\definecolor{refkey}{rgb}{0.9451,0.2706,0.4941}
\definecolor{labelkey}{rgb}{0.9451,0.2706,0.4941}
\definecolor{mygreen}{rgb}{0,0.7,0.3}
\definecolor{myblue}{rgb}{0,0.50,1.20}
\definecolor{myorange}{rgb}{1,0.5,0.1}
\numberwithin{equation}{section}
\crefname{thm}{Theorem}{Theorems}
\crefname{cor}{Corollary}{Corollaries}
\crefname{lem}{Lemma}{Lemmas}
\crefname{sublem}{Sublemma}{Sublemmas}
\crefname{prop}{Proposition}{Propositions}
\crefname{dfn}{Definition}{Definitions}
\crefname{defi}{Definition}{Definitions}
\crefname{ex}{Example}{Examples}
\crefname{claim}{Claim}{Claims}
\crefname{conj}{Conjecture}{Conjectures}
\crefname{conv}{Notation}{Notations}
\crefname{rem}{Remark}{Remarks}
\crefname{rmk}{Remark}{Remarks}
\crefname{prob}{Problem}{Problems}
\crefname{figure}{Figure}{Figures}
\crefname{table}{Table}{Tables}
\crefname{section}{Section}{Sections}
\crefname{subsection}{Section}{Sections}
\crefname{appendix}{Appendix}{Appendices}
\crefname{introthm}{Theorem}{Theorems}
\crefname{introcor}{Corollary}{Corollaries}
\crefname{introconj}{Conjecture}{Conjectures}
\newtheorem{thm}{Theorem}[section]
\newtheorem{prop}[thm]{Proposition}
\newtheorem{cor}[thm]{Corollary}
\newtheorem{lem}[thm]{Lemma}
\newtheorem{introthm}{Theorem}
\newtheorem{introconj}[introthm]{Conjecture}
\theoremstyle{definition}
\newtheorem{dfn}[thm]{Definition}
\newtheorem{ex}[thm]{Example}
\newtheorem{conj}[thm]{Conjecture}
\newtheorem{conv}[thm]{Notation}
\theoremstyle{remark}
\newtheorem{rem}[thm]{Remark}
\newcommand{\corrected}
{\textcolor{red}{(corrected)}}
\newcommand*{\chom}{\mathcal{H}\kern -.5pt om}
\newcommand{\bZ}{\mathbb{Z}}
\newcommand{\bQ}{\mathbb{Q}}
\newcommand{\bR}{\mathbb{R}}
\newcommand{\bT}{\mathbb{T}}
\newcommand{\bM}{\mathbb{M}}
\newcommand{\bD}{\mathbb{D}}
\newcommand{\bP}{\mathbb{M}_\circ}
\newcommand{\bG}{\mathbb{G}}
\newcommand{\bA}{\mathbb{A}}
\newcommand{\bB}{\mathbb{B}}
\newcommand{\bE}{\mathbb{E}}
\newcommand{\bs}{{\boldsymbol{s}}}
\newcommand{\A}{\mathcal{A}}
\newcommand{\cA}{\mathcal{A}}
\newcommand{\cC}{\mathcal{C}}
\newcommand{\cF}{\mathcal{F}}
\newcommand{\cL}{\mathcal{L}}
\newcommand{\cO}{\mathcal{O}}
\newcommand{\cP}{\mathcal{P}}
\def\P{{\mathcal{P}}}
\newcommand{\cW}{\mathcal{W}}
\newcommand{\X}{\mathcal{X}}
\newcommand{\cX}{\mathcal{X}}
\newcommand{\cZ}{\mathcal{Z}}
\newcommand{\sfa}{\mathsf{a}}
\newcommand{\sfx}{\mathsf{x}}
\newcommand{\Hom}{\mathrm{Hom}}
\newcommand{\tri}{\triangle}
\DeclareMathOperator{\sgn}{\mathrm{sgn}}
\newcommand{\trop}{\mathrm{trop}}
\newcommand{\pos}{\mathbb{R}_{>0}}
\newcommand{\eML}{\widehat{\mathcal{ML}}}
\newcommand{\bExch}{\bE \mathrm{xch}}
\newcommand{\uf}{\mathrm{uf}}
\newcommand{\f}{\mathrm{f}}
\newcommand{\Teich}{Teichm\"uller}
\newcommand{\fsl}{\mathfrak{sl}}
\newcommand{\sfp}{\mathsf{p}}
\renewcommand{\cL}{\mathcal{L}_{\fsl_3}}
\newcommand{\picW}[1]{\langle \cW_{#1} \rangle}
\newcommand{\pictW}[1]{\langle W_{#1} \rangle}
\newcommand{\ve}{\varepsilon}
\newcommand{\sfs}{{\mathsf{s}}}
\newcommand{\hL}{\widehat{L}}
\newcommand{\bott}{E,1}
\newcommand{\topp}{E,2}
\DeclareMathOperator{\interior}{\mathrm{int}}
\DeclareMathOperator{\Spec}{\mathrm{Spec}}
\newcommand{\oset}[3][0ex]{%
  \mathrel{\mathop{#3}\limits^{
    \vbox to#1{\kern-2\ex@
    \hbox{$\scriptstyle#2$}\vss}}}}
\newcommand{\overbar}[1]{\oset{#1}{-\!\!\!-\!\!\!-}}
\newcommand{\osetnear}[3][0ex]{%
  \mathrel{\mathop{#3}\limits^{
    \vbox to#1{\kern-.3\ex@
    \hbox{$\scriptstyle#2$}\vss}}}}
\newcommand{\overbarnear}[1]{\osetnear{#1}{-\!\!\!-\!\!\!-}}
\newcommand\qarrow[2]{\draw[->,shorten >=2pt,shorten <=2pt] (#1) -- (#2) [thick];} 
\newcommand\qdrarrow[2]{\draw[->,dashed,shorten >=2pt,shorten <=2pt,bend right=0.5cm] (#1) to (#2) [thick];} 
\newcommand\qdlarrow[2]{\draw[->,dashed,shorten >=2pt,shorten <=2pt,bend left=0.5cm] (#1) to (#2) [thick];} 
\newcommand\qarrowbr[2]{\draw[->,shorten >=2pt,shorten <=2pt,bend right=0.5cm] (#1) to (#2) [thick];} 
\newcommand\qarrowbl[2]{\draw[->,shorten >=2pt,shorten <=2pt,bend left=0.5cm] (#1) to (#2) [thick];} 
\def\centerarc(#1)(#2:#3:#4)
\tikzset{
  mid arrow/.style={postaction={decorate,decoration={
        markings,
        mark=at position .5 with {\arrow[#1]{stealth}}
      }}},
}
\tikzset{
    partial ellipse/.style args={#1:#2:#3}{
        insert path={+ (#1:#3) arc (#1:#2:#3)}
    }
}
\newcommand{\quiverplus}[3]{
\begin{scope}[>=latex]
{\color{mygreen}
    \path(#1) coordinate(x1);
    \path(#2) coordinate(x2);
    \path(#3) coordinate(x3);
    \foreach \l in {1,2}
    {
        \draw($(x1)!0.333*\l!(x2)$) circle(2pt) coordinate(x12\l);
        \draw($(x2)!0.333*\l!(x3)$) circle(2pt) coordinate(x23\l);
        \draw($(x3)!0.333*\l!(x1)$) circle(2pt) coordinate(x31\l);
    }
    \path($(x1)!0.5!(x2)$) coordinate(H);
    \draw($(x3)!0.667!(H)$) circle(2pt) coordinate(G);
    \qarrow{x121}{G}
    \qarrow{x231}{G}
    \qarrow{x311}{G}
    \qarrow{G}{x122}
    \qarrow{G}{x232}
    \qarrow{G}{x312}
    \qarrow{x312}{x121}
    \qarrow{x122}{x231}
    \qarrow{x232}{x311}
}
\end{scope}
}
\newcommand{\quiverminus}[3]{
\begin{scope}[>=latex]
{\color{mygreen}
    \path(#1) coordinate(x1);
    \path(#2) coordinate(x2);
    \path(#3) coordinate(x3);
    \foreach \l in {1,2}
    {
        \draw($(x1)!0.333*\l!(x2)$) circle(2pt) coordinate(x12\l);
        \draw($(x2)!0.333*\l!(x3)$) circle(2pt) coordinate(x23\l);
        \draw($(x3)!0.333*\l!(x1)$) circle(2pt) coordinate(x31\l);
    }
    \path($(x1)!0.5!(x2)$) coordinate(H);
    \draw($(x3)!0.667!(H)$) circle(2pt) coordinate(G);
    \path($(x1)!0.5!(x2)$) coordinate(z12);
    \path($(x2)!0.5!(x3)$) coordinate(z23);
    \path($(x3)!0.5!(x1)$) coordinate(z31);
    \path($(G)!0.7!(z12)$) coordinate(y12);
    \path($(G)!0.7!(z23)$) coordinate(y23);
    \path($(G)!0.7!(z31)$) coordinate(y31);
    \qarrow{x122}{G}
    \qarrow{x232}{G}
    \qarrow{x312}{G}
    \qarrow{G}{x121}
    \qarrow{G}{x231}
    \qarrow{G}{x311}
    \draw[<-,shorten >=2pt,shorten <=2pt] (x232) ..controls (y23) and (y12).. (x121) [thick];
    \draw[<-,shorten >=2pt,shorten <=2pt] (x312) ..controls (y31) and (y23).. (x231) [thick];
    \draw[<-,shorten >=2pt,shorten <=2pt] (x122) ..controls (y12) and (y31).. (x311) [thick];
}
\end{scope}
}
\newcommand{\quiversquare}[4]{
{\color{mygreen}
    \path(#1) coordinate(x1);
    \path(#2) coordinate(x2);
    \path(#3) coordinate(x3);
	\path(#4) coordinate(x4);
    \foreach \l in {1,2}
    {
        \draw($(x1)!0.333*\l!(x2)$) circle(2pt) coordinate(x12\l);
        \draw($(x2)!0.333*\l!(x3)$) circle(2pt) coordinate(x23\l);
        \draw($(x3)!0.333*\l!(x4)$) circle(2pt) coordinate(x34\l);
		\draw($(x4)!0.333*\l!(x1)$) circle(2pt) coordinate(x41\l);
		\draw($(x1)!0.333*\l!(x3)$) circle(2pt) coordinate(x13\l);	
	    \draw($(x2)!0.333*\l!(x4)$) circle(2pt) coordinate(x24\l);
	}
}
}
\newcommand{\squarefrozen}[8]{
{\color{mygreen}
\node[below right,scale=0.8] at (x121) {$#1$}; 
\node[below right,scale=0.8] at (x122) {$#2$};
\node[above right,scale=0.8] at (x231) {$#3$};
\node[above right,scale=0.8] at (x232) {$#4$};
\node[above left,scale=0.8] at (x341) {$#5$};
\node[above left,scale=0.8] at (x342) {$#6$};
\node[below left,scale=0.8] at (x411) {$#7$};
\node[below left,scale=0.8] at (x412) {$#8$};
}
}
\newcommand{\squareunfrozen}[4]{
{\color{mygreen}
\node[below right,scale=0.8] at (x131) {$#1$};
\node[above right,scale=0.8] at (x132) {$#2$};
\node[right,scale=0.8] at (x241) {$#3$};
\node[left,scale=0.8] at (x242) {$#4$};
}
}
\newcommand{\CoG}[3]{
    \path(#1) coordinate(x1);
    \path(#2) coordinate(x2);
    \path(#3) coordinate(x3);
    \path($(x1)!0.5!(x2)$) coordinate(H);
    \path($(x3)!0.667!(H)$) circle(2pt) coordinate(G);}
\newcommand{\sink}[3]{
    \CoG{#1}{#2}{#3}
    \draw[red,very thick,->-] (#1) -- (G);
    \draw[red,very thick,->-] (#2) -- (G);
    \draw[red,very thick,->-] (#3) -- (G);
}   
\newcommand{\source}[3]{
    \CoG{#1}{#2}{#3}
    \draw[red,very thick,-<-] (#1) -- (G);
    \draw[red,very thick,-<-] (#2) -- (G);
    \draw[red,very thick,-<-] (#3) -- (G);
}   
\newcommand{\bline}[3]{
    \path (#1)++(0,-#3) coordinate(m1);
    \path (#2)++(0,-#3) coordinate(m2);
    \filldraw[gray!30] (m1) -- (#1) -- (#2) -- (m2) --cycle;
    \draw[thick] (#1) -- (#2);
}
\newcommand{\tline}[3]{
    \path (#1)++(0,#3) coordinate(m1);
    \path (#2)++(0,#3) coordinate(m2);
    \filldraw[gray!30] (m1) -- (#1) -- (#2) -- (m2) --cycle;
    \draw[thick] (#1) -- (#2);
}
\tikzset{->-/.style 2 args={
	postaction={decorate},
	decoration={markings, mark=at position #1 with {\arrow[thick, #2]{>}}} 
    },
    ->-/.default={0.5}{}
}
\tikzset{-<-/.style 2 args={
	postaction={decorate},
	decoration={markings, mark=at position #1 with {\arrow[thick, #2]{<}}} 
    },
    -<-/.default={0.5}{}
}
\tikzset{
	overarc/.style={
		white, double=red, double distance=1.2pt, line width=2.4pt
	}
}
\newcommand{\Hweb}[4]{
\path($(#1)!0.5!(#3)$) coordinate(P);
\CoG{#1}{#2}{P}
    \draw[red,thick,->-] (#1) -- (G);
    \draw[red,thick,->-] (#2) -- (G);
    \path(G) coordinate (G');
\CoG{#3}{#4}{P}
    \draw[red,thick,-<-] (#3) -- (G);
    \draw[red,thick,-<-] (#4) -- (G);
\draw[red,thick,->-] (G) -- (G');
}
\newcommand{\arr}[1]{
    \draw[red,very thick,-latex] (#1)++(-0.18,0) --++(0.36,0);
}
\newcommand{\arl}[1]{
    \draw[red,very thick,-latex] (#1)++(0.18,0) --++(-0.36,0);
}
\title{Unbounded $\mathfrak{sl}_3$-laminations and their shear coordinates}
\author[Tsukasa Ishibashi]{Tsukasa Ishibashi}
\address{Tsukasa Ishibashi, Mathematical Institute, Tohoku University, 
6-3 Aoba, Aramaki, Aoba-ku, Sendai, Miyagi 980-8578, Japan.}
\email{tsukasa.ishibashi.a6@tohoku.ac.jp}
\author[Shunsuke Kano]{Shunsuke Kano}
\address{Shunsuke Kano, Research Alliance Center for Mathematical Sciences, Tohoku University,
6-3 Aoba, Aramaki, Aoba-ku, Sendai, Miyagi 980-8578, Japan.}
\email{s.kano@tohoku.ac.jp}
\begin{document}

\begin{abstract}
Generalizing the work of Fock--Goncharov on rational unbounded laminations, we give a geometric model of the tropical points of the 
cluster variety $\mathcal{X}_{\mathfrak{sl}_3,\Sigma}$, which we call \emph{unbounded $\mathfrak{sl}_3$-laminations}, 
based on the Kuperberg's $\mathfrak{sl}_3$-webs.
We introduce their tropical cluster coordinates as an $\mathfrak{sl}_3$-analogue of the Thurston's shear coordinates associated with any ideal triangulation. 
As 
a tropical analogue of gluing morphisms among the moduli spaces $\mathcal{P}_{PGL_3,\Sigma}$ of Goncharov--Shen, we describe a geometric gluing procedure of unbounded $\mathfrak{sl}_3$-laminations with pinnings via ``shearings''. 
We also investigate a relation to the graphical basis of the $\mathfrak{sl}_3$-skein algebra \cite{IY21}, which conjecturally leads to a quantum duality map. 
\end{abstract}

\maketitle


\section{Introduction}

\subsection{Background}
The notion of measured geodesic laminations (or its equivalents, measured foliations) on a surface has been first introduced by W.~Thurston \cite{Thu88}, as a powerful geometric tool to study the mapping class groups and the large-scale geometry of the \Teich\ space. 
After a couple of decades, Fock--Goncharov \cite{FG07} studied Thurston's \emph{shear coordinates} on the space $\eML(\Sigma)$ of (enhanced) measured geodesic laminations on a marked surface $\Sigma$, which gives a global coordinate system parametrized by the interior edges of an ideal triangulations $\tri$ of $\Sigma$: $\eML(\Sigma) \xrightarrow{\sim} \bR^{e_{\interior}(\tri)}$. Moreover, they observed that these coordinates can be viewed as a ``tropical analogue'' of the cross-ratio coordinates\footnote{The cross-ratio coordinate is an exponential version of the shear coordinate on the \Teich\ space. In this paper, we always use the term ``shear coordinates'' for those on the lamination spaces.}
on the enhanced \Teich\ space $\widehat{\mathcal{T}}(\Sigma) \xrightarrow{\sim} \bR^{e_{\interior}(\tri)}_{>0}$ studied by Chekhov--Fock \cite{CF}, as their coordinate transformation rule is exactly the tropical analogue of that for the latter. These facts indicate that there would be a universal algebraic object 
behind the \Teich\ and lamination spaces: this idea leads to 
the theory of \emph{cluster varieties} developed in \cite{FG09}. In their terms, there is a cluster $\X$-variety\footnote{Here, the superscript ``uf'' just indicates that it has only unfrozen coordinates. It corresponds to the situation where the shear/cross-ratio coordinates are defined only for internal edges $e_\mathrm{int}(\tri)$ of an ideal triangulation $\tri$.} 
$\X_\Sigma^\uf$ associated with $\Sigma$ such that 
the spaces $\widehat{\mathcal{T}}(\Sigma)$ and  $\eML(\Sigma)$ are naturally identified with the spaces $\X_\Sigma^\uf(\pos)$, $\X_\Sigma^\uf(\bR^T)$ of positive real points and the real tropical points, respectively. We call the latter space $\X_\Sigma^\uf(\bR^T)$ the \emph{tropical cluster $\X$-variety} for short. 

In general, cluster varieties are schemes constructed from combinatorial data $\sfs$ (such as quivers) equipped with a birational atlas whose coordinate changes are given by specific rational transformations, called cluster transformations (see \cref{sec:appendix} for a short review of this theory). They always come in a dual pair $(\A_\sfs,\X_\sfs)$, forming a \emph{cluster ensemble}. The \emph{duality conjecture} is a profound conjecture of Fock--Goncharov \cite{FG09} that asks for a construction of ``duality maps"
\begin{align*}
    \mathbb{I}_\X: \X_\sfs(\bZ^T) \to \cO(\A_{\sfs^\vee}) \quad 
    (\text{resp. } \mathbb{I}_\A: \A_\sfs(\bZ^T) \to \cO(\X_{\sfs^\vee}))
\end{align*}
which parametrizes a linear basis of the function ring $\cO(\A_{\sfs^\vee})$ (resp. $\cO(\X_{\sfs^\vee})$) of the dual cluster variety
by the space $\X_\sfs(\bZ^T) \subset \X_\sfs(\bR^T)$ (resp. $\A_\sfs(\bZ^T) \subset \A_\sfs(\bR^T)$) of integral tropical points, satisfying certain strong axioms such as the positivity of structure constants. 

In the surface case, the spaces $\A_\Sigma(\pos)$ and $\A_\Sigma(\bR^T)$ are identified with the decorated \Teich\ and lamination spaces \cite{Penner,PP93} via the $\lambda$-length and intersection coordinates, respectively \cite{FG07}. 
The geometric realization of the tropical spaces $\A_\Sigma(\bZ^T)$ $\X_\Sigma^\uf(\bZ^T)$ by integral laminations \cite{FG07} leads to a topological construction of the duality maps $\mathbb{I}_\cX$ and $\mathbb{I}_\A$, and their required properties are proved recently by Mandel--Qin \cite{MQ} based on a comparison with the \emph{theta basis} of Gross--Hacking--Keel--Kontsevich \cite{GHKK}.
These duality maps are two kinds of generalizations of the trace function basis for the function ring of the $SL_2$-character variety of a closed surface, parametrized by loops. 

\bigskip

Strongly expected are ``higher rank" generalizations of the above picture. 
The cluster varieties $\cX^\uf_\Sigma$ and $\cA_\Sigma$ are birationally isomorphic to certain generalizations of the $PGL_2$- and $SL_2$-character varieties, respectively \cite{FG03}.
As a generalization for higher rank algebraic groups, there are cluster varieties $\cX^\uf_{\mathfrak{g}, \Sigma}$ and $\cA_{\mathfrak{g}, \Sigma}$ which are birationally isomorphic to the same kind of generalizations $\X_{G',\Sigma}$ and $\A_{G,\Sigma}$ of character varieties \cite{FG03,Le19,GS19}, where the defining combinatorial data for these cluster varieties only depend on the surface $\Sigma$ and a semisimple Lie algebra $\mathfrak{g}$. 
In particular, $\cX^\uf_{\mathfrak{sl}_2, \Sigma} = \cX^\uf_\Sigma$ and $\cA_{\mathfrak{sl}_2, \Sigma} = \cA_\Sigma$ correspond to the case mentioned above. Goncharov--Shen \cite{GS19} introduced a cluster variety $\cX_{\mathfrak{g}, \Sigma}$ with frozen coordinates, which is birational to some extension $\P_{G',\Sigma}$ of $\X_{G',\Sigma}$. 
Hereupon, 
we have combinatorially defined tropical spaces $\A_{\mathfrak{g},\Sigma}(\bR^T)$ and $\X_{\mathfrak{g},\Sigma}(\bR^T)$,
which should parametrize linear bases of the function rings of the dual varieties with good properties by the duality conjecture.
The spaces $\A_{\mathfrak{g},\Sigma}(\bR^T)$ and $\X_{\mathfrak{g},\Sigma}(\bR^T)$ are widely expected to be certain spaces of $\mathfrak{g}$-webs on $\Sigma$, so that the duality maps are built from the web functions on the character variety. However, such a web description is still missing in general. 
We remark here that Le \cite{Le16} gave a description of these spaces in terms of certain configurations in the affine buildings, which should be ultimately related to $\mathfrak{g}$-webs based on the geometric Satake correspondence (see, for instance, \cite{FKK}). 

For the first non-trivial case $\mathfrak{g}=\fsl_3$, a major progress on the space $\A_{\mathfrak{sl}_3,\Sigma}(\bZ^T)$ has been made by Douglas--Sun \cite{DS20I,DS20II} and Kim \cite{Kim21}. They describe this space as an appropriate space of Kuperberg's $\fsl_3$-webs \cite{Kuperberg} by introducing an $\fsl_3$-version of the intersection coordinates with an ideal triangulation. 
Their coordinates can also be extended to the space $\A_{\mathfrak{sl}_3,\Sigma}(\bQ^T)$ by scaling equivariance.

\subsection{Geometric model for the tropical space \texorpdfstring{$\X^\uf_{\mathfrak{sl}_3,\Sigma}(\bQ^T)$}{X(sl3,Sigma)}}
Our aim in this paper is to describe the tropical cluster variety $\X_{\mathfrak{sl}_3,\Sigma}(\bQ^T)$ on the dual side as a space of $\fsl_3$-webs with a different type of boundary conditions and some additional structures at punctures. 
We introduce the space $\cL^x(\Sigma,\bQ)$ of rational unbounded $\fsl_3$-laminations on $\Sigma$, which are certain equivalence classes of non-elliptic \emph{signed $\fsl_3$-webs} with positive rational weights (see \cref{subsec:webs}).
Then we define an $\fsl_3$-version of the shear coordinates of these objects with respect to an ideal triangulation $\tri$. As in the $\fsl_2$-case, we need to perturb the ends incident at punctures (and thus make them spiralling) so that they intersect with $\tri$ transversely. The spiralling directions are controlled by the signs assigned to each end of the $\fsl_3$-web, and this procedure leads to the notion of \emph{spiralling diagrams} (\cref{def:spiralling}) associated with signed $\fsl_3$-webs. After a careful study on the ``good positions'' of a spiralling diagram, we obtain well-defined shear coordinates. 

\begin{introthm}[\cref{thm:cluster_transf_unfrozen}]\label{introthm:shear_X}
For any marked surface $\Sigma$ satisfying the conditions {\rm (S1)--(S4)} in \cref{subsec:notation_marked_surface} and its ideal triangulation $\tri$ without self-folded triangles, we have a bijection
\begin{align}\label{eq:shear_X}
    \sfx^\uf_\tri: \cL^x(\Sigma,\bQ) \xrightarrow{\sim} \bQ^{I_\uf(\tri)},
\end{align}
which we call the shear coordinate system associated with $\tri$. Moreover, for any another ideal triangulation $\tri'$ of $\Sigma$, the coordinate transformation $\sfx_{\tri'}\circ \sfx_\tri^{-1}$ is a composite of tropical cluster $\X$-transformations. 
\end{introthm}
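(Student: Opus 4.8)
The plan is to establish the two assertions separately: first the bijectivity of \eqref{eq:shear_X} by constructing an explicit inverse through a local-to-global reconstruction, and then the compatibility with tropical cluster $\X$-transformations by reducing to a single flip and matching it against the mutation sequence of the $\fsl_3$-quiver.

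For the bijection, I would begin by confirming that $\sfx^\uf_\tri$ is well-defined, i.e.\ that its value on a class in $\cL^x(\Sigma,\bQ)$ depends neither on the chosen non-elliptic representative nor on the chosen good position of its spiralling diagram; the latter should follow from the good-position analysis, since any two good positions are related by moves under which each shear count is manifestly invariant. Granting this, I would prove injectivity and surjectivity simultaneously by building an inverse. The structural input I would rely on is that, in good position, the restriction of a signed $\fsl_3$-web to each triangle $T$ of $\tri$ is a disjoint union of elementary $\fsl_3$-pieces --- corner arcs and trivalent $H$-webs through the central vertex $G$ --- whose isotopy type is completely pinned down by the coordinates attached to the interior edge-points of $T$ together with the coordinate at $G$, while the coordinate on each interior edge records the signed shearing by which the pieces on the two sides are matched. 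I would then show that this local data assembles into a unique non-elliptic signed web, giving injectivity, and conversely that every vector in $\bQ^{I_\uf(\tri)}$ is realized, giving surjectivity, by explicitly laying down the elementary pieces compatibly with the prescribed edge-data and reducing the resulting diagram to its non-elliptic normal form; the signs and spiralling directions at a puncture are read off from the signs of the coordinates on the edges incident to it.

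For the coordinate change, I would use that any two ideal triangulations of a surface satisfying (S1)--(S4) are connected by a finite sequence of flips, so it suffices to analyze a single flip $\tri\to\tri'$ supported in one quadrilateral $Q$. Since the web and its coordinates are unchanged away from $Q$, the computation is entirely local: I would place the web in good position simultaneously adapted to $\tri$ and $\tri'$, track how the elementary pieces inside $Q$ and their shearings transform, and read off the resulting piecewise-linear change of coordinates. The final step is to compare this against the composite of tropical $\X$-mutations
\[
    x_k' = -x_k, \qquad x_i' = x_i - \epsilon_{ik}\,\max\bigl(0,\,-\sgn(\epsilon_{ik})\,x_k\bigr) \quad (i\neq k),
\]
performed in the order that realizes an $\fsl_3$-flip of the $\fsl_3$-quiver, verifying that the two piecewise-linear maps agree on every linearity chamber.

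The hard part will be the reconstruction step inside the bijection: proving that the elementary-piece decomposition of each triangle is \emph{unique} in good position, and that gluing the pieces with prescribed shearings always produces a genuinely non-elliptic web carrying the correct puncture data. This is precisely where the richer combinatorics of $\fsl_3$-webs --- trivalent vertices, the non-elliptic condition, and the sign and spiralling bookkeeping at punctures --- makes the analysis substantially more delicate than in the classical $\fsl_2$ setting, and a case-by-case study of the local models seems unavoidable. Matching the flip to the correct mutation sequence in the coordinate-change statement is a second, more mechanical but still lengthy, verification.
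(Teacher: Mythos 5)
Your plan for the bijection follows the same route as the paper: well-definedness via invariance of the counts under the moves relating good positions, then an explicit inverse $\xi_\tri$ built by laying down local pieces (honeycombs and corner arcs) on triangles and gluing them across biangles according to the edge coordinates. You correctly flag that the hard part is the reconstruction, but you locate the difficulty slightly off target. The genuinely delicate points in the paper are: (i) showing that a spiralling diagram can be put in good position \emph{at all} by finitely many moves --- this requires proving that the relative intersection number with an ideal arc is bounded above, using non-ellipticity (otherwise the tightening procedure could fail to terminate near a puncture); and (ii) injectivity, where the issue is not uniqueness of the triangle-local decomposition but the fact that the reconstruction fixes a \emph{periodic pattern} of corner arcs, so the rebuilt web can a priori differ from the original by infinitely many local permutations of parallel strands. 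The paper resolves (ii) with an unbounded Fellow-Traveler Lemma: travelers in the two global pictures with the same edge-identifiers have identical routes, so the discrepancies assemble into global parallel-moves. Your sentence ``I would then show that this local data assembles into a unique non-elliptic signed web'' is exactly the statement that needs this machinery; without something like it the injectivity argument has a real gap.

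For the coordinate change you propose the direct route: compute the PL transformation of the shear coordinates under a single flip inside the quadrilateral and match it chamber-by-chamber against the composite of four tropical $\X$-mutations. The paper explicitly avoids this computation. Instead it enlarges $\cL^x$ to the space $\cL^p(\Sigma,\bQ)$ of laminations with pinnings, proves that gluing along boundary intervals tropicalizes the amalgamation map and that the extended ensemble map $\widetilde p$ matches the Goncharov--Shen map $\sum_j(\varepsilon_{ij}+m_{ij})\sfa_j$; since $\widetilde p$ is a bijection for unpunctured surfaces, the already-established $\A$-side flip formula of Douglas--Sun transports to the $\X$-side on a quadrilateral, and the general case follows because amalgamation commutes with cluster transformations. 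Your direct approach is viable in principle (the paper concedes as much) and is self-contained, at the cost of a long case analysis over the sign chambers of $(\sfx_{T_L},\sfx_{T_R},\sfx_{E,1},\sfx_{E,2})$ and of handling honeycombs of arbitrary height crossing the flipped diagonal; the paper's route buys a much shorter verification plus the amalgamation and ensemble-map statements as standalone results, but requires importing the Douglas--Sun naturality theorem. Note also that a good position cannot literally be ``simultaneously adapted to $\tri$ and $\tri'$'' since the diagonal changes; you would work in good position for one triangulation and read off the coordinates for the other.
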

As a consequence, the shear coordinates combine to give an $MC(\Sigma)$-equivariant bijection 
\begin{align}\label{eq:shear_combined_X}
    \sfx^\uf_\bullet: \cL^x(\Sigma,\bQ) \xrightarrow{\sim} \X_{\fsl_3,\Sigma}^\uf(\bQ^T).
\end{align}
Therefore, our space $\cL^x(\Sigma,\bQ)$ of unbounded $\fsl_3$-laminations gives a geometric model for the tropical cluster $\X$-variety $\X_{\fsl_3,\Sigma}^\uf(\bQ^T)$. In other words, the space $\cL^x(\Sigma,\bQ)$ can be viewed as a tropical analogue of the moduli space $\X_{PGL_3,\Sigma}$ of framed $PGL_3$-local systems \cite{FG03}. 

In \cref{subsec:reconstruction}, we give an explicit inverse map of $\sfx^\uf_\tri$ by gluing local building blocks according to the shear coordinates, in the same spirit as Fock--Goncharov. The coordinate transformation formula could be obtained by case-by-case as in \cite{DS20II} for the $\A$-side. However, in order to reduce the length of computation, we choose to derive it from the computation on the $\A$-side performed by Douglas--Sun after investigating their relation in detail (see \cref{introthm:shear_P} below). So the second statement in \cref{introthm:shear_X} follows from \cref{introthm:shear_P}.


\subsection{Unbounded \texorpdfstring{$\fsl_3$}{sl3}-laminations with pinnings and their gluing}
In order to supply the frozen coordinates, we further introduce a larger space $\cL^p(\Sigma,\bQ)$ of unbounded $\fsl_3$-laminations \emph{with pinnings} by attaching additional data on boundary intervals, in the same spirit as Goncharov--Shen's construction of the moduli space $\P_{G',\Sigma}$ \cite{GS19}. As in their work, these additional data allow us to glue the $\fsl_3$-laminations along boundary intervals, which leads to the \emph{gluing map}
\begin{align}\label{eq:intro_amalgamation}
    q_{E_L,E_R}: \cL^p(\Sigma,\bQ) \to \cL^p(\Sigma',\bQ)
\end{align}
where $\Sigma'$ is the marked surface obtained from $\Sigma$ by gluing two boundary intervals $E_L,E_R$. 

The space $\cL^p(\Sigma,\bQ)$ is also suited for the comparison with the works of Douglas--Sun \cite{DS20I,DS20II} and Kim \cite{Kim21}.
Let $\cL^a(\Sigma,\bQ)$ denote the space of rational bounded $\fsl_3$-laminations, which essentially appears in these works. See \cref{rem:bounded_lamination}. 
Then we define a \emph{geometric ensemble map} 
\begin{align}\label{eq:intro_ensemble}
    \widetilde{p}: \cL^a(\Sigma,\bQ) \to \cL^p(\Sigma,\bQ)
\end{align}
by forgetting the peripheral components, and assigning pinnings in a certain way. When $\Sigma$ has no punctures, $\widetilde{p}$ gives a bijection. For these structures, we obtain the following:

\begin{introthm}[\cref{thm:amalgamation}, \cref{prop:ensemble_map}, \cref{thm:cluster_transf}]\label{introthm:shear_P}
Under the same assumption as in \cref{introthm:shear_X}, we have a bijection 
\begin{align}\label{eq:shear_P}
    \sfx_\tri: \cL^p(\Sigma,\bQ) \xrightarrow{\sim} \bQ^{I(\tri)},
\end{align}
whose coordinate transformations are given by tropical cluster $\X$-transformations (including frozen coordinates). 
Via these coordinate systems:
\begin{enumerate}
    \item The gluing map $q_{E_L,E_R}$ coincides with the tropicalization of the amalgamation map \cite{FG06a}. 
    \item The geometric ensemble map $\widetilde{p}$ coincides with the tropicalization of the Goncharov--Shen extension of the ensemble map \eqref{eq:GS_extension}. 
\end{enumerate}
\end{introthm}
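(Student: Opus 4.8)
The plan is to construct the full coordinate system $\sfx_\tri$ by augmenting the unfrozen shear coordinates $\sfx^\uf_\tri$ of \cref{introthm:shear_X} with frozen coordinates extracted from the pinning data, and then to transport every structural statement from the already-understood $\A$-side of bounded laminations (Douglas--Sun) across the ensemble map $\widetilde{p}$. First I would make the bijection \eqref{eq:shear_P} explicit: each boundary interval $E$ of $\Sigma$ contributes the frozen indices in $I(\tri)\setminus I_\uf(\tri)$, and I would read off the corresponding frozen shear coordinates from the local picture of the spiralling diagram near $E$ together with the pinning attached there. Since the projection $\cL^p(\Sigma,\bQ)\to\cL^x(\Sigma,\bQ)$ forgetting pinnings is compatible with the projection $\bQ^{I(\tri)}\to\bQ^{I_\uf(\tri)}$ forgetting frozen entries, bijectivity of $\sfx_\tri$ reduces to the bijectivity of $\sfx^\uf_\tri$ together with the local claim that, for a fixed underlying web, the pinnings at each boundary interval are in bijection with the admissible frozen values.

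The conceptual core is part (2), for which I would pin down $\widetilde{p}\colon \cL^a(\Sigma,\bQ)\to\cL^p(\Sigma,\bQ)$ in coordinates. On the $\A$-side the intersection coordinates of Douglas--Sun give a bijection $\cL^a(\Sigma,\bQ)\xrightarrow{\sim}\bQ^{I(\tri)}$, and the tropicalized ensemble map reads $x_i=\sum_j \varepsilon_{ij}\,a_j$ for the exchange matrix $\varepsilon$ of the $\fsl_3$-seed attached to $\tri$. I would therefore verify, directly from the two geometric definitions, that the shear coordinate of $\widetilde{p}(L)$ at an index $i$ equals this signed sum of the intersection coordinates of $L$ around $i$; this is the $\fsl_3$-analogue of the classical identity between shear coordinates and signed intersection numbers. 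Discarding the peripheral components is consistent with this formula because peripheral loops lie in the kernel of the tropicalized $p$-map, and assigning pinnings is exactly what realizes the Goncharov--Shen extension that makes the target the full $\cL^p$.

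With the coordinate identity in hand, the transformation formula follows by transport of structure. The ensemble map intertwines $\A$- and $\X$-mutations, so composing $\widetilde{p}$ with the tropical cluster $\A$-transformations computed case-by-case by Douglas--Sun under a flip $\tri\to\tri'$ yields the tropical cluster $\X$-transformations for $\sfx_\tri$, including the frozen coordinates; piecewise-linearity of both sides lets me promote the identity from the image of $\widetilde{p}$ to all of $\cL^p(\Sigma,\bQ)$. For part (1) I would argue locally at the glued edge: gluing two boundary intervals $E_L,E_R$ superimposes the two spiralling diagrams and combines their pinnings into a single shearing across the new edge, and comparing the resulting coordinate with the amalgamation rule — which on tropical $\X$-coordinates simply adds the two coordinates being identified — I would check that the new coordinate is the sum of the $E_L$- and $E_R$-coordinates while all others are unchanged, which is precisely the tropicalized amalgamation of Fock--Goncharov.

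I expect the main obstacle to be the second step: matching the geometric $\widetilde{p}$ with the tropical ensemble map demands careful bookkeeping of signs and multiplicities in the $\fsl_3$ local models near each triangle, and in particular a delicate control of the interaction between the spiralling and good-position conventions at punctures and the peripheral components that are discarded, so that the signed intersection sum comes out exactly equal to the shear coordinate with the correct entries $\varepsilon_{ij}$. Once this identity is secured, the bijection, the transformation law, and both geometric maps all follow with relatively little extra work.
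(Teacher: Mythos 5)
Your overall architecture matches the paper's: define the frozen coordinates by combining the pinning data with the corner-arc counts near each boundary interval (so bijectivity reduces to the unfrozen case plus solving for $\delta_E$), verify the ensemble-map identity by a local computation on elementary components, and import the flip formula from Douglas--Sun through $\widetilde{p}$. Parts of this are sound; in particular your treatment of (1) by superimposing the diagrams and comparing with the reconstruction/pinning conventions is essentially the paper's proof of \cref{thm:amalgamation}. One small imprecision: the identity you plan to verify should be $\widetilde{p}^*\sfx_i = \sum_j(\varepsilon_{ij}+m_{ij})\sfa_j$ with the half-integral correction $m_{ij}$ of \eqref{eq:m-matrix} on frozen pairs, not the bare $\sum_j\varepsilon_{ij}\sfa_j$; without it the frozen coordinates do not come out right.

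The genuine gap is in your derivation of the coordinate transformation law. You propose to check that $\sfx_{\tri'}\circ\sfx_\tri^{-1}$ agrees with the tropical cluster $\X$-transformation on the image of $\widetilde{p}$ and then ``promote the identity from the image of $\widetilde{p}$ to all of $\cL^p(\Sigma,\bQ)$ by piecewise-linearity.'' When $\Sigma$ has punctures, the tropicalized extended ensemble map is the linear map with matrix $\widetilde{\varepsilon}^\tri=\varepsilon^\tri+m$, whose corank is positive (already for $\fsl_2$ on a once-punctured torus the exchange matrix has rank $2$ in a $3$-dimensional space), so the image of $\widetilde{p}$ is a \emph{proper} linear subspace of $\bQ^{I(\tri)}$. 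Two piecewise-linear maps agreeing on a proper subspace need not agree anywhere off it, so the promotion step fails exactly in the punctured case — which is the only case where there is anything left to prove, since for unpunctured $\Sigma$ the map $\widetilde{p}$ is already a bijection (\cref{lem:ensemble_bijection}) and no promotion is needed. The paper closes this gap differently: a flip is supported in a quadrilateral $Q$, which has no punctures, so the transformation law holds on $\cL^p(Q,\bQ)$ by the ensemble-map argument; one then writes $\Sigma$ as the gluing of $Q$ with its complement and uses your part (1) together with the fact that amalgamation commutes with cluster $\X$-transformations (\cite[Lemma 2.2]{FG06a}) to transport the identity to $\cL^p(\Sigma,\bQ)$. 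You have all the ingredients for this in your part (1); you just need to use the gluing map, rather than density/PL continuity, to pass from the local to the global statement.
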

We will also see in \cref{subsec:Dynkin} that the shear coordinates are equivariant under the Dynkin involution $\ast$, which generates $\mathrm{Out}(SL_3)$. 
In particular, we have an $MC(\Sigma)\times \mathrm{Out}(SL_3)$-equivariant bijection 
\begin{align}\label{eq:shear_combined_P}
    \sfx_\tri: \cL^p(\Sigma,\bQ) \xrightarrow{\sim} \X_{\fsl_3,\Sigma}(\bQ^T).
\end{align}
In other words, the space $\cL^p(\Sigma,\bQ)$ can be viewed as a tropical analogue of the Goncharov--Shen's moduli space $\P_{PGL_3,\Sigma}$ \cite{GS19}. 

The property (1) allows one to reduce the computation of coordinate transformations to those for smaller surfaces. For a surface without punctures, the map $\widetilde{p}$ is a bijection and the property (2) shows that this map intertwines the two types of cluster transformations. This is our strategy to obtain the coordinate transformation formula for \eqref{eq:shear_P}. 

In our sequel paper \cite{IKp}, we will investigate the unbounded $\fsl_3$-laminations around punctures in detail, and study the tropicalizations of the cluster exact sequence of Fock--Goncharov \cite{FG09} and the Weyl group actions at punctures introduced by Goncharov--Shen \cite{GS18} in terms of $\fsl_3$-laminations. In the end, the bijections \eqref{eq:shear_combined_X} and \eqref{eq:shear_combined_P} turn out to be equivariant under the natural action of the group $(MC(\Sigma)\times \mathrm{Out}(SL_3)) \ltimes W(\fsl_3)^{\bP}$.

\subsection{Relation to the graphical basis of the skein algebra \texorpdfstring{$\mathscr{S}_{\fsl_3,\Sigma}^q$}{Sq(sl3,Sigma)}}
As mentioned in the beginning, our space $\cL^p(\Sigma,\bZ)\cong \X_{\fsl_3,\Sigma}(\bZ^T)$ is expected to parametrize a linear basis of the function ring $\cO(\A_{\fsl_3,\Sigma})$. 
When the marked surface has no punctures (hence the exchange matrix has full-rank), it is also expected to parametrize a linear basis of the \emph{quantum upper cluster algebra} $\cO_q(\A_{\fsl_3,\Sigma})$ of Berenstein--Zelevinsky \cite{BZ}. On the other hand, a skein model for $\cO_q(\A_{\fsl_3,\Sigma})$ is investigated in \cite{IY21} by the first named author and W.~Yuasa. They study a skein algebra $\mathscr{S}_{\fsl_3,\Sigma}^q$ with appropriate ``clasped'' skein relations at marked points, and constructed an inclusion of its boundary-localization $\mathscr{S}_{\fsl_3,\Sigma}^q[\partial^{-1}]$ into the quantum cluster algebra (and hence into $\cO_q(\A_{\fsl_3,\Sigma})$). 
Conjecturally these algebras coincide with each other. They give a $\bZ_q$-basis $\mathsf{BWeb}_{\fsl_3,\Sigma}$ of the skein algebra $\mathscr{S}_{\fsl_3,\Sigma}^q$ consisting of flat trivalent graphs. In this paper, we relate our integral $\fsl_3$-laminations with pinnings to the basis webs:

\begin{introthm}[\cref{thm:basis_correspondence}]\label{introthm:basis}
Assume that $\Sigma$ has no punctures. Then we have an $MC(\Sigma)\times \mathrm{Out}(SL_3)$-equivariant bijection
\begin{align*}
    \mathbb{I}_\X^q: \cL^p(\Sigma,\bZ)_+ \xrightarrow{\sim} \mathsf{BWeb}_{\fsl_3,\Sigma} \subset \mathscr{S}_{\fsl_3,\Sigma}^q, 
\end{align*}
where $\cL^p(\Sigma,\bZ)_+ \subset \cL^p(\Sigma,\bZ)$ denotes the subspace of dominant integral $\fsl_3$-laminations. Moreover, it is extended to a map $\mathbb{I}_\X^q: \cL^p(\Sigma,\bZ) \hookrightarrow 
\mathscr{S}_{\fsl_3,\Sigma}^q[\partial^{-1}]$, whose image gives a $\bZ_q$-basis of $\mathscr{S}_{\fsl_3,\Sigma}^q[\partial^{-1}]$. 
\end{introthm}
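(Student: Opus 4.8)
The plan is to realize both sides as the same set of geometric objects --- non-elliptic $\fsl_3$-webs on $\Sigma$ --- and to match their defining conditions, weights, and boundary data. Since $\Sigma$ has no punctures, a dominant integral lamination in $\cL^p(\Sigma,\bZ)_+$ carries no spiralling data, and is thus represented by a non-elliptic $\fsl_3$-web with positive integer weights together with pinning data recording a dominant weight at each boundary interval. The basis $\mathsf{BWeb}_{\fsl_3,\Sigma}$ of \cite{IY21} consists of precisely such flat trivalent graphs subject to the clasped (highest-weight) skein conditions at the marked points. I would therefore define $\mathbb{I}_\X^q$ by sending a lamination to the skein class of its underlying web, with the integer weights encoded as the multiplicities of the web, so that by construction the output is a single element of $\mathsf{BWeb}_{\fsl_3,\Sigma} \subset \mathscr{S}_{\fsl_3,\Sigma}^q$.

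The first task is well-definedness and bijectivity. The equivalence relation on laminations (non-ellipticity together with the allowed local moves) is designed to coincide with the reductions that put a web into the normal form underlying $\mathsf{BWeb}_{\fsl_3,\Sigma}$, so that distinct laminations yield distinct basis webs, which are linearly independent by the basis theorem of \cite{IY21}. Surjectivity and injectivity then reduce to three matchings: (i) the non-elliptic condition on laminations agrees with the condition defining basis webs; (ii) the dominance condition cutting out $\cL^p(\Sigma,\bZ)_+$ corresponds to non-negativity of the boundary degrees, which is exactly what places a basis web in $\mathscr{S}_{\fsl_3,\Sigma}^q$ rather than only in its boundary localization; and (iii) the Goncharov--Shen pinnings translate into the boundary states governing the clasp relations. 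Item (iii) is the crux, and I expect it to require a careful comparison of the two boundary formalisms.

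For the $MC(\Sigma)\times \mathrm{Out}(SL_3)$-equivariance, the mapping class group acts by homeomorphisms on both sides and $\mathbb{I}_\X^q$ is defined topologically, so that action commutes tautologically. The Dynkin involution $\ast$ acts on laminations by reversing the orientations of the underlying webs (interchanging the two fundamental weights) and on $\mathscr{S}_{\fsl_3,\Sigma}^q$ by the corresponding reversal of $\fsl_3$-webs; these match under $\mathbb{I}_\X^q$ upon inspecting the local building blocks.

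Finally, to extend $\mathbb{I}_\X^q$ to all of $\cL^p(\Sigma,\bZ)$, I would use that the boundary skeins are invertible in $\mathscr{S}_{\fsl_3,\Sigma}^q[\partial^{-1}]$. A non-dominant lamination differs from a dominant one by subtracting a boundary weight, which I realize on the skein side by multiplying the associated basis web by the appropriate, possibly negative, powers of the boundary skeins; since these skeins are transparent and $q$-commute with everything in a controlled manner, the assignment is well-defined and injective. Its image is a $\bZ_q$-basis of $\mathscr{S}_{\fsl_3,\Sigma}^q[\partial^{-1}]$ because localizing $\mathsf{BWeb}_{\fsl_3,\Sigma}$ at the boundary skeins yields exactly the family indexed by the full boundary-weight lattice, which the shear coordinates of \cref{introthm:shear_P} identify with $\cL^p(\Sigma,\bZ)$. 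The principal obstacle throughout is the pinning-to-boundary-state dictionary of (iii), together with the compatible tracking of the $q$-powers acquired in this localization.
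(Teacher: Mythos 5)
Your proposal follows essentially the same route as the paper: the map is defined by passing to the underlying non-elliptic web (integer weights realized as parallel copies of weight one), sliding endpoints into the special points with simultaneous crossings, and encoding each pinning $\delta_E=\delta_E^+\varpi_1^\vee+\delta_E^-\varpi_2^\vee$ as the Weyl-ordered monomial $\left[(e_E^+)^{\delta_E^+}(e_E^-)^{\delta_E^-}\right]$ in the two boundary-parallel arcs, so that dominance is precisely non-negativity of these exponents. The ``crux'' you flag in item (iii) is dispatched by exactly this explicit assignment, which you already sketch in your final paragraph, combined with the observation that the lamination moves (loop parallel-move, boundary H-move) and the two notions of elliptic faces correspond under the endpoint shift to the skein relations and the elliptic faces defining $\mathsf{BWeb}_{\fsl_3,\Sigma}$.
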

The latter correspondence should be a basic ingredient for a construction of the \emph{quantum duality map} \cite{FG09} (see \cite[Conjecture 4.14]{Qin21} for a finer formulation as well as \cite{DM}). See \cref{subsec:FG duality} for a detailed discussion. 
Our general expectation is the following: 

\begin{introconj}
The basis $\mathbb{I}_\X^q(\cL^p(\Sigma,\bZ))$ is \emph{parametrized by tropical points} in the sense of \cite[Definition 4.13]{Qin21}. Namely, for any integral $\fsl_3$-lamination $\hL \in \cL^p(\Sigma,\bZ)$, the quantum Laurent expression of $\mathbb{I}_\X^q(\hL) \in \mathscr{A}^q_{\fsl_3,\Sigma}$ in the quantum cluster $\{A_i\}_{i \in I}$ associated with a vertex $\omega \in \bExch_{\fsl_3,\Sigma}$ has the leading term $\big[\prod_{i \in I}A_i^{\sfx_i(\hL)}\big]$ with respect to the dominance order (\cite[Definition 4.6]{Qin21}), where $\sfx^{(\omega)}=(\sfx_i)_{i \in I}$ is the shear coordinate system associated with $\omega$. 
\end{introconj}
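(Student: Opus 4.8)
The plan is to establish the pointedness property one seed at a time and then propagate it along mutations. Fix a vertex $\omega \in \bExch_{\fsl_3,\Sigma}$ arising from an ideal triangulation $\tri$ without self-folded triangles, so that \cref{introthm:shear_P} identifies the shear coordinate system $\sfx^{(\omega)} = (\sfx_i)_{i \in I}$ with the tropical point attached to $\hL$. For such a distinguished seed I would compute the quantum Laurent expansion of $\mathbb{I}_\X^q(\hL)$ directly from the state-sum description of the $\fsl_3$-quantum trace of \cite{IY21}: expanding the basis web $\mathbb{I}_\X^q(\hL) \in \mathsf{BWeb}_{\fsl_3,\Sigma}$ along the edges of $\tri$ produces a finite sum of Laurent monomials in $\{A_i\}$ indexed by admissible states on the web, weighted by powers of $q$.

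The first key step is a \emph{top-term lemma}: among all admissible states, there is a unique one --- the ``dominant'', or maximal-weight, state --- whose monomial dominates all others in the dominance order of \cite[Definition 4.6]{Qin21} attached to $\omega$, and whose exponent vector is exactly the shear coordinate $(\sfx_i(\hL))_{i \in I}$. Geometrically this maximal state should record, at each edge of $\tri$, the dominant component of the weight data determined by the spiralling diagram of $\hL$; the identification of its exponent with $\sfx_i(\hL)$ would then follow by matching the reconstruction procedure of \cref{subsec:reconstruction} with the leading behaviour of the state sum. This yields pointedness with the correct leading term at the single seed $\omega$.

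To pass to an arbitrary seed I would combine two compatibilities. On the algebra side, the quantum trace is compatible with quantum cluster mutations --- the inclusion $\mathscr{S}_{\fsl_3,\Sigma}^q[\partial^{-1}] \hookrightarrow \mathscr{A}_{\fsl_3,\Sigma}$ of \cite{IY21} realizes $\mathbb{I}_\X^q(\hL)$ as a single algebra element, so its expansion in a mutated seed $\omega'$ is the image of the expansion at $\omega$ under the corresponding change of chart. On the tropical side, \cref{introthm:shear_P} guarantees that $\sfx^{(\omega')}(\hL)$ is the tropical $\X$-transformation of $\sfx^{(\omega)}(\hL)$. The remaining point is a \emph{stability lemma}: the property that the leading term is the single monomial $\prod_{i \in I} A_i^{\sfx_i(\hL)}$ is preserved under a mutation precisely when the dominance order refresh and the piecewise-linear tropical $\X$-formula agree, a purely combinatorial compatibility. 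Granting this, pointedness at $\omega$ propagates to all of $\bExch_{\fsl_3,\Sigma}$.

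The main obstacle is the top-term lemma, namely proving that the leading part of the $\fsl_3$-quantum trace is a \emph{single} monomial rather than a cancelling combination, and that its exponent is exactly the shear coordinate. Unlike the $\fsl_2$ case, the $\fsl_3$ state sum involves internal trivalent vertices and the ``clasped'' corner relations, so controlling the highest-degree contributions requires determining which states can be maximal and verifying that no $q$-dependent cancellation destroys the top monomial. I expect this to rest on a positivity- or unimodality-type input for the structure constants of the graphical basis, which is itself tied to the quantum Fock--Goncharov duality conjecture. The most promising route to isolating a self-contained combinatorial statement is to reduce to a single triangle via the amalgamation compatibility of \cref{introthm:shear_P}(1), verify the top-term lemma there by explicit computation, and then show that the tropical gluing of shear coordinates matches the behaviour of leading terms under amalgamation of quantum cluster algebras.
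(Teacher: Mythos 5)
This statement is stated in the paper as a \emph{conjecture}, not a theorem: the paper offers no proof of it, and you should be aware that what you have written is a proof \emph{plan} for an open problem rather than a reconstruction of an existing argument. The evidence the paper actually supplies is much weaker than the full statement: \cref{lem:quantum_duality} verifies the claim only for integral points lying in some positive cone $\cC^+_{(v)}(\bZ)$ (where $\mathbb{I}_\X^q(\hL)$ is a quantum cluster monomial and pointedness is automatic), and a short proposition upgrades this to all of $\cL^p(\Sigma,\bZ)$ only when $\Sigma$ is a triangle or a quadrilateral, precisely because in those finite-type cases the cones $\cC^+_{(v)}$ cover the whole tropical space. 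The paper explicitly remarks (\cref{rem:GS_GHKK}) that for all other surfaces the union of these cones is not dense, so this verification is ``far from characterizing'' the map.

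Your outline correctly locates the two missing ingredients, but both are genuine gaps rather than routine steps. First, the ``top-term lemma'' is essentially the whole content of the conjecture at a single seed: the inclusion $\mathscr{S}_{\fsl_3,\Sigma}^q[\partial^{-1}]\hookrightarrow \mathscr{A}^q_{\fsl_3,\Sigma}$ of \cite{IY21} is not given by a state-sum formula in the paper, and no argument is available ruling out $q$-dependent cancellation among would-be leading monomials or identifying the surviving exponent with $(\sfx_i(\hL))_i$; your proposed reduction to a single triangle via amalgamation does not obviously control leading terms, since the dominance order is a global condition on the exponent lattice and amalgamation identifies frozen variables across triangles. Second, the ``stability lemma'' --- that pointedness with degree transforming by the tropical $\X$-mutation rule propagates across seeds --- is not a purely combinatorial compatibility; in Qin's framework this is the nontrivial property of being \emph{compatibly pointed}, and elements pointed at one seed need not be pointed at a mutated seed, nor need their degrees follow the tropical rule. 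Neither lemma is established in the paper or in your proposal, so the argument as written does not close.
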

Currently we do not know if it gives a basis with positivity (of Laurent expressions and/or structure constants),
or it requires a modification by using an $\fsl_3$-version of \emph{bracelets} \cite{Thu14}. See also \cite{AK,CKKO20,All} for the progress on the positivity problem for the $\fsl_2$-case. 


\subsection{Future directions: real unbounded \texorpdfstring{$\fsl_3$}{sl3}-laminations}\label{subsec:real_lamination}
Let $\cL^x(\Sigma,\bR)$ be the completion of the space $\cL^x(\Sigma,\bQ)$ such that each shear coordinate system \eqref{eq:shear_X} extends to a homeomorphism $\sfx_\tri^\uf: \cL^x(\Sigma,\bR) \xrightarrow{\sim} \bR^{I_\uf(\tri)}$. It is well-defined since the cluster $\X$-transformations are Lipschitz continuous with respect to the Euclidean metrics on $\bQ^{I_\uf(\tri)}$. 
We call an element of $\cL^x(\Sigma,\bR)$ a \emph{real unbounded $\fsl_3$-lamination}, which is represented by a Cauchy sequence in $\cL^x(\Sigma,\bQ)$ with respect to shear coordinates. The space $\cL^x(\Sigma,\bR)$ has a natural PL structure, and considered to be an $\fsl_3$-analogue of the space $\eML(\Sigma)$ of measured geodesic laminations. Recall that in the \Teich--Thurston theory, the latter PL manifold plays the following roles (among others):

\begin{description}
    \item[Boundary at infinity of the \Teich\ space] The \emph{Thurston compactification} is a compactification of the \Teich\ space into a topological disk obtained by attaching the projectivization of $\eML(\Sigma)$, so that the mapping class group action is continuously extended. The measured geodesic laminations encode the ``rate'' of degenerations of geodesics in a divergent sequence in the \Teich\ space. 
    The Thurston compactification is identified with the \emph{Fock--Goncharov compactification} \cite{FG16,Le16,Ish19} $\overline{\X_\Sigma(\pos)}=\X_\Sigma(\pos) \cup \mathbb{S}\X_\Sigma(\bR^T)$, which is defined for any cluster $\X$-variety. 
    \item[Place for analyzing the pseudo-Anosov dynamics] The PL action of the mapping class group on $\eML(\Sigma)$ provides us rich information on the dynamics of pseudo-Anosov mapping classes. In particular, each pseudo-Anosov mapping class has the North-South dynamics on the projectivized space, and its unique attracting/repelling points are represented by a transverse pair of measured geodesic laminations. 
    A generalization of these specific properties for elements of a general cluster modular group is proposed in \cite{IK19,IK20a,IK20b}, which we call the \emph{sign stability}. The equivalence between the ``uniform'' sign stability and the pseudo-Anosov property is discussed in \cite{IK20a}, based on the identification $\eML(\Sigma) \cong \X_\Sigma^\uf(\bR^T)$.
\end{description}
It is natural to expect that the space $\cL^x(\Sigma,\bR)$ plays the same role in the $\fsl_3$-case. Since the positive real part $\X_{\fsl_3,\Sigma}^\uf(\pos)$ has been identified with the moduli space of convex $\mathbb{RP}^2$-structures on $\Sigma$, the real unbounded $\fsl_3$-laminations are expected to encode their degenerations. The PL action of a pseudo-Anosov mapping class on the space $\cL^x(\Sigma,\bR)$ is expected to provide more rich information, which may possibly lead to a finer classification of pseudo-Anosov mapping classes. Although a concrete description of a real unbounded $\fsl_3$-lamination as a certain geometric object (rather than a sequence) is still missing, the cluster algebraic interpretation of Thurston's train tracks studied in \cite{Kano_track} will be a useful tool. 

Generalizations of the Thurston's earthquake maps and the Hubburd--Masur theorem that relates measured foliations with quadratic differentials will be also interesting topics. A study on a cluster algebraic analogue of these theories is in progress by the authors with Takeru Asaka. 

\subsection*{Organization of the paper}
\begin{description}
\item[Main part (Sections \ref{sec:webs}--\ref{sec:amalgamation})]
In \cref{sec:webs}, we introduce rational unbounded $\fsl_3$-laminations and briefly discuss the relation to the works of Douglas--Sun \cite{DS20I,DS20II} and Kim \cite{Kim21}. We study the associated spiralling diagrams and define the shear coordinates in \cref{sec:shear_coord}. The bijectivity of the shear coordinate systems \eqref{eq:shear_X} is proved. 
In \cref{sec:amalgamation}, we introduce pinnings for rational unbounded $\fsl_3$-laminations and discuss their gluing and the extended ensemble map.  \cref{introthm:shear_P} is proved, and hence the proof of \cref{introthm:shear_X} is completed.

\item[Relation to the skein theory (\cref{subsec:FG duality})]
We investigate the relation to the skein algebra and quantum duality map in \cref{subsec:FG duality}. \cref{introthm:basis} is proved here. 

\item[Proofs for the technical statements (\cref{sec:technicalities})]
The proofs for \cref{prop:spiralling_good_position} and \cref{prop:injectivity} are placed in \cref{sec:technicalities}. Logically they do not depend on the contents after the places where the statements are written. 
\end{description}
Basic terminology on the cluster varieties and the known results we need for the $\fsl_3$-case are collected in \cref{sec:appendix}.

\subsection*{Acknowledgements}
The authors thank to Hyun Kyu Kim for illuminating discussion on quantum duality maps. They also appreciate the anonymous referee's careful reading and suggestions. 
T. I. is grateful to Wataru Yuasa for a valuable discussion on $\fsl_3$-skein algebras in the early stage of this work. He is also grateful to Zhe Sun for explaining his works on the $\A$-side, and giving valuable comments on this work at several stages.
S. K. appreciates the support by Wataru Yuasa for his visit to Kyoto University in the spring of 2021.
The main part of this work was done during this visit.
T. I. is partially supported by JSPS KAKENHI (20K22304).
S. K. is partially supported by scientific research support of Research Alliance Center for Mathematical Sciences, Tohoku University.

\section{Unbounded \texorpdfstring{$\mathfrak{sl}_3$}{sl(3)}-laminations and their shear coordinates}\label{sec:webs}

\subsection{Marked surfaces and their triangulations}\label{subsec:notation_marked_surface}
A marked surface $(\Sigma,\bM)$ is a compact oriented surface $\Sigma$ together with a fixed non-empty finite set $\bM \subset \Sigma$ of \emph{marked points}. 
When the choice of $\bM$ is clear from the context, we simply denote a marked surface by $\Sigma$. 
A marked point is called a \emph{puncture} if it lies in the interior of $\Sigma$, and a \emph{special point} otherwise. 
Let $\bP=\bP(\Sigma)$ (resp. $\bM_\partial=\bM_\partial(\Sigma)$) denote the set of punctures (resp. special points), so that $\bM=\bP \sqcup \bM_\partial$. 
Let $\Sigma^*:=\Sigma \setminus \bP$. 
We always assume the following conditions:
\begin{enumerate}
    \item[(S1)] Each boundary component (if exists) has at least one marked point.
    \item[(S2)] $-2\chi(\Sigma^*)+|\bM_\partial| >0$.
    \item[(S3)] $(\Sigma,\bM)$ is not a once-punctured disk with a single special point on the boundary.
\end{enumerate}
We call a connected component of the punctured boundary $\partial^\ast \Sigma:=\partial\Sigma\setminus \bM_\partial$ a \emph{boundary interval}. The set of boundary intervals is denote by $\bB=\bB(\Sigma)$. We always endow each boundary interval with the orientation induced from $\partial\Sigma$. Then we have $|\bM_\partial|=|\bB|$. 

Unless otherwise stated, an \emph{isotopy} in a marked surface $(\Sigma,\bM)$ means an ambient isotopy in $\Sigma$ relative to $\bM$, which preserves each boundary interval setwisely. 
An \emph{ideal arc} in $(\Sigma,\bM)$ is an immersed arc in $\Sigma$ with endpoints in $\bM$ which has no self-intersection except possibly at its endpoints, and not isotopic to one point.

An \emph{ideal triangulation} is a triangulation $\tri$ of $\Sigma$ whose set of $0$-cells (vertices) coincides with $\bM$. 
The conditions (S1), (S2) ensure the existence of such an ideal triangulation, and the positive integer in (S2) gives the number of 2-cells (triangles). The $1$-cells (edges) are necessarily ideal arcs. In this paper, we always consider an ideal triangulation without \emph{self-folded triangles} of the form
\begin{align*}
\begin{tikzpicture}[scale=0.7]
    \draw [blue] (0,0) -- (0,-2);
	\draw [blue] (0,-2) .. controls (-1.3,-0.5) and (-1.3,1) .. (0,1) .. controls (1.3,1) and (1.3,-0.5) .. (0,-2);
    \node [fill, circle, inner sep=1.5pt] at (0,0) {};
    \node [fill, circle, inner sep=1.5pt] at (0,-2) {};
\end{tikzpicture}
\end{align*}
Such an ideal triangulation exists by the condition (S3). See, for instance, \cite[Lemma 2.13]{FST}.
For an ideal triangulation $\tri$, denote the set of edges (resp. interior edges, triangles) of $\tri$ by $e(\tri)$ (resp. $e_{\interior}(\tri)$, $t(\tri)$). Since the boundary intervals belong to any ideal triangulation, $e(\tri)=e_{\interior}(\tri) \sqcup \bB$. By a computation on the Euler characteristics, we get
\begin{align*}
    &|e(\tri)|=-3\chi(\Sigma^*)+2|\bM_\partial|, \quad |e_{\interior}(\tri)|=-3\chi(\Sigma^*)+|\bM_\partial|, \\
    &|t(\tri)|=-2\chi(\Sigma^*)+|\bM_\partial|.
\end{align*}

\begin{figure}[h]
\centering
\begin{tikzpicture}
\draw[blue] (2.5,0) -- (0,2.5) -- (-2.5,0) -- (0,-2.5) --cycle;
\draw[blue, postaction={decorate}, decoration={markings,mark=at position 0.5 with {\arrow[scale=1.5,>=stealth]{>}}}] (0,-2.5) -- node[midway,right,scale=0.8] {$E$} (0,2.5);
\quiversquare{0,-2.5}{2.5,0}{0,2.5}{-2.5,0};
\draw(x131) node[mygreen, right=0.2em,scale=0.9]{$i^1(E)$};
\draw(x132) node[mygreen, left=0.2em,scale=0.9]{$i^2(E)$};
\end{tikzpicture}
    \caption{The set $I(\tri)$ of distinguished points.}
    \label{fig:sl3_triangulation}
\end{figure}
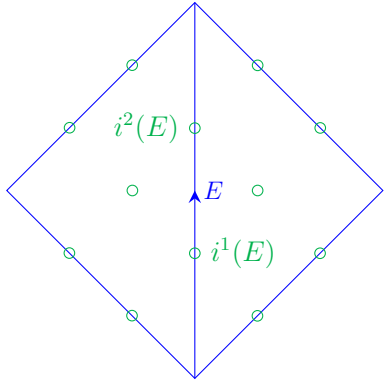

It is useful to equip $\tri$ with two distinguished points on the interior of each edge and one point in the interior of each triangle, as shown in \cref{fig:sl3_triangulation}. The set of such points is denoted by $I(\tri)=I_{\mathfrak{sl}_3}(\tri)$. This set will give the vertex set of the quiver $Q^\tri$ associated with $\tri$: see \cref{subsec:cluster_sl3}. 
Let $I^{\mathrm{edge}}(\tri)$ (resp. $I^{\mathrm{tri}}(\tri)$) denote the set of points on edges (resp. faces of triangles) so that $I(\tri)=I^{\mathrm{edge}}(\tri) \sqcup I^{\mathrm{tri}}(\tri)$, where we have a canonical bijection 
\begin{align*}
    t(\tri) \xrightarrow{\sim} I^{\mathrm{tri}}(\tri), \quad T \mapsto i(T).
\end{align*}
When we need to label the two vertices on an edge $E \in e(\tri)$, we endow $E$ with an orientation. Then let $i^1(E) \in I(\tri)$ (resp. $i^2(E) \in I(\tri)$) denote the vertex closer to the initial (resp. terminal) endpoint of $E$. 
Let $I(\tri)_\f \subset I^\mathrm{edge}(\tri)$ (\lq\lq frozen'') be the subset consisting of the points on the boundary, and let $I(\tri)_\uf:=I(\tri) \setminus I(\tri)_\f$ (\lq\lq unfrozen''). The numbers
\begin{align*}
    |I(\tri)|= 2|e(\tri)| + |t(\tri)| =-8\chi(\Sigma^*)+5|\bM_\partial|
\end{align*}
and 
\begin{align*}
    |I(\tri)_\uf|= 2|e_{\interior}(\tri)| + |t(\tri)| =-8\chi(\Sigma^*)+3|\bM_\partial|
\end{align*}
will give the dimensions of the PL manifolds $\cL^p(\Sigma,\bR)$ and $\cL^x(\Sigma,\bR)$ respectively.

\subsection{Unbounded \texorpdfstring{$\fsl_3$}{sl(3)}-laminations}\label{subsec:webs}
Recall that a \emph{uni-trivalent} graph is a (possibly disconnected and/or infinite) graph whose vertices have valency either one or three. It is allowed to have a loop component (\emph{i.e.}, a connected component without vertices). An \emph{orientation} of a uni-trivalent graph is an assignment of an orientation on each edge and loop such that any trivalent vertex is either a \emph{sink} or a \emph{source}, respectively:
\begin{align*}
    \begin{tikzpicture}[scale=0.8]
        \draw[dashed] (0,0) circle(1cm);
        {\color{red} \sink{-90:1}{30:1}{150:1};}
        \begin{scope}[xshift=5cm]
        \draw[dashed] (0,0) circle(1cm);
        {\color{red} \source{-90:1}{30:1}{150:1};}
        \end{scope}
    \end{tikzpicture}
\end{align*}

An $\mathfrak{sl}_3$-web (or simply a \emph{web}) on a marked surface $\Sigma$ is an immersed oriented uni-trivalent graph $W$ on $\Sigma$ such that each univalent vertex lie in $\bP \cup \partial^\ast \Sigma$, and the other part is embedded into $\interior \Sigma^\ast$. 
It is said to be \emph{non-elliptic} if it has none of the following \emph{elliptic faces}:
\begin{align}
    &\begin{tikzpicture}[scale=.8]
		\draw[dashed, fill=white] (0,0) circle [radius=1];
		\draw[very thick, red, fill=pink!60] (0,0) circle [radius=0.4];
	\end{tikzpicture}
	\hspace{2em}
	\begin{tikzpicture}[scale=.8]
		\draw[dashed, fill=white] (0,0) circle [radius=1];
		\draw[very thick, red] (0,-1) -- (0,-0.4);
		\draw[very thick, red] (0,0.4) -- (0,1);
		\draw[very thick, red, fill=pink!60] (0,0) circle [radius=0.4];
	\end{tikzpicture}
	\hspace{2em}
	\begin{tikzpicture}[scale=.8]
		\draw[dashed, fill=white] (0,0) circle [radius=1];
		\draw[very thick, red] (-45:1) -- (-45:0.5);
		\draw[very thick, red] (-135:1) -- (-135:0.5);
		\draw[very thick, red] (45:1) -- (45:0.5);
		\draw[very thick, red] (135:1) -- (135:0.5);
		\draw[very thick, red, fill=pink!60] (45:0.5) -- (135:0.5) -- (225:0.5) -- (315:0.5) -- cycle;
	\end{tikzpicture} \label{eq:elliptic face}\\
	&\begin{tikzpicture}[scale=0.8,baseline=-0.7cm]
		\coordinate (P) at (-0.4,0) {};
		\coordinate (P') at (0.4,0) {};
		\coordinate (C) at (90:0.4) {};
		\draw[very thick, red, fill=pink!60] (P) to[out=north, in=west] (C) to[out=east, in=north] (P');
		\draw[dashed] (1,0) arc (0:180:1cm);
		\bline{-1,0}{1,0}{0.2}
		\draw[fill=black] (-0.7,0) circle(2pt);
		\draw[fill=black] (0.7,0) circle(2pt);
	\end{tikzpicture}
	\hspace{2em}
	\begin{tikzpicture}[scale=0.8,baseline=-0.7cm]
		\coordinate (P) at (-0.4,0) {};
		\coordinate (P') at (0.4,0) {};
		\coordinate (C) at (90:0.4) {};
		\draw[very thick, red, fill=pink!60] (P) to[out=north, in=west] (C) to[out=east, in=north] (P');
		\draw[very thick, red] (C) -- (90:1);
		\draw[dashed] (1,0) arc (0:180:1cm);
		\bline{-1,0}{1,0}{0.2}
		\draw[fill=black] (-0.7,0) circle(2pt);
		\draw[fill=black] (0.7,0) circle(2pt);
	\end{tikzpicture}
	\hspace{2em}
	\begin{tikzpicture}[scale=.8]
		\draw[dashed, fill=white] (0,0) circle [radius=1];
		\draw[red,very thick,fill=pink!60] (0,0.3) circle[radius=0.3]; 
		\draw[fill=white] (0,0) circle(2pt);
	\end{tikzpicture}
	\label{eq:elliptic_face_peripheral}
\end{align}
A web is said to be \emph{bounded} if none of its univalent vertices lie in $\bP$. 

We will mostly deal with finite webs, while infinite ones appear when (and only when) we discuss spiralling diagrams (\cref{def:spiralling}), which are still locally finite except possibly around punctures. When we simply say an ($\fsl_3$-)web below, it will mean a finite web. When the web in consideration can be infinite, we will say a ``possibly infinite web''. 

\begin{rem}
The exclusion of the internal faces in \eqref{eq:elliptic face} is usual in literature. Indeed, a web containing these faces can be written as a linear combination of non-elliptic webs in the skein algebra (see \cref{subsec:FG duality}) and hence not needed as a basis element. 
The first two faces in \eqref{eq:elliptic_face_peripheral} are excluded as variants of boundary skein relations \cite{IY21}. It is also related to the \emph{weakly reduced} condition in \cite{FS20}. The third one can be regarded as a variant for a boundary component without marked points.
\end{rem}

\begin{ex}[Honeycomb webs]
Let $T \subset \interior\Sigma^\ast$ be an embedded triangle.
For each positive integer $n$, 
the incoming (resp. outgoing) \emph{honeycomb-web} (or \emph{pyramid web}) in $T$ of height $n$ is the $\mathfrak{sl}_3$-web dual to the \emph{$n$-triangulation} of $T$, oriented so that the outer-most edges are incoming to (resp. outgoing from) $T$. See the left picture in \cref{fig:honeycomb-web} for an example. We will also use a short-hand presentation as shown in the right \emph{loc.~cit}. 
The embedded image of a honeycomb web in $\Sigma$ is simply called a \emph{honeycomb}. The ends of a honeycomb can be connected with other oriented arcs or honeycombs on $\Sigma$. 

\begin{figure}[ht]
\centering
\begin{tikzpicture}[scale=0.7]
\begin{scope}[xscale=0.5773]
\draw(-4,0) -- (4,0) -- (0,4) --cycle;
\foreach \y in {1,2,3}
{
\draw[gray] (-4+\y,\y) -- (4-\y,\y);
\draw[gray] (-4+\y,\y) -- (-4+2*\y,0);
\draw[gray] (4-\y,\y) -- (4-2*\y,0);
}
\foreach \i in {-3,-1,1,3}
{
\draw[red,very thick] (\i,-0.25) -- (\i,0.25) -- (\i+1,0.75);
\draw[red,very thick] (\i,0.25) -- (\i-1,0.75);
}
\foreach \i in {-2,0,2}
{
\draw[red,very thick] (\i,0.75) -- (\i,1.25) -- (\i+1,1.75);
\draw[red,very thick] (\i,1.25) -- (\i-1,1.75);
}
\foreach \i in {-1,1}
{
\draw[red,very thick] (\i,1.75) -- (\i,2.25) -- (\i+1,2.75);
\draw[red,very thick] (\i,2.25) -- (\i-1,2.75);
}
\draw[red,very thick] (0,2.75) -- (0,3.25) -- (1,3.75);
\draw[red,very thick] (0,3.25) -- (-1,3.75);
\end{scope}
\node[scale=1.5] at (3.5,2) {$=\mathrel{\mathop:}$};

\begin{scope}[xshift=7cm,yshift=1.333cm]
\draw (-30:8/3) -- (90:8/3) -- (210:8/3) --cycle;
\draw[red, very thick] (-30:1) -- (90:1) -- (210:1) --cycle;
\foreach \i in {1,2,3,4}
{
\draw[red,very thick] ($(-30:1)!\i/5!(90:1)$) --++(30:1.2);
\draw[red,very thick] ($(90:1)!\i/5!(210:1)$) --++(150:1.2);
\draw[red,very thick] ($(210:1)!\i/5!(-30:1)$) --++(-90:1.2);
}
\end{scope}
\end{tikzpicture}
    \caption{A honeycomb-web on a triangle $T$ of height $n=4$ (left) and its short-hand presentation (right).}
    \label{fig:honeycomb-web}
\end{figure}
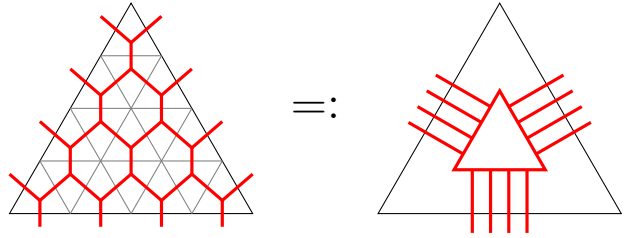
\end{ex}
A \emph{signed web} is a web on $\Sigma$ together with a sign ($+$ or $-$) assigned to each end incident to a puncture. The following patterns (and their orientation-reversals) of signed ends are called \emph{bad ends}:
\begin{align}\label{eq:puncture-admissible}
\begin{tikzpicture}
\draw[dashed] (0,0) circle(1cm);
\draw[red,very thick,->-] (-1,0) -- (0,0) node[below left]{$-\epsilon$};
\draw[red,very thick,-<-] (1,0) -- (0,0) node[below right]{$\epsilon$};
\filldraw[fill=white](0,0) circle(2pt);
\begin{scope}[xshift=4cm]
\draw[dashed] (0,0) circle(1cm);
\draw[red,very thick,-<-](0,0) arc(-90:90:0.3cm);
\draw[red,very thick,-<-](0,0) arc(-90:-270:0.3cm);
\draw[red,very thick,->-] (0,0.6) -- (0,1);
\node[red] at (-0.4,0) {$\epsilon$};
\node[red] at (0.4,0) {$\epsilon$};
\filldraw[fill=white](0,0) circle(2pt);
\end{scope}
\begin{scope}[xshift=8cm]
\draw[dashed] (0,0) circle(1cm);
\draw[red,very thick,->-](0,0) arc(-90:90:0.3cm);
\draw[red,very thick,->-](0,0) arc(-90:-270:0.3cm);
\draw[red,very thick,-<-] (0,0.6) -- (0,1);
\node[red] at (-0.4,0) {$\epsilon$};
\node[red] at (0.4,0) {$\epsilon$};
\filldraw[fill=white](0,0) circle(2pt);
\end{scope}
\end{tikzpicture}
\end{align}
Here $\epsilon \in \{+,-\}$. 
A signed web is said to be \emph{admissible} if it has no bad ends. In this paper, we always assume that the signed webs are admissible unless otherwise stated. 
A bounded web is naturally regarded as a signed web since we do not need to specify any signs. 

\begin{rem}
The latter two types of bad ends will be excluded simply because they will not contribute to the shear coordinates.
On the other hand, a pair of the first type will have non-trivial coordinates, while there is always another web that attains the same coordinates. So we only need admissible signed webs to realize the tropical space. 
It turns out that we need to include the bad ends of first type to define the Weyl group actions at punctures \cite{IKp}.
\end{rem}

\bigskip
\paragraph{\textbf{Elementary moves of signed webs.}}
We are going to introduce several elementary moves for signed webs. The first two are defined for a web without signs.

\begin{enumerate}
\item[(E1)] Loop parallel-move (a.~k.~a.~\emph{flip move} \cite{FS20} or \emph{global parallel move} \cite{DS20I}):  
\begin{align}\label{eq:loop_parallel-move}
\begin{tikzpicture}[scale=1]
\draw(0,0) -- (2,0);
\draw(0,1.5) -- (2,1.5);
\draw[red,very thick,->-] (0.7,0.75) [partial ellipse=90:-90:0.2cm and 0.75cm];
\draw[red,very thick,dashed] (0.7,0.75) [partial ellipse=-90:-270:0.2cm and 0.75cm];
\draw[red,very thick,-<-] (1.3,0.75) [partial ellipse=90:-90:0.2cm and 0.75cm];
\draw[red,very thick,dashed] (1.3,0.75) [partial ellipse=-90:-270:0.2cm and 0.75cm];
\node[scale=1.5] at (3,0.75) {$\sim$};
\begin{scope}[xshift=4cm]
\draw(0,0) -- (2,0);
\draw(0,1.5) -- (2,1.5);
\draw[red,very thick,-<-] (0.7,0.75) [partial ellipse=90:-90:0.2cm and 0.75cm];
\draw[red,very thick,dashed] (0.7,0.75) [partial ellipse=-90:-270:0.2cm and 0.75cm];
\draw[red,very thick,->-] (1.3,0.75) [partial ellipse=90:-90:0.2cm and 0.75cm];
\draw[red,very thick,dashed] (1.3,0.75) [partial ellipse=-90:-270:0.2cm and 0.75cm];
\end{scope}
\end{tikzpicture}
\end{align}

\item[(E2)] Boundary H-move:
\begin{align}\label{eq:boundary_H-move}
\begin{tikzpicture}
\filldraw[pink!60] (0.4,0) -- (0.4,0.4) -- (-0.4,0.4) -- (-0.4,0) --cycle;
\draw[very thick,red,-<-] (0.4,0) -- (0.4,0.4);
\draw[very thick,red,->-] (-0.4,0) -- (-0.4,0.4);
\draw[very thick,red,->-] (0.4,0.4) -- (0.4,0.917);
\draw[very thick,red,-<-] (-0.4,0.4) -- (-0.4,0.917);
\draw[very thick,red,->-] (0.4,0.4) -- (-0.4,0.4);
\draw[dashed] (1,0) arc (0:180:1cm);
\bline{-1,0}{1,0}{0.2}
\draw[fill=black] (-0.8,0) circle(2pt);
\draw[fill=black] (0.8,0) circle(2pt);
\node[scale=1.5] at (2,0.5) {$\sim$};
\begin{scope}[xshift=4cm]
\draw[very thick,red,->-] (0.4,0) -- (0.4,0.917);
\draw[very thick,red,-<-] (-0.4,0) -- (-0.4,0.917);
\draw[dashed] (1,0) arc (0:180:1cm);
\bline{-1,0}{1,0}{0.2}
\draw[fill=black] (-0.8,0) circle(2pt);
\draw[fill=black] (0.8,0) circle(2pt);
\end{scope}
\end{tikzpicture}
\end{align}
Similarly for the opposite orientation. We call the face in the left-hand side a \emph{boundary H-face}.

\item[(E3)] Puncture H-moves:
\begin{align}\label{eq:puncture_H-move_1}
\begin{tikzpicture}
\draw[dashed, fill=white] (0,0) circle [radius=1];
\draw(60:0.6) coordinate(A);
\draw(120:0.6) coordinate(B);
\fill[pink!60] (A) -- (B) -- (0,0) --cycle; 
\draw[red,very thick,-<-={0.6}{}] (0,0)--(A);
\draw[red,very thick,->-={0.6}{}] (0,0)--(B);
\draw[red,very thick,->-={0.6}{}] (A)--(60:1);
\draw[red,very thick,-<-={0.6}{}] (B)--(120:1);
\draw[red,very thick,->-] (A) -- (B);
\node[red,scale=0.8,anchor=west] at (0.1,0) {$\epsilon$};
\node[red,scale=0.8,anchor=east] at (-0.1,0) {$\epsilon$};
\draw[fill=white] (0,0) circle(2pt);
\node[scale=1.5] at (2,0) {$\sim$};
\begin{scope}[xshift=4cm]
\draw[dashed, fill=white] (0,0) circle [radius=1];
\draw(60:0.6) coordinate(A);
\draw(120:0.6) coordinate(B);
\draw[red,very thick,->-={0.6}{}] (0,0)--(60:1);
\draw[red,very thick,-<-={0.6}{}] (0,0)--(120:1);
\node[red,scale=0.8,anchor=west] at (0.1,0) {$\epsilon$};
\node[red,scale=0.8,anchor=east] at (-0.1,0) {$\epsilon$};
\draw[fill=white] (0,0) circle(2pt);
\end{scope}
\end{tikzpicture}
\end{align}
for $\epsilon \in \{+,-\}$, and
\begin{align}\label{eq:puncture_H-move_2}
\begin{tikzpicture}
\draw[dashed] (0,0) circle(1cm);
\filldraw[draw=red,very thick,->-={0.7}{},-<-={0.42}{},fill=pink!60] (0,0) ..controls (0.5,0.1) and (0.2,0.4).. (0,0.4) ..controls (-0.2,0.4) and (-0.5,0.1).. (0,0);
\draw[red,very thick,->-={0.7}{}] (0,0.4) -- (0,0.7);
\draw[red,very thick,-<-] (0,0.7) -- (60:1);
\draw[red,very thick,-<-] (0,0.7) -- (120:1);
\node[red,scale=0.8,anchor=east] at (-0.1,-0.1) {$+$};
\node[red,scale=0.8,anchor=west] at (0.1,-0.1) {$-$};
\draw[fill=white] (0,0) circle(2pt);
\node[scale=1.5] at (2,0) {$\sim$};
\begin{scope}[xshift=4cm]
\draw[dashed] (0,0) circle(1cm);
\filldraw[draw=red,very thick,->-={0.7}{},-<-={0.42}{},fill=pink!60] (0,0) ..controls (0.5,0.1) and (0.2,0.4).. (0,0.4) ..controls (-0.2,0.4) and (-0.5,0.1).. (0,0);
\draw[red,very thick,->-={0.7}{}] (0,0.4) -- (0,0.7);
\draw[red,very thick,-<-] (0,0.7) -- (60:1);
\draw[red,very thick,-<-] (0,0.7) -- (120:1);
\node[red,scale=0.8,anchor=east] at (-0.1,-0.1) {$-$};
\node[red,scale=0.8,anchor=west] at (0.1,-0.1) {$+$};
\draw[fill=white] (0,0) circle(2pt);
\node[scale=1.5] at (2,0) {$\sim$};
\end{scope}
\begin{scope}[xshift=8cm]
\draw[dashed] (0,0) circle(1cm);
\draw[red,very thick,-<-] (0,0) -- (60:1);
\draw[red,very thick,-<-] (0,0) -- (120:1);
\node[red,scale=0.8,anchor=east] at (-0.1,0) {$+$};
\node[red,scale=0.8,anchor=west] at (0.1,0) {$-$};
\draw[fill=white] (0,0) circle(2pt);
\end{scope}
\end{tikzpicture}
\end{align}
Similarly for the opposite orientation. We call the face in the left-hand side of \eqref{eq:puncture_H-move_1} a \emph{puncture H-face}. 
\end{enumerate}
The following lemma is verified by using (E2) and the first one in (E3):

\begin{lem}\label{lem:arc-parallel-moves}
From the boundary and puncture H-moves, we get the following \lq\lq arc parallel-moves'' swapping parallel arcs with opposite orientations:
\begin{align*}
&\begin{tikzpicture}
\fill[gray!30](0,0) -- (0,1.5) -- (-0.2,1.5) -- (-0.2,0) --cycle;
\fill[gray!30](2,0) -- (2,1.5) -- (2.2,1.5) -- (2.2,0) --cycle;
\draw[thick](0,0) -- (0,1.5);
\draw[thick](2,0) -- (2,1.5);
\draw[red,very thick,->-] (0,1) to (2,1);
\draw[red,very thick,-<-] (0,0.5) to (2,0.5);
\fill(0,1.2) circle(2pt);
\fill(0,0.3) circle(2pt);
\fill(2,1.2) circle(2pt);
\fill(2,0.3) circle(2pt);
\node[scale=1.5] at (3,0.75) {$\sim$};
\begin{scope}[xshift=4cm]
\fill[gray!30](0,0) -- (0,1.5) -- (-0.2,1.5) -- (-0.2,0) --cycle;
\fill[gray!30](2,0) -- (2,1.5) -- (2.2,1.5) -- (2.2,0) --cycle;
\draw[thick](0,0) -- (0,1.5);
\draw[thick](2,0) -- (2,1.5);
\draw[red,very thick,-<-] (0,1) to (2,1);
\draw[red,very thick,->-] (0,0.5) to (2,0.5);
\fill(0,1.2) circle(2pt);
\fill(0,0.3) circle(2pt);
\fill(2,1.2) circle(2pt);
\fill(2,0.3) circle(2pt);
\end{scope}
\end{tikzpicture}\\
&\begin{tikzpicture}
\fill[gray!30](0,0) -- (0,1.5) -- (-0.2,1.5) -- (-0.2,0) --cycle;
\draw[thick](0,0) -- (0,1.5);
\draw[red,very thick,->-] (0,1) to[out=0,in=150] (2,0.75);
\draw[red,very thick,-<-] (0,0.5) to[out=0,in=210] (2,0.75);
\fill(0,1.2) circle(2pt);
\fill(0,0.3) circle(2pt);
\draw[fill=white](2,0.75) circle(2pt);
\node[red,scale=0.8] at (1.9,0.75+0.25) {$\epsilon$};
\node[red,scale=0.8] at (1.9,0.75-0.25) {$\epsilon$};
\node[scale=1.5] at (3,0.75) {$\sim$};
\begin{scope}[xshift=4cm]
\fill[gray!30](0,0) -- (0,1.5) -- (-0.2,1.5) -- (-0.2,0) --cycle;
\draw[thick](0,0) -- (0,1.5);
\draw[red,very thick,-<-] (0,1) to[out=0,in=150] (2,0.75);
\draw[red,very thick,->-] (0,0.5) to[out=0,in=210] (2,0.75);
\fill(0,1.2) circle(2pt);
\fill(0,0.3) circle(2pt);
\draw[fill=white](2,0.75) circle(2pt);
\node[red,scale=0.8] at (1.9,0.75+0.25) {$\epsilon$};
\node[red,scale=0.8] at (1.9,0.75-0.25) {$\epsilon$};
\end{scope}
\end{tikzpicture}\\
&\ 
\begin{tikzpicture}
\draw[red,very thick,->-] (0,0.75) to[bend left=20] (2,0.75);
\draw[red,very thick,-<-] (0,0.75) to[bend right=20] (2,0.75);
\draw[fill=white](0,0.75) circle(2pt);
\node[red,scale=0.8] at (0.1,0.75+0.25) {$\epsilon$};
\node[red,scale=0.8] at (0.1,0.75-0.25) {$\epsilon$};
\draw[fill=white](2,0.75) circle(2pt);
\node[red,scale=0.8] at (1.9,0.75+0.25) {$\epsilon'$};
\node[red,scale=0.8] at (1.9,0.75-0.25) {$\epsilon'$};
\node[scale=1.5] at (3,0.75) {$\sim$};
\begin{scope}[xshift=4cm]
\draw[red,very thick,-<-] (0,0.75) to[bend left=20] (2,0.75);
\draw[red,very thick,->-] (0,0.75) to[bend right=20] (2,0.75);
\draw[fill=white](0,0.75) circle(2pt);
\node[red,scale=0.8] at (0.1,0.75+0.25) {$\epsilon$};
\node[red,scale=0.8] at (0.1,0.75-0.25) {$\epsilon$};
\draw[fill=white](2,0.75) circle(2pt);
\node[red,scale=0.8] at (1.9,0.75+0.25) {$\epsilon'$};
\node[red,scale=0.8] at (1.9,0.75-0.25) {$\epsilon'$};
\end{scope}
\end{tikzpicture}
\end{align*}
Here white (resp. black) circles stand for punctures (resp. special points), and $\epsilon,\epsilon' \in \{+,-\}$.
\end{lem}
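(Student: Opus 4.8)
The plan is to verify the three displayed equivalences one at a time by purely local manipulations near a shared endpoint of the two parallel arcs, using only the boundary H-move (E2) and the first puncture H-move \eqref{eq:puncture_H-move_1}, as the hint suggests. The three cases are distinguished by the nature of the endpoints: both arcs running between two boundary intervals (top), one boundary interval and one puncture (middle), or two punctures (bottom). In each case the two arcs carry \emph{opposite} orientations, so they cannot simply be isotoped past one another: an honest transverse double point is forbidden for a web, so no isotopy rel $\bM$ can interchange the two arcs. The H-moves are precisely the device that allows two oppositely-oriented strands to be reconnected, and this is what makes the orientation swap possible. Concretely, for each move I would bring the two arcs close together near one endpoint they share, run the relevant H-move \emph{backwards} to insert an H-face between them, perform a short isotopy of the resulting contractible face, and then run an H-move \emph{forwards} to delete an H-face, the composite having interchanged the two arcs.

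For the top move I would work near one of the two boundary intervals. There the two arc-ends lie on a common boundary interval, one oriented into the surface and the other out of it, which matches (possibly after a left-right reflection) the configuration on one side of \eqref{eq:boundary_H-move}; hence (E2) applies. First I would run (E2) to introduce a boundary H-face, then isotope, then remove an H-face by (E2) again, reading off that the two parallel arcs reappear with swapped orientations. For the middle and bottom moves I would instead work near a puncture and use \eqref{eq:puncture_H-move_1}. The key bookkeeping point here is that its two signed ends carry the \emph{same} sign $\epsilon$, which is exactly the hypothesis in the statement (both ends at a given puncture share a sign); inserting and deleting a puncture H-face is therefore compatible with the sign data, and the signs $\epsilon$ (and $\epsilon'$ at the second puncture in the bottom move) are simply carried along unchanged. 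In the middle move one endpoint is a boundary interval and the other a puncture, so either (E2) at the boundary end or \eqref{eq:puncture_H-move_1} at the puncture end may be used; I would use the puncture move so the sign $\epsilon$ is handled directly. The statement of \cref{lem:arc-parallel-moves} then follows by comparing the initial and final configurations of each sequence.

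The hard part will be bookkeeping rather than anything conceptual. One must track the orientation of each strand through the intermediate H-face configuration and check that the sink/source condition at every trivalent vertex is respected at each stage, so that the intermediate diagrams are genuine oriented webs. In the puncture cases one must additionally verify that no intermediate signed web acquires a \emph{bad end} of the forms in \eqref{eq:puncture-admissible}, so that admissibility is preserved throughout; this is where using specifically the first puncture H-move (and not the second) matters. It is also worth confirming that each intermediate step is an isotopy rel $\bM$ preserving the boundary intervals setwise, and that the local pictures involved are genuinely embedded so that the H-moves are applicable. Once these local checks are carried out, each of the three equivalences is immediate from the corresponding short sequence of an H-move, an isotopy, and an inverse H-move.
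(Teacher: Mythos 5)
Your overall strategy --- realizing each arc parallel-move as (inverse H-move) $\circ$ (isotopy) $\circ$ (H-move) --- is exactly what the paper intends; it offers no proof beyond the hint that (E2) and the first move of (E3) suffice, so the task is precisely to make this composite explicit. Your identification of which local move applies at which kind of endpoint, and your observation that the same-sign hypothesis at punctures is what makes \eqref{eq:puncture_H-move_1} applicable and keeps the intermediate signed webs admissible, are correct.

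The one place where your description, read literally, fails is the isotopy step. Inserting an H-face near one endpoint and then deleting an H-face near the \emph{same} endpoint after ``a short isotopy'' is the identity: each application of (E2) or \eqref{eq:puncture_H-move_1} only reverses the orientation of the two short germs between the rung and that endpoint, leaving the long parts of both arcs untouched, so nothing gets swapped. The swap is achieved only by transporting the rung (together with its two trivalent vertices) along the \emph{entire} length of the two parallel strands, from one endpoint to the other, and then deleting the H-face \emph{there} using the H-move appropriate to that second endpoint; the orientation reversal of the long parts happens because the formerly short germ on the far side of the rung grows into the whole arc during this transport. In particular, in the middle move it is not a matter of choosing ``either (E2) at the boundary end or \eqref{eq:puncture_H-move_1} at the puncture end'': you must use one of them to create the H-face at one end and the \emph{other} to destroy it at the other end. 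Once this is corrected, the bookkeeping you outline (germ orientations flip at each end, long parts retain their orientations, the sink/source conditions at the two trivalent vertices hold throughout, and the signs $\epsilon,\epsilon'$ are carried along unchanged) goes through and yields all three moves.
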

Also note that we can always transform any signed web to a signed web without boundary H-faces (resp. puncture H-faces) by applying (E2) and (E3), respectively. Slightly generalizing the terminology in \cite{FS20}, such a signed web is said to be \emph{boundary-reduced} (resp. \emph{puncture-reduced}). It is said to be \emph{reduced} if it is both boundary- and puncture-reduced.

\begin{enumerate}
    \item [(E4)] Peripheral move: removing or creating a peripheral component:
    \begin{align}\label{eq:peripheral}
    \begin{tikzpicture}[scale=.8]
    \draw[dashed, fill=white] (0,0) circle [radius=1];
    \draw[very thick, red, fill=pink!60] (0,0) circle [radius=0.5];
    \filldraw[draw=black,fill=white] (0,0) circle(2.5pt); 
    \begin{scope}[xshift=5cm]
    \coordinate (P) at (-0.5,0) {};
    \coordinate (P') at (0.5,0) {};
    \coordinate (C) at (0,0.5) {};
    \draw[very thick, red, fill=pink!60] (P) to[out=north, in=west] (C) to[out=east, in=north] (P');
    \draw[dashed] (1,0) arc (0:180:1cm);
    \bline{-1,0}{1,0}{0.2}
    \draw[fill=black] (0,0) circle(2pt);
    \end{scope}
    \end{tikzpicture}
    \end{align}
    Moreover, we have the moves
    \begin{align*}
    \hspace{-5mm}
    \begin{tikzpicture}
    \draw[dashed, fill=white] (0,0) circle [radius=1];
    \fill[pink!60] (0,0) circle(0.6cm);
    \draw[red,very thick,->-={0.8}{}] (0,0) circle(0.6cm);
    \foreach \i in {120,60,0}
    { 
        \draw[red,very thick,-<-={0.8}{}] (\i+15:0) -- (\i+15:0.6);
        \draw[red,very thick,-<-] (\i:0.6) -- (\i:1);
    }
    \node[red,scale=0.8] at (-0.2,0) {$+$};
	\node[red,scale=0.8] at (-0.05,0.3) {$+$};
	\node[red,scale=0.8] at (0.25,0.2) {$+$};
    \draw[red,very thick,dotted] (-30:0.3) arc(-30:-90:0.3);
    \draw[fill=white] (0,0) circle(2pt);
    \node[scale=1.7] at (2,0) {$\sim$};
    \begin{scope}[xshift=4cm]
    \draw[dashed, fill=white] (0,0) circle [radius=1];
    \foreach \i in {120,60,0} \draw[red,very thick,-<-={0.7}{}] (\i:0) -- (\i:1);
    \node[red,scale=0.8] at (-0.2,0.1) {$+$};
    \node[red,scale=0.8] at (0,0.3) {$+$};
    \node[red,scale=0.8] at (0.3,0.15) {$+$};
    \draw[red,very thick,dotted] (-30:0.3) arc(-30:-90:0.3);
    \draw[fill=white] (0,0) circle(2pt);
    \end{scope}
    {\begin{scope}[xshift=7cm]
    \draw[dashed, fill=white] (0,0) circle [radius=1];
    \fill[pink!60] (0,0) circle(0.6cm);
    \draw[red,very thick,->-={0.8}{}] (0,0) circle(0.6cm);
    \foreach \i in {120,60,0}
    { 
        \draw[red,very thick,->-={0.85}{}] (\i-15:0) -- (\i-15:0.6);
        \draw[red,very thick,->-] (\i:0.6) -- (\i:1);
    }
    \node[red,scale=0.8] at (-0.2,0.1) {$-$};
    \node[red,scale=0.8] at (0.1,0.3) {$-$};
    \node[red,scale=0.8] at (0.25,0.05) {$-$};
    \draw[red,very thick,dotted] (-45:0.3) arc(-45:-105:0.3);
    \draw[fill=white] (0,0) circle(2pt);
    \node[scale=1.7] at (2,0) {$\sim$};
    \end{scope}}
    {\begin{scope}[xshift=11cm]
    \draw[dashed, fill=white] (0,0) circle [radius=1];
    \foreach \i in {120,60,0} \draw[red,very thick,->-={0.7}{}] (\i:0) -- (\i:1);
    \node[red,scale=0.8] at (-0.3,0.1) {$-$};
    \node[red,scale=0.8] at (0,0.3) {$-$};
    \node[red,scale=0.8] at (0.3,0.1) {$-$};
    \draw[fill=white] (0,0) circle(2pt);
    \draw[red,very thick,dotted] (-30:0.3) arc(-30:-90:0.3);
    \end{scope}}
    \end{tikzpicture}
    \end{align*}
    Similarly for the opposite orientation.
\end{enumerate}
We will consider the equivalence relation on signed webs generated by isotopies of marked surfaces and the elementary moves (E1)--(E4). Observe that the moves (E1)--(E4) preserves the admissibility. On the other hand, a non-elliptic signed web may be equivalent to an elliptic web as the following example shows.

\begin{ex}
We have
\begin{align*}
\begin{tikzpicture}
\draw[dashed, fill=white] (0,0) circle [radius=1];
\draw(60:0.6) coordinate(A);
\draw(120:0.6) coordinate(B);
\draw[red,very thick,-<-={0.6}{}] (0,0)--(A);
\draw[red,very thick,->-={0.6}{}] (0,0)--(B);
\draw[red,very thick,->-={0.6}{}] (A)--(60:1);
\draw[red,very thick,-<-={0.6}{}] (B)--(120:1);
\draw[red,very thick,->-] (A) -- (B);
\node[red,scale=0.8,anchor=west] at (0.1,0) {$+$};
\node[red,scale=0.8,anchor=east] at (-0.1,0) {$+$};
\draw[fill=white] (0,0) circle(2pt);
\node[scale=1.5] at (1.5,0) {$\sim$};
\begin{scope}[xshift=-3cm]
\draw[dashed, fill=white] (0,0) circle [radius=1];
\draw(60:0.6) coordinate(A);
\draw(120:0.6) coordinate(B);
\draw[red,very thick,->-={0.6}{}] (0,0)--(60:1);
\draw[red,very thick,-<-={0.6}{}] (0,0)--(120:1);
\node[red,scale=0.8,anchor=west] at (0.1,0) {$+$};
\node[red,scale=0.8,anchor=east] at (-0.1,0) {$+$};
\draw[fill=white] (0,0) circle(2pt);
\node[scale=1.5] at (1.5,0) {$\sim$};
\end{scope}
\begin{scope}[xshift=3cm]
\draw[dashed, fill=white] (0,0) circle [radius=1];
\draw(60:0.5) coordinate(A);
\draw(120:0.5) coordinate(B);
\draw(60:0.8) coordinate(A');
\draw(120:0.8) coordinate(B');
\fill[pink!60] (A) -- (B) -- (B') -- (A') --cycle;
\draw[red,very thick,->-={0.6}{}] (0,0)--(A);
\draw[red,very thick,-<-={0.6}{}] (0,0)--(B);
\draw[red,very thick,-<-={0.4}{},->-={0.9}{}] (A)--(60:1);
\draw[red,very thick,->-={0.4}{},-<-={0.9}{}] (B)--(120:1);
\draw[red,very thick,-<-] (A) -- (B);
\draw[red,very thick,->-] (A') -- (B');
\node[red,scale=0.8,anchor=west] at (0.1,0) {$+$};
\node[red,scale=0.8,anchor=east] at (-0.1,0) {$+$};
\draw[fill=white] (0,0) circle(2pt);
\end{scope}
\end{tikzpicture}\ ,\quad 
\begin{tikzpicture}
\draw[dashed, fill=white] (0,0) circle [radius=1];
\draw[red,very thick,-<-,->-={0.9}{}] (0,0) -- (60:1);
\draw[red,very thick,-<-,->-={0.9}{}] (0,0) -- (120:1);
\draw[red,very thick,-<-] (0,0.8) -- (0,1);
\draw[red,very thick,-<-={0.5}{}] (0,0.8) -- (60:0.7);
\draw[red,very thick,-<-={0.5}{}] (0,0.8) -- (120:0.7);
\node[red,scale=0.8] at (-0.3,0.1) {$+$};
\node[red,scale=0.8] at (0.3,0.1) {$-$};
\draw[fill=white] (0,0) circle(2pt);
 \node[scale=1.7] at (1.5,0) {$\sim$};
\begin{scope}[xshift=3cm]
\draw[dashed, fill=white] (0,0) circle [radius=1];
\fill[pink!60] (60:0.7) ..controls (60:0.6) and (0.2,0.5).. (0,0.5) ..controls (-0.2,0.5) and (120:0.6).. (120:0.7) -- (0,0.8) -- (60:0.7);
\draw[red,very thick,-<-={0.15}{},->-={0.95}{}] (60:1) -- (60:0.7) ..controls (60:0.6) and (0.2,0.5).. (0,0.5) ..controls (-0.2,0.5) and (120:0.6).. (120:0.7) -- (120:1);
\draw[red,very thick,-<-] (0,0.8) -- (0,1);
\draw[red,very thick,-<-] (0,0.8) -- (60:0.7);
\draw[red,very thick,-<-] (0,0.8) -- (120:0.7);
\draw[red,very thick] (0,0.5) -- (0,0.3);
\draw[red,very thick,-<-={0.38}{},->-={0.72}{}] (0,0) ..controls (-0.3,0.1) and (-0.1,0.3).. (0,0.3) ..controls (0.1,0.3) and (0.3,0.1).. (0,0);
\node[red,scale=0.8] at (-0.3,0) {$-$};
\node[red,scale=0.8] at (0.3,0) {$+$};
    \draw[fill=white] (0,0) circle(2pt);
\end{scope}
\end{tikzpicture}
\end{align*}
by the puncture H-moves \eqref{eq:puncture_H-move_1} and \eqref{eq:puncture_H-move_2}, where the resulting signed webs are elliptic (having interior 4-gon faces).
\end{ex}

\begin{dfn}[rational unbounded $\mathfrak{sl}_3$-laminations]\label{def:unbounded laminations}
A \emph{rational unbounded $\mathfrak{sl}_3$-lamination} (or a rational $\mathfrak{sl}_3$-$\X$-lamination) on $\Sigma$ is an admissible, non-elliptic signed $\mathfrak{sl}_3$-web $W$ on $\Sigma$ equipped with a positive rational number (called the \emph{weight}) on each component, which is considered modulo the equivalence relation generated by isotopies and the following operations:
\begin{enumerate}
    \item Elementary moves (E1)--(E4) for the underlying signed webs. Here the corresponding components are assumed to have the same weights. 
    \item Combine a pair of isotopic loops with the same orientation with weights $u$ and $v$ into a single loop with the weight $u+v$. Similarly combine a pair of isotopic oriented arcs with the same orientation (and with the same signs if some of their ends are incident to punctures) into a single one by adding their weights. 
    \item For an integer $n \in \bZ_{>0}$ and a rational number $u \in \bQ_{>0}$, replace a component with weight $nu$ with its \emph{$n$-cabling} with weight $u$, which locally looks like
    \begin{align*}
    \hspace{-5mm}
        \begin{tikzpicture}[scale=1.2]
        \draw[dashed] (0,0) circle(0.76cm);
        \foreach \i in {30,150,270}
        \draw[red,very thick] (0,0) -- (\i:0.76);
        \node[red] at (0,0.3) {$nu$};
        \node at (1.5,0) {\scalebox{1.2}{$\sim$}}; 
        \begin{scope}[xshift=3cm]
        \draw[red, very thick] (-30:0.4) -- (90:0.4) -- (210:0.4) --cycle;
        \foreach \i in {1,4}
        {
        \draw[red,very thick] ($(-30:0.4)!\i/5!(90:0.4)$) --++(30:0.5) coordinate(A\i);
        \draw[red,very thick] ($(90:0.4)!\i/5!(210:0.4)$) --++(150:0.5) coordinate(B\i);
        \draw[red,very thick] ($(210:0.4)!\i/5!(-30:0.4)$) --++(-90:0.5) coordinate(C\i);
        }
        \draw[red,very thick,dotted] (15:0.5) -- (45:0.5);
        \draw[red,very thick,dotted] (135:0.5) -- (165:0.5);
        \draw[red,very thick,dotted] (255:0.5) -- (285:0.5);
        \draw[decorate,decoration={brace,amplitude=3pt,raise=4pt}] (A4) --node[midway,xshift=12pt,yshift=8pt]{$n$} (A1);
        \draw[decorate,decoration={brace,amplitude=3pt,raise=4pt}] (B4) --node[midway,xshift=-12pt,yshift=8pt]{$n$} (B1);
        \draw[decorate,decoration={brace,amplitude=3pt,raise=4pt}] (C4) --node[midway,below=7pt]{$n$} (C1);
        \node[red] at (0,0.6) {$u$};
        \draw[dashed] (0,0) circle(0.76cm);
        \end{scope}
        \begin{scope}[xshift=6cm]
        \draw[dashed] (0,0) circle(0.76cm);
        \node at (1.5,0) {\scalebox{1.2}{$\sim$}};
        \clip (0,0) circle(0.76cm);
        \draw[red,very thick,->-={0.8}{}] (-1,0) --node[midway,above]{$nu$} (1,0);
        \end{scope}
        \begin{scope}[xshift=9cm]
        \draw[dashed] (0,0) circle(0.76cm);
        \draw[decorate,decoration={brace,amplitude=3pt,raise=2pt}] (0.76,0.2) --node[midway,right=0.3em]{$n$} (0.76,-0.2);
        \clip (0,0) circle(0.76cm);
        \draw[red,very thick,->-={0.8}{}] (-1,0.2) --node[above]{$u$} (1,0.2);
        \draw[red,very thick,dotted] (0,0.2) -- (0,-0.2);
        \draw[red,very thick,->-={0.8}{}] (-1,-0.2) --node[below]{$u$} (1,-0.2);
        \end{scope}
        \end{tikzpicture}
    \end{align*}
    For a loop or arc component, it is just a successive applications of the operation (2). One can also verify that the cabling operation is associative in the sense that the $n$-cabling followed by the $m$-cabling agrees with the $nm$-cabling, since $nm$-cabling is dual to the $m$-th subdivision of an $n$-triangulation (recall \cref{fig:honeycomb-web}). 
\end{enumerate}
\end{dfn}

See \cref{fig:global example} for a global example. 
Let $\cL^x(\Sigma,\bQ)$ denote the set of equivalence classes of the rational unbounded $\mathfrak{sl}_3$-laminations on $\Sigma$. We have a natural $\bQ_{>0}$-action on $\cL^x(\Sigma,\bQ)$ that simultaneously rescales the weights. 
A rational unbounded $\fsl_3$-lamination is said to be \emph{integral} if all the weights are integers. The subset of integral unbounded $\mathfrak{sl}_3$-laminations is denoted by $\cL^x(\Sigma,\bZ)$.

\begin{figure}[htbp]
\centering
\begin{tikzpicture}[scale=.9]
\draw(0,-1.5) ellipse(0.2cm and 1cm);
\draw(0,1.5) ellipse(0.2cm and 1cm);
\node[fill,circle,inner sep=1.5pt] at (0.2,-1.5) {};
\node[fill,circle,inner sep=1.5pt] at (0,2.5) {};
\node[fill,circle,inner sep=1.5pt] at (0,0.5) {};
\draw(0,-0.5) ..controls (0.5,-0.5) and (0.5,0.5).. (0,0.5);
\draw(0,2.5) ..controls ++(0.5,-0.5) and (5,3).. node[inner sep=0,pos=0.9](T){} (6,3)
..controls (7,3) and (9,2).. (9,0)
..controls (9,-2) and (7,-3).. (6,-3)
..controls (5,-3) and (0.5,-2).. (0,-2.5) 
node[pos=0.4](A){} node[pos=0.2](B){};
\draw[red,very thick,-<-] (A) arc(-90:90:0.2cm and 1cm) 
coordinate(A');
\draw[red,very thick,dashed] (A) arc(-90:-270:0.2cm and 1cm);
\draw[red,very thick,->-={0.7}{}] (B) arc(-90:90:0.2cm and 1cm) 
coordinate(B');
\draw[red,very thick,dashed] (B) arc(-90:-270:0.2cm and 1cm);
\draw[shorten >=-15pt,shorten <=-15pt] (A') to[bend right=20] 
node[inner sep=0,pos=-0.15](A''){} node[inner sep=0,pos=1.15](B''){} 
(B');
\draw(A'') to[bend left=20] node[inner sep=0,pos=0.7](C){} (B'');
\path (4,1) coordinate(P1);
\path(6,0.75) coordinate(P2);
\path (1,0) coordinate(X);
\path (2,0) coordinate(Y);
\path (2,1) coordinate(Z);
\path ($(0,-1.5)+(0.1*1.732,1*0.5)$) coordinate(BP1);
\path ($(0,1.5)+(0.1*1.732,-1*0.5)$) coordinate(BP2);
\path ($(0,-1.5)+(0.1*1.732,-1*0.5)$) coordinate(BP3);
\path ($(0,1.5)+(0.1*1.732,1*0.5)$) coordinate(BP4);
\draw[red,very thick,->-] (X) -- (Y);
\draw[red,very thick,-<-] (Y) -- (Z);
\draw[red,very thick,->-] (X) to[out=-135,in=45] (BP1);
\draw[red,very thick,->-] (X) to[out=135,in=-45] (BP2);
\draw[red,very thick,->-] ($(A)+(0.1*1.732,1+1*0.5)$) to[out=180,in=-45] (Y);
\draw[red,very thick,-<-] ($(A)+(0.1*1.732,1-1*0.5)$) to[out=180,in=0] (BP3);
\draw[red,very thick,->-] (Z) to[out=180,in=0] (BP4);
\draw[red,very thick,->-] (Z) to [out=0,in=180] (P1);
\draw[red,very thick,->-={0.4}{}] (P2) ..controls ++(-120:0.5) and ($(C)+(0.3,0)$).. (C) node[inner sep=0,pos=0.5](S){};
\draw[red,very thick,->-] (P2) ..controls ++(120:0.5) and ($(T)+(0.3,0)$).. (T);
\draw[red,very thick,dashed] (T) ..controls ++(-0.3,0) and ($(C)+(-0.3,0)$).. (C);
\draw[red,very thick,-<-={0.7}{}] (S) to[out=150,in=-45] (P1);
\node[red,scale=0.9] at ($(P1)+(150:0.3)$) {$+$};
\node[red,scale=0.9] at ($(P1)+(-10:0.3)$) {$+$};
\node[red,scale=0.9] at ($(P2)+(80:0.3)$) {$+$};
\node[red,scale=0.9] at ($(P2)+(-90:0.3)$) {$-$};
\draw[fill=white](P1) circle(2pt);
\draw[fill=white](P2) circle(2pt);
\node[red] at ($(Z)+(0,0.3)$) {$u_1$};
\node[red] at ($(S)+(0.3,-0.1)$) {$u_2$};
\node[red] at ($(B)+(0.5,1)$) {$u_3$};
\end{tikzpicture}
    \caption{An example of a rational unbounded $\fsl_3$-lamination. Here $u_1,u_2,u_3$ are arbitrary positive rational weights.}
    \label{fig:global example}
\end{figure}
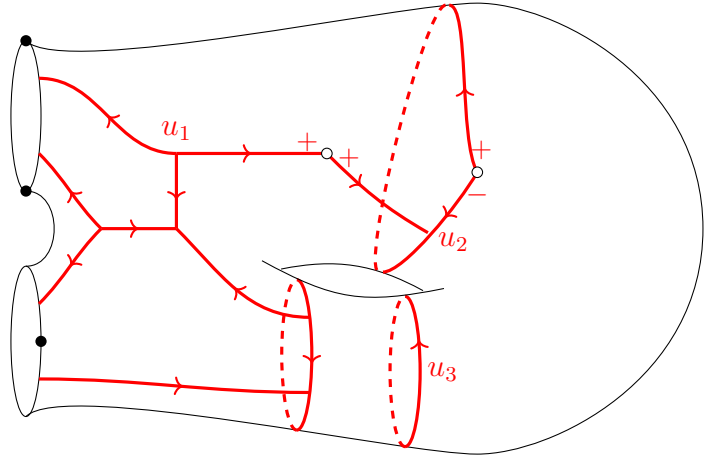
The set $\cL^x(\Sigma,\bQ)$ (resp. $\cL^x(\Sigma,\bZ)$) will be identified with the unfrozen part $\X_{\mathfrak{sl}_3,\Sigma}^\uf(\bQ^T)$ (resp. $\X_{\mathfrak{sl}_3,\Sigma}^\uf(\bZ^T)$) of the tropical cluster $\X$-variety associated with the pair $(\mathfrak{sl}_3,\Sigma)$ (see \cref{subsec:cluster_sl3}). 

\begin{conv}\label{notation:division of honeycombs}
In view of the equivalence relation (4), we will occasionally use the following equivalent notations for honeycombs:
\begin{align*}
\begin{tikzpicture}[scale=1.2]
\draw[red,very thick] (-30:0.4) -- (90:0.4) -- (210:0.4) --cycle;
\foreach \i in {1,4}
{
\draw[red,very thick] ($(-30:0.4)!\i/5!(90:0.4)$) --++(30:0.5) coordinate(A\i);
\draw[red,very thick] ($(90:0.4)!\i/5!(210:0.4)$) --++(150:0.5) coordinate(B\i);
\draw[red,very thick] ($(210:0.4)!\i/5!(-30:0.4)$) --++(-90:0.5) coordinate(C\i);
}
\draw[red,very thick,dotted] (15:0.5) -- (45:0.5);
\draw[red,very thick,dotted] (135:0.5) -- (165:0.5);
\draw[red,very thick,dotted] (255:0.5) -- (285:0.5);
\draw[decorate,decoration={brace,amplitude=3pt,raise=4pt}] (A4) --node[midway,xshift=12pt,yshift=8pt]{$n$} (A1);
\draw[decorate,decoration={brace,amplitude=3pt,raise=4pt}] (B4) --node[midway,xshift=-12pt,yshift=8pt]{$n$} (B1);
\draw[decorate,decoration={brace,amplitude=3pt,raise=4pt}] (C4) --node[midway,below=7pt]{$n$} (C1);
\draw[dashed] (0,0) circle(0.76cm);
\node at (1.5,0) {\scalebox{1.2}{$\sim$}}; 
\begin{scope}[xshift=3cm]
\draw[red,very thick] (-30:0.4) -- (90:0.4) -- (210:0.4) --cycle;
\draw[red,very thick] ($(-30:0.4)!0.5!(90:0.4)$) --node[midway,above]{$n$} ++(30:0.5);
\draw[red,very thick] ($(90:0.4)!0.5!(210:0.4)$) --node[midway,above]{$n$} ++(150:0.5);
\draw[red,very thick] ($(210:0.4)!0.5!(-30:0.4)$) --node[midway,right]{$n$} ++(-90:0.5);
\draw[dashed] (0,0) circle(0.76cm);
\node at (1.5,0) {\scalebox{1.2}{$\sim$}}; 
\end{scope}
\begin{scope}[xshift=6cm]
\draw[red,very thick] (-30:0.4) -- (90:0.4) -- (210:0.4) --cycle;
\draw[red,very thick] ($(-30:0.4)!0.5!(90:0.4)$) --node[midway,above]{$n$} ++(30:0.5);
\draw[red,very thick] ($(90:0.4)!0.5!(210:0.4)$) --node[midway,above]{$n$} ++(150:0.5);
\draw[red,very thick] ($(210:0.4)!0.3!(-30:0.4)$) --node[midway,left,scale=0.9]{$n_1$} ++(-90:0.5);
\draw[red,very thick] ($(210:0.4)!0.7!(-30:0.4)$) --node[midway,right,scale=0.9]{$n_2$} ++(-90:0.5);
\draw[dashed] (0,0) circle(0.76cm);
\end{scope}
\end{tikzpicture}
\end{align*}
with $n_1+n_2=n$. We may also split an edge of weight $n$ with $k$ edges of weight $n_1,\dots,n_k$ with $n_1+\dots+n_k=n$.
\end{conv}

\begin{dfn}[Dynkin involution]\label{def:Dynkin_geometric}
The \emph{Dynkin involution} is the involutive automorphism
\begin{align*}
    \ast: \cL^x(\Sigma,\bQ) \to \cL^x(\Sigma,\bQ), \quad \hL \mapsto \hL^\ast,
\end{align*}
where $\hL^\ast$ is obtained from $\hL$ by reversing the orientation of every components of the underlying web, and keeping the signs at punctures intact. 
Since all the elementary moves (E1)--(E4) are equivariant under the orientation-reversion, this indeed defines an automorphism on $\cL^x(\Sigma,\bQ)$. 
\end{dfn}

\paragraph{\textbf{Bounded laminations and the ensemble map}}

\begin{dfn}[rational bounded $\fsl_3$-laminations]\label{def:bounded laminations}
A \emph{rational bounded $\fsl_3$-lamination} (or a \emph{rational $\fsl_3$-$\A$-lamination}) on $\Sigma$ is a bounded non-elliptic $\fsl_3$-web $W$ on $\Sigma$ equipped with a rational number (called the \emph{weight}) on each component such that the weight on a non-peripheral component is positive. It is considered modulo the equivalence relation generated by isotopies and the operations (2)--(4) in \cref{def:unbounded laminations}.
\end{dfn}
Let $\cL^a(\Sigma,\bQ)$ denote the space of rational bounded $\fsl_3$-laminations. We have a natural $\bQ_{>0}$-action on $\cL^a(\Sigma,\bQ)$ that simultaneously rescales the weights. A rational bounded $\fsl_3$-lamination is said to be \emph{integral} if all the weights are integers. The subset of integral bounded $\fsl_3$-laminations is denoted by $\cL^a(\Sigma,\bZ)$. 

\begin{rem}\label{rem:bounded_lamination}
The space $\cL^a(\Sigma,\bZ)$ is the same one as the space $\A_L(\Sigma;\bZ)$ that appears in Kim's work (\cite[Definition 3.9]{Kim21})\footnote{Indeed, an element of our space $\cL^a(\Sigma,\bZ)$ can be represented by a reduced web (\cite[Definition 3.3]{Kim21}) by applying the boundary H-moves, and we can rescale the weights on honeycombs to be $1$ by the operation (4) in \cref{def:unbounded laminations}.}. The space $\mathcal{W}_\Sigma$ in Douglas--Sun's work (\cite[Definition 6]{DS20I}) is the subset of $\cL^a(\Sigma,\bZ)$ consisting of elements with positive peripheral weights. 
It is straightforward to extend their coordinate systems by $\bQ_{>0}$-equivariance to the rational case, and the space $\cL^a(\Sigma,\bQ)$ is identified with the tropical cluster $\A$-variety $\A_{\fsl_3,\Sigma}(\bQ^T)$ (\cite[Theorem 3.39]{Kim21})\footnote{Here note that there is a subset of $\cL^a(\Sigma,\bZ)$ formed by \emph{congruent} laminations (\cite[Definition 3.38]{Kim21}) which is identified with the tropical cluster $\A$-variety $\A_{\fsl_3,\Sigma}(\bZ^T)$.}.
\end{rem}

By forgetting the peripheral components, we get the \emph{geometric ensemble map}
\begin{align}\label{eq:ensemble_unfrozen}
    p: \cL^a(\Sigma,\bQ) \to \cL^x(\Sigma,\bQ).
\end{align}
We will see in \cref{sec:amalgamation} that the geometric ensemble map coincides with the cluster ensemble map \eqref{eq:ensemble_map} via the Douglas--Sun coordinates and our shear coordinates.

\section{Shear coordinates}\label{sec:shear_coord}

\subsection{Essential webs on polygons}
Let $\bD_k$ denote a disk with $k \geq 2$ special points. In what follows, we simply refer to $\bD_k$ as a \emph{$k$-gon}.
We say that an $\mathfrak{sl}_3$-web $W$ on $\bD_k$ is \emph{taut} if for any compact embedded arc $\alpha$ whose endpoints lie in a common boundary interval $E$, the number of intersection points of $W$ with $E$ does not exceed that of $W$ with $\alpha$. See \cref{fig:non-taut}. 
Following \cite{DS20I}, we call a non-elliptic, taut $\mathfrak{sl}_3$-web an \emph{essential} web. These essential webs on polygons are basic building blocks for the bounded $\mathfrak{sl}_3$-laminations studied in \cite{DS20I}. We recall the concrete description of the essential webs for $k=2,3$ following \cite[Sections 2.7 and 2.8]{DS20I} and \cite[Sections 8 and 9]{FS20}, including additional infinite webs needed for our purpose. 

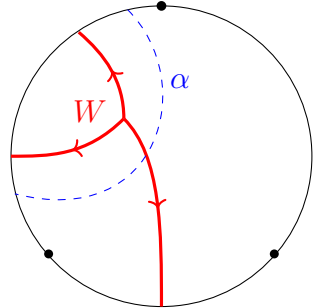
\begin{figure}[htbp]
    \centering
    \begin{tikzpicture}
    \draw (0,0) ellipse (2 and 2);
    \node [fill, circle, inner sep=1.3pt] at (0,2) {};
    \node [fill, circle, inner sep=1.2pt] at (-1.5,-1.3) {};
    \node [fill, circle, inner sep=1.2pt] at (1.5,-1.3) {};
    \draw [red, very thick, ->-](-0.5,0.5) .. controls (-0.5,1) and (-0.75,1.35) .. (-1.1,1.65);
    \draw [red, very thick, ->-](-0.5,0.5) .. controls (-1,0) and (-1.5,0) .. (-2,0);
    \draw [red, very thick, ->-](-0.5,0.5) .. controls (0,0) and (0,-1) .. (0,-2);
    \draw [dashed, blue](-0.45,1.95) .. controls (0.5,1) and (0,-1) .. (-1.95,-0.5);
    \node [blue] at (0.25,1) {$\alpha$};
    \node [red] at (-0.95,0.6) {$W$};
    \end{tikzpicture}
    \caption{Example of a non-taut web in $\bD_3$.}
    \label{fig:non-taut}
\end{figure}

\bigskip
\paragraph{\textbf{The biangle (2-gon) case.}}
Let $E_L,E_R$ denote the boundary intervals of a biangle $\bD_2$. A \emph{(finite) symmetric strand set} on $\bD_2$ is a pair $S=(S_L,S_R)$ of finite collections of disjoint oriented strands (\emph{i.e.}, germs of oriented arcs), where the oriented strands in $S_Z$ are located on $E_Z$ for $Z \in \{L,R\}$ such that the number of incoming (resp. outgoing) strands on $E_L$ is equal to the outgoing (resp. incoming) strands on $E_R$. See the left-most picture in \cref{fig:ladder-web} for an example. 

Given a symmetric strand set $S=(S_L,S_R)$, the associated \emph{ladder-web} $W(S)$ on $\bD_2$ is constructed as follows. First, let $W_\mathrm{br}(S)$ be the unique (up to ambient isotopy of $\bD_2$) collection of oriented curves connecting strands in $S_L$ with those in $S_R$ in the order-preserving and minimally-intersecting way. See the middle picture in \cref{fig:ladder-web}. It is characterized by the \emph{pairing map} $f:S_L \to S_R$, which is an order-preserving bijection that maps each incoming (resp. outgoing) strand of $S_L$ to an outgoing (resp. incoming) strand of $S_R$. 
The associated ladder-web $W(S)$ is obtained from $W_\mathrm{br}(S)$ by replacing each intersection with an H-web, as follows:
\begin{align}\label{eq:H-replacement}
    \begin{tikzpicture}[scale=0.8]
    \begin{scope}[rotate=90]
    \draw[dashed](0,0) circle(1.118cm);
    {\color{red}
    \draw[very thick,->-={0.7}{}] (-0.5,-1) ..controls (-0.5,-0.5) and (0.5,0.5).. (0.5,1);
    \draw[very thick,->-={0.3}{}] (-0.5,1) ..controls (-0.5,0.5) and (0.5,-0.5).. (0.5,-1);
    }
    \end{scope}
    \node[scale=1.5] at (2.5,0) {$\rightsquigarrow$};
    %
    \begin{scope}[xshift=5cm,rotate=90]
    \draw[dashed](0,0) circle(1.118cm);
    {\color{red}
    \draw[very thick,->-] (-0.5,-1) -- (-0.5,0);
    \draw[very thick,->-] (-0.5,1) -- (-0.5,0);
    \draw[very thick,-<-] (-0.5,0) -- (0.5,0);
    \draw[very thick,->-] (0.5,0) -- (0.5,-1);
    \draw[very thick,->-] (0.5,0) -- (0.5,1);
    }
    \end{scope}
    \end{tikzpicture}
\end{align}
Conversely, the collection $W_\mathrm{br}(S)$ is called the \emph{braid representation} of the ladder-web $W(S)$. 
It is known that all the essential webs on $\bD_2$ arise in this way:

\begin{figure}[htbp]
\centering
\begin{tikzpicture}[scale=0.9]
\begin{scope}[rotate=90]
\fill(2,0) circle(2pt);
\fill(-2,0) circle(2pt);
\draw[blue](-2,0) to[out=70,in=180] (-1,1) -- (1,1) to[out=0,in=110] (2,0);
\draw[blue](-2,0) to[out=-70,in=180] (-1,-1) -- (1,-1) to[out=0,in=-110] (2,0);
\foreach \i in {-1,-0.5,0}
\draw[red,very thick,-latex] (\i,-1.18)--++(90:0.36);
\foreach \i in {0.5,1}
\draw[red,very thick,-latex] (\i,-1+0.18)--++(-90:0.36);
\foreach \i in {-1,1}
\draw[red,very thick,-latex] (\i,1.18)--++(-90:0.36);
\foreach \i in {-0.5,0,0.5}
\draw[red,very thick,-latex] (\i,1-0.18)--++(90:0.36);
\end{scope}

\begin{scope}[xshift=5cm,rotate=90]
\fill(2,0) circle(2pt);
\fill(-2,0) circle(2pt);
\draw[blue](-2,0) to[out=70,in=180] (-1,1) -- (1,1) to[out=0,in=110] (2,0);
\draw[blue](-2,0) to[out=-70,in=180] (-1,-1) -- (1,-1) to[out=0,in=-110] (2,0);
\foreach \i in {-1,-0.5,0}
\draw[red,thick,->-={0.2}{}] (\i,-1) ..controls (\i,0) and (\i+0.5,0).. (\i+0.5,1);
\draw[red,thick,->-={0.9}{}] (-1,1) ..controls (-1,0) and (0.5,0).. (0.5,-1);
\draw[red,thick,->-] (1,1) -- (1,-1);
\end{scope}

\begin{scope}[xshift=10cm,rotate=90]
\fill(2,0) circle(2pt);
\fill(-2,0) circle(2pt);
\draw[blue](-2,0) to[out=70,in=180] (-1,1) -- (1,1) to[out=0,in=110] (2,0);
\draw[blue](-2,0) to[out=-70,in=180] (-1,-1) -- (1,-1) to[out=0,in=-110] (2,0);
\foreach \i in {-1,-0.5,0}
{
\draw[red,thick,->-={0.3}{}] (\i,-1) -- (\i,0);
\draw[red,thick,-<-={0.3}{}] (\i+0.5,1) -- (\i+0.5,0);
\draw[red,thick] (\i,-\i-0.5) --++(0.5,0);
}
\draw[red,thick,->-={0.3}{}] (-1,1) -- (-1,0);
\draw[red,thick,-<-={0.3}{}] (0.5,-1) -- (0.5,0);
\draw[red,thick,-<-={0.3}{}] (0.5,1) -- (0.5,0);
\draw[red,thick,->-] (1,1) -- (1,-1);
\end{scope}
 \end{tikzpicture}
    \caption{Construction of the ladder-webs. Left: a symmetric set $S$. Middle: the corresponding collection of oriented curves $W_\mathrm{br}(S)$. Right: the associated ladder-web $W(S)$.}
    \label{fig:ladder-web}
\end{figure}
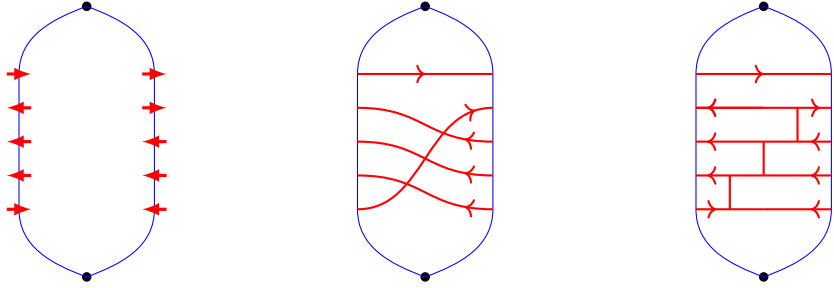

\begin{prop}[{\cite[Section 8]{FS20}, \cite[Proposition 19]{DS20I}}]
The ladder-web $W(S)$ is an essential web on $\bD_2$ for any symmetric strand set $S$. Conversely, given an essential web $W$ on $\bD_2$, there exists a unique symmetric strand set $S$ such that $W=W(S)$. 
\end{prop}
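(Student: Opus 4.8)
The plan is to prove the two assertions separately: first that every ladder-web $W(S)$ is essential (the easier direction), and then that every essential web arises uniquely this way (where the real work lies). Throughout I would use a height function $h \colon \bD_2 \to [0,1]$ with $h^{-1}(0) = E_L$ and $h^{-1}(1) = E_R$, thought of as a ``left-to-right'' coordinate, together with the foliation of $\bD_2$ by the level arcs $h^{-1}(t)$, each of which joins the two special points.

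For the forward direction I would first check non-ellipticity. By construction $W_\mathrm{br}(S)$ is a minimally-intersecting, order-preserving collection of arcs from $E_L$ to $E_R$; in a disk two such arcs cross at most once, so $W_\mathrm{br}(S)$ contains no bigon. I would then analyze the complementary regions of $W(S)$ locally: each crossing of $W_\mathrm{br}(S)$ is replaced by a single H-web \eqref{eq:H-replacement}, and a bounded internal face of $W(S)$ with fewer than six sides (a $0$-, $2$-, or $4$-gon as in \eqref{eq:elliptic face}) would force either a closed component, a returning arc, or a pair of arcs of $W_\mathrm{br}(S)$ bounding a bigon, each excluded by minimality and the $L\to R$ structure. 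For tautness, the key point is that every edge of $W(S)$ is monotone with respect to $h$, so $W(S)$ has no ``U-turn'' back to a single boundary interval: given an arc $\alpha$ with endpoints on some $E$ cutting off a subdisk $D_\alpha$, each intersection point of $W(S)$ with the sub-arc $E \cap \partial D_\alpha$ is an endpoint of an edge that must leave $D_\alpha$ across $\alpha$, and sending such a point to the first crossing of its edge with $\alpha$ is injective, so $|W(S) \cap E| \le |W(S) \cap \alpha|$.

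For the converse, uniqueness is immediate: if $W = W(S)$ then necessarily $S = (W \cap E_L, W\cap E_R)$ with the induced orientations, so $S$ is determined by $W$. The substance is existence, namely that an essential $W$ is isotopic to the ladder-web on its own boundary data. I would put $W$ in general position with respect to $h$ and argue that tautness forbids interior critical points of $h|_W$ (a local maximum or minimum is a cap or cup, hence a U-turn contradicting tautness, while closed components are ruled out by non-ellipticity), so every edge is $h$-monotone and meets each level arc transversally. Because each trivalent vertex is a sink or a source, the local model at every vertex is then exactly a half of an H-web (two legs on one side, one on the other), and an $h$-monotone web with only such vertices is a stack of H-webs, i.e.\ a ladder-web $W(S')$. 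Reading off the boundary gives $S' = (W\cap E_L, W \cap E_R)$; in particular the symmetric condition holds because every edge runs from $E_L$ to $E_R$, matching incoming ends on one side with outgoing ends on the other. Finally, minimality of the order-preserving pairing identifies $W(S')$ with the ladder-web of $S = S'$.

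The main obstacle is the normal-form step in the converse: showing that tautness together with non-ellipticity forces the $h$-monotone, sink/source-only picture, and that this picture is genuinely a ladder rather than some more complicated $h$-monotone web. Concretely, after removing interior critical points one must verify that the sinks and sources organize into H-webs (so that a braid representation can be recovered by resolving each H into a crossing) and that the resulting pairing is the minimal order-preserving one; this is where non-ellipticity enters a second time, to preclude the reducible square faces that would correspond to a non-minimal pairing. Handling the perturbation of the horizontal rungs and the bookkeeping of orientations at each critical level is the technical heart of the argument.
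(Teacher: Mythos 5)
First, a remark on scope: the paper does not actually prove this proposition — it is imported from \cite[Section 9]{FS20} and \cite[Proposition 19]{DS20I}, where the argument runs through Kuperberg-style curvature/Euler-characteristic counting for non-elliptic webs in a disk rather than Morse theory for a height function. So your proposal is necessarily a different route, and it must stand on its own. The forward direction is essentially fine: internal faces of $W(S)$ have $2k$ sides where $k$ is the number of corners of the corresponding face of $W_\mathrm{br}(S)$, and $k\le 2$ is excluded by embeddedness and minimality, while the boundary elliptic faces are excluded because no edge of $W(S)$ has both endpoints on one interval. For tautness, the injection should be set up by following the braid arcs $\gamma$ of $W_\mathrm{br}(S)$ (each individually embedded, with exactly one endpoint on $E_L$, and forced to exit $D_\alpha$ since it terminates on $E_R$) rather than single edges of $W(S)$, and $\alpha$ should be pushed off the small disks where the H-replacement is performed so that $\#(W(S)\cap\alpha)=\#(W_\mathrm{br}(S)\cap\alpha)$; with those adjustments your count goes through.

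The converse, however, has a genuine gap at exactly the step you flag, and two of the intermediate claims are not correct as stated. (i) ``Tautness forbids interior critical points of $h|_W$'' cannot be literally true, since critical points can always be created by isotopy; what is needed is that an innermost cap or cup can be cancelled, and a single interior cap does not by itself produce an arc $\alpha$ violating tautness — it can be shielded by other strands, so the innermost-bigon analysis (in which non-ellipticity must rule out the vertices trapped inside the bigon) is unavoidable and is not supplied. (ii) ``An $h$-monotone web whose vertices are sinks or sources is a stack of H-webs, i.e.\ a ladder-web'' is false: the tripod with two legs on $E_L$ and one on $E_R$ is $h$-monotone, non-elliptic, has a single sink whose local model is two legs on one side and one on the other, yet it is not a ladder-web (its strand set is not symmetric — indeed your later claim that ``every edge runs from $E_L$ to $E_R$'' presupposes the ladder structure). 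The tripod is excluded only because it is not taut, so tautness must be re-invoked at this stage, for instance by showing that the generic level-count $n(t)=\#(W\cap h^{-1}(t))$ is constant: tautness applied to arcs parallel to level sets gives $n(t)\ge|S_L|=|S_R|$, but ruling out $n(t)$ rising and falling back requires a further non-ellipticity argument about the faces created between a source and the compensating sink. Since (i) and (ii) together constitute the actual content of the proposition, the proposal as written is not yet a proof, although the Morse-theoretic route should be completable if those two steps are carried out in detail.
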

For the study of unbounded $\mathfrak{sl}_3$-webs, we need the following infinite extension of the symmetric strand sets.

\begin{dfn}[asymptotically periodic symmetric strand sets]\label{def:infinite strand set}
An \emph{asymptotically periodic symmetric strand set} $S=(S_L,S_R)$ on $\bD_2$ consists of countable collections $S_L$, $S_R$ of disjoint oriented strands, where the oriented strands in $S_Z$ are located on $E_Z$ without accumulation points for $Z \in \{L,R\}$. 
The oriented strands are required to be symmetric, and periodic away from a compact set (see \cref{fig:ladder_infinite}). Namely, we require that there exists a compact strip $K \subset \bD_2 \setminus \bM$ such that
\begin{itemize}
    \item $K$ is bounded by two parallel arcs $\alpha_1,\alpha_2$ transverse to the boundary intervals of $\bD_2$, and $\alpha_1 \cup \alpha_2$ 
    avoiding the strand sets $S_L$, $S_R$;
    \item the pair $(S_L \cap K, S_R \cap K)$ is a finite symmetric strand set;
    \item the orientation patterns of the strands in the sets $S_L$ and $S_R$ that belong to $\bD_2 \setminus K$ are periodic, and the pairing map $f_K: S_L \cap K \to S_R \cap K$ of finite symmetric strand set can be extended to an order-preserving bijection $f:S_L \to S_R$ that maps each incoming (resp. outgoing) strand of $S_L$ to an outgoing (resp. incoming) strand of $S_R$.
\end{itemize}
\end{dfn}
Unlike the finite case, the pairing map $f$ may not be unique, as it depends on the choice of the compact strip $K$. 
Given such a pair $(S,f)$, we get a collection $W_\mathrm{br}(S,f)$ of oriented curves mutually in a minimal position, and the associated ladder-web $W(S,f)$ just in the same manner as in the finite case. We call $W(S,f)$ the ladder-web associated with the pair $(S,f)$. It is possibly an infinite web.

\begin{figure}[ht]
    \centering
\begin{tikzpicture}[scale=0.9]
\fill[gray!20] (-1,0.75) -- (1,0.75) -- (1,-0.75) -- (-1,-0.75) --cycle;
\node at (0,0) {$K$};
\draw(-1,0.75)--node[midway,above]{$\alpha_1$}(1,0.75);
\draw(-1,-0.75)--node[midway,below]{$\alpha_2$}(1,-0.75);
\fill(0,2) circle(2pt);
\fill(0,-2.5) circle(2pt);
\draw[blue](0,-2.5) to[out=20,in=-90] (1,-1.5) -- (1,1) to[out=90,in=-20] (0,2);
\draw[blue](0,-2.5) to[out=160,in=-90] (-1,-1.5) -- (-1,1) to[out=90,in=-160] (0,2);
\foreach \i in {-0.3,0,0.6} \arr{-1,\i};
\foreach \i in {-0.6,0.3} \arl{-1,\i};
\foreach \i in {-0.3,0.3,0.6} \arr{1,\i};
\foreach \i in {-0.6,0} \arl{1,\i};
\foreach \i in {0.9,1.2} \arr{-1,\i};
\arl{-1,-0.9};
\arr{-1,-1.2};
\arl{-1,-1.5};
\arr{-1,-1.8};
\foreach \i in {0.9,1.2} \arr{1,\i};
\arr{1,-0.9};
\arl{1,-1.2};
\arr{1,-1.5};
\arl{1,-1.8};
\begin{scope}[xshift=4cm]
\fill[gray!20] (-1,0.75) -- (1,0.75) -- (1,-0.75) -- (-1,-0.75) --cycle;
\draw(-1,0.75)--(1,0.75);
\draw(-1,-0.75)--(1,-0.75);
\fill(0,2) circle(2pt);
\fill(0,-2.5) circle(2pt);
\draw[blue](0,-2.5) to[out=20,in=-90] (1,-1.5) -- (1,1) to[out=90,in=-20] (0,2);
\draw[blue](0,-2.5) to[out=160,in=-90] (-1,-1.5) -- (-1,1) to[out=90,in=-160] (0,2);
\foreach \i in {-0.3,0.6,0.9,1.2} \draw[red,thick,->-] (-1,\i) -- (1,\i);
\draw[red,thick,->-={0.6}{}] (1,-0.6) -- (-1,-0.6);
\draw[red,thick,->-={0.3}{},-<-={0.7}{}] (-1,0) -- (1,0);
\draw[red,thick,-<-={0.3}{},->-={0.7}{}] (-1,0.3) -- (1,0.3);
\draw[red,thick] (0,0) -- (0,0.3);
\draw[red,thick,->-={0.3}{},-<-={0.7}{}] (-1,-1.2) -- (1,-1.2);
\draw[red,thick,-<-={0.3}{},->-={0.7}{}] (-1,-0.9) -- (1,-0.9);
\draw[red,thick] (0,-1.2) -- (0,-0.9);
\draw[red,thick,->-={0.3}{},-<-={0.7}{}] (-1,-1.8) -- (1,-1.8);
\draw[red,thick,-<-={0.3}{},->-={0.7}{}] (-1,-1.5) -- (1,-1.5);
\draw[red,thick] (0,-1.5) -- (0,-1.8);
\end{scope}
\begin{scope}[xshift=8cm]
\fill[gray!20] (-1,0.75) -- (1,0.75-0.3) -- (1,-0.75-0.3) -- (-1,-0.75) --cycle;
\node at (0,0) {$K'$};
\draw(-1,0.75)--node[midway,above]{$\alpha'_1$}(1,0.75-0.3);
\draw(-1,-0.75)--node[midway,below]{$\alpha'_2$}(1,-0.75-0.3);
\fill(0,2) circle(2pt);
\fill(0,-2.5) circle(2pt);
\draw[blue](0,-2.5) to[out=20,in=-90] (1,-1.5) -- (1,1) to[out=90,in=-20] (0,2);
\draw[blue](0,-2.5) to[out=160,in=-90] (-1,-1.5) -- (-1,1) to[out=90,in=-160] (0,2);
\foreach \i in {-0.3,0,0.6} \arr{-1,\i};
\foreach \i in {-0.6,0.3} \arl{-1,\i};
\foreach \i in {-0.3,0.3,0.6} \arr{1,\i};
\foreach \i in {-0.6,0} \arl{1,\i};
\foreach \i in {0.9,1.2} \arr{-1,\i};
\arl{-1,-0.9};
\arr{-1,-1.2};
\arl{-1,-1.5};
\arr{-1,-1.8};
\foreach \i in {0.9,1.2} \arr{1,\i};
\arr{1,-0.9};
\arl{1,-1.2};
\arr{1,-1.5};
\arl{1,-1.8};
\end{scope}
\begin{scope}[xshift=12cm]
\fill[gray!20] (-1,0.75) -- (1,0.75-0.3) -- (1,-0.75-0.3) -- (-1,-0.75) --cycle;
\draw(-1,0.75)--(1,0.75-0.3);
\draw(-1,-0.75)--(1,-0.75-0.3);
\fill(0,2) circle(2pt);
\fill(0,-2.5) circle(2pt);
\draw[blue](0,-2.5) to[out=20,in=-90] (1,-1.5) -- (1,1) to[out=90,in=-20] (0,2);
\draw[blue](0,-2.5) to[out=160,in=-90] (-1,-1.5) -- (-1,1) to[out=90,in=-160] (0,2);
\foreach \i in {1.2,0.9,0.6,0} \draw[red,thick,->-] (-1,\i) -- (1,\i-0.3);
\draw[red,thick,-<-] (-1,0.3) -- (1,0);
\draw[red,thick,->-={0.3}{},-<-={0.7}{}] (-1,-0.3) -- (1,-0.6);
\draw[red,thick,-<-={0.3}{},->-={0.7}{}] (-1,-0.6) -- (1,-0.9);
\draw[red,thick] (0,-0.45) -- (0,-0.75);
\foreach \i in {-1.2} \draw[red,thick,->-] (-1,\i) -- (1,\i-0.3);
\foreach \i in {-0.9,-1.5} \draw[red,thick,-<-] (-1,\i) -- (1,\i-0.3);
\draw[red,thick,->-={0.9}{}] (-0.95,-1.8) --++(0.5*2,-0.5*0.3);
\draw[red,thick,-<-={0.9}{}] (0.95,1.2) --++(-0.5*2,0.5*0.3);
\end{scope}
\end{tikzpicture}
    \caption{An asymptotically periodic symmetric strand set and the associated ladder webs corresponding to the two choices of compact strips $K$ and $K'$.} 
    \label{fig:ladder_infinite}
\end{figure}

\begin{dfn}
An \emph{unbounded essential web} on $\bD_2$ is the isotopy class of the ladder-web associated with a pair $(S,f)$ as above.
\end{dfn}

Among the others, the following way of fixing a pairing map turns out to be useful in this paper.

\begin{dfn}\label{def:pinning}
A \emph{pinning} of an asymptotically periodic symmetric strand set $S=(S_L,S_R)$ is a pair $\sfp_Z=(p_Z^+,p_Z^-)$ of points in $E_Z$ away from the set $S_Z$ for $Z \in \{L,R\}$. 
The resulting tuple 
$\widehat{S}:=(S;\sfp_L,\sfp_R)$ is called a \emph{pinned symmetric strand set}. 
\end{dfn}
Then we define the pairing map
as follows. 
For $Z \in \{L,R\}$, let us decompose $S_Z=S_Z^+ \sqcup S_Z^-$, where $S_Z^+$ (resp. $S_Z^-$) denotes the subset of incoming (resp. outgoing) strands. 
Then there exist orientation-reversing homeomorphisms $f_\pm: E_L \to E_R$ such that $f_\pm(S_L^\pm) = S_R^\mp$ and $f_\pm(p_L^\pm) = p_R^\mp$. 
Then we get the unique pairing map 
\begin{align*}
    f_{\widehat{S}}:=f_+\sqcup f_-: S_L^+ \sqcup S_L^- \to S_R^- \sqcup S_R^+,
\end{align*}
which determines the collection $W_{\mathrm{br}}(\widehat{S}):=W_{\mathrm{br}}(S,f_{\widehat{S}})$ of oriented curves and the associated ladder-web $W(\widehat{S}):=W(S,f_{\widehat{S}})$.

\bigskip
\paragraph{\textbf{The triangle (3-gon) case.}}
Let $\bD_3$ be a triangle. Recall that we have honeycomb-webs on $\bD_3$, which are dual to $n$-triangulations of $\bD_3$. 

\begin{prop}[{\cite[Theorem 19]{FS20}, \cite[Proposition 22]{DS20I}}]\label{lem:triangle_essential}
A honeycomb-web is 
reduced (\emph{rung-less} in terms of \cite{DS20I})
and essential. Conversely, any connected reduced essential web on $\bD_3$ having at least one trivalent vertex is a honeycomb-web. 
\end{prop}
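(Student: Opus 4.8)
The plan is to prove the two directions separately, the forward one by inspection and the converse by a combinatorial Gauss--Bonnet count.

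For the forward direction, let $H_n$ be the honeycomb-web of height $n$, dual to the $n$-triangulation of $\bD_3$. Its bounded complementary faces are hexagons (one for each interior vertex of the triangulation), so $H_n$ contains none of the internal elliptic faces of \eqref{eq:elliptic face}, and one checks directly that it has no peripheral elliptic face of the forms \eqref{eq:elliptic_face_peripheral}; thus $H_n$ is non-elliptic. The edges of $H_n$ meeting a boundary interval $E$ are the $n$ ``legs'' emanating toward the opposite corner, and no edge runs parallel and adjacent to a boundary interval, so $H_n$ has no boundary H-face and is reduced (rung-less). For tautness I would use that $H_n$ is in minimal position with respect to each $E$: its boundary faces are pentagons and quadrilaterals, so there is no ``returning'' sub-arc, i.e.\ no embedded disk bounded by a sub-arc of $E$ and an arc of $H_n$ meeting $E$ in fewer points. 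Hence for every compact arc $\alpha$ with endpoints on a common $E$ one has $|H_n \cap E| \le |H_n \cap \alpha|$, which is tautness.

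For the converse, let $W$ be connected and reduced essential on $\bD_3$ with at least one trivalent vertex; connectedness and non-ellipticity rule out closed-loop components, so $W$ is a trivalent graph with $V$ trivalent vertices and $b$ ends on $\partial^\ast \bD_3$. First, every interior face has at least $6$ sides: colouring the trivalent vertices as sinks and sources makes the underlying graph bipartite (each oriented edge joins a source to a sink), so every interior face boundary is an even cycle, and non-ellipticity forbids bigons and squares \eqref{eq:elliptic face}. Writing $d(f)$ for the total number of sides of an internal face $f$ and regarding $W \cup \partial\bD_3$ as a CW decomposition of the disk (with the $3$ corners, the $b$ ends and the $V$ trivalent vertices as $0$-cells), the identity $\chi(\bD_3)=1$ gives
\begin{equation*}
\sum_{f} \bigl(6 - d(f)\bigr) = b + 3 .
\end{equation*}
The heart of the argument is to bound the boundary faces. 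Tautness forbids a ``$V$''-face bounded by two legs and a single boundary segment (an arc $\alpha$ cutting it off would meet $W$ fewer times than the corresponding sub-arc of $E$), and reducedness forbids the boundary H-face; hence every non-corner boundary face has $d(f) \ge 5$. At each corner, connectedness together with the existence of a trivalent vertex excludes a bare corner arc, forcing the corner face to have $d(f) \ge 4$. Since there are exactly $3$ corner faces and $b-3$ non-corner boundary faces, these bounds give $\sum_{\mathrm{bdry}}(6-d(f)) \le 3\cdot 2 + (b-3)\cdot 1 = b+3$. Combined with the displayed identity and with $6 - d(f) \le 0$ for interior faces, all inequalities must be equalities: every interior face is a hexagon, every edge face a pentagon, and every corner face a quadrilateral.

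A trivalent graph in $\bD_3$ with exactly these faces is the planar dual of a triangulation of $\bD_3$ all of whose interior vertices have degree $6$; such a triangulation is forced to be the uniform $n$-triangulation (so $b = 3n$ with $n$ ends per side), and therefore $W = H_n$. I expect the main obstacle to be this boundary-face analysis: carefully ruling out the low-complexity corner and edge faces from tautness, reducedness and connectedness, and making the ``$3$ corner faces and $b-3$ edge faces'' bookkeeping precise. A secondary point requiring care is the final reconstruction step --- showing that the rigid, all-hexagon equality configuration is combinatorially exactly a honeycomb (including that the three sides carry equally many ends), which I would handle either by recognizing the degree-$6$ interior triangulation as a patch of the triangular lattice or by an induction that peels off the outermost layer of boundary faces and reduces to $H_{n-1}$.
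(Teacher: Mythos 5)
The paper gives no proof of this proposition---it is imported from \cite[Section 10]{FS20} and \cite[Proposition 22]{DS20I}---so there is nothing internal to compare against; your discrete Gauss--Bonnet count is essentially the standard route taken in those references (going back to Kuperberg's analysis of non-elliptic webs in a disk). The core computations check out: the identity $\sum_f(6-d(f))=b+3$ is correct, bipartiteness plus non-ellipticity gives $d(f)\ge 6$ for interior faces, an edge face with $d=2$ or $d=3$ is literally one of the first two peripheral elliptic faces in \eqref{eq:elliptic_face_peripheral} (so non-ellipticity already kills the ``V''-face; tautness is not needed there), a $d=4$ edge face is a boundary H-face, and a $d=3$ corner face would force a whole arc component of $W$, contradicting connectedness together with the existence of a trivalent vertex.

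The one step that would fail as literally written is the bookkeeping ``$3$ corner faces and $b-3$ edge faces.'' It presupposes that every side of $\bD_3$ carries at least one endpoint of $W$, equivalently that no complementary face contains two corners; you neither state nor prove this, and it does not follow formally from the hypotheses you invoke at that point. (You do need, and should record, the observation that connectedness of $W$ forces each face to meet $\partial\bD_3$ in a single arc; otherwise even the ``one boundary arc per edge face'' claim is unjustified.) The bad case is closed by the same inequality: a face containing an entire side and its two corners meets at least three boundary arcs and, again by connectedness, at least two web edges, so it contributes at most $1$ to $\sum_f(6-d(f))$ where your accounting needs those two corners to contribute $4$; the total is then at most $b+1<b+3$, a contradiction. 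With that inserted, the equality analysis goes through, and the remaining flagged item---that the all-hexagon interior configuration with pentagonal edge faces and quadrilateral corner faces is rigidly the dual of the uniform $n$-triangulation---is a standard fact that completes the converse. The forward direction is fine.
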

Consequently, any reduced essential web on $\bD_3$ consists of a unique (possibly empty) honeycomb component together with a collection of disjoint oriented arcs located on the corners of $\bD_3$. These oriented arcs are called \emph{corner arcs}. Similarly to the biangle case, we may allow the collection of corner arcs to be semi-infinite and asymptotically periodic. 

\begin{dfn}
An \emph{unbounded reduced essential web} on $\bD_3$ is the isotopy class of a disjoint union of a (possibly empty) reduced essential web on $\bD_3$ and at most one semi-infinite periodic collection of corner arcs around each corner. 
\end{dfn}





\subsection{Good position of an unbounded \texorpdfstring{$\fsl_3$}{sl(3)}-lamination}
Let $\tri$ be an ideal triangulation of $\Sigma$ without self-folded triangles. Recall from \cite[Section 3]{DS20I} that a bounded $\mathfrak{sl}_3$-web $W$ on $\Sigma$ is \emph{generic} with respect to $\tri$ if none of its trivalent vertices intersect with the edges of $\tri$, and $W$ intersects with $\tri$ transversely. A \emph{generic isotopy} is an isotopy of webs through generic webs. Recall the \emph{parallel-equivalence} of bounded webs, which is the equivalence relation generated by isotopies of marked surface and the loop parallel-move (E1). 
A generic bounded web $W$ is said to be in \emph{minimal position} with respect to $\tri$ if it minimizes the sum of the intersection numbers with the edges of $\tri$ among those parallel-equivalent to $W$. Then we have:

\begin{prop}[{\cite[Section 6]{FS20}, \cite[Proposition 27]{DS20I}}]\label{prop:good_position_bounded}
Any parallel-equivalence class of non-elliptic bounded webs on $\Sigma$ has a representative in minimal position with respect to $\tri$. Moreover, such a representative is unique up to a sequence of $H$-moves across edges of $\tri$ (\cref{fig:H-move}), loop parallel-moves, and generic isotopies. 
\end{prop}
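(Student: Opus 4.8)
The plan is to separate existence from uniqueness, treating the latter as the substantive part. For existence, fix a non-elliptic bounded web $W$ and consider the total intersection number $N(W'):=\sum_{E\in e(\tri)}|W'\cap E|$ ranging over all generic webs $W'$ parallel-equivalent to $W$. This is a nonempty set of nonnegative integers, hence attains its minimum at some generic $W_{\min}$; since loop parallel-moves and isotopies preserve non-ellipticity and boundedness, $W_{\min}$ is again a non-elliptic bounded web, and by definition it lies in minimal position.

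The first step toward uniqueness is to pin down the local structure of a minimal-position web. I would show that after cutting $\Sigma$ along the interior edges of $\tri$, the restriction $W|_T$ to each triangle $T$ is an \emph{essential} web. Non-ellipticity is inherited by restriction, so only tautness needs checking: if $W|_T$ failed to be taut, there would be an arc $\alpha\subset T$ with both endpoints on one edge $E$ meeting $W$ fewer times than $E$ does, and an innermost such disk would let one isotope $W$ across $\alpha$ to a parallel-equivalent web of strictly smaller $N$, contradicting minimality. Invoking \cref{lem:triangle_essential}, each $W|_T$ is then a (possibly empty) honeycomb together with corner arcs, determined up to isotopy of $T$ by the number of strand endpoints it produces on each side of $\partial T$. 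It is convenient to thicken every interior edge into a biangle beforehand, so that $W$ restricts to a ladder-web on each biangle --- uniquely reconstructed from its symmetric strand set by the classification of essential webs on $\bD_2$ recalled above --- while the remaining freedom in how strands reconnect across an edge is localized inside these biangles.

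With this in hand, I would read off from $W$, for each edge $E$, the sequence of orientation signs of $W\cap E$ as a combinatorial \emph{boundary datum}, and then argue a reconstruction statement: the collection of boundary data on all edges determines $W$ up to generic isotopy, $H$-moves across edges, and loop parallel-moves. Indeed the honeycomb-plus-corner-arc filling of each triangle is forced by its edge-endpoint counts, whereas the order-preserving matching of endpoints across each interior edge is canonical apart from the reconnection ambiguity recorded precisely by the $H$-move across that edge (\cref{fig:H-move}); isotopic parallel strands created in the process are absorbed by loop parallel-moves. Consequently, two minimal-position webs sharing the same boundary data (up to $H$-moves) are related by the three permitted moves.

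The main obstacle is to promote an \emph{arbitrary} parallel-equivalence $W_0\simeq W_1$ --- a priori a sequence of isotopies and loop parallel-moves that may pass through highly non-minimal webs --- into a sequence of generic isotopies, $H$-moves across edges, and loop parallel-moves connecting the two minimal representatives while keeping the intersection with $\tri$ under control. I would handle this by induction on $N$ using an innermost-disk / bigon analysis: each elementary creation or annihilation of an edge-intersection, and each passage of a trivalent vertex across an edge, is shown to be realizable by one of the three allowed moves. The genuinely higher-rank subtlety, absent in the $\fsl_2$ curve case, is that trivalent vertices may cross edges and that parallel honeycomb strands of differing weights can be regrouped; verifying that this list of moves is \emph{complete} under these phenomena --- rather than merely that minimal position exists --- is where the real work lies, and this is exactly the content established in \cite[Section 7]{FS20} and \cite[Proposition 27]{DS20I}.
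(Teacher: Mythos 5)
The paper does not actually prove \cref{prop:good_position_bounded}: it is quoted from \cite[Section 7]{FS20} and \cite[Proposition 27]{DS20I}, with only the one-line remark that minimal position is realized by applying the intersection reduction moves of \cref{fig:tightening}. Your existence argument (minimize the total edge-intersection number over the parallel-equivalence class; a non-taut restriction to a triangle would permit a further reduction) matches that remark and the references' strategy exactly, and your reduction of uniqueness to the classification of essential webs on triangles and biangles (\cref{lem:triangle_essential} and the ladder-web description) is likewise the route taken in \cite{FS20,DS20I}.

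That said, if your write-up were meant to stand alone rather than defer to the references, the gap is precisely where you place it, and it is worth naming more sharply than ``the list of moves is complete.'' Your reconstruction step establishes that two minimal-position webs \emph{with the same boundary data on every edge} differ only by $H$-moves across edges, loop parallel-moves, and generic isotopies; but the statement that two parallel-equivalent minimal-position webs \emph{do} induce the same boundary data (the same ordered sequence of oriented strand-ends on each edge of $\tri$, up to the reconnection ambiguity absorbed by $H$-moves) is not a consequence of minimality of the total count $N$ — minimality of the sum does not a priori fix the distribution over individual edges, let alone the orientation pattern. Establishing this invariance is the actual content of \cite[Proposition 27]{DS20I} (via the fellow-traveler analysis) and of \cite[Section 7]{FS20} (via a confluence argument for the tightening moves), so your final paragraph correctly identifies, but does not close, the essential step. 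Since the paper itself also treats this as a black box, your proposal is an accurate account of the situation rather than a flawed proof.
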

Indeed, the minimal position is realized by appropriately applying the \emph{intersection reduction moves} (a.k.a. \emph{tightening moves}) across edges of $\tri$ shown in \cref{fig:tightening}.

\begin{figure}[htbp]
    \centering
\begin{tikzpicture}
\fill[pink!70] (0.3,0.5) -- (0.3,-0.5) -- (0,-0.5) -- (0,0.5) --cycle;
\draw[blue] (0,1) -- (0,-1);
\draw[blue,thick,dashed] (0,0.8) ..controls ++(-20:1) and ($(0,-0.8)+(20:1)$).. (0,-0.8);
\draw[red,very thick] (-0.3,0.5) -- (0.9,0.5);
\draw[red,very thick] (-0.3,-0.5) -- (0.9,-0.5);
\draw[red,very thick] (0.3,0.5) -- (0.3,-0.5);
\draw[thick,<->] (1.5,0) -- (2.5,0);
\begin{scope}[xshift=4cm,xscale=-1]
\fill[pink!70] (0.3,0.5) -- (0.3,-0.5) -- (0,-0.5) -- (0,0.5) --cycle;
\draw[blue] (0,1) -- (0,-1);
\draw[blue,thick,dashed] (0,0.8) ..controls ++(-20:1) and ($(0,-0.8)+(20:1)$).. (0,-0.8);
\draw[red,very thick] (-0.3,0.5) -- (0.9,0.5);
\draw[red,very thick] (-0.3,-0.5) -- (0.9,-0.5);
\draw[red,very thick] (0.3,0.5) -- (0.3,-0.5);
\end{scope}
\end{tikzpicture}
    \caption{The H-move across an arc.}
    \label{fig:H-move}
\end{figure}
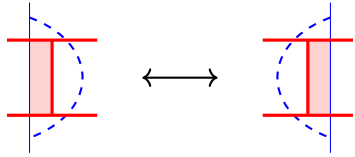

\begin{figure}[htbp]
    \centering
\begin{tikzpicture}
\fill[pink!70] (0,0.5) ..controls (0.8,0.5) and (0.8,-0.5).. (0,-0.5) -- (0,0.5);
\draw[blue] (0,1) -- (0,-1);
\draw[red,very thick] (-0.3,0.5) -- (0,0.5) ..controls (0.8,0.5) and (0.8,-0.5).. (0,-0.5) -- (-0.3,-0.5);
\draw[blue,thick,dashed] (0,0.8) ..controls ++(-10:1.5) and ($(0,-0.8)+(10:1.5)$).. (0,-0.8);
\draw[thick,->] (1.5,0) -- (2.5,0);
\begin{scope}[xshift=4cm]
\draw[blue] (0,1) -- (0,-1);
\draw[red,very thick]  (-0.8,0.5) ..controls (0,0.5) and (0,-0.5).. (-0.8,-0.5);
\end{scope}
\begin{scope}[xshift=7cm]
\fill[pink!70] (0,0.5) -- (0.5,0) -- (0,-0.5) --cycle;
\draw[blue] (0,1) -- (0,-1);
\draw[blue,thick,dashed] (0,0.8) ..controls ++(-20:1) and ($(0,-0.8)+(20:1)$).. (0,-0.8);
\draw[red,very thick] (-0.25,0.75) -- (0.5,0) -- (-0.25,-0.75);
\draw[red,very thick] (0.5,0) -- (1,0);
\draw[thick,->] (1.5,0) -- (2.5,0);
\end{scope}
\begin{scope}[xshift=11cm]
\draw[blue] (0,1) -- (0,-1);
\draw[red,very thick] (-0.85,0.75) -- (-0.1,0) -- (-0.85,-0.75);
\draw[red,very thick] (-0.1,0) -- (0.4,0);
\end{scope}
\end{tikzpicture}
    \caption{The intersection reduction moves across an arc.}
    \label{fig:tightening}
\end{figure}
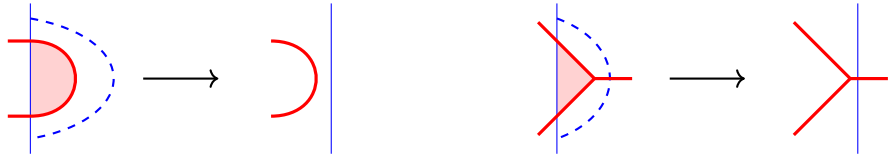

The \emph{split ideal triangulation} $\widehat{\tri}$ is obtained from $\tri$ by replacing each edge $E$ into a biangle $B_E$.
We say that a bounded web $W$ on $\Sigma$ is in \emph{good position} with respect to $\widehat{\tri}$ if the restrictions $W \cap B_E$ for $E \in e(\tri)$ (resp. $W \cap T$ for $T \in t(\tri)$) are an essential (resp. reduced essential) webs. Then it is known that any parallel-equivalence class of non-elliptic bounded webs on $\Sigma$ has a representative in good position with respect to $\widehat{\tri}$; such a representative is unique up to a sequence of modified H-moves (\cref{fig:modified H-move}), loop parallel-moves, and generic isotopies for $\widehat{\tri}$ (\cite[Corollary 18]{FS20} and \cite[Proposition 30]{DS20I}). Using such a representative, the Douglas--Sun coordinates are defined (\cite[Section 4]{DS20I}). 

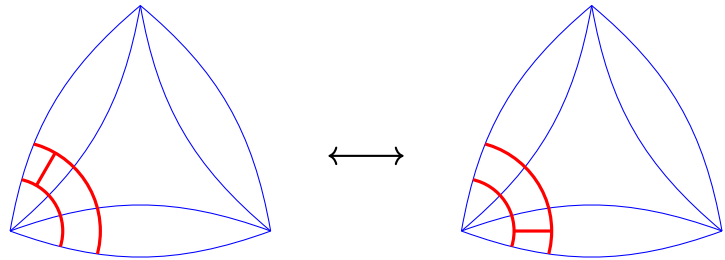
\begin{figure}[htbp]
    \centering
\begin{tikzpicture}
\foreach \i in {-30,90,210}
{
\draw[blue](\i:2) to[bend left=20pt] (\i+120:2);
\draw[blue](\i:2) to[bend right=20pt] (\i+120:2);
}
\begin{scope}
\clip(-30:2) to[bend right=20pt] (90:2) to[bend right=20pt] (210:2) to[bend right=20pt] (-30:2);
\draw[red,very thick] (210:2) circle(0.7cm);
\draw[red,very thick] (210:2) circle(1.2cm);
\path(210:2) --++(60:0.7) coordinate(A);
\draw[red,very thick] (A) --++(60:0.5);
\end{scope}

\draw[thick,<->] (2.5,0) -- (3.5,0);

\begin{scope}[xshift=6cm]
\foreach \i in {-30,90,210}
{
\draw[blue](\i:2) to[bend left=20pt] (\i+120:2);
\draw[blue](\i:2) to[bend right=20pt] (\i+120:2);
}
{\begin{scope}
\clip(-30:2) to[bend right=20pt] (90:2) to[bend right=20pt] (210:2) to[bend right=20pt] (-30:2);
\draw[red,very thick] (210:2) circle(0.7cm);
\draw[red,very thick] (210:2) circle(1.2cm);
\path(210:2) --++(0:0.7) coordinate(A);
\draw[red,very thick] (A) --++(0:0.5);
\end{scope}}
\end{scope}
\end{tikzpicture}
    \caption{The modified H-move \cite{DS20I} (a.k.a.~crossbar pass \cite{FS20}) across a corner.}
    \label{fig:modified H-move}
\end{figure}

Now let us consider a signed web $W$ on $\Sigma$. In this case, $W$ is no more parallel-equivalent to a web in good position in the above sense. To resolve this, we introduce the following notion:

\begin{dfn}[spiralling diagram]\label{def:spiralling}
Let $W$ be a non-elliptic signed web on $\Sigma$. 
Then the associated \emph{spiralling diagram} $\cW$ is a (possibly infinite and non-compact) $\mathfrak{sl}_3$-web obtained by the following two steps. 
\begin{enumerate}
    \item In a small disk neighborhood $D_p$ of each puncture $p \in \bP$, deform each end of $W$ incident to $p$ into an infinitely spiralling curve, according to their signs as shown in \cref{fig:spiral}. Let $\cW'$ be the resulting diagram.
    \item A pair of ends incident to a common puncture $p$ with the opposite sign produce infinitely many intersections in $\cW'$. We then modify these intersections into H-webs in a periodic manner, as follows. 
    By applying an isotopy in $D_p$, we can make these intersections only occurring in a single half-biangle $B_p$ in $D_p$ with special point $p$, without producing additional intersections\footnote{Concretely, this can be done as follows. If we fix a polar coordinates $(r,\theta)$, $r < r_0$ for some $r_0 > 0$ on the punctured disk $D_p\setminus \{p\}$, each spiralling curve can be modeled by the logarithmic spiral $\ell_\pm(a):\ \theta=\pm\log(ar)$ for some parameter $a >0$. Then an elementary calculation shows that the intersection points of $\ell_+(a_1)$ and $\ell_-(a_2)$ lie on a single line, which is viewed as the union of two rays. Then we can collectively push these rays into a chosen half-biangle $B_p$ only by smoothly varying the coordinate function $\theta$. By the standard argument involving a smooth cut-off function, we can also modify this \lq\lq angular'' isotopy to be identity near $\partial D_p$.}. 
    Then $\cW' \cap B_p=W_{\mathrm{br}}(S_p)$ for an asymptotically periodic symmetric strand set $S_p$ on $B_p$. By replacing the biangle part $W_{\mathrm{br}}(S_p)$ with the associated ladder-web $W(S_p)$, we get the spiralling diagram $\cW$. Since $\cW\cap (D_p \setminus B_p)$ consists of oriented corner arcs, the result does not depend on the choice of $B_p$.
\end{enumerate}
\end{dfn}
See \cref{fig:spiral_example} for a local example. A global example arising from \cref{fig:global example} is shown in \cref{fig:global example spiralling}. 

\begin{figure}[htbp]
    \centering
\begin{tikzpicture}
\draw[dashed] (-2.5,-1.5) circle(2cm);
\draw [red,thick](-3,0.45) .. controls (-2.5,0) and (-2.9,-0.8) .. (-2.5,-1.5);
\filldraw[fill=white] (-2.5,-1.5) circle(2pt);
\node[red] at (-2.5,0.2) {$W$};
\node[red] at (-2.3,-1.7) {$+$};
\node at (-2.7,-1.7) {$p$};
\draw (5,-1.5) circle(2pt);
\draw[dashed] (5,-1.5) circle(2cm);

\draw [red,thick](4.5,0.45) .. controls (5,0) and (5.55,-1.05) .. (5.55,-1.5) .. controls (5.55,-1.85) and (5.25,-2) .. (5,-2) .. controls (4.75,-2) and (4.55,-1.8) .. (4.55,-1.5) .. controls (4.55,-1.25) and (4.75,-1.1) .. (5,-1.1) .. controls (5.25,-1.1) and (5.4,-1.25) .. (5.4,-1.5) .. controls (5.4,-1.75) and (5.2,-1.85) .. (5,-1.85) .. controls (4.85,-1.85) and (4.7,-1.7) .. (4.7,-1.5) .. controls (4.7,-1.35) and (4.85,-1.25) .. (5,-1.25) .. controls (5.15,-1.25) and (5.25,-1.35) .. (5.25,-1.5) .. controls (5.25,-1.6) and (5.15,-1.7) .. (5,-1.7) .. controls (4.9,-1.7) and (4.85,-1.6) .. (4.85,-1.5);
\draw [red, thick, dotted](4.85,-1.5) .. controls (4.85,-1.3) and (5.15,-1.3) .. (5.15,-1.5);

\draw [thick,-{Classical TikZ Rightarrow[length=4pt]},decorate,decoration={snake,amplitude=2pt,pre length=2pt,post length=3pt}](0.65,-1.5) -- (2,-1.5);
\end{tikzpicture}
    \caption{Construction of a spiralling diagram. The negative sign similarly produce an end spiralling counter-clockwisely.}
    \label{fig:spiral}
\end{figure}
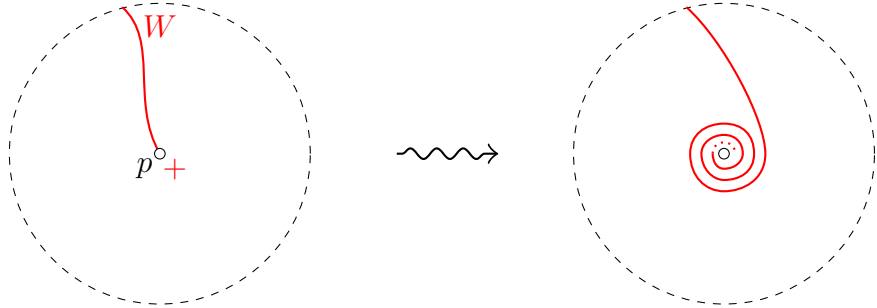

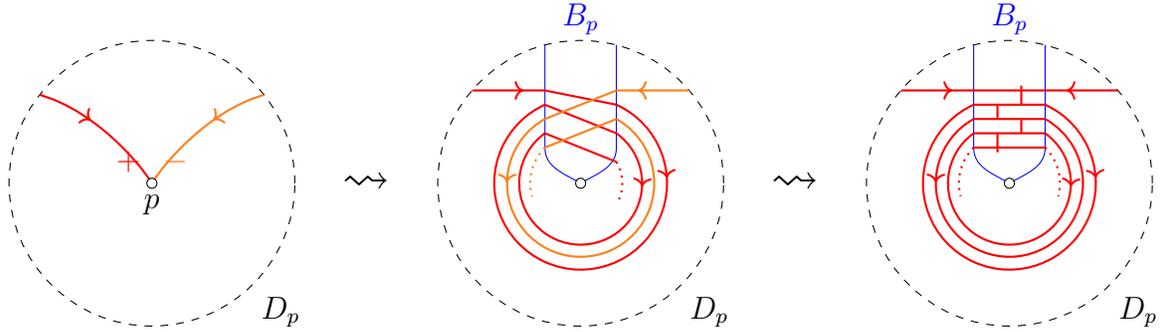
\begin{figure}[htbp]
\centering
\begin{tikzpicture}[scale=.8]
\begin{scope}
\draw[dashed] (0,0) circle(2cm);
\node at (1.8,-1.8) {$D_p$};
\clip(0,0) circle(2cm);
\draw[red,thick,->-] (-2,1.3) ..controls (-1,1.3) and (120:0.3).. (0,0);
\node[red,scale=0.8] at (-0.3,0.1) {$+$}; 
\draw[myorange,thick,->-] (2,1.3) ..controls (1,1.3) and (60:0.3).. (0,0);
\node[myorange,scale=0.8] at (0.3,0.1) {$+$}; 
\draw[fill=white](0,0) circle(2pt) node[below]{$p$};
\end{scope}
\node[scale=1.5] at (3,0) {$\rightsquigarrow$};

\begin{scope}[xshift=6cm]
\draw[dashed] (0,0) circle(2cm);
\node at (1.8,-1.8) {$D_p$};
\node[blue] at (0,2.3) {$B_p$};
\clip(0,0) circle(2cm);
\draw[blue] (0,0) to[out=150,in=-90] (-0.5,0.5) -- (-0.5,2);
\draw[blue] (0,0) to[out=30,in=-90] (0.5,0.5) -- (0.5,2);
\draw[fill=white](0,0) circle(2pt);
\draw[red,thick,->-={0.8}{}] (-2,1.3) -- (-0.5,1.3);
\draw[red,thick] (-0.5,1.3) -- (0.5,1.1);
\draw[red,thick] (-0.5,1.1) -- (0.5,0.7);
\draw[red,thick] (-0.5,0.7) -- (0.5,0.3);
\draw[red,thick,->-={0.2}{}] (0.5,1.1) arc(65.56:-180-65.56:1.209);
\draw[red,thick,->-={0.2}{}] (0.5,0.7) arc(54.46:-180-54.46:0.86);
\draw[red,thick,dotted] (0.5,0.3) arc(30.96:-30:0.583);

\draw[myorange,thick,->-={0.8}{}] (2,1.3) -- (0.5,1.3);
\draw[myorange,thick] (0.5,1.3) -- (-0.5,0.9);
\draw[myorange,thick] (0.5,0.9) -- (-0.5,0.5);
\draw[myorange,thick,->-={0.2}{}] (-0.5,0.9) arc(-180-60.95:60.95:1.0296);
\draw[myorange,thick,dotted] (-0.5,0.5) arc(-180-45:-180+15:0.5*1.414);
\end{scope}
\node[scale=1.5] at (9,0) {$\rightsquigarrow$};

\begin{scope}[xshift=12cm]
\draw[dashed] (0,0) circle(2cm);
\node at (1.8,-1.8) {$D_p$};
\node[blue] at (0,2.3) {$B_p$};
\clip(0,0) circle(2cm);
\draw[blue] (0,0) to[out=150,in=-90] (-0.5,0.5) -- (-0.5,2);
\draw[blue] (0,0) to[out=30,in=-90] (0.5,0.5) -- (0.5,2);
\draw[fill=white](0,0) circle(2pt);
\draw[red,thick,->-={0.8}{}] (-2,1.3) -- (-0.5,1.3);
\draw[red,thick] (-0.5,1.3) -- (0.5,1.3);
\draw[red,thick] (-0.5,1.1) -- (0.5,1.1);
\draw[red,thick] (-0.5,0.9) -- (0.5,0.9);
\draw[red,thick] (-0.5,0.7) -- (0.5,0.7);
\draw[red,thick] (-0.5,0.5) -- (0.5,0.5);

\draw[red,thick] ($(-0.5,1.3)!2/3!(0.5,1.3)$) -- ($(-0.5,1.1)!2/3!(0.5,1.1)$);
\draw[red,thick] ($(-0.5,1.1)!1/3!(0.5,1.1)$) -- ($(-0.5,0.9)!1/3!(0.5,0.9)$);
\draw[red,thick] ($(-0.5,0.9)!2/3!(0.5,0.9)$) -- ($(-0.5,0.7)!2/3!(0.5,0.7)$);
\draw[red,thick] ($(-0.5,0.7)!1/3!(0.5,0.7)$) -- ($(-0.5,0.5)!1/3!(0.5,0.5)$);

\draw[red,thick,->-={0.2}{}] (0.5,1.1) arc(65.56:-180-65.56:1.209);
\draw[red,thick,->-={0.2}{}] (0.5,0.7) arc(54.46:-180-54.46:0.86);
\draw[red,thick,dotted] (0.5,0.5) arc(45:-15:0.5*1.414);

\draw[red,thick,->-={0.8}{}] (2,1.3) -- (0.5,1.3);
\draw[red,thick,->-={0.2}{}] (-0.5,0.9) arc(-180-60.95:60.95:1.0296);
\draw[red,thick,dotted] (-0.5,0.5) arc(-180-45:-180+15:0.5*1.414);
\end{scope}
\end{tikzpicture}
    \caption{Construction of a spiralling diagram. Replace intersections with H-webs in a periodic manner.}
    \label{fig:spiral_example}
\end{figure}

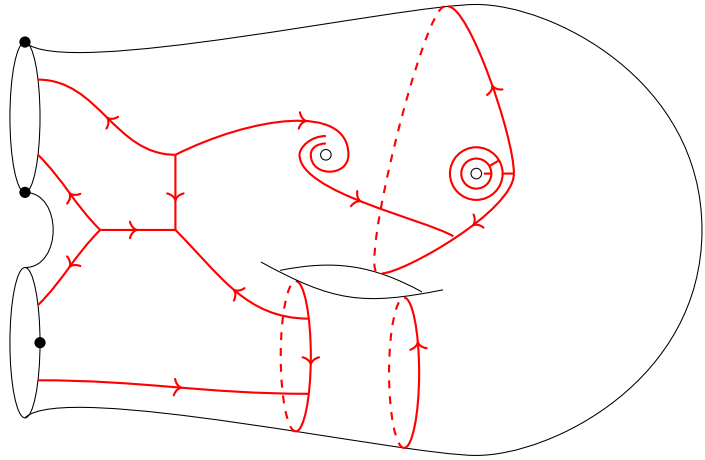
\begin{figure}[htbp]
\centering
\begin{tikzpicture}[scale=.9]
\draw(0,-1.5) ellipse(0.2cm and 1cm);
\draw(0,1.5) ellipse(0.2cm and 1cm);
\node[fill,circle,inner sep=1.5pt] at (0.2,-1.5) {};
\node[fill,circle,inner sep=1.5pt] at (0,2.5) {};
\node[fill,circle,inner sep=1.5pt] at (0,0.5) {};
\draw(0,-0.5) ..controls (0.5,-0.5) and (0.5,0.5).. (0,0.5);
\draw(0,2.5) ..controls ++(0.5,-0.5) and (5,3).. node[inner sep=0,pos=0.9](T){} (6,3)
..controls (7,3) and (9,2).. (9,0)
..controls (9,-2) and (7,-3).. (6,-3)
..controls (5,-3) and (0.5,-2).. (0,-2.5) 
node[pos=0.4](A){} node[pos=0.2](B){};
\draw[red,thick,-<-] (A) arc(-90:90:0.2cm and 1cm) 
coordinate(A');
\draw[red,thick,dashed] (A) arc(-90:-270:0.2cm and 1cm);
\draw[red,thick,->-={0.7}{}] (B) arc(-90:90:0.2cm and 1cm) 
coordinate(B');
\draw[red,thick,dashed] (B) arc(-90:-270:0.2cm and 1cm);
\draw[shorten >=-15pt,shorten <=-15pt] (A') to[bend right=20] 
node[inner sep=0,pos=-0.15](A''){} node[inner sep=0,pos=1.15](B''){} 
(B');
\draw(A'') to[bend left=20] node[inner sep=0,pos=0.7](C){} (B'');
\path (4,1) coordinate(P1);
\path(6,0.75) coordinate(P2);
\path (1,0) coordinate(X);
\path (2,0) coordinate(Y);
\path (2,1) coordinate(Z);
\path ($(0,-1.5)+(0.1*1.732,1*0.5)$) coordinate(BP1);
\path ($(0,1.5)+(0.1*1.732,-1*0.5)$) coordinate(BP2);
\path ($(0,-1.5)+(0.1*1.732,-1*0.5)$) coordinate(BP3);
\path ($(0,1.5)+(0.1*1.732,1*0.5)$) coordinate(BP4);
\draw[red,thick,->-] (X) -- (Y);
\draw[red,thick,-<-] (Y) -- (Z);
\draw[red,thick,->-] (X) to[out=-135,in=45] (BP1);
\draw[red,thick,->-] (X) to[out=135,in=-45] (BP2);
\draw[red,thick,->-] ($(A)+(0.1*1.732,1+1*0.5)$) to[out=180,in=-45] (Y);
\draw[red,thick,-<-] ($(A)+(0.1*1.732,1-1*0.5)$) to[out=180,in=0] (BP3);
\draw[red,thick,->-] (Z) to[out=180,in=0] (BP4);
\draw[red,thick,->-] (Z)
..controls ++(1,0.5) and ($(P1)+(0.3,0.7)$).. ($(P1)+(0.3,0)$)
..controls ++(0,-0.3) and ($(P1)+(-0.2,-0.3)$).. ($(P1)+(-0.2,0)$)
..controls ++(0,0.1) and ($(P1)+(-0.1,0.15)$).. ($(P1)+(0,0.15)$);
\draw[red,thick,->-={0.4}{}] (P2)++(0.5,0) ..controls ++(-90:0.5) and ($(C)+(0.3,0)$).. (C) node[inner sep=0,pos=0.5](S){};
\draw[red,thick,->-] (P2)++(0.5,0) ..controls ++(90:0.5) and ($(T)+(0.3,0)$).. (T);
\draw[red,thick] (P2) circle (0.35cm);
\draw[red,thick] (P2) circle (0.2cm);
\draw[red,thick] (P2)++(0.5,0) -- ($(P2)+(0.35,0)$);
\draw[red,thick] ($(P2)+(30:0.35)$) -- ($(P2)+(30:0.2)$);
\draw[red,thick] ($(P2)+(0:0.2)$) -- ($(P2)+(0:0.1)$);
\draw[red,thick,dashed] (T) ..controls ++(-0.3,0) and ($(C)+(-0.3,0)$).. (C);
\draw[red,thick,-<-={0.5}{}] (S) 
..controls ++(150:0.5) and ($(P1)+(-0.35,-0.5)$).. ($(P1)+(-0.35,0)$)
..controls ++(0,0.1) and ($(P1)+(-0.2,0.25)$).. ($(P1)+(0,0.25)$);
\draw[fill=white](P1) circle(2pt);
\draw[fill=white](P2) circle(2pt);
\end{tikzpicture}
    \caption{A global example of spiralling diagram arising from the underlying signed non-elliptic web in \cref{fig:global example}.}
    \label{fig:global example spiralling}
\end{figure}

\begin{dfn}
The spiralling diagram $\cW$ is in a \emph{good position} with respect to a split triangulation $\widehat{\tri}$ if the intersection $\cW\cap B_E$ (resp. $\cW \cap T$) is an unbounded essential (resp. reduced essential) local web for each $E \in e(\tri)$ and $T \in t(\tri)$.  
\end{dfn}
The loop parallel-move and the boundary H-move of a spiralling diagram are similarly defined as before, so that the construction of spiralling diagram from a signed web is equivariant under these moves. 
We define the \emph{modified periodic H-move} of a spiralling diagram in a good position across a corner to be the periodic application of the modified H-move to be the periodic parts of the unbounded essential local webs on biangles. By a \emph{strict isotopy} relative to a split triangulation $\widehat{\tri}$, we mean an isotopy on a marked surface $\Sigma$ which is the identity on each edge of $\widehat{\tri}$ and a neighborhood of each puncture. 

\begin{thm}[Proof in \cref{sec:good_position}]\label{prop:spiralling_good_position}
Any spiralling diagram arising from a non-elliptic signed web on $\Sigma$ can be isotoped into a good position with respect to $\widehat{\tri}$ by a finite sequence of intersection reduction moves, H-moves, and strict isotopies relative to $\widehat{\tri}$.
Moreover, such a good position is unique up to a sequence of modified H-moves, modified periodic H-moves, loop parallel-moves, boundary H-moves, and strict isotopies relative to $\widehat{\tri}$. 
\end{thm}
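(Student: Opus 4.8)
The plan is to reduce everything to the bounded case \cref{prop:good_position_bounded} by isolating the asymptotically periodic behaviour of the spiralling diagram near the punctures. By the very construction in \cref{def:spiralling}, inside each puncture disk $D_p$ the diagram $\cW$ is arranged so that $\cW \cap B_p$ is the ladder-web $W(S_p)$ of an asymptotically periodic symmetric strand set (\cref{def:infinite strand set}), and the corner arcs in $D_p \setminus B_p$ spiral logarithmically into $p$. The crucial structural observation I would establish first is that, beyond a compact ``transient'' neighbourhood of the core of $\Sigma^\ast$ together with finitely many initial turns, the restriction of $\cW$ to each biangle $B_E$ and triangle $T$ abutting a puncture is \emph{already} an unbounded essential (resp.\ reduced essential) local web: the logarithmic spirals cross each incident edge of $\tri$ once per turn and are therefore taut there. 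Thus all failures of the good-position condition are concentrated in a compact region $\Sigma_0 \subset \Sigma^\ast$.

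For existence, on $\Sigma_0$ the diagram $\cW \cap \Sigma_0$ is an ordinary bounded $\fsl_3$-web, so I would invoke \cref{prop:good_position_bounded} in its good-position refinement (\cite[Proposition 30]{DS20I}) to produce a finite sequence of intersection reduction moves, H-moves, and isotopies carrying $\cW\cap\Sigma_0$ into good position with respect to $\widehat{\tri}$. One then checks that these moves can be chosen to be strict relative to $\widehat{\tri}$ and supported away from the periodic tails, so that they neither disturb the already-good pieces near the punctures nor break periodicity; this yields the asserted finite sequence.

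For uniqueness, I would compare two good positions $\cW_1, \cW_2$ of the same spiralling diagram separately on the compact core and on the periodic tails. On $\Sigma_0$, the uniqueness half of \cref{prop:good_position_bounded} identifies the two up to modified H-moves, loop parallel-moves, boundary H-moves, and strict isotopies. On each periodic tail, good position forces the restriction to every fundamental period to be the reduced essential web determined by $S_p$, so the only remaining freedom is the per-turn H-move ambiguity applied uniformly to all turns, which is precisely a modified periodic H-move. Matching the two comparisons on the overlap annuli surrounding the punctures assembles them into the single list of moves in the statement.

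The step I expect to be the main obstacle is the interface analysis near each puncture: verifying rigorously that (i) finitely many reductions absorb the transient turns so that the remainder is exactly periodic and taut in every $B_E$ and $T$, (ii) the compact reductions can be realized strictly relative to $\widehat{\tri}$ without propagating into the tails, and (iii) no elliptic face is ever created by the H-web replacements of the ladder-web interacting with the edges incident to $p$. Controlling (iii) is the delicate point, and it is exactly here that the hypotheses that $\cW$ comes from a \emph{non-elliptic} signed web and that the signed ends are admissible (no bad ends, \eqref{eq:puncture-admissible}) are used.
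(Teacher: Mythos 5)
Your overall shape (the periodic tails near punctures are already essential, so the trouble is concentrated in a compact core which you hand to the bounded result) matches the intuition behind the paper's argument, but the step you flag as ``the main obstacle'' is in fact the entire content of the theorem, and your proposal does not supply an argument for it. The danger is not that the periodic tails fail to be taut --- they do not --- but that putting the compact core into minimal position with an arc $\gamma$ of $\widehat{\tri}$ ending at a puncture may require \emph{unwinding} turns of the web around that puncture, and there is no a priori bound on how many intersection reduction moves this takes: each reduction can expose a new bigon nested inside the previous one (exactly the configuration of \cref{fig:relative_intersection_diverge}). Consequently the claim that ``finitely many reductions absorb the transient turns'' cannot be obtained by restricting to a fixed compact subsurface and citing \cref{prop:good_position_bounded}; the compact part you would need to reduce grows with each move, so the reduction is circular as stated.

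The paper closes this gap with a quantitative device your proposal lacks: the \emph{relative intersection number} $i(\cW;\gamma,\gamma')$, defined by excising the periodic spiralling parts with cut-off sectors so that a finite web remains, together with its cocycle identity (\cref{lem:relative_int_cocycle}) and, crucially, \cref{lem:relative_intersection_maximal}, which asserts that this number is bounded above as $\gamma'$ ranges over arcs isotopic to $\gamma$. The proof of that lemma is where non-ellipticity and puncture-reducedness are actually used: an unbounded sequence of reductions would force a nested family of tightening biangles whose loose parts spiral into a puncture, and the correspondence between faces stuck to $\gamma$ and puncture faces (\cref{fig:side-puncture_correspondence}) shows the underlying signed web would then contain a puncture H-face or an elliptic face. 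Once boundedness is known, one takes an arc maximizing the relative intersection number and verifies via the cocycle identity that it is in minimal position; the remaining steps (simultaneous minimal position with all arcs of $\widehat{\tri}$, tidying H-webs into biangles, and uniqueness by induction on the number of arcs as in the bounded case) do go through essentially as you sketch. Without a substitute for this finiteness lemma, your items (i)--(iii) remain unproved and the existence half of the theorem is not established.
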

Indeed, we can obtain a representative in a good position by successively applying the intersection reduction moves (\cref{fig:tightening}) and then pushing the H-faces into biangles by the H-move (\cref{fig:H-move}). An example of this procedure is illustrated in \cref{fig:canonical_position_example}. 
The main issue here is to ensure that this procedure always terminates in finite steps, which is discussed in \cref{sec:good_position} in detail.

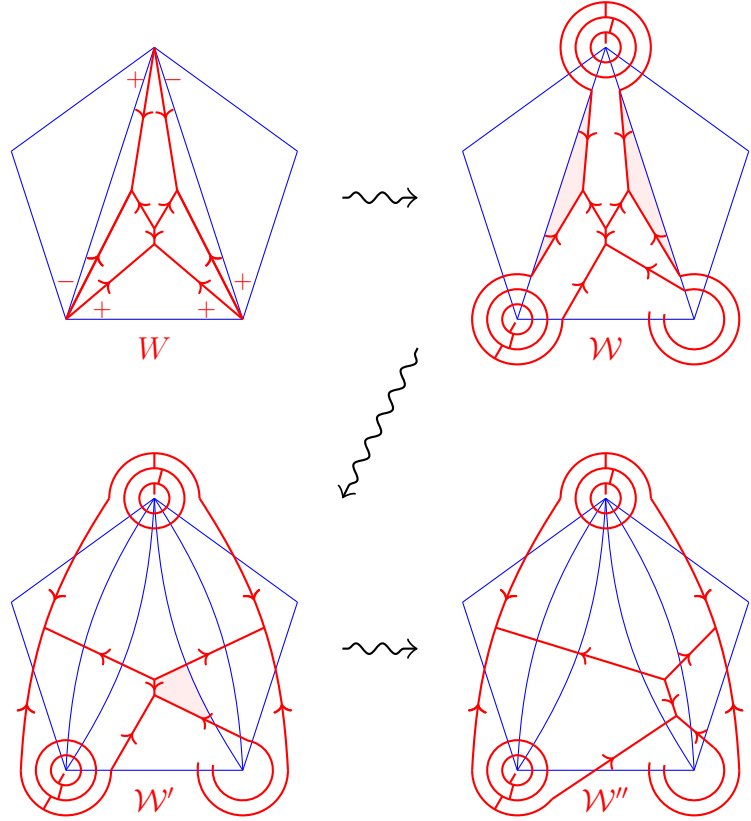
\begin{figure}
\centering
\begin{tikzpicture}
\begin{scope}[rotate=18]
\foreach \x in {0,1,2,3,4}
\draw[blue] (72*\x:2) coordinate(A\x) -- (72*\x+72:2);
\foreach \x in {0,1,2,3,4}
{
\foreach \y in {0,1,2,3,4}
\draw($(A\x)!0.5!(A\y)$) coordinate(A\x\y);
}
\end{scope}
\draw($(A1)!2/3!(A34)$) coordinate(G);
\draw[blue] (A1) -- (A4);
\draw[blue] (A1) -- (A3);
\draw(A13) ++(-18:0.3) coordinate(B13);
\draw(A14) ++(-180+18:0.3) coordinate(B14);
\draw(A34) ++(90:1) coordinate(B34);

{\color{red}
\draw[thick,->-] (A1) -- (B13);
\draw[thick,->-] (A1) -- (B14);
\draw[thick,->-] (A3) -- (B13);
\draw[thick,->-] (A4) -- (B14);
\draw[thick,->-] (A3) -- (B34);
\draw[thick,->-] (A4) -- (B34);
\draw[thick,->-] (A3) -- (B13);
\draw[thick,->-] (A4) -- (B14);
\draw[thick,->-={0.7}{}] (G) -- (B13);
\draw[thick,->-={0.7}{}] (G) -- (B14);
\draw[thick,->-={0.7}{}] (G) -- (B34);

\node[scale=0.8] at ($(A1)+(-120:0.5)$) {$+$};
\node[scale=0.8] at ($(A1)+(-60:0.5)$) {$-$};
\node[scale=0.8] at ($(A3)+(15:0.5)$) {$+$};
\node[scale=0.8] at ($(A3)+(90:0.5)$) {$-$};
\node[scale=0.8] at ($(A4)+(180-15:0.5)$) {$+$};
\node[scale=0.8] at ($(A4)+(90:0.5)$) {$+$};
\node at (0,-2) {$W$};
}

{\begin{scope}[xshift=6cm]
\begin{scope}[rotate=18]
\foreach \x in {0,1,2,3,4}
\draw[blue] (72*\x:2) coordinate(A\x) -- (72*\x+72:2);
\foreach \x in {0,1,2,3,4}
{
\foreach \y in {0,1,2,3,4}
\draw($(A\x)!0.5!(A\y)$) coordinate(A\x\y);
}
\end{scope}
\draw($(A1)!2/3!(A34)$) coordinate(G);
\draw[blue] (A1) -- (A4);
\draw[blue] (A1) -- (A3);
\draw(A13) ++(-18:0.3) coordinate(B13);
\draw(A14) ++(-180+18:0.3) coordinate(B14);
\draw(A34) ++(90:1) coordinate(B34);

\draw[thick] (A1)++(-90+18:0.6) coordinate(D14);
\draw[thick] (A1)++(270-18:0.6) coordinate(D13);
\draw[thick] (A3)++(72:0.6) coordinate(D31);
\draw[thick] (A3)++(0:0.6) coordinate(D34);
\draw[thick] (A4)++(90+18:0.6) coordinate(D41);
\draw[thick] (A4)++(90+18:0.4) coordinate(D43);
\begin{pgfonlayer}{bg} 
\filldraw[pink!30] (D31) -- (B13) -- (D13) --cycle;
\filldraw[pink!30] (D41) -- (B14) -- (D14) --cycle;
\end{pgfonlayer} 

{\color{red}
\draw[thick,->-={0.7}{}] (G) -- (B13);
\draw[thick,->-={0.7}{}] (G) -- (B14);
\draw[thick,->-={0.7}{}] (G) -- (B34);
\draw[thick] (A1) circle(0.2cm);
\draw[thick] (A1) circle(0.4cm);
\draw[thick] (A1)++(270-18:0.6) arc(270-18:-90+18:0.6cm);
\draw[thick,->-] (A1)++(-90+18:0.6) --(B14);
\draw[thick,->-] (A1)++(270-18:0.6) --(B13);
\draw[thick] (A1)++(90:0.6) --++(-90:0.2);
\draw[thick] (A1)++(75:0.4) --++(-105:0.2);
\draw[thick] (A1)++(90:0.2) --++(-90:0.15);
\draw[thick] (A3) circle(0.2cm);
\draw[thick] (A3) circle(0.4cm);
\draw[thick] (A3)++(0:0.6) arc(0:-360+72:0.6cm);
\draw[thick,->-] (A3)++(72:0.6)--(B13);
\draw[thick,->-] (A3)++(0:0.6) --(B34);
\draw[thick] (A3)++(-120:0.6) --++(60:0.2);
\draw[thick] (A3)++(-105:0.4) --++(75:0.2);
\draw[thick] (A3)++(-120:0.2) --++(60:0.15);
\draw[thick] (A4)++(90+18:0.6) arc(90+18:-190:0.6cm);
\draw[thick] (A4)++(90+18:0.4) arc(90+18:-190:0.4cm);
\draw[thick,->-] (A4)++(90+18:0.6) --(B14);
\draw[thick,->-] (A4)++(90+18:0.4) --(B34);
\node at (0,-2) {$\cW$};
}
\end{scope}}

{\begin{scope}[yshift=-6cm]
\begin{scope}[rotate=18]
\foreach \x in {0,1,2,3,4}
\draw[blue] (72*\x:2) coordinate(A\x) -- (72*\x+72:2);
\foreach \x in {0,1,2,3,4}
{
\foreach \y in {0,1,2,3,4}
\draw($(A\x)!0.5!(A\y)$) coordinate(A\x\y);
}
\end{scope}
\draw($(A1)!2/3!(A34)$) coordinate(G);
\draw[blue] (A1) to[bend left=15] (A4);
\draw[blue,name path=P] (A1) to[bend right=15] (A4);
\draw[blue] (A1) to[bend left=15] (A3);
\draw[blue] (A1) to[bend right=15] (A3);
\draw(A13) ++(-18:0.3) coordinate(B13);
\draw(A14) ++(-180+18:0.3) coordinate(B14);
\draw(A34) ++(90:1) coordinate(B34);

\draw[thick] (A1)++(0:0.6) coordinate(D14);
\draw[thick] (A1)++(180:0.6) coordinate(D13);
\draw[thick] (A3)++(180:0.6) coordinate(D31);
\draw[thick] (A3)++(0:0.6) coordinate(D34);
\draw[thick] (A4)++(0:0.6) coordinate(D41);
\draw[thick] (A4)++(90+18:0.4) coordinate(D43);

{\color{red}
\draw[thick,->-={0.4}{},-<-={0.8}{}] (D13) to[bend right=15] node[midway]{} coordinate(E13) (D31);
\draw[thick,->-={0.4}{},-<-={0.8}{}] (D14) to[bend left=15] node[midway]{} coordinate(E14) (D41);
\draw[thick,->-={0.7}{}] (G) -- (B34);
\draw[thick] (A1) circle(0.2cm);
\draw[thick] (A1) circle(0.4cm);
\draw[thick] (A1)++(180:0.6) arc(180:0:0.6cm);
\draw[thick] (A1)++(270-18:0.6);
\draw[thick] (A1)++(90:0.6) --++(-90:0.2);
\draw[thick] (A1)++(75:0.4) --++(-105:0.2);
\draw[thick] (A1)++(90:0.2) --++(-90:0.15);
\draw[thick] (A3) circle(0.2cm);
\draw[thick] (A3) circle(0.4cm);
\draw[thick] (A3)++(0:0.6) arc(0:-180:0.6cm);
\draw[thick,->-] (A3)++(0:0.6) --(B34);
\draw[thick] (A3)++(-120:0.6) --++(60:0.2);
\draw[thick] (A3)++(-105:0.4) --++(75:0.2);
\draw[thick] (A3)++(-120:0.2) --++(60:0.15);
\draw[thick] (A4)++(0:0.6) arc(0:-190:0.6cm);
\draw[thick] (A4)++(80:0.4) arc(80:-190:0.4cm);
\draw[thick] (A4)++(90+18:0.6);
\draw[thick,->-,name path=P1] (A4)++(80:0.4) --(B34);
\draw[thick,->-] (G) -- (E13);
\draw[thick,->-,name path=P2] (G) -- (E14);

\draw[name intersections={of=P and P1,by=F1}];
\path[name intersections={of=P and P2,by=F2}];
\begin{pgfonlayer}{bg}  
    \filldraw[pink!30] (F1) -- (B34) -- (G) -- (F2);
\end{pgfonlayer}
\node at (0,-2) {$\cW'$};
}
\end{scope}}

{\begin{scope}[xshift=6cm,yshift=-6cm]
\begin{scope}[rotate=18]
\foreach \x in {0,1,2,3,4}
\draw[blue] (72*\x:2) coordinate(A\x) -- (72*\x+72:2);
\foreach \x in {0,1,2,3,4}
{
\foreach \y in {0,1,2,3,4}
\draw($(A\x)!0.5!(A\y)$) coordinate(A\x\y);
}
\end{scope}
\draw($(A1)!2/3!(A4)$) coordinate(G);
\draw[blue] (A1) to[bend left=15] (A4);
\draw[blue,name path=P] (A1) to[bend right=15] (A4);
\draw[blue] (A1) to[bend left=15] (A3);
\draw[blue] (A1) to[bend right=15] (A3);
\draw(A13) ++(-18:0.3) coordinate(B13);
\draw(A14) ++(-180+18:0.3) coordinate(B14);
\draw($(A1)!0.8!(A4)$) coordinate(B34);

\draw[thick] (A1)++(0:0.6) coordinate(D14);
\draw[thick] (A1)++(180:0.6) coordinate(D13);
\draw[thick] (A3)++(180:0.6) coordinate(D31);
\draw[thick] (A3)++(0:0.6) coordinate(D34);
\draw[thick] (A4)++(0:0.6) coordinate(D41);
\draw[thick] (A4)++(90+18:0.4) coordinate(D43);

{\color{red}
\draw[thick,->-={0.4}{},-<-={0.8}{}] (D13) to[bend right=15] node[midway]{} coordinate(E13) (D31);
\draw[thick,->-={0.4}{},-<-={0.8}{}] (D14) to[bend left=15] node[midway]{} coordinate(E14) (D41);
\draw[thick,->-={0.7}{}] (G) -- (B34);
\draw[thick] (A1) circle(0.2cm);
\draw[thick] (A1) circle(0.4cm);
\draw[thick] (A1)++(180:0.6) arc(180:0:0.6cm);
\draw[thick] (A1)++(270-18:0.6);
\draw[thick] (A1)++(90:0.6) --++(-90:0.2);
\draw[thick] (A1)++(75:0.4) --++(-105:0.2);
\draw[thick] (A1)++(90:0.2) --++(-90:0.15);
\draw[thick] (A3) circle(0.2cm);
\draw[thick] (A3) circle(0.4cm);
\draw[thick] (A3)++(-40:0.6) arc(-40:-180:0.6cm);
\draw[thick,->-] (A3)++(-40:0.6) --(B34);
\draw[thick] (A3)++(-120:0.6) --++(60:0.2);
\draw[thick] (A3)++(-105:0.4) --++(75:0.2);
\draw[thick] (A3)++(-120:0.2) --++(60:0.15);
\draw[thick] (A4)++(0:0.6) arc(0:-190:0.6cm);
\draw[thick] (A4)++(50:0.4) arc(50:-190:0.4cm);
\draw[thick] (A4)++(90+18:0.6);
\draw[thick,->-] (A4)++(50:0.4) --(B34);
\draw[thick,->-] (G) -- (E13);
\draw[thick,->-] (G) -- (E14);

\node at (0,-2) {$\cW''$};
}
\end{scope}}
\draw [thick,-{Classical TikZ Rightarrow[length=4pt]},decorate,decoration={snake,amplitude=2pt,pre length=2pt,post length=3pt}](2.5,0) --(3.5,0);
\draw [thick,-{Classical TikZ Rightarrow[length=4pt]},decorate,decoration={snake,amplitude=2pt,pre length=2pt,post length=3pt}](2.5,-6) --(3.5,-6);
\draw [thick,-{Classical TikZ Rightarrow[length=4pt]},decorate,decoration={snake,amplitude=2pt,pre length=2pt,post length=3pt}](3.5,-2) --(2.5,-4);
\end{tikzpicture}
    \caption{An example of the procedure to place a spiralling diagram in good position.}
    \label{fig:canonical_position_example}
\end{figure}

While the spiralling diagram itself is suited to discuss its good position, the following \emph{braid representation} will be useful to define the shear coordinates:

\begin{dfn}[Braid representation of a spiralling diagram]
Let $\cW$ be a spiralling diagram in a good position with respect to $\widehat{\tri}$. Then its \emph{braid representation} $\cW_{\mathrm{br}}^\tri$ is obtained from $\cW$ by replacing the unbounded essential web $\cW\cap B_E$ on each biangle $B_E$ with its braid representation. 
\end{dfn}
The braid representation is closely related to (an unbounded version of) \emph{global picture} \cite[Definition 55]{DS20I}. See also \cref{sec:traveler}.

\subsection{Definition of the shear coordinates}\label{subsec:shear}
Now we define the shear coordinates associated with an ideal triangulation $\tri$ of $\Sigma$ without self-folded triangles. Let $\widehat{\tri}$ be the associated split triangulation. 

Given a rational $\fsl_3$-lamination $\hL \in \cL^x(\Sigma,\bQ)$, represent it by an $\fsl_3$-web $W$ together with rational weights on its components and signs at the ends incident to punctures. Let $\cW$ be the associated spiralling diagram together with rational weights on the components, placed in good position with respect to $\widehat{\tri}$. Let $\cW_{\mathrm{br}}^\tri$ be its braid representation, together with well-assigned rational weights on its components. The shear coordinates of $\hL$ are going to be defined out of $\cW_{\mathrm{br}}^\tri$. 

For each $E \in e_{\interior}(\tri)$, let $Q_E$ be the unique quadrilateral containing $E$ as its diagonal, regarded as the union of two triangles $T_L,T_R$ and the biangle $B_E$. By \cref{lem:triangle_essential}, the restriction of $\cW_{\mathrm{br}}^\tri$ to each of $T_L$ and $T_R$ has at most one honeycomb web, which is represented by a triangular symbol as in \cref{notation:division of honeycombs}. We call any strand in the braid representative $\cW_{\mathrm{br}}^\tri \cap Q_E$ that is incident to the triangular symbol in $T_L$ (if exists) a \emph{$T_L$-strand}. Similarly, we define \emph{$T_R$-strands}. It is possible that an arc is both $T_L$- and $T_R$-strand, in which case it connects the two honeycombs. 
By removing the $T_L$- and $T_R$-strands, remaining is a collection of (possibly intersecting) oriented curves, which we call the \emph{curve components}. See \cref{fig:shear_example} below.


\begin{dfn}[$\fsl_3$-shear coordinates]
The \emph{($\fsl_3$-)shear coordinate system} 
\begin{align*}
    \sfx^\uf_\tri(\hL) =(\sfx_i^\tri(\hL))_{i \in I_\uf(\tri)} \in \bQ^{I_\uf(\tri)}
\end{align*}
is defined as follows. First, for each $E \in e_{\interior}(\tri)$, the coordinates assigned to the four vertices in the interior of $Q_E$ only depends on the restriction $\cW_{\mathrm{br}}^\tri \cap Q_E$. 
\begin{enumerate}
    \item Each curve component contributes to the edge coordinates according to the rule shown in \cref{fig:shear_curve}.
    \item The honeycomb on the triangle $T_L$ contributes to $\sfx^\uf_\tri(\hL)$ as in \cref{fig:shear_honeycomb}. Namely, the face coordinate counts the height of the honeycomb web, where a sink (resp. source) is counted positively (resp. negatively). 
    The edge coordinates counts the contributions from $T_L$-strands, where we have $n_1$ left-turning ones, $n_2$ straight-going ones (which are also $T_R$-strands), and $n_3$ right-turning ones. 
    \item The honeycomb on the triangle $T_R$ and the $T_R$-strands contribute in the symmetric way with respect to the $\pi$ rotation of the figure. 
\end{enumerate}
Then the shear coordinates are defined to be the weighted sums of these contributions. 
\end{dfn}

\begin{figure}[tbp]
    \centering
\begin{tikzpicture}
\draw[blue] (2,0) -- (0,2) -- (-2,0) -- (0,-2) --cycle;
\draw[blue] (0,-2) to[bend left=30pt] (0,2);
\draw[blue] (0,-2) to[bend right=30pt] (0,2);
\node[blue] at (-1.5,-1.5) {$T_L$};
\node[blue] at (1.5,-1.5) {$T_R$};
\draw[very thick,red,->-] (-1.2,1) --node[midway,below=0.3em]{$m$} (1,-1.2);
\draw[very thick,red,-<-] (-1,1.2) --node[midway,above=0.3em]{$n$} (1.2,-1);
\draw[thick,|->] (2.5,0) --node[midway,above]{$\sfx^\uf_\tri$} (3.5,0); 
\begin{scope}[xshift=6cm]
\draw[blue] (2,0) -- (0,2) -- (-2,0) -- (0,-2) --cycle;
\draw[blue] (0,-2) to (0,2);
{\color{mygreen}
\draw (0.8,0) circle(2pt) node[right]{$0$};
\draw (-0.8,0) circle(2pt) node[left]{$0$};
\draw (0,0.8) circle(2pt) node[above right]{$n$};
\draw(0,-0.8) circle(2pt)node[below right]{$m$};
}
\end{scope}

\begin{scope}[yshift=-5cm]
\draw[blue] (2,0) -- (0,2) -- (-2,0) -- (0,-2) --cycle;
\draw[blue] (0,-2) to[bend left=30pt] (0,2);
\draw[blue] (0,-2) to[bend right=30pt] (0,2);
\node[blue] at (-1.5,-1.5) {$T_L$};
\node[blue] at (1.5,-1.5) {$T_R$};
\draw[very thick,red,-<-] (-1.2,-1) --node[midway,above=0.3em]{$n$} (1,1.2);
\draw[very thick,red,->-] (-1,-1.2) --node[midway,below=0.3em]{$m$} (1.2,1);
\draw[thick,|->] (2.5,0) --node[midway,above]{$\sfx^\uf_\tri$} (3.5,0); 
{\begin{scope}[xshift=6cm]
\draw[blue] (2,0) -- (0,2) -- (-2,0) -- (0,-2) --cycle;
\draw[blue] (0,-2) to (0,2);
{\color{mygreen}
\draw (0.8,0) circle(2pt) node[right]{$0$};
\draw (-0.8,0) circle(2pt) node[left]{$0$};
\draw (0,0.8) circle(2pt) node[above right]{$-n$};
\draw(0,-0.8) circle(2pt)node[below right]{$-m$};
}
\end{scope}}
\end{scope}
\end{tikzpicture}
    \caption{Contributions from curve components.}
    \label{fig:shear_curve}
\end{figure}
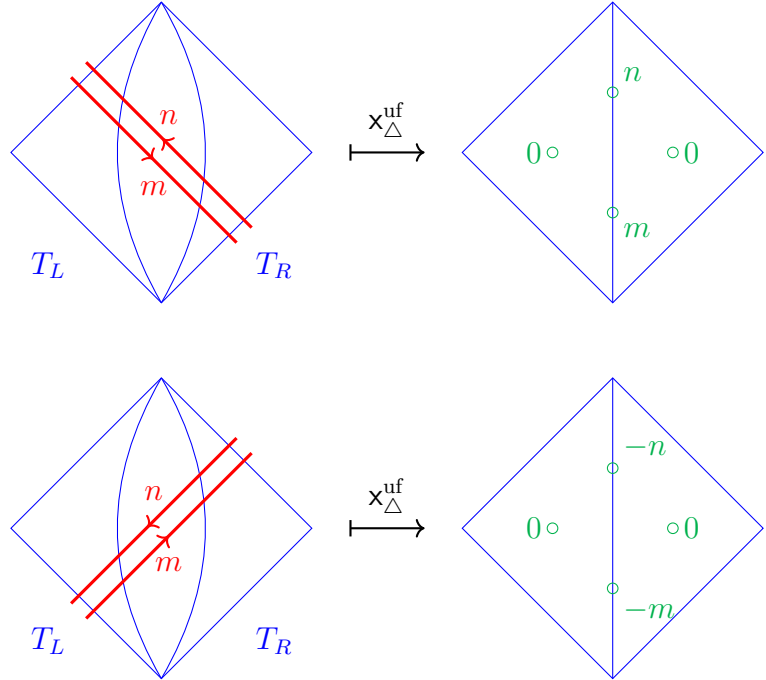

\begin{figure}[htbp]
    \centering
\begin{tikzpicture}
\draw[blue] (2,0) -- (0,2) -- (-2,0) -- (0,-2) --cycle;
\draw[blue] (0,-2) to[bend left=30pt] (0,2);
\draw[blue] (0,-2) to[bend right=30pt] (0,2);
\node[blue] at (-1.5,-1.5) {$T_L$};
\node[blue] at (1.5,-1.5) {$T_R$};
\draw[red,very thick] (-1.4,0) --++(0.6,0.6) --++(0,-1.2) --cycle;
\draw[red,very thick,->-] (-1.6,0.6) node[above]{$n$} -- (-1.2,0.2);
\draw[red,very thick,->-] (-1.6,-0.6) node[below]{$n$} -- (-1.2,-0.2);
\draw[red,dashed] (1.4,0) --++(-0.6,0.6) --++(0,-1.2) --cycle;
\draw[red,very thick,-<-] (-0.8,0) -- (0.8,0);
\node[red] at (0.4,-0.2) {$n_2$};
\draw[red,very thick,-<-] (-0.8,0.2) to[out=0, in=-120] (0.8,1.4) node[above]{$n_1$};
\draw[red,very thick,-<-] (-0.8,-0.2) to[out=0, in=120] (0.8,-1.4) node[below]{$n_3$};
\draw[thick,|->] (2.5,0) --node[midway,above]{$\sfx^\uf_\tri$} (3.5,0); 
\begin{scope}[xshift=6cm]
\draw[blue] (2,0) -- (0,2) -- (-2,0) -- (0,-2) --cycle;
\draw[blue] (0,-2) to (0,2);
{\color{mygreen}
\draw (0.8,0) circle(2pt) node[right]{$0$};
\draw (-0.8,0) circle(2pt) node[left]{$n$};
\draw (0,0.8) circle(2pt) node[above right]{$-n_1$};
\draw(0,-0.8) circle(2pt)node[below right]{$0$};
}
\end{scope}

\begin{scope}[yshift=-5cm]
\draw[blue] (2,0) -- (0,2) -- (-2,0) -- (0,-2) --cycle;
\draw[blue] (0,-2) to[bend left=30pt] (0,2);
\draw[blue] (0,-2) to[bend right=30pt] (0,2);
\node[blue] at (-1.5,-1.5) {$T_L$};
\node[blue] at (1.5,-1.5) {$T_R$};
\draw[red,very thick] (-1.4,0) --++(0.6,0.6) --++(0,-1.2) --cycle;
\draw[red,very thick,-<-] (-1.6,0.6) node[above]{$n$} -- (-1.2,0.2);
\draw[red,very thick,-<-] (-1.6,-0.6) node[below]{$n$} -- (-1.2,-0.2);
\draw[red,dashed] (1.4,0) --++(-0.6,0.6) --++(0,-1.2) --cycle;
\draw[red,very thick,->-] (-0.8,0) -- (0.8,0);
\node[red] at (0.4,-0.2) {$n_2$};
\draw[red,very thick,->-] (-0.8,0.2) to[out=0, in=-120] (0.8,1.4) node[above]{$n_1$};
\draw[red,very thick,->-] (-0.8,-0.2) to[out=0, in=120] (0.8,-1.4) node[below]{$n_3$};
\draw[thick,|->] (2.5,0) --node[midway,above]{$\sfx^\uf_\tri$} (3.5,0); 
{\begin{scope}[xshift=6cm]
\draw[blue] (2,0) -- (0,2) -- (-2,0) -- (0,-2) --cycle;
\draw[blue] (0,-2) to (0,2);
{\color{mygreen}
\draw (0.8,0) circle(2pt) node[right]{$0$};
\draw (-0.8,0) circle(2pt) node[left]{$-n$};
\draw (0,0.8) circle(2pt) node[above right]{$0$};
\draw(0,-0.8) circle(2pt)node[below right]{$n_3$};
}
\end{scope}}
\end{scope}
\end{tikzpicture}
    \caption{Contributions from the honeycomb of height $n=n_1+n_2+n_3$ on the triangle $T_L$. Observe that the $n_2$ straight-going $T_L$-strands do not contribute.}
    \label{fig:shear_honeycomb}
\end{figure}
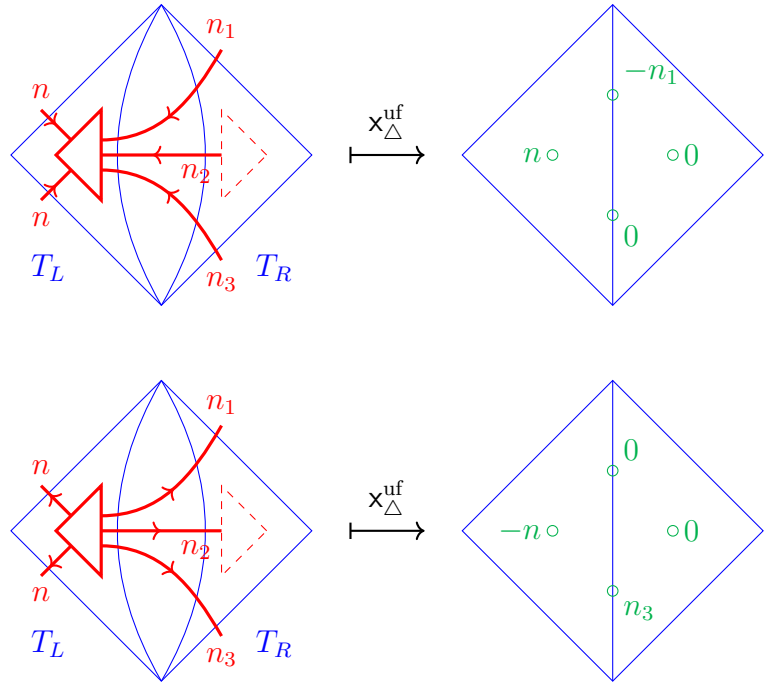

\begin{rem}\label{rem:decomposition_honeycomb}
\begin{enumerate}
    \item Notice that the rule shown in \cref{fig:shear_curve} is an \lq\lq oriented version'' of the Thurston's shear coordinates (see \cref{subsec:principal}). Indeed, the sign of contribution is determined by the crossing pattern as in the $\fsl_2$-case, and it contributes to the coordinates \emph{on the right side} of the oriented curve. 
    \item The shear coordinates of the first honeycomb component shown in \cref{fig:shear_honeycomb} is the same as the sum of shear coordinates of the three honeycomb components shown in \cref{fig:shear_honeycomb_decomposition}. 
\end{enumerate}
\end{rem}

\begin{figure}[htbp]
\centering
\begin{tikzpicture}[scale=.9]
\draw[blue] (2,0) -- (0,2) -- (-2,0) -- (0,-2) --cycle;
\draw[blue] (0,-2) to[bend left=30pt] (0,2);
\draw[blue] (0,-2) to[bend right=30pt] (0,2);
\draw[red,very thick] (-1.4,0) --++(0.6,0.6) --++(0,-1.2) --cycle;
\draw[red,very thick,->-] (-1.6,0.6) node[above]{$n_1$} -- (-1.2,0.2);
\draw[red,very thick,->-] (-1.6,-0.6) node[below]{$n_1$} -- (-1.2,-0.2);
\draw[red,very thick,-<-] (-0.8,0) to[out=0, in=-120] (0.8,1.4) node[above]{$n_1$};

{\begin{scope}[xshift=5cm]
\draw[blue] (2,0) -- (0,2) -- (-2,0) -- (0,-2) --cycle;
\draw[blue] (0,-2) to[bend left=30pt] (0,2);
\draw[blue] (0,-2) to[bend right=30pt] (0,2);
\draw[red,very thick] (-1.4,0) --++(0.6,0.6) --++(0,-1.2) --cycle;
\draw[red,very thick,->-] (-1.6,0.6) node[above]{$n_2$} -- (-1.2,0.2);
\draw[red,very thick,->-] (-1.6,-0.6) node[below]{$n_2$} -- (-1.2,-0.2);
\draw[red,dashed] (1.4,0) --++(-0.6,0.6) --++(0,-1.2) --cycle;
\draw[red,very thick,-<-] (-0.8,0) -- (0.8,0) node[below left=0.2em]{$n_2$};
\end{scope}}

{\begin{scope}[xshift=10cm]
\draw[blue] (2,0) -- (0,2) -- (-2,0) -- (0,-2) --cycle;
\draw[blue] (0,-2) to[bend left=30pt] (0,2);
\draw[blue] (0,-2) to[bend right=30pt] (0,2);
\draw[red,very thick] (-1.4,0) --++(0.6,0.6) --++(0,-1.2) --cycle;
\draw[red,very thick,->-] (-1.6,0.6) node[above]{$n_3$} -- (-1.2,0.2);
\draw[red,very thick,->-] (-1.6,-0.6) node[below]{$n_3$} -- (-1.2,-0.2);
\draw[red,very thick,-<-] (-0.8,0) to[out=0, in=120] (0.8,-1.4) node[below]{$n_3$};
\end{scope}}
\end{tikzpicture}
    \caption{Basic honeycomb components}
    \label{fig:shear_honeycomb_decomposition}
\end{figure}
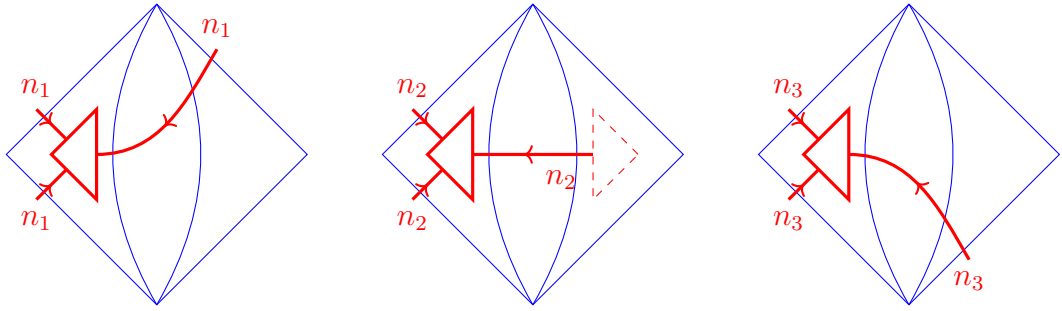

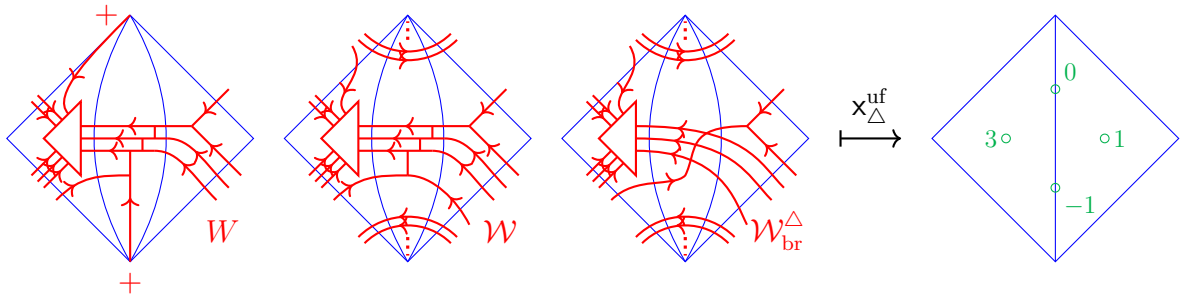
\begin{figure}[htp]
    \centering
    \begin{tikzpicture}[scale=.7]
\draw[blue] (2,0) -- (0,2) -- (-2,0) -- (0,-2) --cycle;
\draw[blue] (0,-2) to[bend left=30pt] (0,2);
\draw[blue] (0,-2) to[bend right=30pt] (0,2);
\draw[red,thick] (-1.4,0) --++(0.6,0.6) --++(0,-1.2) --cycle;
\draw[red,thick,->-={0.7}{}] (-1.6,0.6) -- (-1.2,0.2);
\draw[red,thick,->-={0.7}{}] (-1.5,0.7) -- (-1.1,0.3);
\draw[red,thick,->-={0.7}{}] (0,2) node[left,scale=0.8]{$+$} ..controls (-0.6,1.4) and (-1.3,0.7).. (-1.0,0.4);
\draw[red,thick,->-={0.7}{}] (-1.6,-0.6) -- (-1.2,-0.2);
\draw[red,thick,->-={0.7}{}] (-1.5,-0.7) -- (-1.1,-0.3);
\draw[red,thick,->-={0.7}{}] (-1.4,-0.8) -- (-1.0,-0.4);
\draw[red,thick,-<-] (-0.8,0.2) -- (1,0.2);
\draw[red,thick,-<-] (1,0.2) -- (1.6,0.8);
\draw[red,thick,-<-] (1,0.2) -- (1.8,-0.6);
\draw[red,thick,-<-] (-0.8,0) -- (0.6,0);
\draw[red,thick,-<-] (0.6,0) ..controls (1,0) and (1.2,-0.4).. (1.6,-0.8);
\draw[red,thick,-<-] (-0.8,-0.2) -- (0.4,-0.2);
\draw[red,thick,-<-] (0.4,-0.2) ..controls (0.8,-0.2) and (1,-0.6).. (1.4,-1.0);
\draw[red,thick] (0.4,0.2) -- (0.4,0);
\draw[red,thick] (0.2,0) -- (0.2,-0.2);
\draw[red,thick,-<-] (0,-0.2) -- (0,-2) node[below,scale=0.8]{$+$};
\draw[red,thick,->-={0.3}{}] (-1.2,-1.0) ..controls (-0.8,-0.6) and (-0.5,-0.6).. (0,-0.6);
\node[red] at (1.5,-1.5) {$W$};
\begin{scope}[xshift=4.5cm]
\draw[blue] (2,0) -- (0,2) -- (-2,0) -- (0,-2) --cycle;
\draw[blue] (0,-2) to[bend left=30pt] (0,2);
\draw[blue] (0,-2) to[bend right=30pt] (0,2);
\draw[red,thick] (-1.4,0) --++(0.6,0.6) --++(0,-1.2) --cycle;
\draw[red,thick,->-={0.7}{}] (-1.6,0.6) -- (-1.2,0.2);
\draw[red,thick,->-={0.7}{}] (-1.5,0.7) -- (-1.1,0.3);
\draw[red,thick,->-={0.7}{}] (-0.9,1.5) ..controls (-0.6,1) and (-1.3,0.7).. (-1.0,0.4);
\draw[red,thick,->-] (-0.8,1.6) to[out=-45,in=-135] (0.8,1.6);
\draw[red,thick,->-] (-0.7,1.7) to[out=-45,in=-135] (0.7,1.7);
\draw[red,very thick,dotted] (0,1.6) -- (0,1.9);
\draw[red,thick,->-={0.7}{}] (-1.6,-0.6) -- (-1.2,-0.2);
\draw[red,thick,->-={0.7}{}] (-1.5,-0.7) -- (-1.1,-0.3);
\draw[red,thick,->-={0.7}{}] (-1.4,-0.8) -- (-1.0,-0.4);
\draw[red,thick,-<-] (-0.8,0.2) -- (1,0.2);
\draw[red,thick,-<-] (1,0.2) -- (1.6,0.8);
\draw[red,thick,-<-] (1,0.2) -- (1.8,-0.6);
\draw[red,thick,-<-] (-0.8,0) -- (0.6,0);
\draw[red,thick,-<-] (0.6,0) ..controls (1,0) and (1.2,-0.4).. (1.6,-0.8);
\draw[red,thick,-<-] (-0.8,-0.2) -- (0.4,-0.2);
\draw[red,thick,-<-] (0.4,-0.2) ..controls (0.8,-0.2) and (1,-0.6).. (1.4,-1.0);
\draw[red,thick] (0.4,0.2) -- (0.4,0);
\draw[red,thick] (0.2,0) -- (0.2,-0.2);
\draw[red,thick] (0,-0.2) -- (0,-0.6);
\draw[red,thick,->-={0.3}{}] (-1.2,-1.0) ..controls (-0.8,-0.6) and (-0.5,-0.6).. (0,-0.6);
\draw[red,thick,-<-={0.7}{}] (0,-0.6) ..controls (0.4,-0.6) and (0.8,-1).. (1.0,-1.4);
\draw[red,thick,-<-] (-0.8,-1.6) to[out=45,in=135] (0.8,-1.6);
\draw[red,thick,-<-] (-0.7,-1.7) to[out=45,in=135] (0.7,-1.7);
\draw[red,very thick,dotted] (0,-1.6) -- (0,-1.9);
\node[red] at (1.5,-1.5) {$\cW$};
\end{scope}
\begin{scope}[xshift=9cm]
\draw[blue] (2,0) -- (0,2) -- (-2,0) -- (0,-2) --cycle;
\draw[blue] (0,-2) to[bend left=30pt] (0,2);
\draw[blue] (0,-2) to[bend right=30pt] (0,2);
\draw[red,thick] (-1.4,0) --++(0.6,0.6) --++(0,-1.2) --cycle;
\draw[red,thick,->-={0.7}{}] (-1.6,0.6) -- (-1.2,0.2);
\draw[red,thick,->-={0.7}{}] (-1.5,0.7) -- (-1.1,0.3);
\draw[red,thick,->-={0.7}{}] (-0.9,1.5) ..controls (-0.6,1) and (-1.3,0.7).. (-1.0,0.4);
\draw[red,thick,->-] (-0.8,1.6) to[out=-45,in=-135] (0.8,1.6);
\draw[red,thick,->-] (-0.7,1.7) to[out=-45,in=-135] (0.7,1.7);
\draw[red,very thick,dotted] (0,1.6) -- (0,1.9);
\draw[red,thick,->-={0.7}{}] (-1.6,-0.6) -- (-1.2,-0.2);
\draw[red,thick,->-={0.7}{}] (-1.5,-0.7) -- (-1.1,-0.3);
\draw[red,thick,->-={0.7}{}] (-1.4,-0.8) -- (-1.0,-0.4);
\draw[red,thick,-<-={0.65}{}] (1,0.2) ..controls (0,0.2) and (0.2,-0.2).. (0,-0.6) ..controls (-0.2,-0.8) and (-0.8,-0.6).. (-1.2,-1.0);
\draw[red,thick,-<-] (1,0.2) -- (1.6,0.8);
\draw[red,thick,-<-] (1,0.2) -- (1.8,-0.6);
\draw[red,thick,-<-={0.3}{}] (-0.8,0.2) ..controls(1,0.2) and  (1.2,-0.4).. (1.6,-0.8);
\draw[red,thick,-<-={0.3}{}] (-0.8,0) ..controls (0.8,0) and (1.0,-0.6).. (1.4,-1.0);
\draw[red,thick,-<-={0.3}{}] (-0.8,-0.2) ..controls (0.6,-0.2) and (0.8,-1.0).. (1.0,-1.4);
\draw[red,thick,-<-] (-0.8,-1.6) to[out=45,in=135] (0.8,-1.6);
\draw[red,thick,-<-] (-0.7,-1.7) to[out=45,in=135] (0.7,-1.7);
\draw[red,very thick,dotted] (0,-1.6) -- (0,-1.9);
\node[red] at (1.6,-1.5) {$\cW_\mathrm{br}^\tri$};
\end{scope}

\draw[thick,|->] (11.5,0) --node[midway,above]{$\sfx^\uf_\tri$} (12.5,0); 

\begin{scope}[xshift=15cm]
\draw[blue] (2,0) -- (0,2) -- (-2,0) -- (0,-2) --cycle;
\draw[blue] (0,-2) to (0,2);
{\color{mygreen}
\draw (0.8,0) circle(2pt) node[right, scale=.8]{$1$};
\draw (-0.8,0) circle(2pt) node[left, scale=.8]{$3$};
\draw (0,0.8) circle(2pt) node[above right, scale=.8]{$0$};
\draw(0,-0.8) circle(2pt)node[below right, scale=.8]{$-1$};
}
\end{scope}
\end{tikzpicture}
    \caption{An example of a signed web $W$ restricted to $Q_E$, the associated spiralling diagram $\cW$, its braid representation $\cW_\mathrm{br}^\tri$, its shear coordinates are shown order. In $\cW_\mathrm{br}^\tri$, there are two honeycomb components and infinitely many curve components.}
    \label{fig:shear_example}
\end{figure}

\begin{prop}\label{prop:shear_well-defined}
The shear coordinate system $\sfx^\uf_\tri(\hL) \in \bQ^{I_\uf(\tri)}$ is well-defined, and we get a map
\begin{align*}
    \sfx^\uf_\tri: \cL^x(\Sigma,\bQ) \to \bQ^{I_\uf(\tri)}.
\end{align*}
\end{prop}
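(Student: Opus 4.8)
The plan is to isolate the choices entering the definition of $\sfx^\uf_\tri(\hL)$ and verify coordinate-invariance under each. A presentation of $\hL$ involves (a) a choice of signed web $W$ within its equivalence class under \cref{def:unbounded laminations}, and (b) a choice of spiralling diagram $\cW$ in good position with respect to $\widehat{\tri}$, together with its braid representation $\cW_{\mathrm{br}}^\tri$ and the induced weights. Since each coordinate is by construction a weighted sum of local contributions read off from the restrictions $\cW_{\mathrm{br}}^\tri \cap Q_E$, it suffices to prove invariance under the generating moves of groups (a) and (b) separately. I would first record that this local recipe is internally consistent: the two edge vertices $i^1(E),i^2(E)$ of an interior edge lie in a single quadrilateral $Q_E$, while a face vertex $i(T)$ may lie in several quadrilaterals, but its contribution is the signed height of the honeycomb component of $\cW_{\mathrm{br}}^\tri\cap T$, which is intrinsic to $T$ and well-defined by \cref{lem:triangle_essential}; hence the value assigned to $\sfx_{i(T)}$ does not depend on which $Q_E$ is used.

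Fixing $W$, I would next establish independence of the choices in (b). The spiralling construction does not depend on the auxiliary half-biangle $B_p$ by \cref{def:spiralling}, and the splitting of $\cW_{\mathrm{br}}^\tri\cap Q_E$ into honeycomb and curve components is canonical by \cref{lem:triangle_essential}, so the only residual ambiguity is the good position itself. By \cref{prop:spiralling_good_position} any two good positions of $\cW$ differ by a finite sequence of strict isotopies relative to $\widehat{\tri}$, loop parallel-moves, boundary H-moves, modified H-moves, and modified periodic H-moves, reducing the task to a move-by-move check. Strict isotopies preserve the intersection pattern with every edge and thus fix all coordinates; boundary H-moves alter neither the crossings of curve components with interior edges nor the honeycomb in any triangle, hence leave every unfrozen coordinate unchanged; and the loop parallel-move, which swaps two anti-parallel strands, is handled exactly as in the $\fsl_2$-case, the opposite crossing signs prescribed in \cref{fig:shear_curve} compensating for the change of side.

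The substantive checks are the modified H-move and modified periodic H-move across a corner, where the combinatorics of a honeycomb is rearranged. Here I would invoke the additivity of \cref{rem:decomposition_honeycomb}: decomposing a honeycomb component into the three basic types of \cref{fig:shear_honeycomb_decomposition} reduces the claim to showing that the honeycomb height (the face contribution) and the left/right turning counts (the edge contributions) read off via \cref{fig:shear_honeycomb} are preserved when the honeycomb is slid across the corner. This is then a finite local verification on the two sides of \cref{fig:modified H-move} and its periodic analogue. I expect this to be the main obstacle, as it is precisely the point where the $\fsl_3$-specific honeycomb bookkeeping must be shown to be genuinely move-invariant rather than an artefact of the chosen representative.

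Finally I would prove independence of the choice in (a), i.e.\ invariance under the operations of \cref{def:unbounded laminations}. Combining parallel components and cabling are immediate from the additivity and $\bQ_{>0}$-homogeneity of the weighted-sum definition, since an $n$-cabling of weight $u$ crosses each edge as $n$ parallel copies contributing $n\cdot u$ per crossing, matching a single strand of weight $nu$. A peripheral component can be isotoped into a boundary collar or punctured-disk neighborhood disjoint from all edges of $\tri$, so (E4) changes no coordinate. For (E1) and (E2) the spiralling construction is equivariant (as noted after \cref{def:spiralling}), so these reduce to the loop parallel-move and boundary H-move already treated; and for the puncture H-moves (E3) the two associated spiralling diagrams, once placed in good position, are related by modified periodic H-moves, hence again reduce to the invariances established above. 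Assembling these invariances shows that $\sfx^\uf_\tri(\hL)$ depends only on the class $\hL$, yielding the asserted map $\sfx^\uf_\tri\colon \cL^x(\Sigma,\bQ)\to\bQ^{I_\uf(\tri)}$.
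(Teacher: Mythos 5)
Your overall architecture coincides with the paper's: first show invariance under the moves of \cref{prop:spiralling_good_position} relating different good positions of a fixed spiralling diagram, then under the elementary moves (E1)--(E4), then under the operations (2)--(3) of \cref{def:unbounded laminations}. However, two of your key justifications are off the mark. First, you describe the modified H-move as a rearrangement of honeycomb combinatorics and propose to handle it via the decomposition of \cref{rem:decomposition_honeycomb}; in fact that move (see \cref{fig:modified H-move}) concerns an H-face formed by corner arcs, and the relevant mechanism is the one the paper uses: in the braid representation the H-web becomes a crossing of a pair of oppositely oriented curves, and the joint contribution of such an anti-parallel pair in \cref{fig:shear_curve} is independent of where the crossing sits. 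Your honeycomb-decomposition argument does not apply to this move, although the conclusion (a finite local check suffices) is still reachable.

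Second, and more substantively, your treatment of the puncture H-moves (E3) asserts that the two spiralling diagrams become related by modified periodic H-moves once placed in good position. This is not justified and is not literally true: the two sides of \eqref{eq:puncture_H-move_1} differ by a single extra H-web near the puncture, which is not part of the periodic tail and hence is not absorbed by any of the moves listed in \cref{prop:spiralling_good_position}. The two spiralling diagrams are genuinely distinct, and one must compare their braid representatives directly --- which is exactly what the paper does with its explicit table, noting that the braid representatives differ but have equal coordinates. As written, this step of your argument has a gap. A smaller issue: your justification for (E4), isotoping a peripheral component off all edges of $\tri$, fails for a loop around a puncture $p$, since such a loop necessarily crosses every edge incident to $p$; the correct argument is that these corner-cutting crossings contribute zero under the rule of \cref{fig:shear_curve}.
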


\begin{proof}
It is not hard to see that the operations appearing in \cref{prop:spiralling_good_position} that move a spiralling diagram in a good position to another good position do not change the shear coordinates. For example, the modified H-move always involves a pair of oriented curves in the opposite directions in the braid representation and hence preserves the contribution from the pair. It follows that the shear coordinates are well-defined for a given spiralling diagram, not depending on the choice of a good position with respect to $\widehat{\tri}$. 

We need to check that the elementary moves (E1)--(E4) of signed webs do not change the shear coordinates. It is easy to see the invariance for the loop parallel-move (E1). The braid representatives of spiralling diagrams associated with the local signed webs in  \eqref{eq:boundary_H-move}--\eqref{eq:puncture_H-move_2} are obtained as follows:
\begin{equation*}
\adjustbox{scale=0.7, center}{
\begin{tikzcd}
W: \hspace{-1cm}&
\begin{tikzpicture}
\draw[blue] (-0.8,0) to[bend left=100] (0.8,0);
\draw[very thick,red,-<-] (0.4,0) -- (0.4,0.4);
\draw[very thick,red,->-] (-0.4,0) -- (-0.4,0.4);
\draw[very thick,red,->-] (0.4,0.4) -- (0.4,0.917);
\draw[very thick,red,-<-] (-0.4,0.4) -- (-0.4,0.917);
\draw[very thick,red,->-] (0.4,0.4) -- (-0.4,0.4);
\draw[dashed] (1,0) arc (0:180:1cm);
\bline{-1,0}{1,0}{0.2}
\draw[fill=black] (-0.8,0) circle(1.6pt);
\draw[fill=black] (0.8,0) circle(1.6pt);
\end{tikzpicture}
\ar[d, phantom, "="{scale=1.5,rotate=-90}]
\ar[r, phantom, "\sim"{scale=1.5}]
&
\begin{tikzpicture}
\draw[blue] (-0.8,0) to[bend left=100] (0.8,0);
\draw[very thick,red,->-={0.7}{}] (0.4,0) -- (0.4,0.917);
\draw[very thick,red,-<-={0.7}{}] (-0.4,0) -- (-0.4,0.917);
\draw[dashed] (1,0) arc (0:180:1cm);
\bline{-1,0}{1,0}{0.2}
\draw[fill=black] (-0.8,0) circle(1.6pt);
\draw[fill=black] (0.8,0) circle(1.6pt);
\end{tikzpicture}
\ar[d, phantom, "="{scale=1.5,rotate=-90}]
&
\begin{tikzpicture}
\draw[dashed, fill=white] (0,0) circle [radius=1];
\draw(60:0.6) coordinate(A);
\draw(120:0.6) coordinate(B);
\fill[pink!60] (A) -- (B) -- (0,0) --cycle; 
\draw[red,very thick,-<-={0.6}{}] (0,0)--(A);
\draw[red,very thick,->-={0.6}{}] (0,0)--(B);
\draw[red,very thick,->-={0.6}{}] (A)--(60:1);
\draw[red,very thick,-<-={0.6}{}] (B)--(120:1);
\draw[red,very thick,->-] (A) -- (B);
\node[red,scale=0.8,anchor=west] at (0.1,0) {$+$};
\node[red,scale=0.8,anchor=east] at (-0.1,0) {$+$};
\draw[fill=white] (0,0) circle(2pt);
\end{tikzpicture}
\ar[d, phantom, "\rightsquigarrow"{scale=1.5, rotate=-90}]
\ar[r, phantom, "\sim"{scale=1.5}]
&
\begin{tikzpicture}
\draw[dashed, fill=white] (0,0) circle [radius=1];
\draw(60:0.6) coordinate(A);
\draw(120:0.6) coordinate(B);
\draw[red,very thick,->-={0.6}{}] (0,0)--(60:1);
\draw[red,very thick,-<-={0.6}{}] (0,0)--(120:1);
\node[red,scale=0.8,anchor=west] at (0.1,0) {$+$};
\node[red,scale=0.8,anchor=east] at (-0.1,0) {$+$};
\draw[fill=white] (0,0) circle(2pt);
\end{tikzpicture}
\ar[d, phantom, "\rightsquigarrow"{scale=1.5, rotate=-90}]
&
\begin{tikzpicture}
\draw[dashed, fill=white] (0,0) circle [radius=1];
\draw[red,very thick,-<-={0.7}{}] (0,0) ..controls (0.5,0.1) and (0.2,0.4).. (0,0.4);
\draw[red,very thick,-<-={0.7}{}] (0,0) ..controls (-0.5,0.1) and (-0.2,0.4).. (0,0.4);
\draw[red,very thick,->-={0.7}{}] (0,0.4) -- (0,0.7);
\draw[red,very thick,-<-] (0,0.7) -- (60:1);
\draw[red,very thick,-<-] (0,0.7) -- (120:1);
\node[red,scale=0.8,anchor=east] at (-0.1,-0.1) {$-$};
\node[red,scale=0.8,anchor=west] at (0.1,-0.1) {$+$};
\draw[fill=white] (0,0) circle(2pt);
\end{tikzpicture}
\ar[d, phantom, "\rightsquigarrow"{scale=1.5, rotate=-90}]
\ar[r, phantom, "\sim"{scale=1.5}]
&
\begin{tikzpicture}
\draw[dashed] (0,0) circle(1cm);
\draw[red,very thick,-<-] (0,0) -- (60:1);
\draw[red,very thick,-<-] (0,0) -- (120:1);
\node[red,scale=0.8,anchor=east] at (-0.1,0) {$+$};
\node[red,scale=0.8,anchor=west] at (0.1,0) {$-$};
\draw[fill=white] (0,0) circle(2pt);
\end{tikzpicture}
\ar[d, phantom, "\rightsquigarrow"{scale=1.5, rotate=-90}]
\\
\cW: \hspace{-1cm}& 
\begin{tikzpicture}
\draw[blue] (-0.8,0) to[bend left=100] (0.8,0);
\draw[very thick,red,-<-] (0.4,0) -- (0.4,0.4);
\draw[very thick,red,->-] (-0.4,0) -- (-0.4,0.4);
\draw[very thick,red,->-] (0.4,0.4) -- (0.4,0.917);
\draw[very thick,red,-<-] (-0.4,0.4) -- (-0.4,0.917);
\draw[very thick,red,->-] (0.4,0.4) -- (-0.4,0.4);
\draw[dashed] (1,0) arc (0:180:1cm);
\bline{-1,0}{1,0}{0.2}
\draw[fill=black] (-0.8,0) circle(1.6pt);
\draw[fill=black] (0.8,0) circle(1.6pt);
\end{tikzpicture}
\ar[d, phantom, "\rightsquigarrow"{scale=1.5, rotate=-90}]
&
\begin{tikzpicture}
\draw[blue] (-0.8,0) to[bend left=100] (0.8,0);
\draw[very thick,red,->-={0.7}{}] (0.4,0) -- (0.4,0.917);
\draw[very thick,red,-<-={0.7}{}] (-0.4,0) -- (-0.4,0.917);
\draw[dashed] (1,0) arc (0:180:1cm);
\bline{-1,0}{1,0}{0.2}
\draw[fill=black] (-0.8,0) circle(1.6pt);
\draw[fill=black] (0.8,0) circle(1.6pt);
\end{tikzpicture}
\ar[d, phantom, "="{scale=1.5,rotate=-90}]
&
\begin{tikzpicture}
\draw[dashed, fill=white] (0,0) circle [radius=1];
\begin{scope}
    \clip (0,0) circle(1cm);
    \draw[blue] (0,0) to[out=60,in=180] (1,0.4);
    \draw[blue] (0,0) to[out=-60,in=180] (1,-0.4);
    \draw[red,very thick,-<-] (0.3,0) -- (0.7,0);
    \draw[red,very thick,-<-={0.55}{}] (0.7,1) -- (0.7,0);
    \draw[red,very thick,->-] (0.7,0) ..controls (0.7,-0.7) and (-0.5,-0.5).. (-0.5,0);
    \draw[red,very thick,->-={0.55}{}] (0.3,1) -- (0.3,0);
    \draw[red,very thick,-<-] (0.3,0) ..controls (0.3,-0.3) and (-0.3,-0.3).. (-0.3,0);
\end{scope}
\draw[fill=white] (0,0) circle(2pt);
\end{tikzpicture}
\ar[d, phantom, "\rightsquigarrow"{scale=1.5,rotate=-90}]
&
\begin{tikzpicture}
\draw[dashed, fill=white] (0,0) circle [radius=1];
\begin{scope}
    \clip (0,0) circle(1cm);
    \draw[blue] (0,0) to[out=60,in=180] (1,0.4);
    \draw[blue] (0,0) to[out=-60,in=180] (1,-0.4);
    \draw[red,very thick,->-={0.3}{}] (0.3,1) ..controls ++(0,-1) and (0.3,0.3).. (0.3,-0.1) ..controls (0.3,-0.3) and (-0.3,-0.5).. (-0.3,0);
    \draw[red,very thick,-<-={0.18}{}] (0.7,1) ..controls ++(0,-1) and (0.7,0.3).. (0.7,-0.1) ..controls (0.7,-0.9) and (-0.5,-0.7).. (-0.5,0);
\end{scope}
\draw[fill=white] (0,0) circle(2pt);
\end{tikzpicture}
\ar[d, phantom, "="{scale=1.5, rotate=-90}]
&
\begin{tikzpicture}
\draw[dashed, fill=white] (0,0) circle [radius=1];
\begin{scope}
    \clip (0,0) circle(1cm);
    \draw[blue] (0,0) to[out=45,in=-90] (0.2,1);
    \draw[blue] (0,0) to[out=135,in=-90] (-0.2,1);
	\draw[blue] (0,0) to[out=-45,in=-90] (0.2,-1);
    \draw[blue] (0,0) to[out=-135,in=-90] (-0.2,-1);
    \draw[red,very thick,-<-={0.1}{},->-={0.45}{}] (0,0) circle(0.4cm);
	\draw[red,very thick] (0,0) circle(0.25cm);
	\draw[red,very thick] (-100:0.4) -- (-100:0.25);
    \draw[red,very thick] (-80:0.25) -- (-80:0.15);
    \draw[red,very thick,->-={0.7}{}] (0,0.4) -- (0,0.7);
    \draw[red,very thick,-<-={0.7}{}] (0,0.7) -- (60:1);
    \draw[red,very thick,-<-={0.7}{}] (0,0.7) -- (120:1);
    \draw[fill=white] (0,0) circle(2pt);
\end{scope}
\draw[fill=white] (0,0) circle(2pt);
\end{tikzpicture}
\ar[d, phantom, "\rightsquigarrow"{scale=1.5, rotate=-90}]
&
\begin{tikzpicture}
\draw[dashed, fill=white] (0,0) circle [radius=1];
\begin{scope}
    \clip (0,0) circle(1cm);
    \draw[blue] (0,0) to[out=45,in=-90] (0.2,1);
    \draw[blue] (0,0) to[out=135,in=-90] (-0.2,1);
	\draw[blue] (0,0) to[out=-45,in=-90] (0.2,-1);
    \draw[blue] (0,0) to[out=-135,in=-90] (-0.2,-1);
    \draw [red,very thick,->-](0.4,0.9) .. controls (-0.5,0.5) and (-0.5,-0.5) .. (0,-0.25) .. controls (0.25,-0.2) and (0.25,0) .. (0.15,0.1);
	\begin{scope}[xscale=-1]
	\draw [red,very thick,->-](0.4,0.9) .. controls (-0.5,0.5) and (-0.5,-0.5) .. (0,-0.25) .. controls (0.25,-0.2) and (0.25,0) .. (0.15,0.1);
	\end{scope}
\end{scope}
\draw[fill=white] (0,0) circle(2pt);
\end{tikzpicture}
\ar[d, phantom, "="{scale=1.5, rotate=-90}]
\\
\cW_{\mathrm{br}}^\tri: \hspace{-1cm}&
\begin{tikzpicture}
\draw[blue] (-0.8,0) to[bend left=100] (0.8,0);
\draw[very thick,red,-<-={0.8}{}] (0.4,0) ..controls (0.4,0.4) and (-0.4,0.2).. (-0.4,0.917);
\draw[very thick,red,->-={0.8}{}] (-0.4,0) ..controls (-0.4,0.4) and (--0.4,0.2).. (0.4,0.917);
\draw[dashed] (1,0) arc (0:180:1cm);
\bline{-1,0}{1,0}{0.2}
\draw[fill=black] (-0.8,0) circle(1.6pt);
\draw[fill=black] (0.8,0) circle(1.6pt);
\end{tikzpicture}
&
\begin{tikzpicture}
\draw[blue] (-0.8,0) to[bend left=100] (0.8,0);
\draw[very thick,red,->-={0.7}{}] (0.4,0) -- (0.4,0.917);
\draw[very thick,red,-<-={0.7}{}] (-0.4,0) -- (-0.4,0.917);
\draw[dashed] (1,0) arc (0:180:1cm);
\bline{-1,0}{1,0}{0.2}
\draw[fill=black] (-0.8,0) circle(1.6pt);
\draw[fill=black] (0.8,0) circle(1.6pt);
\end{tikzpicture}
&
\begin{tikzpicture}
\draw[dashed, fill=white] (0,0) circle [radius=1];
\begin{scope}
    \clip (0,0) circle(1cm);
    \draw[blue] (0,0) to[out=60,in=180] (1,0.4);
    \draw[blue] (0,0) to[out=-60,in=180] (1,-0.4);
    \draw[red,very thick,->-={0.4}{}] (0.3,1) ..controls ++(0,-1) and (0.7,0.3).. (0.7,-0.1) ..controls (0.7,-0.9) and (-0.5,-0.7).. (-0.5,0);
    \draw[red,very thick,-<-={0.55}{}] (0.7,1) ..controls ++(0,-1) and (0.3,0.3).. (0.3,-0.1) ..controls (0.3,-0.4) and (-0.3,-0.5).. (-0.3,0);
\end{scope}
\draw[fill=white] (0,0) circle(2pt);
\end{tikzpicture}
&
\begin{tikzpicture}
\draw[dashed, fill=white] (0,0) circle [radius=1];
\begin{scope}
    \clip (0,0) circle(1cm);
    \draw[blue] (0,0) to[out=60,in=180] (1,0.4);
    \draw[blue] (0,0) to[out=-60,in=180] (1,-0.4);
    \draw[red,very thick,->-={0.3}{}] (0.3,1) ..controls ++(0,-1) and (0.3,0.3).. (0.3,-0.1) ..controls (0.3,-0.3) and (-0.3,-0.5).. (-0.3,0);
    \draw[red,very thick,-<-={0.18}{}] (0.7,1) ..controls ++(0,-1) and (0.7,0.3).. (0.7,-0.1) ..controls (0.7,-0.9) and (-0.5,-0.7).. (-0.5,0);
\end{scope}
\draw[fill=white] (0,0) circle(2pt);
\end{tikzpicture}
&
\begin{tikzpicture}
\draw[dashed, fill=white] (0,0) circle [radius=1];
\begin{scope}
    \clip (0,0) circle(1cm);
    \draw[blue] (0,0) to[out=45,in=-90] (0.2,1);
    \draw[blue] (0,0) to[out=135,in=-90] (-0.2,1);
	\draw[blue] (0,0) to[out=-45,in=-90] (0.2,-1);
    \draw[blue] (0,0) to[out=-135,in=-90] (-0.2,-1);
    \draw [red,very thick,->-](0.4,0.9) .. controls (-0.5,0.5) and (-0.5,-0.5) .. (0,-0.25) .. controls (0.25,-0.2) and (0.25,0) .. (0.15,0.1);
	\begin{scope}[xscale=-1]
	\draw [red,very thick,->-](0.4,0.9) .. controls (-0.5,0.5) and (-0.5,-0.5) .. (0,-0.25) .. controls (0.25,-0.2) and (0.25,0) .. (0.15,0.1);
	\end{scope}
\end{scope}
\draw[fill=white] (0,0) circle(2pt);
\end{tikzpicture}
\ar[r, phantom, "="{scale=1.5}]
&
\begin{tikzpicture}
\draw[dashed, fill=white] (0,0) circle [radius=1];
\begin{scope}
    \clip (0,0) circle(1cm);
    \draw[blue] (0,0) to[out=45,in=-90] (0.2,1);
    \draw[blue] (0,0) to[out=135,in=-90] (-0.2,1);
	\draw[blue] (0,0) to[out=-45,in=-90] (0.2,-1);
    \draw[blue] (0,0) to[out=-135,in=-90] (-0.2,-1);
    \draw [red,very thick,->-](0.4,0.9) .. controls (-0.5,0.5) and (-0.5,-0.5) .. (0,-0.25) .. controls (0.25,-0.2) and (0.25,0) .. (0.15,0.1);
	\begin{scope}[xscale=-1]
	\draw [red,very thick,->-](0.4,0.9) .. controls (-0.5,0.5) and (-0.5,-0.5) .. (0,-0.25) .. controls (0.25,-0.2) and (0.25,0) .. (0.15,0.1);
	\end{scope}
\end{scope}
\draw[fill=white] (0,0) circle(2pt);
\end{tikzpicture}
\end{tikzcd}}
\end{equation*}
Here the braid representatives are not quite the same in the first two cases, but both have the same shear coordinates. 
Thus the shear coordinates are invariant under the moves (E2) and (E3). The invariance under the peripheral move (E4) is similarly verified, where the signed web in the left-hand side produces a peripheral component in its spiralling diagram.

The shear coordinates are clearly invariant under the operations (2) and (3) in \cref{def:unbounded laminations}, and hence do not depend on the choice of a signed $\bQ_{>0}$-weighted web representing an unbounded $\fsl_3$-lamination. 
\end{proof}

\begin{conv}\label{conv:shear_coordinates}
We will write $\sfx^\tri_T:=\sfx^\tri_{i(T)}$ for a triangle $T$ of $\tri$, and $\sfx^\tri_{E,s}:=\sfx^\tri_{i^s(E)}$ for an oriented edge $E$ of $\tri$ and $s=1,2$. Here recall the notations in \cref{subsec:notation_marked_surface}.
\end{conv}

\subsection{Reconstruction}\label{subsec:reconstruction}
We are going to give an inverse map $\xi_\tri:\bQ^{I_\uf(\tri)} \to \cL^x(\Sigma,\bQ)$ of the shear coordinate system associated with an ideal triangulation $\tri$. 

Given $(\widetilde{\sfx}_i)_i \in \bQ^{I_\uf(\tri)}$, choose a positive integer $u \in \bZ_{>0}$ such that $\sfx_i:=u\widetilde{\sfx}_i$ are integral for all $i \in I_\uf(\tri)$. We will use a notation similar to \cref{conv:shear_coordinates} for these tuples. 
On each triangle $T \in t(\tri)$, first draw a honeycomb web of height $|\sfx_T|$ of sink type (resp. source type) if $\sfx_T \geq 0$ (resp. $\sfx_T < 0$). Moreover, on each corner of $T$, draw an semi-infinite collection of disjoint corner arcs with alternating orientations such that 
\begin{itemize}
    \item they are disjoint from the honeycomb web (placed on the center of $T$), 
    \item they accumulate only at the marked points of the triangle, and 
    \item the farthest one from the marked point is oriented clockwise. 
\end{itemize}
See \cref{fig:gluing block}. Then we get an unbounded reduced essential web $W_T$ on each triangle $T$. We are going to glue these local blocks together to form an integral unbounded $\fsl_3$-lamination $\xi_\tri((\sfx_i)_i) \in \cL^x(\Sigma,\bZ)$. 

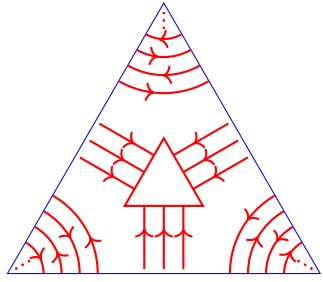
\begin{figure}[t]
    \centering
\begin{tikzpicture}[scale=1.2]
\draw[blue] (-30:2) coordinate(A) -- (90:2) coordinate(B) -- (210:2) coordinate(C) --cycle;
\begin{scope}
\clip (-30:2) -- (90:2) -- (210:2) --cycle;
\draw[red,thick] (-30:0.5) -- (90:0.5) -- (210:0.5) --cycle;
\foreach \i in {1,2,3}
{
\foreach \x in {0,120,240}
\draw[red,thick,rotate=\x,-<-] ($(-30:0.5)!\i/4!(90:0.5)$) --++(30:0.7);
}
\foreach \i in {0.6,1}
{
\foreach \x in {0,120,240}
    {
    \draw(-30+\x:2)++(120+\x:\i) coordinate(a);
    \draw(-30+\x:2)++(120+\x:\i-0.2) coordinate(b);
    \draw[red,thick,->-={0.4}{}] (a) arc(120+\x:180+\x:\i);
    \draw[red,thick,-<-={0.6}{}] (b) arc(120+\x:180+\x:\i-0.2);
    \draw[red,thick,dotted] (-30+\x:1.9) -- (-30+\x:1.65);
    }
}
\end{scope}
\end{tikzpicture}
    \caption{The building block for reconstruction from the shear coordinates when $\sfx_{T}=+3$.}
    \label{fig:gluing block}
\end{figure}

Now let us concentrate on a quadrilateral $Q_E$ in the ideal triangulation $\tri$ which contains two triangles $T_L$ and $T_R$ that share an interior edge $E$. We fix an orientation of $E$ so that $T_L$ lies on the left, hence have two edge coordinates $\sfx_{\bott}$ and $\sfx_{\topp}$ as well as two face coordinates $\sfx_{T_L}$ and $\sfx_{T_R}$:
\begin{align*}
\begin{tikzpicture}[scale=0.8]
\draw[blue] (0,2.5) -- (-2.5,0) -- (0,-2.5) -- (2.5,0) --cycle;
\draw[blue, postaction={decorate}, decoration={markings,mark=at position 0.5 with {\arrow[scale=1.5,>=stealth]{>}}}] (0,-2.5) -- (0,2.5);
\draw[mygreen] (1,0) circle(2pt) node[right]{$\sfx_{T_R}$};
\draw[mygreen] (-1,0) circle(2pt) node[left]{$\sfx_{T_L}$};
\draw[mygreen] (0,-1) circle(2pt) node[left]{$\sfx_{E,1}$};
\draw[mygreen] (0,1) circle(2pt) node[right]{$\sfx_{E,2}$};
\end{tikzpicture}
\end{align*}
Consider a biangle $B_E$ in the split ideal triangulation $\widehat{\tri}$ obtained by fattening $E$, which is bounded by boundary intervals $E_L$ and $E_R$ of $T_L$ and $T_R$, respectively. 
For $Z \in \{L,R\}$, let $S_{Z}=S_{Z}^+\sqcup S_{Z}^-$ denote the set of ends of the web $W_{T_Z}$ on $E_Z$, where $S_{Z}^+$ (resp. $S_{Z}^-$) consists of the ends incoming to (resp. outgoing from) the biangle $B_E$. Then $S=(S_L,S_R)$ defines an asymptotically periodic symmetric strand set (\cref{def:infinite strand set}). 
Let us define its pinning by the following rule:
\begin{itemize}
    \item For $Z \in \{L,R\}$, choose orientation-preserving parametrizations $\phi_{Z}^\pm:\bR \to E_Z$ so that $\phi_{Z}^\pm(\frac{1}{2}+\bZ)=S_{Z}^\pm$, and $\phi_Z^\pm(\bR_{<0}) \cap S_Z^\pm$ consists of all the strands coming from the corner arcs around the initial marked point of $E_Z$. 
    \item Let $p_Z^\pm:=\phi_{Z}^\pm(n_{Z}^\pm) \in E_Z$ for $Z \in \{L,R\}$, where $n_{Z}^\pm \in \bZ$ are given by
    \begin{align}\label{eq:gluing_rule}
    \begin{aligned}
    n_{L}^+&:=\sfx_{\bott},& n_{R}^-&:=[\sfx_{T_R}]_+,
    \\ 
    n_{L}^-&:= [\sfx_{T_L}]_+,& n_{R}^+&:= \sfx_{\topp},
    \end{aligned}
    \end{align}
\end{itemize}
where we use the notation $[x]_+:=\max\{0,x\}$.
Then we get a pinned symmetric strand set $\widehat{S}_{E}:=(S;\sfp_L,\sfp_R)$ with the pinnings $\sfp_Z:=(p_Z^+,p_Z^-)$ for $Z \in \{L,R\}$. 
Let $W_{\mathrm{br}}(\widehat{S}_E)$ denote the associated collection of oriented curves in $B_E$. 

\begin{rem}\label{rem:gluing_symmetry}
The resulting collection $W_{\mathrm{br}}(\widehat{S}_E)$ is invariant under the transformation
\begin{align*}
\begin{aligned}
    n_{L}^+ &\mapsto n_{L}^+ - k,& n_{R}^- &\mapsto n_{R}^- + k,
    \\ 
    n_{L}^- &\mapsto n_{L}^- - l,& n_{R}^+ &\mapsto n_{R}^+ + l
\end{aligned}
\end{align*}
for $k,l \in \bZ$. 
\end{rem}

Gluing together the local webs $W_T$ for $T \in t(\tri)$ and the curves in $W_{\mathrm{br}}(\widehat{S}_E)$ for $E \in e(\tri)$, we get a (possibly infinite) collection $\cW_{\mathrm{br}}^\tri((\sfx_i)_i)$ of webs on $\Sigma$. 
The following lemma shows that it has correct shear coordinates.

\begin{lem}\label{lem:braid-shear}
We have $\sfx_k^\tri(\cW_{\mathrm{br}}^\tri((\sfx_i)_i)) = \sfx_k$ for all $k \in I_\uf(\tri)$.
\end{lem}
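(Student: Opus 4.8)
The plan is to verify $\sfx_k^\tri(\cW_{\mathrm{br}}^\tri((\sfx'_i)_i)) = \sfx'_k$ one coordinate at a time, using that every shear coordinate is a local quantity. A face coordinate $\sfx_T^\tri$ depends only on the honeycomb component lying in the triangle $T$, whereas an edge coordinate $\sfx_{E,s}^\tri$ ($s\in\{1,2\}$) depends only on the restriction $\cW_{\mathrm{br}}^\tri \cap Q_E$ to the quadrilateral around the interior edge $E$. Hence it suffices to treat a single triangle and a single quadrilateral, then assemble over all $T \in t(\tri)$ and $E \in e_{\interior}(\tri)$.

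The face coordinates are immediate. By construction the component of $\cW_{\mathrm{br}}^\tri$ in $T$ is a honeycomb of height $|\sfx'_T|$, of sink type when $\sfx'_T \geq 0$ and of source type otherwise. Since the rule of \cref{fig:shear_honeycomb} reads off exactly the signed height (positively for a sink, negatively for a source), we get $\sfx_T^\tri = \sfx'_T$ with no further work.

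The substance lies in the edge coordinates. Fix $E$ oriented so that $T_L$ is on the left, and recall the four pinning integers $n_L^+=\sfx_\bott$, $n_L^-=[\sfx_{T_L}]_+$, $n_R^+=\sfx_\topp$, $n_R^-=[\sfx_{T_R}]_+$ from \eqref{eq:gluing_rule}. The contributions to $\sfx_\bott$ (resp.\ $\sfx_\topp$) split into those from the corner arcs crossing $E$ near the initial (resp.\ terminal) vertex, which are curve components contributing by \cref{fig:shear_curve}, and those from the honeycomb edges of the $W_T$ that turn toward that vertex, which are honeycomb components contributing by \cref{fig:shear_honeycomb}. I would make this tractable by first decomposing each honeycomb into its basic summands (\cref{rem:decomposition_honeycomb}(2) and \cref{fig:shear_honeycomb_decomposition}), so that every honeycomb edge on the $E$-side is classified as turning up, going straight across, or turning down; and by normalizing the pinning via the shift invariance of \cref{rem:gluing_symmetry}, which leaves $W_{\mathrm{br}}(\widehat{S}_E)$, and therefore every shear coordinate, unchanged. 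One then reads the braid $W_{\mathrm{br}}(\widehat{S}_E)$ off the pairing map $f_{\widehat{S}_E}$, counts the signed crossings of the resulting oriented curves across $E$ (with the sign convention of \cref{rem:decomposition_honeycomb}(1), where a curve contributes on its right-hand side), and adds the honeycomb-edge contributions. The assertion to be checked is that this total collapses to $n_L^+=\sfx_\bott$ at the initial vertex and to $n_R^+=\sfx_\topp$ at the terminal vertex.

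The main obstacle will be precisely this final count, because the split of the honeycomb edges into up/straight/down is not fixed in advance: it is dictated by how the pinning interleaves honeycomb strands with corner-arc strands, and the truncations $[\sfx_{T_L}]_+$, $[\sfx_{T_R}]_+$ force a case analysis on the pair $(\sgn \sfx_{T_L}, \sgn \sfx_{T_R})$. In each case I would verify that the honeycomb's own edge-coordinate contribution (the $-n_1$ of a sink and the $+n_3$ of a source in \cref{fig:shear_honeycomb}) cancels against the honeycomb-dependent part of the corner-arc shear, leaving only the pure shear $\sfx_\bott$ (resp.\ $\sfx_\topp$). Once all four sign cases are dispatched, both edge coordinates at $E$ come out correct, and locality then yields the lemma in full.
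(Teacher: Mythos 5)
Your proposal is correct and follows essentially the same route as the paper: reduce by locality to a single quadrilateral $Q_E = T_L \cup B_E \cup T_R$, note that the face coordinates are immediate from the construction, and verify the two edge coordinates by a case analysis on the signs of $\sfx_{T_L}$ and $\sfx_{T_R}$, tracking how the pinning interleaves honeycomb strands with corner arcs and how the resulting crossings and honeycomb edges contribute. The paper carries out this case check pictorially (Figures \ref{fig:reconstruction1}--\ref{fig:reconstruction3}, overlaying the two gluing sheets $S_L^\pm \to S_R^\mp$), while you propose the same count organized via \cref{rem:decomposition_honeycomb} and \cref{rem:gluing_symmetry} — a cosmetic difference only.
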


\begin{proof}
Let us concentrate on a quadrilateral $Q=T_L \cup B_E \cup T_R$. It is easy to see $\sfx_{T_Z}^\tri(\hL)=\sfx_{T_Z}$ for $Z \in \{L,R\}$. The equalities $\sfx^\tri_{\bott}(\hL)=\sfx_{\bott}$ and $\sfx^\tri_{\topp}(\hL)=\sfx_{\topp}$ can be also verified case-by-case, divided according to the signs of $\sfx_{T_L}$ and $\sfx_{T_R}$. See Figures \ref{fig:reconstruction1}--\ref{fig:reconstruction3}. Here we draw the pictures by separating the gluing procedures $S_L^- \to S_R^+$ and $S_L^+ \to S_R^-$ into two sheets: the result is obtained by overlaying the two diagrams drawn on the right. 

For example, let us consider the example shown in \cref{fig:reconstruction1}. In the case $\sfx_{\topp}\geq 0$ (as in this example), there are $\sfx_{\topp}$ many lines from South-East to North-West that contribute positively. One can imagine the other cases by varying this example: if we decrease $\sfx_{\topp}$, then the point $p_R^+$ moves upward and the gluing pattern is shifted. When $-\sfx_{T_L} \leq \sfx_{\topp}< 0$, negative contributions come from the honeycomb in $T_L$. When $\sfx_{\topp} < -\sfx_{T_L}$, there are also lines from South-West to North-East that contribute negatively. Thus we get $\sfx_{\topp}^\tri(\hL)=\sfx_{\topp}$. The check for $\sfx_{\bott}$ is similar. One can check the other cases from \cref{fig:reconstruction2,fig:reconstruction3} by a similar manner.
\end{proof}

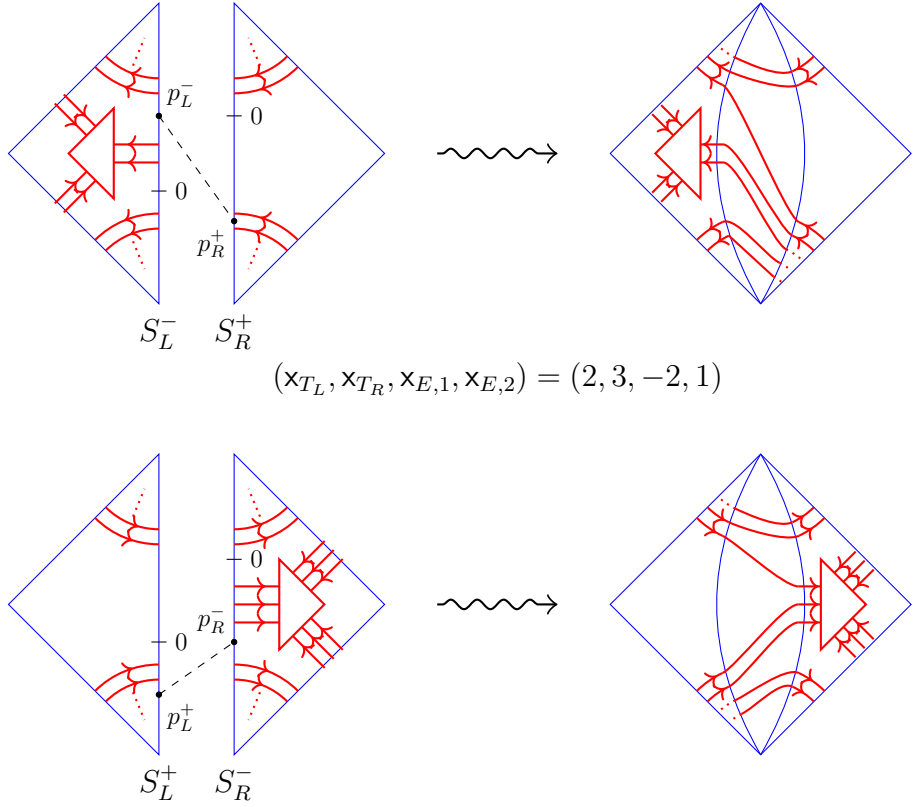
\begin{figure}[htbp]
\begin{tikzpicture}
\draw[blue] (-2,0) -- (0,2) -- (0,-2) --cycle;
\draw[red,thick] (-1.2,0) coordinate(A) --++(0.6,0.6) coordinate(B) --++(0,-1.2) coordinate(C) --cycle;
\draw[red,thick,-<-] ($(A)!0.6!(B)$) --++(135:0.6);
\draw[red,thick,-<-] ($(A)!0.4!(B)$) --++(135:0.6);
\draw[red,thick,-<-] ($(A)!0.6!(C)$) --++(225:0.6);
\draw[red,thick,-<-] ($(A)!0.4!(C)$) --++(225:0.6);
\draw[red,thick,-<-] ($(B)!0.6!(C)$) --++(0:0.6);
\draw[red,thick,-<-] ($(B)!0.4!(C)$) --++(0:0.6);
\node[below] at (0,-2) {$S_L^-$};

\foreach \x in {1.2,1.0}
{
\draw[red,thick,->-] (0,-2+\x) arc(90:135:\x); 
\draw[red,thick,->-] (0,2-\x) arc(-90:-135:\x);
}
\draw[red,thick,dotted] ($(0,-2)+(112.5:0.9)$)--++(-67.5:0.4);
\draw[red,thick,dotted] ($(0,2)+(-112.5:0.9)$)--++(67.5:0.4);

\draw(-0.1,-0.5) -- (0.1,-0.5) node[right,scale=0.8]{$0$};
\filldraw(0,0.5) circle(1pt) node[above right,scale=0.8]{$p_L^-$} coordinate(p1);

\begin{scope}[xshift=1cm]
\draw[blue] (2,0) -- (0,2) -- (0,-2) --cycle;
\node[below] at (0,-2) {$S_R^+$};

\foreach \x in {1.2,1.0}
{
\draw[red,thick,-<-] (0,-2+\x) arc(90:45:\x); 
\draw[red,thick,-<-] (0,2-\x) arc(-90:-45:\x);
}
\draw[red,thick,dotted] ($(0,-2)+(67.5:0.9)$)--++(-112.5:0.4);
\draw[red,thick,dotted] ($(0,2)+(-67.5:0.9)$)--++(112.5:0.4);

\draw(-0.1,0.5) -- (0.1,0.5) node[right,scale=0.8]{$0$};
\filldraw(0,-0.9) circle(1pt) node[below left,scale=0.8]{$p_R^+$} coordinate(p2);
\end{scope}
\draw[dashed] (p1) -- (p2);
\draw [thick,-{Classical TikZ Rightarrow[length=4pt]},decorate,decoration={snake,amplitude=1.8pt,pre length=2pt,post length=3pt}](3.7,0) --(5.3,0);

\begin{scope}[xshift=8cm]
\draw[blue] (2,0) -- (0,2) -- (-2,0) -- (0,-2) --cycle;
\draw[blue] (0,-2) to[bend left=30pt] (0,2);
\draw[blue] (0,-2) to[bend right=30pt] (0,2);
\draw[red,thick] (-1.4,0) coordinate(A) --++(0.6,0.6) coordinate(B) --++(0,-1.2) coordinate(C) --cycle;
\draw[red,thick,-<-] ($(A)!0.6!(B)$) --++(135:0.4);
\draw[red,thick,-<-] ($(A)!0.4!(B)$) --++(135:0.4);
\draw[red,thick,-<-] ($(A)!0.6!(C)$) --++(225:0.4);
\draw[red,thick,-<-] ($(A)!0.4!(C)$) --++(225:0.4);
\draw[red,thick,-<-] ($(B)!0.6!(C)$) --++(0:0.3) coordinate(L1);
\draw[red,thick,-<-] ($(B)!0.4!(C)$) --++(0:0.3) coordinate(L2);
\draw (L1) ++(0.2,0) coordinate(L1');
\draw (L2) ++(0.2,0) coordinate(L2');
\foreach \x in {5,6}
{
\draw[red,thick,->-] ($(0,-2)+(115:\x/5)$) coordinate(LB\x) arc(115:135:\x/5); 
\draw (LB\x) ++(25:0.2) coordinate (LB\x');
\draw[red,thick,->-] ($(0,2)+(-115:\x/5)$) coordinate(LT\x) arc(-115:-135:\x/5);
\draw (LT\x) ++(-25:0.2) coordinate (LT\x');
}
\foreach \x in {4}
{
\draw[red,thick,dotted] ($(0,2)+(-115:\x/5)$) coordinate(LT\x) arc(-115:-132:\x/5);
\draw (LT\x) ++(-25:0.2) coordinate (LT\x');
}
\foreach \x in {5,6}
{
\draw[red,thick,-<-] ($(0,-2)+(65:\x/5)$) coordinate(RB\x) arc(65:45:\x/5); 
\draw (RB\x) ++(180-25:0.2) coordinate (RB\x');
\draw[red,thick,-<-] ($(0,2)+(-65:\x/5)$) coordinate(RT\x) arc(-65:-45:\x/5);
\draw (RT\x) ++(-180+25:0.2) coordinate (RT\x');
}
\foreach \x in {2,3,4}
{
\draw[red,thick,dotted] ($(0,-2)+(65:\x/5)$) coordinate(RB\x) arc(65:47:\x/5); 
\draw (RB\x) ++(180-25:0.2) coordinate (RB\x');
}
\draw[red,thick] (LT5) ..controls (LT5') and (RT6')..  (RT6);
\draw[red,thick] (LT6) ..controls (LT6') and (RB6').. (RB6);
\draw[red,thick] (L2) ..controls (L2') and (RB5').. (RB5);
\draw[red,thick] (L1) ..controls (L1') and (RB4').. (RB4);
\draw[red,thick] (LT4) ..controls (LT4') and (RT5').. (RT5);
\draw[red,thick] (LB6) ..controls (LB6') and (RB3').. (RB3);
\draw[red,thick] (LB5) -- (RB2);
\end{scope}
{\begin{scope}[yshift=-6cm]
\draw[blue] (-2,0) -- (0,2) -- (0,-2) --cycle;
\node[below] at (0,-2) {$S_L^+$};

\foreach \x in {1.2,1.0}
{
\draw[red,thick,-<-] (0,-2+\x) arc(90:135:\x); 
\draw[red,thick,-<-] (0,2-\x) arc(-90:-135:\x);
}
\draw[red,thick,dotted] ($(0,-2)+(112.5:0.9)$)--++(-67.5:0.4);
\draw[red,thick,dotted] ($(0,2)+(-112.5:0.9)$)--++(67.5:0.4);

\draw(-0.1,-0.5) -- (0.1,-0.5) node[right,scale=0.8]{$0$};
\filldraw(0,-1.2) circle(1pt) node[below right,scale=0.8]{$p_L^+$} coordinate(p1);

\begin{scope}[xshift=1cm]
\draw[blue] (2,0) -- (0,2) -- (0,-2) --cycle;
\draw[red,thick] (1.2,0) coordinate(A) --++(-0.6,-0.6) coordinate(B) --++(0,+1.2) coordinate(C) --cycle;
\draw[red,thick,-<-] ($(A)!0.7!(B)$) --++(-45:0.6);
\draw[red,thick,-<-] ($(A)!0.5!(B)$) --++(-45:0.6);
\draw[red,thick,-<-] ($(A)!0.3!(B)$) --++(-45:0.6);
\draw[red,thick,-<-] ($(A)!0.7!(C)$) --++(45:0.6);
\draw[red,thick,-<-] ($(A)!0.5!(C)$) --++(45:0.6);
\draw[red,thick,-<-] ($(A)!0.3!(C)$) --++(45:0.6);
\draw[red,thick,-<-] ($(B)!0.7!(C)$) --++(180:0.6);
\draw[red,thick,-<-] ($(B)!0.5!(C)$) --++(180:0.6);
\draw[red,thick,-<-] ($(B)!0.3!(C)$) --++(180:0.6);
\node[below] at (0,-2) {$S_R^-$};

\foreach \x in {1.2,1.0}
{
\draw[red,thick,->-] (0,-2+\x) arc(90:45:\x); 
\draw[red,thick,->-] (0,2-\x) arc(-90:-45:\x);
}
\draw[red,thick,dotted] ($(0,-2)+(67.5:0.9)$)--++(-112.5:0.4);
\draw[red,thick,dotted] ($(0,2)+(-67.5:0.9)$)--++(112.5:0.4);

\draw(-0.1,0.6) -- (0.1,0.6) node[right,scale=0.8]{$0$};
\filldraw(0,-0.5) circle(1pt) node[above left,scale=0.8]{$p_R^-$} coordinate(p2);
\end{scope}
\draw[dashed] (p1) -- (p2);
\draw [thick,-{Classical TikZ Rightarrow[length=4pt]},decorate,decoration={snake,amplitude=1.8pt,pre length=2pt,post length=3pt}](3.7,0) --(5.3,0);

\begin{scope}[xshift=8cm]
\draw[blue] (2,0) -- (0,2) -- (-2,0) -- (0,-2) --cycle;
\draw[blue] (0,-2) to[bend left=30pt] (0,2);
\draw[blue] (0,-2) to[bend right=30pt] (0,2);
\draw[red,thick] (1.4,0) coordinate(A) --++(-0.6,-0.6) coordinate(B) --++(0,+1.2) coordinate(C) --cycle;
\draw[red,thick,-<-] ($(A)!0.7!(B)$) --++(-45:0.4);
\draw[red,thick,-<-] ($(A)!0.5!(B)$) --++(-45:0.4);
\draw[red,thick,-<-] ($(A)!0.3!(B)$) --++(-45:0.4);
\draw[red,thick,-<-] ($(A)!0.7!(C)$) --++(45:0.4);
\draw[red,thick,-<-] ($(A)!0.5!(C)$) --++(45:0.4);
\draw[red,thick,-<-] ($(A)!0.3!(C)$) --++(45:0.4);
\draw[red,thick,-<-] ($(B)!0.7!(C)$) --++(180:0.3) coordinate(R1);
\draw[red,thick,-<-] ($(B)!0.5!(C)$) --++(180:0.3) coordinate(R2);
\draw[red,thick,-<-] ($(B)!0.3!(C)$) --++(180:0.3) coordinate(R3);
\draw (R1) ++(-0.2,0) coordinate(R1');
\draw (R2) ++(-0.2,0) coordinate(R2');
\draw (R3) ++(-0.2,0) coordinate(R3');
\foreach \x in {5,6}
{
\draw[red,thick,-<-] ($(0,-2)+(115:\x/5)$) coordinate(LB\x) arc(115:135:\x/5); 
\draw (LB\x) ++(25:0.2) coordinate (LB\x');
\draw[red,thick,-<-] ($(0,2)+(-115:\x/5)$) coordinate(LT\x) arc(-115:-135:\x/5);
\draw (LT\x) ++(-25:0.2) coordinate (LT\x');
}
\foreach \x in {4}
{
\draw[red,thick,dotted] ($(0,2)+(-115:\x/5)$) coordinate(LT\x) arc(-115:-132:\x/5);
\draw (LT\x) ++(-25:0.2) coordinate (LT\x');
}
\foreach \x in {3,4}
{
\draw[red,thick,dotted] ($(0,-2)+(115:\x/5)$) coordinate(LB\x) arc(115:132:\x/5); 
\draw (LB\x) ++(25:0.2) coordinate (LB\x');
}
\foreach \x in {5,6}
{
\draw[red,thick,->-] ($(0,-2)+(65:\x/5)$) coordinate(RB\x) arc(65:45:\x/5); 
\draw (RB\x) ++(180-25:0.2) coordinate (RB\x');
\draw[red,thick,->-] ($(0,2)+(-65:\x/5)$) coordinate(RT\x) arc(-65:-45:\x/5);
\draw (RT\x) ++(-180+25:0.2) coordinate (RT\x');
}


\draw[red,thick] (LB5) ..controls (LB5') and (R3')..  (R3);
\draw[red,thick] (LB6) ..controls (LB6') and (R2')..  (R2);
\draw[red,thick] (LT6) ..controls (LT6') and (R1')..  (R1);
\draw[red,thick] (LT5) ..controls (LT5') and (RT6')..  (RT6);
\draw[red,thick] (LT4) ..controls (LT4') and (RT5')..  (RT5);
\draw[red,thick] (LB4) ..controls (LB4') and (RB6')..  (RB6);
\draw[red,thick] (LB3) ..controls (LB3') and (RB5')..  (RB5);
\end{scope}
\end{scope}}
\node at (4.5,-3) {$(\sfx_{T_L},\sfx_{T_R},\sfx_{\bott},\sfx_{\topp})=(2,3,-2,1)$};
\end{tikzpicture}
    \caption{An example for the case $\sfx_{T_L}\geq 0$ and $\sfx_{T_R}\geq 0$.}
    \label{fig:reconstruction1}
\end{figure}

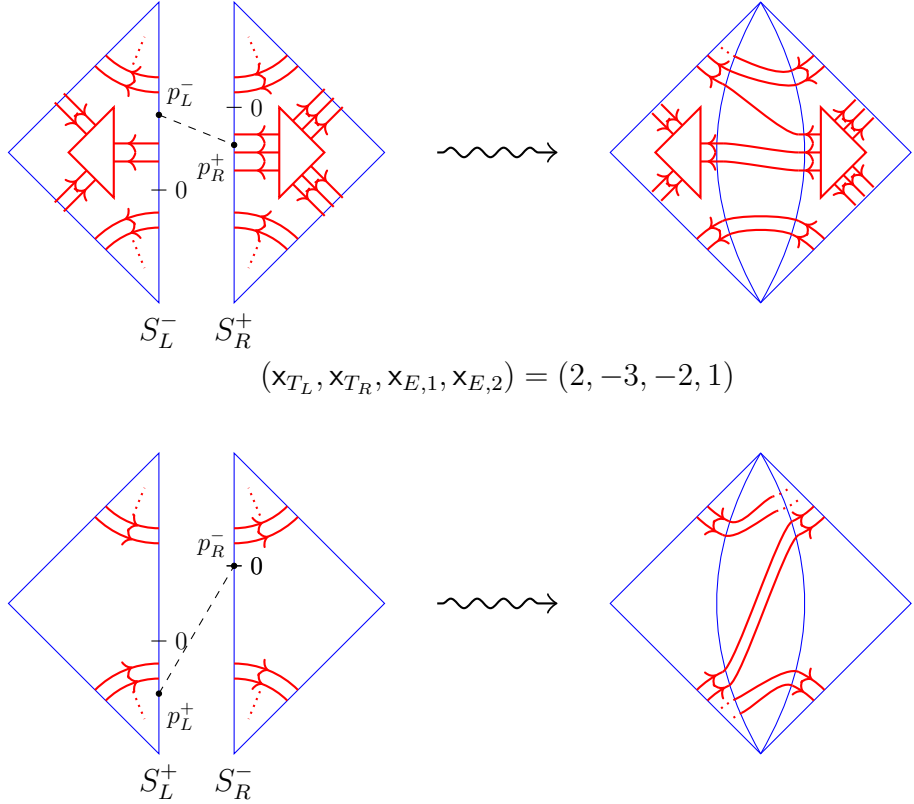
\begin{figure}[htbp]
\begin{tikzpicture}
\draw[blue] (-2,0) -- (0,2) -- (0,-2) --cycle;
\draw[red,thick] (-1.2,0) coordinate(A) --++(0.6,0.6) coordinate(B) --++(0,-1.2) coordinate(C) --cycle;
\draw[red,thick,-<-] ($(A)!0.6!(B)$) --++(135:0.6);
\draw[red,thick,-<-] ($(A)!0.4!(B)$) --++(135:0.6);
\draw[red,thick,-<-] ($(A)!0.6!(C)$) --++(225:0.6);
\draw[red,thick,-<-] ($(A)!0.4!(C)$) --++(225:0.6);
\draw[red,thick,-<-] ($(B)!0.6!(C)$) --++(0:0.6);
\draw[red,thick,-<-] ($(B)!0.4!(C)$) --++(0:0.6);
\node[below] at (0,-2) {$S_L^-$};

\foreach \x in {1.2,1.0}
{
\draw[red,thick,->-] (0,-2+\x) arc(90:135:\x); 
\draw[red,thick,->-] (0,2-\x) arc(-90:-135:\x);
}
\draw[red,thick,dotted] ($(0,-2)+(112.5:0.9)$)--++(-67.5:0.4);
\draw[red,thick,dotted] ($(0,2)+(-112.5:0.9)$)--++(67.5:0.4);

\draw(-0.1,-0.5) -- (0.1,-0.5) node[right,scale=0.8]{$0$};
\filldraw(0,0.5) circle(1pt) node[above right,scale=0.8]{$p_L^-$} coordinate(p1);

\begin{scope}[xshift=1cm]
\draw[blue] (2,0) -- (0,2) -- (0,-2) --cycle;
\draw[red,thick] (1.2,0) coordinate(A) --++(-0.6,-0.6) coordinate(B) --++(0,+1.2) coordinate(C) --cycle;
\draw[red,thick,->-] ($(A)!0.7!(B)$) --++(-45:0.6);
\draw[red,thick,->-] ($(A)!0.5!(B)$) --++(-45:0.6);
\draw[red,thick,->-] ($(A)!0.3!(B)$) --++(-45:0.6);
\draw[red,thick,->-] ($(A)!0.7!(C)$) --++(45:0.6);
\draw[red,thick,->-] ($(A)!0.5!(C)$) --++(45:0.6);
\draw[red,thick,->-] ($(A)!0.3!(C)$) --++(45:0.6);
\draw[red,thick,->-] ($(B)!0.7!(C)$) --++(180:0.6);
\draw[red,thick,->-] ($(B)!0.5!(C)$) --++(180:0.6);
\draw[red,thick,->-] ($(B)!0.3!(C)$) --++(180:0.6);
\node[below] at (0,-2) {$S_R^+$};

\foreach \x in {1.2,1.0}
{
\draw[red,thick,-<-] (0,-2+\x) arc(90:45:\x); 
\draw[red,thick,-<-] (0,2-\x) arc(-90:-45:\x);
}
\draw[red,thick,dotted] ($(0,-2)+(67.5:0.9)$)--++(-112.5:0.4);
\draw[red,thick,dotted] ($(0,2)+(-67.5:0.9)$)--++(112.5:0.4);

\draw(-0.1,0.6) -- (0.1,0.6) node[right,scale=0.8]{$0$};
\filldraw(0,0.1) circle(1pt) node[below left,scale=0.8]{$p_R^+$} coordinate(p2);
\end{scope}
\draw[dashed] (p1) -- (p2);
\draw [thick,-{Classical TikZ Rightarrow[length=4pt]},decorate,decoration={snake,amplitude=1.8pt,pre length=2pt,post length=3pt}](3.7,0) --(5.3,0);

\begin{scope}[xshift=8cm]
\draw[blue] (2,0) -- (0,2) -- (-2,0) -- (0,-2) --cycle;
\draw[blue] (0,-2) to[bend left=30pt] (0,2);
\draw[blue] (0,-2) to[bend right=30pt] (0,2);

\draw[red,thick] (-1.4,0) coordinate(A) --++(0.6,0.6) coordinate(B) --++(0,-1.2) coordinate(C) --cycle;
\draw[red,thick,-<-] ($(A)!0.6!(B)$) --++(135:0.4);
\draw[red,thick,-<-] ($(A)!0.4!(B)$) --++(135:0.4);
\draw[red,thick,-<-] ($(A)!0.6!(C)$) --++(225:0.4);
\draw[red,thick,-<-] ($(A)!0.4!(C)$) --++(225:0.4);
\draw[red,thick,-<-={0.7}{}] ($(B)!0.6!(C)$) --++(0:0.3) coordinate(L1);
\draw[red,thick,-<-={0.7}{}] ($(B)!0.4!(C)$) --++(0:0.3) coordinate(L2);
\draw (L1) ++(0.2,0) coordinate(L1');
\draw (L2) ++(0.2,0) coordinate(L2');

\draw[red,thick] (1.4,0) coordinate(A) --++(-0.6,-0.6) coordinate(B) --++(0,+1.2) coordinate(C) --cycle;
\draw[red,thick,->-] ($(A)!0.7!(B)$) --++(-45:0.4);
\draw[red,thick,->-] ($(A)!0.5!(B)$) --++(-45:0.4);
\draw[red,thick,->-] ($(A)!0.3!(B)$) --++(-45:0.4);
\draw[red,thick,->-] ($(A)!0.7!(C)$) --++(45:0.4);
\draw[red,thick,->-] ($(A)!0.5!(C)$) --++(45:0.4);
\draw[red,thick,->-] ($(A)!0.3!(C)$) --++(45:0.4);
\draw[red,thick,->-={0.7}{}] ($(B)!0.7!(C)$) --++(180:0.3) coordinate(R1);
\draw[red,thick,->-={0.7}{}] ($(B)!0.5!(C)$) --++(180:0.3) coordinate(R2);
\draw[red,thick,->-={0.7}{}] ($(B)!0.3!(C)$) --++(180:0.3) coordinate(R3);
\draw (R1) ++(-0.2,0) coordinate(R1');
\draw (R2) ++(-0.2,0) coordinate(R2');
\draw (R3) ++(-0.2,0) coordinate(R3');
\foreach \x in {5,6}
{
\draw[red,thick,->-] ($(0,-2)+(115:\x/5)$) coordinate(LB\x) arc(115:135:\x/5); 
\draw (LB\x) ++(25:0.2) coordinate (LB\x');
\draw[red,thick,->-] ($(0,2)+(-115:\x/5)$) coordinate(LT\x) arc(-115:-135:\x/5);
\draw (LT\x) ++(-25:0.2) coordinate (LT\x');
}
\foreach \x in {4}
{
\draw[red,thick,dotted] ($(0,2)+(-115:\x/5)$) coordinate(LT\x) arc(-115:-132:\x/5);
\draw (LT\x) ++(-25:0.2) coordinate (LT\x');
}
\foreach \x in {5,6}
{
\draw[red,thick,-<-] ($(0,-2)+(65:\x/5)$) coordinate(RB\x) arc(65:45:\x/5); 
\draw (RB\x) ++(180-25:0.2) coordinate (RB\x');
\draw[red,thick,-<-] ($(0,2)+(-65:\x/5)$) coordinate(RT\x) arc(-65:-45:\x/5);
\draw (RT\x) ++(-180+25:0.2) coordinate (RT\x');
}


\draw[red,thick] (LB5) ..controls (LB5') and (RB5')..  (RB5);
\draw[red,thick] (LB6) ..controls (LB6') and (RB6')..  (RB6);
\draw[red,thick] (LT6) ..controls (LT6') and (R1')..  (R1);
\draw[red,thick] (LT5) ..controls (LT5') and (RT6')..  (RT6);
\draw[red,thick] (LT4) ..controls (LT4') and (RT5')..  (RT5);
\draw[red,thick] (L2) ..controls (L2') and (R2')..  (R2);
\draw[red,thick] (L1) ..controls (L1') and (R3')..  (R3);
\end{scope}
{\begin{scope}[yshift=-6cm]

\draw[blue] (-2,0) -- (0,2) -- (0,-2) --cycle;
\node[below] at (0,-2) {$S_L^+$};

\foreach \x in {1.2,1.0}
{
\draw[red,thick,-<-] (0,-2+\x) arc(90:135:\x); 
\draw[red,thick,-<-] (0,2-\x) arc(-90:-135:\x);
}
\draw[red,thick,dotted] ($(0,-2)+(112.5:0.9)$)--++(-67.5:0.4);
\draw[red,thick,dotted] ($(0,2)+(-112.5:0.9)$)--++(67.5:0.4);

\draw(-0.1,-0.5) -- (0.1,-0.5) node[right,scale=0.8]{$0$};
\filldraw(0,-1.2) circle(1pt) node[below right,scale=0.8]{$p_L^+$} coordinate(p1);

\begin{scope}[xshift=1cm]
\draw[blue] (2,0) -- (0,2) -- (0,-2) --cycle;
\draw(-0.1,0.5) -- (0.1,0.5) node[right,scale=0.8]{$0$};
\node[below] at (0,-2) {$S_R^-$};

\foreach \x in {1.2,1.0}
{
\draw[red,thick,->-] (0,-2+\x) arc(90:45:\x); 
\draw[red,thick,->-] (0,2-\x) arc(-90:-45:\x);
}
\draw[red,thick,dotted] ($(0,-2)+(67.5:0.9)$)--++(-112.5:0.4);
\draw[red,thick,dotted] ($(0,2)+(-67.5:0.9)$)--++(112.5:0.4);

\draw(-0.1,0.5) -- (0.1,0.5) node[right,scale=0.8]{$0$};
\filldraw(0,0.5) circle(1pt) node[above left,scale=0.8]{$p_R^-$} coordinate(p2);
\end{scope}
\draw[dashed] (p1) -- (p2);
\draw [thick,-{Classical TikZ Rightarrow[length=4pt]},decorate,decoration={snake,amplitude=1.8pt,pre length=2pt,post length=3pt}](3.7,0) --(5.3,0);

\begin{scope}[xshift=8cm]
\draw[blue] (2,0) -- (0,2) -- (-2,0) -- (0,-2) --cycle;
\draw[blue] (0,-2) to[bend left=30pt] (0,2);
\draw[blue] (0,-2) to[bend right=30pt] (0,2);

\foreach \x in {5,6}
{
\draw[red,thick,-<-] ($(0,-2)+(115:\x/5)$) coordinate(LB\x) arc(115:135:\x/5); 
\draw (LB\x) ++(25:0.2) coordinate (LB\x');
\draw[red,thick,-<-] ($(0,2)+(-115:\x/5)$) coordinate(LT\x) arc(-115:-135:\x/5);
\draw (LT\x) ++(-25:0.2) coordinate (LT\x');
}
\foreach \x in {3,4}
{
\draw[red,thick,dotted] ($(0,-2)+(115:\x/5)$) coordinate(LB\x) arc(115:132:\x/5);
\draw (LB\x) ++(25:0.2) coordinate (LB\x');
}
\foreach \x in {5,6}
{
\draw[red,thick,->-] ($(0,-2)+(65:\x/5)$) coordinate(RB\x) arc(65:45:\x/5); 
\draw (RB\x) ++(180-25:0.2) coordinate (RB\x');
\draw[red,thick,->-] ($(0,2)+(-65:\x/5)$) coordinate(RT\x) arc(-65:-45:\x/5);
\draw (RT\x) ++(-180+25:0.2) coordinate (RT\x');
}
\foreach \x in {3,4}
{
\draw[red,thick,dotted] ($(0,2)+(-75:\x/5)$) coordinate(RT\x) arc(-75:-48:\x/5);
\draw (RT\x) ++(-155:0.2) coordinate (RT\x');
}

\draw[red,thick] (LB5) ..controls (LB5') and (RT6')..  (RT6);
\draw[red,thick] (LB6) ..controls (LB6') and (RT5')..  (RT5);
\draw[red,thick] (LT6) ..controls (LT6') and (RT4')..  (RT4);
\draw[red,thick] (LT5) ..controls (LT5') and (RT3')..  (RT3);
\draw[red,thick] (LB4) ..controls (LB4') and (RB6')..  (RB6);
\draw[red,thick] (LB3) ..controls (LB3') and (RB5')..  (RB5);

\end{scope}
\end{scope}}
\node at (4.5,-3) {$(\sfx_{T_L},\sfx_{T_R},\sfx_{\bott},\sfx_{\topp})=(2,-3,-2,1)$};
\end{tikzpicture}
    \caption{An example for the case $\sfx_{T_L}\geq 0$ and $\sfx_{T_R}\leq 0$. The case $\sfx_{T_L}\leq 0$ and $\sfx_{T_R}\geq 0$ follows by symmetry (\cref{rem:gluing_symmetry}).}
    \label{fig:reconstruction2}
\end{figure}

\begin{figure}[htbp]
\begin{tikzpicture}

\draw[blue] (-2,0) -- (0,2) -- (0,-2) --cycle;
\node[below] at (0,-2) {$S_L^-$};

\foreach \x in {1.2,1.0}
{
\draw[red,thick,->-] (0,-2+\x) arc(90:135:\x); 
\draw[red,thick,->-] (0,2-\x) arc(-90:-135:\x);
}
\draw[red,thick,dotted] ($(0,-2)+(112.5:0.9)$)--++(-67.5:0.4);
\draw[red,thick,dotted] ($(0,2)+(-112.5:0.9)$)--++(67.5:0.4);

\draw(-0.1,-0.5) node[left,scale=0.8]{$0$} -- (0.1,-0.5);
\filldraw(0,-0.5) circle(1pt) node[below right,scale=0.8]{$p_L^-$} coordinate(p1);

\begin{scope}[xshift=1cm]
\draw[blue] (2,0) -- (0,2) -- (0,-2) --cycle;
\draw[red,thick] (1.2,0) coordinate(A) --++(-0.6,-0.6) coordinate(B) --++(0,+1.2) coordinate(C) --cycle;
\draw[red,thick,->-] ($(A)!0.7!(B)$) --++(-45:0.6);
\draw[red,thick,->-] ($(A)!0.5!(B)$) --++(-45:0.6);
\draw[red,thick,->-] ($(A)!0.3!(B)$) --++(-45:0.6);
\draw[red,thick,->-] ($(A)!0.7!(C)$) --++(45:0.6);
\draw[red,thick,->-] ($(A)!0.5!(C)$) --++(45:0.6);
\draw[red,thick,->-] ($(A)!0.3!(C)$) --++(45:0.6);
\draw[red,thick,->-] ($(B)!0.7!(C)$) --++(180:0.6);
\draw[red,thick,->-] ($(B)!0.5!(C)$) --++(180:0.6);
\draw[red,thick,->-] ($(B)!0.3!(C)$) --++(180:0.6);
\node[below] at (0,-2) {$S_R^+$};

\foreach \x in {1.2,1.0}
{
\draw[red,thick,-<-] (0,-2+\x) arc(90:45:\x); 
\draw[red,thick,-<-] (0,2-\x) arc(-90:-45:\x);
}
\draw[red,thick,dotted] ($(0,-2)+(67.5:0.9)$)--++(-112.5:0.4);
\draw[red,thick,dotted] ($(0,2)+(-67.5:0.9)$)--++(112.5:0.4);

\draw(-0.1,0.5) -- (0.1,0.5) node[right,scale=0.8]{$0$};
\filldraw(0,0.1) circle(1pt) node[above left,scale=0.8]{$p_R^+$} coordinate(p2);
\end{scope}
\draw[dashed] (p1) -- (p2);
\draw [thick,-{Classical TikZ Rightarrow[length=4pt]},decorate,decoration={snake,amplitude=1.8pt,pre length=2pt,post length=3pt}](3.7,0) --(5.3,0);

\begin{scope}[xshift=8cm]
\draw[blue] (2,0) -- (0,2) -- (-2,0) -- (0,-2) --cycle;
\draw[blue] (0,-2) to[bend left=30pt] (0,2);
\draw[blue] (0,-2) to[bend right=30pt] (0,2);


\draw[red,thick] (1.4,0) coordinate(A) --++(-0.6,-0.6) coordinate(B) --++(0,+1.2) coordinate(C) --cycle;
\draw[red,thick,->-] ($(A)!0.7!(B)$) --++(-45:0.4);
\draw[red,thick,->-] ($(A)!0.5!(B)$) --++(-45:0.4);
\draw[red,thick,->-] ($(A)!0.3!(B)$) --++(-45:0.4);
\draw[red,thick,->-] ($(A)!0.7!(C)$) --++(45:0.4);
\draw[red,thick,->-] ($(A)!0.5!(C)$) --++(45:0.4);
\draw[red,thick,->-] ($(A)!0.3!(C)$) --++(45:0.4);
\draw[red,thick,->-={0.7}{}] ($(B)!0.7!(C)$) --++(180:0.3) coordinate(R1);
\draw[red,thick,->-={0.7}{}] ($(B)!0.5!(C)$) --++(180:0.3) coordinate(R2);
\draw[red,thick,->-={0.7}{}] ($(B)!0.3!(C)$) --++(180:0.3) coordinate(R3);
\draw (R1) ++(-0.2,0) coordinate(R1');
\draw (R2) ++(-0.2,0) coordinate(R2');
\draw (R3) ++(-0.2,0) coordinate(R3');
\foreach \x in {5,6}
{
\draw[red,thick,->-] ($(0,-2)+(115:\x/5)$) coordinate(LB\x) arc(115:135:\x/5); 
\draw (LB\x) ++(25:0.2) coordinate (LB\x');
\draw[red,thick,->-] ($(0,2)+(-115:\x/5)$) coordinate(LT\x) arc(-115:-135:\x/5);
\draw (LT\x) ++(-25:0.2) coordinate (LT\x');
}
\foreach \x in {4}
{
\draw[red,thick,dotted] ($(0,2)+(-115:\x/5)$) coordinate(LT\x) arc(-115:-132:\x/5);
\draw (LT\x) ++(-25:0.2) coordinate (LT\x');
}
\foreach \x in {3,4}
{
\draw[red,thick,dotted] ($(0,-2)+(115:\x/5)$) coordinate(LB\x) arc(115:132:\x/5); 
\draw (LB\x) ++(25:0.2) coordinate (LB\x');
}

\foreach \x in {5,6}
{
\draw[red,thick,-<-] ($(0,-2)+(65:\x/5)$) coordinate(RB\x) arc(65:45:\x/5); 
\draw (RB\x) ++(180-25:0.2) coordinate (RB\x');
\draw[red,thick,-<-] ($(0,2)+(-65:\x/5)$) coordinate(RT\x) arc(-65:-45:\x/5);
\draw (RT\x) ++(-180+25:0.2) coordinate (RT\x');
}


\draw[red,thick] (LB5) ..controls (LB5') and (R3')..  (R3);
\draw[red,thick] (LB6) ..controls (LB6') and (R2')..  (R2);
\draw[red,thick] (LT6) ..controls (LT6') and (R1')..  (R1);
\draw[red,thick] (LT5) ..controls (LT5') and (RT6')..  (RT6);
\draw[red,thick] (LT4) ..controls (LT4') and (RT5')..  (RT5);
\draw[red,thick] (LB4) ..controls (LB4') and (RB6')..  (RB6);
\draw[red,thick] (LB3) ..controls (LB3') and (RB5')..  (RB5);
\end{scope}

{\begin{scope}[yshift=-6cm]
\draw[blue] (-2,0) -- (0,2) -- (0,-2) --cycle;
\draw[red,thick] (-1.2,0) coordinate(A) --++(0.6,0.6) coordinate(B) --++(0,-1.2) coordinate(C) --cycle;
\draw[red,thick,->-] ($(A)!0.6!(B)$) --++(135:0.6);
\draw[red,thick,->-] ($(A)!0.4!(B)$) --++(135:0.6);
\draw[red,thick,->-] ($(A)!0.6!(C)$) --++(225:0.6);
\draw[red,thick,->-] ($(A)!0.4!(C)$) --++(225:0.6);
\draw[red,thick,->-] ($(B)!0.6!(C)$) --++(0:0.6);
\draw[red,thick,->-] ($(B)!0.4!(C)$) --++(0:0.6);
\node[below] at (0,-2) {$S_L^+$};

\foreach \x in {1.2,1.0}
{
\draw[red,thick,-<-] (0,-2+\x) arc(90:135:\x); 
\draw[red,thick,-<-] (0,2-\x) arc(-90:-135:\x);
}
\draw[red,thick,dotted] ($(0,-2)+(112.5:0.9)$)--++(-67.5:0.4);
\draw[red,thick,dotted] ($(0,2)+(-112.5:0.9)$)--++(67.5:0.4);

\draw(-0.1,-0.5) -- (0.1,-0.5) node[right,scale=0.8]{$0$};
\filldraw(0,-1.2) circle(1pt) node[below right,scale=0.8]{$p_L^+$} coordinate(p1);

\begin{scope}[xshift=1cm]
\draw[blue] (2,0) -- (0,2) -- (0,-2) --cycle;
\node[below] at (0,-2) {$S_R^-$};

\foreach \x in {1.2,1.0}
{
\draw[red,thick,->-] (0,-2+\x) arc(90:45:\x); 
\draw[red,thick,->-] (0,2-\x) arc(-90:-45:\x);
}
\draw[red,thick,dotted] ($(0,-2)+(67.5:0.9)$)--++(-112.5:0.4);
\draw[red,thick,dotted] ($(0,2)+(-67.5:0.9)$)--++(112.5:0.4);

\draw(-0.1,0.5) -- (0.1,0.5) node[right,scale=0.8]{$0$};
\filldraw(0,0.5) circle(1pt) node[above left,scale=0.8]{$p_R^-$} coordinate(p2);
\end{scope}
\draw[dashed] (p1) -- (p2);
\draw [thick,-{Classical TikZ Rightarrow[length=4pt]},decorate,decoration={snake,amplitude=1.8pt,pre length=2pt,post length=3pt}](3.7,0) --(5.3,-0);

\begin{scope}[xshift=8cm]
\draw[blue] (2,0) -- (0,2) -- (-2,0) -- (0,-2) --cycle;
\draw[blue] (0,-2) to[bend left=30pt] (0,2);
\draw[blue] (0,-2) to[bend right=30pt] (0,2);

\draw[red,thick] (-1.4,0) coordinate(A) --++(0.6,0.6) coordinate(B) --++(0,-1.2) coordinate(C) --cycle;
\draw[red,thick,->-] ($(A)!0.6!(B)$) --++(135:0.4);
\draw[red,thick,->-] ($(A)!0.4!(B)$) --++(135:0.4);
\draw[red,thick,->-] ($(A)!0.6!(C)$) --++(225:0.4);
\draw[red,thick,->-] ($(A)!0.4!(C)$) --++(225:0.4);
\draw[red,thick,->-={0.7}{}] ($(B)!0.6!(C)$) --++(0:0.3) coordinate(L1);
\draw[red,thick,->-={0.7}{}] ($(B)!0.4!(C)$) --++(0:0.3) coordinate(L2);
\draw (L1) ++(0.2,0) coordinate(L1');
\draw (L2) ++(0.2,0) coordinate(L2');

\foreach \x in {5,6}
{
\draw[red,thick,-<-] ($(0,-2)+(115:\x/5)$) coordinate(LB\x) arc(115:135:\x/5); 
\draw (LB\x) ++(25:0.2) coordinate (LB\x');
\draw[red,thick,-<-] ($(0,2)+(-115:\x/5)$) coordinate(LT\x) arc(-115:-135:\x/5);
\draw (LT\x) ++(-25:0.2) coordinate (LT\x');
}
\foreach \x in {3,4}
{
\draw[red,thick,dotted] ($(0,-2)+(115:\x/5)$) coordinate(LB\x) arc(115:132:\x/5); 
\draw (LB\x) ++(25:0.2) coordinate (LB\x');
}

\foreach \x in {5,6}
{
\draw[red,thick,->-] ($(0,-2)+(65:\x/5)$) coordinate(RB\x) arc(65:45:\x/5); 
\draw (RB\x) ++(180-25:0.2) coordinate (RB\x');
\draw[red,thick,->-] ($(0,2)+(-65:\x/5)$) coordinate(RT\x) arc(-65:-45:\x/5);
\draw (RT\x) ++(-180+25:0.2) coordinate (RT\x');
}
\foreach \x in {2,5/2,3,4}
{
\draw[red,thick,dotted] ($(0,2)+(-65:\x/5)$) coordinate(RT\x) arc(-65:-48:\x/5);
\draw (RT\x) ++(180+25:0.2) coordinate (RT\x');
}


\draw[red,thick] (LB5) ..controls (LB5') and (RT6')..  (RT6);
\draw[red,thick] (LB6) ..controls (LB6') and (RT5')..  (RT5);
\draw[red,thick] (LT6) ..controls (LT6') and (RT5/2')..  (RT5/2);
\draw[red,thick] (LT5) ..controls (LT5') and (RT2')..  (RT2);
\draw[red,thick] (L1) ..controls (L1') and (RT4')..  (RT4);
\draw[red,thick] (L2) ..controls (L2') and (RT3')..  (RT3);
\draw[red,thick] (LB4) ..controls (LB4') and (RB6')..  (RB6);
\draw[red,thick] (LB3) ..controls (LB3') and (RB5')..  (RB5);
\end{scope}
\end{scope}}
\node at (4.5,-3) {$(\sfx_{T_L},\sfx_{T_R},\sfx_{\bott},\sfx_{\topp})=(-2,-3,-2,1)$};
\end{tikzpicture}
    \caption{An example for the case $\sfx_{T_L}\leq 0$ and $\sfx_{T_R}\leq 0$.}
    \label{fig:reconstruction3}
\end{figure}

The collection $\cW_{\mathrm{br}}^\tri((\sfx_i)_i)$ is the braid representative of the spiralling diagram associated to an unbounded integral $\fsl_3$-lamination $\xi_\tri((\sfx_i)_i)$, which is obtained as follows:
\begin{description}
    \item[Step 1] First remove the peripheral components around the marked points (both special points and punctures) from $\cW_{\mathrm{br}}^\tri((\sfx_i)_i)$. Then, remaining are finitely many components. 
    \item[Step 2] Replace each spiralling end around a puncture $p$ with an end incident to $p$, while encoding the spiralling directions in signs by reversing the rule in \cref{fig:spiral}. Then we get a collection $W_\mathrm{br}^\tri((\sfx_i)_i)$ of signed webs, which we call a \emph{braid representative} of a signed web. 
    It contains at most finitely many intersections of curves only in biangles. Here we can rearrange $W_\mathrm{br}^\tri((\sfx_i)_i)$ so that no pair of curves form a bigon by applying a Reidemeister II-type isotopy if necessary (cf.~\emph{square removing algorithm} in \cite{DS20I}). See \cref{fig:square_resolution}. Observe that this operation does not affect the shear coordinates. 
    \item[Step 3] Replace each intersection of curves in a biangle with an H-web by the rule \eqref{eq:H-replacement}. Then we get a signed $\fsl_3$-web $W$ on $\Sigma$, which has no elliptic faces. Indeed, we have no 0-gon nor 2-gon faces by construction, and possible emergence of 4-gon faces has been eliminated in Step 2.
    \end{description}
Then $\xi_\tri((\sfx_i)_i) \in \cL^x(\Sigma,\bZ)$ is
defined to be the unbounded integral $\fsl_3$-lamination represented by the non-elliptic signed web $W$ (with weight $1$ on each component). 
Set $\xi_\tri((\widetilde{\sfx}_i)_i):=u^{-1}\cdot\xi_\tri((\sfx_i)_i) \in \cL^x(\Sigma,\bQ)$. 

Thus we get the map $\xi_\tri: \bQ^{I_\uf(\tri)} \to \cL^x(\Sigma,\bQ)$, which is clearly $\bQ_{>0}$-equivariant. We are going to show that this map indeed gives the inverse map of $\sfx^\uf_\tri$. The following direction is easier:

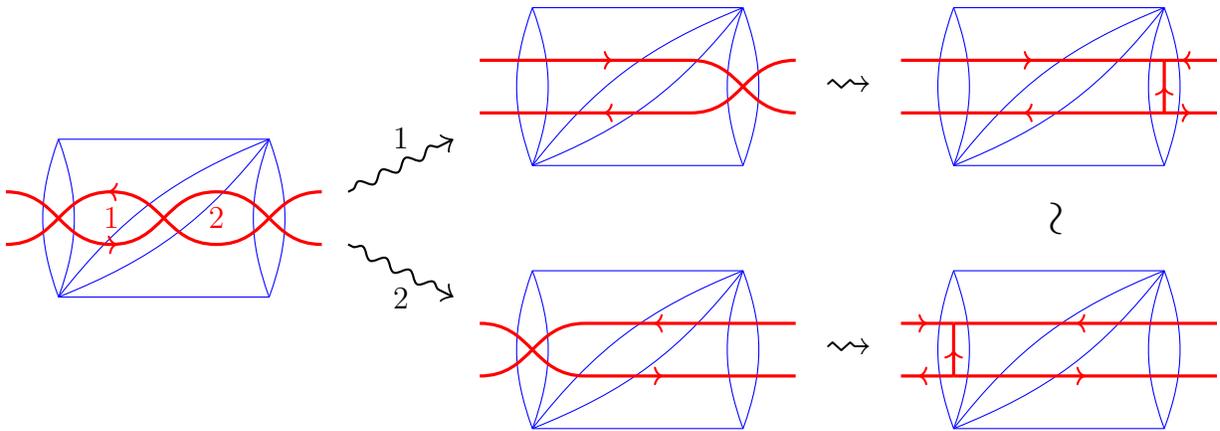
\begin{figure}[htbp]
\begin{tikzpicture}[scale=.58]
\draw[blue] (-2,-1.5) to[bend left=20pt] (-2,1.5) -- (2,1.5) to[bend left=20pt] (2,-1.5) --cycle;
\draw[blue] (-2,-1.5) to[bend right=20pt] (-2,1.5);
\draw[blue] (2,-1.5) to[bend left=20pt] (2,1.5);
\draw[blue] (-2,-1.5) to[bend left=15pt] (2,1.5);
\draw[blue] (-2,-1.5) to[bend right=15pt] (2,1.5);

\node[red] at (-1,0) {$1$};
\node[red] at (1,0) {$2$};
\draw [red, very thick, ->-={0.35}{}](-3,0.5) .. controls (-2,0.5) and (-2,-0.5) .. (-1,-0.5) .. controls (0,-0.5) and (0,0.5) .. (1,0.5) .. controls (2,0.5) and (2,-0.5) .. (3,-0.5);
\draw [red, very thick, -<-={0.35}{}](-3,-0.5) .. controls (-2,-0.5) and (-2,0.5) .. (-1,0.5) .. controls (0,0.5) and (0,-0.5) .. (1,-0.5) .. controls (2,-0.5) and (2,0.5) .. (3,0.5);

\draw [thick,-{Classical TikZ Rightarrow[length=4pt]},decorate,decoration={snake,amplitude=1.8pt,pre length=2pt,post length=3pt}](3.5,0.5) --node[midway,above=0.2em]{$1$} (5.5,1.5);
\draw [thick,-{Classical TikZ Rightarrow[length=4pt]},decorate,decoration={snake,amplitude=1.8pt,pre length=2pt,post length=3pt}](3.5,-0.5) --node[midway,below=0.2em]{$2$} (5.5,-1.5);

\begin{scope}[xshift=9cm,yshift=2.5cm]
\draw[blue] (-2,-1.5) to[bend left=20pt] (-2,1.5) -- (2,1.5) to[bend left=20pt] (2,-1.5) --cycle;
\draw[blue] (-2,-1.5) to[bend right=20pt] (-2,1.5);
\draw[blue] (2,-1.5) to[bend left=20pt] (2,1.5);
\draw[blue] (-2,-1.5) to[bend left=15pt] (2,1.5);
\draw[blue] (-2,-1.5) to[bend right=15pt] (2,1.5);
\draw [red, very thick, ->-={0.4}{}](-3,0.5) .. controls (-2,0.5)  and (0,0.5) .. (1,0.5) .. controls (2,0.5) and (2,-0.5) .. (3,-0.5);
\draw [red, very thick, -<-={0.4}{}](-3,-0.5) .. controls (-2,-0.5) and (0,-0.5) .. (1,-0.5) .. controls (2,-0.5) and (2,0.5) .. (3,0.5);
\end{scope}

\node[scale=1.5] at (13,2.5) {$\rightsquigarrow$};

\begin{scope}[xshift=17cm,yshift=2.5cm]
\draw[blue] (-2,-1.5) to[bend left=20pt] (-2,1.5) -- (2,1.5) to[bend left=20pt] (2,-1.5) --cycle;
\draw[blue] (-2,-1.5) to[bend right=20pt] (-2,1.5);
\draw[blue] (2,-1.5) to[bend left=20pt] (2,1.5);
\draw[blue] (-2,-1.5) to[bend left=15pt] (2,1.5);
\draw[blue] (-2,-1.5) to[bend right=15pt] (2,1.5);
\draw[red,very thick,->-] (-3,0.5) -- (2,0.5);
\draw[red,very thick,-<-] (2,0.5) -- (3,0.5);
\draw[red,very thick,-<-] (-3,-0.5) -- (2,-0.5);
\draw[red,very thick,->-] (2,-0.5) -- (3,-0.5);
\draw[red,very thick,->-] (2,-0.5) -- (2,0.5);
\end{scope}

\begin{scope}[xshift=9cm,yshift=-2.5cm]
\draw[blue] (-2,-1.5) to[bend left=20pt] (-2,1.5) -- (2,1.5) to[bend left=20pt] (2,-1.5) --cycle;
\draw[blue] (-2,-1.5) to[bend right=20pt] (-2,1.5);
\draw[blue] (2,-1.5) to[bend left=20pt] (2,1.5);
\draw[blue] (-2,-1.5) to[bend left=15pt] (2,1.5);
\draw[blue] (-2,-1.5) to[bend right=15pt] (2,1.5);
\draw [red, very thick, ->-={0.6}{}](-3,0.5) .. controls (-2,0.5) and (-2,-0.5) .. (-1,-0.5) .. controls (0,-0.5) and (2,-0.5) .. (3,-0.5);
\draw [red, very thick, -<-={0.6}{}](-3,-0.5) .. controls (-2,-0.5) and (-2,0.5) .. (-1,0.5) .. controls (0,0.5) and (2,0.5) .. (3,0.5);
\end{scope}

\node[scale=1.5, rotate=90] at (17, 0) {$\sim$};

\node[scale=1.5] at (13,-2.5) {$\rightsquigarrow$};

\begin{scope}[xshift=17cm,yshift=-2.5cm]
\draw[blue] (-2,-1.5) to[bend left=20pt] (-2,1.5) -- (2,1.5) to[bend left=20pt] (2,-1.5) --cycle;
\draw[blue] (-2,-1.5) to[bend right=20pt] (-2,1.5);
\draw[blue] (2,-1.5) to[bend left=20pt] (2,1.5);
\draw[blue] (-2,-1.5) to[bend left=15pt] (2,1.5);
\draw[blue] (-2,-1.5) to[bend right=15pt] (2,1.5);
\draw[red,very thick,->-] (-3,0.5) -- (-2,0.5);
\draw[red,very thick,-<-] (-2,0.5) -- (3,0.5);

\draw[red,very thick,-<-] (-3,-0.5) -- (-2,-0.5);
\draw[red,very thick,->-] (-2,-0.5) -- (3,-0.5);

\draw[red,very thick,->-] (-2,-0.5) -- (-2,0.5);
\end{scope}
\end{tikzpicture}
    \caption{Reidemeister II-type isotopy. We have two ways of applications of this isotopy, which produce equivalent webs.}
    \label{fig:square_resolution}
\end{figure}

\begin{prop}\label{prop:surjectivity}
We have $\sfx^\uf_\tri \circ \xi_\tri=\mathrm{id}_{\bQ^{I_\uf(\tri)}}$. 
\end{prop}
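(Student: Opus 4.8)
The plan is to reduce the statement to the integral case by equivariance and then invoke \cref{lem:braid-shear} directly, once the glued collection $\cW_{\mathrm{br}}^\tri((\sfx'_i)_i)$ has been identified with a braid representative that computes the shear coordinates of $\xi_\tri((\sfx'_i)_i)$.

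First I would exploit the $\bQ_{>0}$-equivariance of both maps. Since the shear coordinates are defined as weighted sums of local contributions, they satisfy $\sfx^\uf_\tri(c\cdot\hL) = c\,\sfx^\uf_\tri(\hL)$ for $c \in \bQ_{>0}$, and $\xi_\tri$ is $\bQ_{>0}$-equivariant by construction. Writing $\sfx'_i := u\sfx_i$ with $u \in \bZ_{>0}$ as in the definition of $\xi_\tri$, it therefore suffices to establish $\sfx^\uf_\tri(\xi_\tri((\sfx'_i)_i)) = (\sfx'_i)_i$ for the integral tuple, after which
\begin{align*}
\sfx^\uf_\tri(\xi_\tri((\sfx_i)_i)) = \sfx^\uf_\tri\bigl(u^{-1}\cdot\xi_\tri((\sfx'_i)_i)\bigr) = u^{-1}(\sfx'_i)_i = (\sfx_i)_i.
\end{align*}

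The core step is to observe that, by the very construction in Steps 1--3, the glued collection $\cW_{\mathrm{br}}^\tri((\sfx'_i)_i)$ is a braid representative of a spiralling diagram in good position for the non-elliptic signed web $W$ representing $\xi_\tri((\sfx'_i)_i)$. Indeed, Steps 1--3 recover $W$ from $\cW_{\mathrm{br}}^\tri((\sfx'_i)_i)$ by reversing the spiralling deformation of \cref{def:spiralling} (encoding the spiralling directions back into signs) and by performing the H-replacements \eqref{eq:H-replacement} inside the biangles; this is precisely inverse to forming the spiralling diagram of $W$ and passing to its braid representation. The honeycombs placed on triangles are reduced essential and the glued curves on each biangle form an unbounded essential web after H-replacement, so the diagram is genuinely in good position with respect to $\widehat{\tri}$. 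Consequently $\sfx^\uf_\tri(\xi_\tri((\sfx'_i)_i))$, which by \cref{prop:shear_well-defined} may be computed from any good-position braid representative, equals $\sfx^\uf_\tri(\cW_{\mathrm{br}}^\tri((\sfx'_i)_i))$, and \cref{lem:braid-shear} delivers the value $(\sfx'_i)_i$.

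The point requiring care — the main obstacle — is reconciling the two modifications made in Steps 1--2 with this good-position computation: the removal of peripheral components (Step 1) and the Reidemeister II bigon removal (Step 2). For the former, the spiralling diagram of $W$ may reacquire peripheral components around the marked points, but these are invisible to the shear coordinates, which are invariant under the peripheral move (E4) as established in \cref{prop:shear_well-defined}. For the latter, the bigon removal is an isotopy supported inside the biangles that does not cross any edge of $\widehat{\tri}$ and leaves the crossing pattern within each quadrilateral $Q_E$ unchanged, hence preserves every local contribution of \cref{fig:shear_curve} and \cref{fig:shear_honeycomb} and thus the shear coordinates. Once these two observations are in place, the chain of equalities closes and the identity $\sfx^\uf_\tri \circ \xi_\tri = \mathrm{id}_{\bQ^{I_\uf(\tri)}}$ follows.
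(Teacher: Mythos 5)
Your proposal is correct and follows essentially the same route as the paper: reduce to the integral case by $\bQ_{>0}$-equivariance, identify $\cW_{\mathrm{br}}^\tri((\sfx'_i)_i)$ as the braid representative of the spiralling diagram of the signed web underlying $\xi_\tri((\sfx'_i)_i)$, and conclude by \cref{lem:braid-shear}. The extra paragraph verifying that the peripheral removal and Reidemeister II adjustments of Steps 1--2 do not affect the shear coordinates is a worthwhile elaboration of what the paper compresses into the phrase ``by construction,'' but it does not change the argument.
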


\begin{proof}
By the $\bQ_{>0}$-equivariance, it suffices to consider an integral tuple $(\sfx_i)_i \in \bZ^{I_\uf(\tri)}$. Notice that by construction, the collection $\cW_{\mathrm{br}}^\tri((\sfx_i)_i)$ arising from the gluing construction above is exactly the braid representative of the spiralling diagram associated with the underlying signed web of the $\fsl_3$-lamination $\hL:=\xi_\tri((\sfx_i)_i) \in \cL^x(\Sigma,\bZ)$. Therefore the shear coordinates $(\sfx^\tri_i(\hL))$ can be directly read off from the collection $\cW_{\mathrm{br}}^\tri((\sfx_i)_i)$. Hence the assertion follows from \cref{lem:braid-shear}.
\end{proof}

\begin{thm}[Proof in \cref{sec:traveler}]\label{prop:injectivity}
We have $\xi_\tri \circ \sfx^\uf_\tri=\mathrm{id}_{\cL^x(\Sigma,\bQ)}$. In particular, the shear coordinates gives a bijection $\xi_\tri: \bQ^{I_\uf(\tri)} \xrightarrow{\sim} \cL^x(\Sigma,\bQ)$.
\end{thm}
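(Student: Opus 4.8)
The plan is to show that reconstructing an $\fsl_3$-lamination from its own shear coordinates returns the lamination, i.e. $\xi_\tri\circ\sfx^\uf_\tri=\mathrm{id}$. Since both $\sfx^\uf_\tri$ and $\xi_\tri$ are $\bQ_{>0}$-equivariant, I may reduce to the integral case and assume $\hL\in\cL^x(\Sigma,\bZ)$. I would represent $\hL$ by a reduced non-elliptic signed web $W$, form the associated spiralling diagram $\cW$ placed in good position with respect to $\widehat{\tri}$ (which exists and is essentially unique by \cref{prop:spiralling_good_position}), and pass to its braid representative $\cW_{\mathrm{br}}^\tri$. Because $\hL$ is recovered from $\cW_{\mathrm{br}}^\tri$ by exactly the Steps 1--3 used to \emph{define} $\xi_\tri$ (deleting peripheral components, restoring signs at punctures, and resolving crossings in biangles into $H$-webs), it suffices to prove that $\cW_{\mathrm{br}}^\tri$ coincides, up to the moves preserving a good position, with the reconstructed collection $\cW_{\mathrm{br}}^\tri(\sfx^\uf_\tri(\hL))$ produced by the gluing construction of \cref{subsec:reconstruction}.

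The comparison splits into a triangle part and a biangle part. In each triangle $T\in t(\tri)$, by \cref{lem:triangle_essential} the restriction $\cW_{\mathrm{br}}^\tri\cap T$ is a single (possibly empty) honeycomb component together with corner arcs, and the signed face coordinate $\sfx^\tri_T(\hL)$ records precisely the oriented height of this honeycomb by the rule of \cref{fig:shear_honeycomb}; hence the honeycombs of $\hL$ and of the reconstruction match. The substance of the proof is the biangle part: for each interior edge $E$, one must verify that the order-preserving pairing of strands across $B_E$ in $\cW_{\mathrm{br}}^\tri$ is exactly the one prescribed by the pinning rule \eqref{eq:gluing_rule}. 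Both pairings are determined once one fixes, on each side $E_Z$ of $B_E$, an \emph{origin} strand relative to which strands are indexed, and the claim is that the offsets of these origins $p_Z^\pm$ are governed by the edge coordinates $\sfx^\tri_{E,1}(\hL),\sfx^\tri_{E,2}(\hL)$ and the face coordinates $\sfx^\tri_{T_L}(\hL),\sfx^\tri_{T_R}(\hL)$ exactly as in \eqref{eq:gluing_rule}.

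To make this precise I would introduce a \emph{traveler}: a bookkeeping device that walks along the braid representative across the edges of $\widehat{\tri}$ and records, at each crossing, the relative index shift of its strand according to the signed contributions tabulated in \cref{fig:shear_curve} and \cref{fig:shear_honeycomb}. Accumulating these shifts converts the purely local shear data into the global combinatorial pairing across every biangle. The invariance of $W_{\mathrm{br}}(\widehat{S}_E)$ under the simultaneous shift of \cref{rem:gluing_symmetry} corresponds precisely to the freedom in the choice of good position (modified periodic $H$-moves and boundary $H$-moves), so the reconstructed pairing is well-defined and agrees with the traveler's computation. Patching these identifications over the whole triangulation identifies $\cW_{\mathrm{br}}^\tri(\sfx^\uf_\tri(\hL))$ with $\cW_{\mathrm{br}}^\tri$, whence $\xi_\tri(\sfx^\uf_\tri(\hL))=\hL$; combined with \cref{prop:surjectivity} this yields the asserted bijection.

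The step I expect to be the main obstacle is the biangle matching. The counting in \cref{fig:shear_curve,fig:shear_honeycomb} mixes genuine curve components with the outer edges of honeycomb components, and one must disentangle these contributions to pin down the origin strands $p_Z^\pm$ unambiguously in all sign regimes of $\sfx^\tri_{T_L}(\hL)$ and $\sfx^\tri_{T_R}(\hL)$. In particular, tracking the spiralling ends at punctures — so that the traveler recovers not only the geometric strands but also their signs under the reverse of the rule in \cref{fig:spiral} — and confirming that no stray bigons survive (so the resolved web is genuinely non-elliptic) are the delicate points requiring the detailed case analysis deferred to \cref{sec:traveler}.
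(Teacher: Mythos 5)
Your overall strategy is the same as the paper's: reduce to integral laminations by $\bQ_{>0}$-equivariance, pass to braid representatives of spiralling diagrams in good position, match honeycombs triangle-by-triangle via the face coordinates, and match strand pairings across each biangle against the gluing rule \eqref{eq:gluing_rule}; the ``traveler'' you introduce is exactly the device the paper uses. However, the core of the argument is missing. Your claim that accumulating local index shifts ``converts the purely local shear data into the global combinatorial pairing'' presupposes that the \emph{route} of each traveler --- the sequence of edges of $\widehat{\tri}$ it crosses, and in particular whether after crossing a biangle it turns left, terminates at the honeycomb, or turns right in the next triangle --- already agrees between the original web and the reconstruction. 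That is precisely what has to be proved. The paper does this via the unbounded Fellow-Traveler Lemma, whose key step is the claim that the traveler entering $B_E$ with identifier $(k_L,+)$ exits with identifier determined by $k_L+k_R=\sfx_{E,1}+[\sfx_{T_R}]_+$, and that its subsequent motion is governed by whether $k_R<0$, $0<k_R<[\sfx_{T_R}]_+$, or $k_R>[\sfx_{T_R}]_+$; since these thresholds are expressed purely in the shear coordinates, the routes propagate identically. Without this claim the ``bookkeeping device'' does not close up into a global correspondence, and peripheral travelers (which fall outside the common index ranges) must be excluded and treated separately.

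There is a second gap at the final identification step. Even after travelers and routes are matched, the two braid representatives need not coincide up to good-position-preserving moves: a pair of corresponding travelers sharing a long route may cross in \emph{different} biangles in the two pictures, and the original web's corner arcs and oppositely-oriented parallel components may be ordered differently from the reconstruction's alternating pattern. These discrepancies are resolved only by genuine equivalences of laminations --- loop and arc parallel-moves and boundary/puncture H-moves to put both webs in reduced, left-oriented position, followed by modified H-moves pushing each pair of matched intersection points into a common shared-route biangle, as in Douglas--Sun --- not by the uniqueness of good position from \cref{prop:spiralling_good_position}. The shift invariance of \cref{rem:gluing_symmetry} accounts only for the freedom in the pinning, not for these rearrangements, so your appeal to it conflates two different sources of ambiguity. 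Both gaps are repairable by the arguments in \cref{sec:traveler}, but as written the proposal defers exactly the parts that constitute the proof.
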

See \cref{sec:traveler} for a proof. 
The main ingredient of the proof is an unbounded version of the Fellow-Traveler Lemma (\cite[Lemma 57]{DS20I}) with respect to the shear coordinates. 

Recall from \cref{subsec:cluster_sl3} that the ideal triangulations $\tri$ correspond to certain seeds in the mutation class $\sfs(\Sigma, \fsl_3)$. The following theorem states that the associated shear coordinate systems $\sfx^\uf_\tri$ are related by tropical cluster Poisson transformations:

\begin{thm}\label{thm:cluster_transf_unfrozen}
For any two ideal triangulations $\tri$ and $\tri'$ of $\Sigma$, the coordinate transformation $\sfx^\uf_{\tri'}\circ (\sfx^\uf_\tri)^{-1}: \bQ^{I_\uf(\tri)} \to \bQ^{I_\uf(\tri)}$ is a composite of tropical cluster Poisson transformations. In particular, we get an $MC(\Sigma)$-equivariant identification $\sfx^\uf_\bullet:\cL^x(\Sigma,\bQ) \xrightarrow{\sim} \X_{\fsl_3,\Sigma}^\uf(\bQ^T)$. 
\end{thm}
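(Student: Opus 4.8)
The plan is to reduce the statement to a single flip and then match the resulting change of shear coordinates with the tropicalized cluster $\X$-mutation. Recall from \cref{subsec:cluster_sl3} that each ideal triangulation $\tri$ without self-folded triangles determines a seed in the mutation class $\sfs(\Sigma,\fsl_3)$, with vertex set $I(\tri)$, and that a flip of an interior edge $E$ is realized by a fixed sequence of mutations at the vertices lying in the quadrilateral $Q_E$. Since any two ideal triangulations are connected by a finite sequence of flips, and a composite of composites of tropical cluster $\X$- (a.k.a.\ Poisson) transformations is again such a composite, it suffices to treat a single flip $\tri \to \tri'$ across one interior edge $E$.

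First I would exploit the locality of the shear coordinates. By construction, $\sfx^\tri_i(\hL)$ for $i \in I_\uf(\tri)$ depends only on the restriction of the braid representative $\cW^\tri_{\mathrm{br}}$ to the quadrilateral containing the edge carrying $i$; hence the flip at $E$ leaves every coordinate outside $Q_E$ unchanged, in exact parallel with the fact that the mutation sequence realizing the flip is supported on the vertices of $Q_E$ and fixes all coordinates attached to the rest of $\Sigma$. The problem is therefore purely local on $Q_E$, and what remains is to compare, for the unfrozen vertices inside $Q_E$, the two systems $\sfx^\uf_\tri$ and $\sfx^\uf_{\tri'}$ as functions on $\cL^x(\Sigma,\bQ)$. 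For this local comparison I would use the explicit reconstruction map $\xi_\tri$ together with the bijectivity of \cref{prop:injectivity} and the matching \cref{lem:braid-shear}: a lamination restricted to $Q_E$ is realized by the braid diagram built in the biangle $B_E$ via the gluing rule \eqref{eq:gluing_rule}, and re-reading the \emph{same} diagram with respect to $\tri'$ (whose diagonal is the other one of $Q_E$) expresses $\sfx^\uf_{\tri'}$ as an explicit piecewise-linear function of $\sfx^\uf_\tri$. Performing this reading in the cases organized by the signs of the triangle (face) coordinates --- exactly the case division of Figures \ref{fig:reconstruction1}--\ref{fig:reconstruction3} --- should reproduce the tropicalization of the cluster $\X$-mutation, which in the conventions of \cref{subsec:cluster_sl3} reads
\[
    \mu_k(\sfx)_i =
    \begin{cases}
        -\sfx_k, & i = k,\\
        \sfx_i - \varepsilon_{ik}\,\bigl[-\sgn(\varepsilon_{ik})\,\sfx_k\bigr]_+, & i \neq k,
    \end{cases}
\]
with $[\,\cdot\,]_+ = \max\{0,\cdot\}$, applied along the mutation sequence attached to the flip. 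The appearance of the terms $[\sfx_{T}]_+$ in \eqref{eq:gluing_rule} is precisely the shadow of this tropical formula, which makes the match plausible. Alternatively, and this is the route that minimizes computation, one can bypass the direct case analysis by transporting the already established $\A$-side transformation of Douglas--Sun \cite{DS20II} through the geometric ensemble map (\cref{prop:ensemble_map}), which intertwines the tropical cluster $\A$- and $\X$-mutations; the unfrozen statement then follows from the frozen-inclusive version by discarding the frozen coordinates.

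Finally I would assemble the pieces. Concatenating over a flip sequence shows that $\sfx^\uf_{\tri'}\circ(\sfx^\uf_\tri)^{-1}$ is a composite of tropical cluster $\X$-transformations. Because the entire construction --- spiralling diagram, good position (\cref{prop:spiralling_good_position}), braid representative, and the coordinate rules --- is natural under orientation-preserving homeomorphisms, we have $\sfx^\uf_{\phi(\tri)}(\phi(\hL)) = \sfx^\uf_\tri(\hL)$ for $\phi \in MC(\Sigma)$; combined with the cluster-transformation compatibility and the identification of the set of triangulations with seeds in $\sfs(\Sigma,\fsl_3)$, the systems $\sfx^\uf_\tri$ glue to an $MC(\Sigma)$-equivariant bijection $\sfx^\uf_\bullet:\cL^x(\Sigma,\bQ)\xrightarrow{\sim}\X^\uf_{\fsl_3,\Sigma}(\bQ^T)$.

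The main obstacle is the local verification on $Q_E$: one must check that reading the biangle diagram in the two triangulations differs \emph{exactly} by the above tropical $\X$-formula in every sign regime, which requires keeping track of the honeycomb (source/sink) contributions and the interaction of the several $[\,\cdot\,]_+$ terms across the successive mutations of the flip sequence. A secondary but necessary point is confirming that the prescribed mutation sequence indeed realizes the quiver flip $Q^\tri \to Q^{\tri'}$ in the $\fsl_3$ cluster structure of \cref{subsec:cluster_sl3}. This is where the bulk of the computation lives, and it is exactly the reason one prefers to import the Douglas--Sun $\A$-side formula rather than to grind through the piecewise-linear cases by hand.
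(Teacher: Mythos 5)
Your preferred route (transporting the Douglas--Sun $\A$-side transformation through the ensemble map, then discarding frozen coordinates) is exactly the paper's strategy: the paper proves the frozen-inclusive statement \cref{thm:cluster_transf} first and deduces \cref{thm:cluster_transf_unfrozen} via the cluster projection \cref{lem:cluster_projection}. The reduction to a single flip and the $MC(\Sigma)$-equivariance at the end also match.

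The one step where your write-up is too quick is the claim that ``the problem is therefore purely local on $Q_E$.'' The unfrozen coordinates sitting on the four \emph{side} edges of $Q_E$ do change under the flip (e.g.\ $\sfx'_5=\sfx_5+[\sfx_1]_+$ in the paper's labelling), and each of them is computed from a quadrilateral that straddles $Q_E$ and a neighbouring triangle of $\Sigma$ --- so it does \emph{not} depend only on the restriction of the braid representative to $Q_E$. Consequently one cannot literally read off the new coordinates by re-reading the same local diagram inside $Q_E$; and one also cannot apply \cref{prop:ensemble_map} directly on $\Sigma$, since $\widetilde{p}$ is a bijection only in the absence of punctures (\cref{lem:ensemble_bijection}). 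The paper closes this gap by doing the ensemble-map transport on the standalone (unpunctured) quadrilateral $Q$, where the side coordinates are frozen, and then invoking the tropicalized amalgamation formula \cref{thm:amalgamation} together with the fact that amalgamation commutes with cluster $\X$-transformations \cite[Lemma 2.2]{FG06a}: the $\Sigma$-coordinate on a side edge is the sum of the $Q$-frozen coordinate and an outside contribution that the flip leaves fixed, and this sum is compatible with the mutation formulas. You implicitly acknowledge the need for the frozen-inclusive version, but the amalgamation argument (or an equivalent substitute) is a genuinely necessary ingredient, not an optional refinement, and should be stated explicitly.
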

Since it is classically known that any two ideal triangulations of the same marked surface can be connected by a finite sequence of flips, it suffices to show that a flip corresponds to a composite of tropical cluster Poisson transformations. Although it can be directly checked in a similar way to \cite[Section 4]{DS20II}, we are going to reduce it to the Douglas--Sun's result via the ensemble map and the gluing technique developed in \cref{sec:amalgamation}.

\subsection{Relation to the rational unbounded $\fsl_2$-laminations}\label{subsec:principal}
Recall the space $\mathcal{L}_{\fsl_2}^x(\Sigma,\bQ)$ of rational unbounded ($\fsl_2$-)laminations from \cite{FG07}. It consists of the following data:
\begin{itemize}
    \item A collection of immersed unoriented loops and arcs such that each endpoint lies in $\bP \cup \partial^\ast \Sigma$, and the other part is embedded into $\interior \Sigma$. It is required to have no elliptic faces (the first one in \eqref{eq:elliptic face} nor the first and last ones in \eqref{eq:elliptic_face_peripheral}).
    \item A positive rational weight on each component.
    \item A sign $\sigma_p \in \{+,0,-\}$ for each puncture $p \in \bP$ such that $\sigma_p=0$ if and only if there are no component incident to $p$.
\end{itemize}
They are considered modulo removal/creation of peripheral components as in \eqref{eq:peripheral}, and the weighted isotopy as in \cref{def:unbounded laminations} (2). Given an ideal triangulation $\tri$ of $\Sigma$, the ($\fsl_2$-)shear coordinate $\sfx_\tri=(\sfx_E^\tri)_{E\in e(\tri)}: \mathcal{L}_{\fsl_2}^x(\Sigma,\bQ) \xrightarrow{\sim} \bQ^{e(\tri)}$ (cf.~\cite{FG07}) is defined by first constructing a spiralling diagram according to the sign $\sigma_p$, and counting the following contributions with weights from the curves in that diagram: 

\begin{figure}[h]
    \centering
\begin{tikzpicture}
\draw[blue] (2,0) -- (0,2) -- (-2,0) -- (0,-2) --cycle;
\draw[blue] (0,-2) to (0,2);
\draw[red,thick] (-1.2,1) -- (1,-1.2);
{\color{mygreen}
\draw (0,0) circle(2pt) node[right]{$+1$};
}

\begin{scope}[xshift=6cm]
\draw[blue] (2,0) -- (0,2) -- (-2,0) -- (0,-2) --cycle;
\draw[blue] (0,-2) to (0,2);
\draw[red,thick] (-1.2,-1) -- (1,1.2);
{\color{mygreen}
\draw (0,0) circle(2pt) node[right]{$-1$};
}
\end{scope}
\end{tikzpicture}
    \caption{Contributions to the $\fsl_2$-shear coordinates.}
    \label{fig:shear_curve_sl2}
\end{figure}
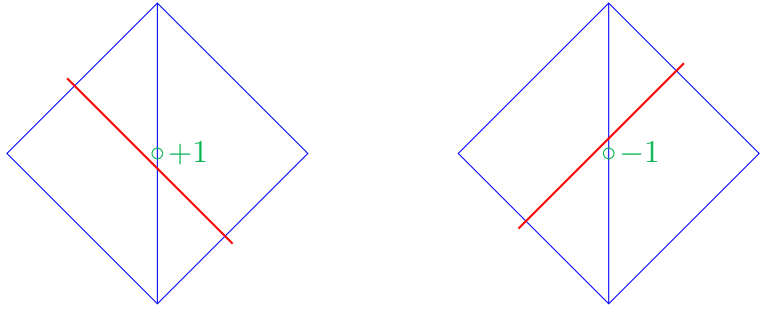

An embedding $\iota_\mathrm{prin}: \mathcal{L}_{\fsl_2}^x(\Sigma,\bQ) \to \cL^x(\Sigma,\bQ)$ is defined so that
\begin{itemize}
    \item each curve $\gamma$ with weight $u \in \bQ_{>0}$ is sent to its parallel copies $\gamma_1,\gamma_2$ with the same weight $u$ with the opposite orientations;
    \item if an arc $\gamma$ is incident to a puncture $p$, then the corresponding ends of the oriented curves $\gamma_1,\gamma_2$ are assigned the sign $\sigma_p \in \{+,-\}$.
\end{itemize}
One can easily verify that it is indeed well-defined. We call $\iota_{\mathrm{prin}}$ the \emph{principal embedding}, as it is a tropical analogue of the morphism $\X_{SL_2,\Sigma} \to \X_{SL_3,\Sigma}$ induced by the principal embedding $\fsl_2 \to \fsl_3$. The following is a tropical analogue of the statement given in \cite[Section 2.5.3]{FG07c}:

\begin{prop}\label{prop:principal_embedding}
The image $\iota_{\mathrm{prin}}(\mathcal{L}_{\fsl_2}^x(\Sigma,\bQ))$ coincides with the fixed point locus of the Dynkin involution $\ast$ (\cref{def:Dynkin_geometric}). In the shear coordinate system $\sfx_\tri$ associated with any ideal triangulation $\tri$, it is characterized by the equations
\begin{align*}
    &\sfx_{E,1}^\tri=\sfx_{E,2}^\tri \hspace{-2cm} &\hspace{-2cm} \mbox{for each $E \in e(\tri)$}, \\
    &\sfx_T^\tri=0 \hspace{-2cm} &\hspace{-2cm} \mbox{for each $T \in t(\tri)$}.
\end{align*}
\end{prop}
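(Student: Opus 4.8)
The plan is to reduce both assertions to the coordinate characterization and to exploit the bijectivity of $\sfx^\uf_\tri$ established in \cref{prop:injectivity}. Write $V \subseteq \bQ^{I_\uf(\tri)}$ for the subspace cut out by $\sfx_{E,1}^\tri = \sfx_{E,2}^\tri$ (for interior edges $E$) together with $\sfx_T^\tri = 0$ (for triangles $T$); the boundary-edge relations involve only frozen coordinates and are handled identically. Two elementary facts about the Dynkin involution will drive everything. First, since reversing all orientations interchanges sink- and source-honeycombs while preserving their heights, and $\sfx_T^\tri$ is by definition the signed height (positive for a sink, negative for a source), one has $\sfx_T^\tri(\hL^\ast) = -\sfx_T^\tri(\hL)$ for every $\hL$ and every $T$. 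Second, if $\sfx_T^\tri(\hL) = 0$ for all $T$, then by \cref{lem:triangle_essential} the good-position representative of $\hL$ carries no honeycomb component, so every contribution to the edge coordinates comes from curve components; reading off \cref{fig:shear_curve}, reversal of a curve's orientation transfers its contribution from one side of $E$ to the other, whence $\sfx_{E,1}^\tri(\hL^\ast) = \sfx_{E,2}^\tri(\hL)$ in this honeycomb-free regime.

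First I would compute the composite $\sfx^\uf_\tri \circ \iota_{\mathrm{prin}}$. An element of the image is a web with no trivalent vertices --- a disjoint family of oriented loops and arcs occurring in oppositely-oriented parallel pairs --- so it has no honeycomb component and $\sfx_T^\tri = 0$ automatically. For the edge coordinates, the two parallel copies of a curve $\gamma$ of weight $u$ cross $Q_E$ along a common diagonal with opposite orientations, and \cref{fig:shear_curve} assigns each of them the same signed contribution to both $\sfx_{E,1}^\tri$ and $\sfx_{E,2}^\tri$; this contribution is exactly the classical Thurston $\fsl_2$-shear coordinate $y_E$ of $\gamma$. Hence $\sfx^\uf_\tri \circ \iota_{\mathrm{prin}}$ is the diagonal map $y \mapsto (y_E,y_E)_E$ onto $V$. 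Since the $\fsl_2$-shear coordinates give a bijection $\mathcal{L}_{\fsl_2}^x(\Sigma,\bQ) \xrightarrow{\sim} \bQ^{e(\tri)}$ \cite{FG07} and $\sfx^\uf_\tri$ is a bijection by \cref{prop:injectivity}, this identifies $\iota_{\mathrm{prin}}(\mathcal{L}_{\fsl_2}^x(\Sigma,\bQ))$ with $(\sfx^\uf_\tri)^{-1}(V)$; that is, the image equals the coordinate locus $V$.

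It remains to identify $V$ with the fixed locus of $\ast$. For the inclusion $\iota_{\mathrm{prin}}(\mathcal{L}_{\fsl_2}^x(\Sigma,\bQ)) \subseteq \mathrm{Fix}(\ast)$ I would argue geometrically: reversing all orientations of a web built from oppositely-oriented parallel pairs returns the same web, with the two members of each pair interchanged, so every image element is fixed. Conversely, let $\hL$ be fixed. The first fact gives $\sfx_T^\tri(\hL) = -\sfx_T^\tri(\hL)$, hence $\sfx_T^\tri(\hL) = 0$ for all $T$; the second fact then applies and yields $\sfx_{E,1}^\tri(\hL) = \sfx_{E,2}^\tri(\hL)$. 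Thus $\hL \in V$, and by the previous paragraph $\hL$ lies in the image. Combining, $\iota_{\mathrm{prin}}(\mathcal{L}_{\fsl_2}^x(\Sigma,\bQ)) \subseteq \mathrm{Fix}(\ast) \subseteq V = \iota_{\mathrm{prin}}(\mathcal{L}_{\fsl_2}^x(\Sigma,\bQ))$, so all three coincide, proving both statements.

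The main obstacle I anticipate is the behavior at punctures. Away from punctures the oriented Thurston rule of \cref{fig:shear_curve} makes the two facts about $\ast$ and the diagonal computation essentially immediate, but near a puncture the two parallel copies of an arc become infinitely spiralling strands of the same sign, and one must verify directly from the spiralling-diagram construction (\cref{def:spiralling}) and its braid representation that orientation reversal still cleanly swaps the two edge coordinates and that a same-signed pair contributes equally to $\sfx_{E,1}^\tri$ and $\sfx_{E,2}^\tri$. This amounts to a careful but routine analysis of the ladder-webs in the biangles adjacent to a puncture, entirely analogous to the invariance checks already carried out in the proof of \cref{prop:shear_well-defined}.
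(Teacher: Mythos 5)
Your proposal is correct in substance, and your treatment of the coordinate characterization (the ``second assertion'') is essentially the paper's: both compute $\sfx^\uf_\tri\circ\iota_{\mathrm{prin}}$ as the diagonal map $y\mapsto(y_E,y_E)_E$ with vanishing face coordinates, and conclude by bijectivity of $\sfx^\uf_\tri$ (\cref{prop:injectivity}) and of the $\fsl_2$-shear coordinates. Where you diverge is the identification of the image with $\mathrm{Fix}(\ast)$. The paper deduces this in one line from \cref{prop:Dynkin-cluster} (a forward reference to \cref{subsec:Dynkin}), which gives the full formula for $\ast$ in shear coordinates --- $\sfx_T\mapsto-\sfx_T$ and $\sfx_{E,s}\mapsto\sfx_{E,3-s}+[\sfx_{T_\bullet}]_+-[-\sfx_{T_\bullet}]_+$ --- whose fixed locus is visibly your $V$; that proposition is itself proved by reducing to triangles via amalgamation, so the puncture behaviour never has to be inspected directly. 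You instead re-derive the two special cases of that formula you need ($\sfx_T(\hL^\ast)=-\sfx_T(\hL)$ always, and the clean swap $\sfx_{E,1}\leftrightarrow\sfx_{E,2}$ in the honeycomb-free regime) straight from the geometric definition, plus the sandwich $\iota_{\mathrm{prin}}(\mathcal{L}^x_{\fsl_2})\subseteq\mathrm{Fix}(\ast)\subseteq V=\iota_{\mathrm{prin}}(\mathcal{L}^x_{\fsl_2})$. This is logically self-contained and avoids the forward reference, which is a genuine advantage; the cost, which you correctly flag, is that the swap of edge coordinates under orientation reversal must be checked for spiralling ends, where reversing the orientation while keeping the sign changes the braid pattern in the biangles near the puncture. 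That check is doable (the moves in \cref{prop:spiralling_good_position} are all orientation-reversal equivariant, and the ladder-web replacement commutes with reversal), but as written it remains an announced verification rather than a completed one; citing \cref{prop:Dynkin-cluster} as the paper does would close it immediately. One small imprecision: each member of an oppositely-oriented parallel pair contributes only to the coordinate on its right, so it is the pair jointly (not ``each of them'') that produces the equal contributions to $\sfx_{E,1}$ and $\sfx_{E,2}$ --- the conclusion is unaffected.
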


\begin{proof}
The first assertion follows from the second one, by \cref{prop:Dynkin-cluster} below. The second assertion is easily verified by comparing the definitions of $\fsl_2$- and $\fsl_3$-shear coordinates. Indeed, we have $\sfx_E^\tri(\hL)=\sfx_{E,1}^\tri(\iota_{\mathrm{prin}}(\hL))=\sfx_{E,2}^\tri(\iota_{\mathrm{prin}}(\hL))$ and $\sfx_T^\tri(\iota_\mathrm{prin}(\hL))=0$, where $(\sfx_E^\tri)_{E \in e(\tri)}$ denotes the $\fsl_2$-shear coordinate system. 
\end{proof}

\section{Rational \texorpdfstring{$\P$}{P}-laminations, their gluing and the mutation equivariance}\label{sec:amalgamation}
In this section, we introduce the space of \emph{rational $\P$-laminations} by considering some additional data on boundary intervals and define a coordinate system $\sfx_\tri$ extending $\sfx^\uf_\tri$. These additional data allow us to introduce the \emph{gluing map} between these spaces. 
Under this extended situation, we discuss the relation to the Douglas--Sun's tropical $\A$-coordinates \cite{DS20I}, and prove that the coordinates $\sfx_\tri$ transform correctly under flips. 

\subsection{Rational unbounded \texorpdfstring{$\fsl_3$}{sl(3)}-laminations with pinnings}
It has been stated that the space $\cL^x(\Sigma,\bQ)$ of rational unbounded $\fsl_3$-laminations is identified with the unfrozen part $\X_{\fsl_3,\Sigma}^\uf(\bQ^T)$ of the tropical cluster $\X$-variety. 
In order to obtain the entire tropical cluster $\X$-variety, we further equip the rational laminations with additional data on boundary intervals. Let $\mathsf{P}^\vee=\bZ \varpi_1^\vee \oplus \bZ \varpi_2^\vee$ be the coweight lattice of $\fsl_3$, and $\mathsf{P}^\vee_\bQ:= \mathsf{P}^\vee \otimes \bQ$. Let us consider the direct sum
\begin{align*}
    H_\partial(\bQ^T):=\bigoplus_{E \in \mathbb{B}} \mathsf{P}^\vee_\bQ
\end{align*}
of the coweight lattices over $\bQ$, one for each boundary interval.

\begin{dfn}[rational unbounded $\fsl_3$-laminations with pinnings]
We introduce the space
\begin{align*}
    \cL^p(\Sigma,\bQ):=\cL^x(\Sigma,\bQ) \times H_\partial(\bQ^T),
\end{align*}
and call its elements \emph{rational unbounded $\fsl_3$-laminations with pinnings} (or \emph{rational ($\fsl_3$-)$\cP$-laminations}). The datum in the second factor is written as $\nu=(\nu_E)_{E \in \mathbb{B}}$ with $\nu_E= \nu_E^+\varpi_1^\vee + \nu_E^-\varpi_2^\vee$, $\nu_E^\pm \in \bQ$.
\end{dfn}
The data $\nu=(\nu_E)_E$ will be related to the pinning in the sense of \cref{def:pinning} when we consider their gluings, thus the terminology. 
We have a natural $\bQ_{>0}$-action on $\cL^p(\Sigma,\bQ)$ given by $u.(\hL,\nu):=(u.\hL,(u\nu_E)_E)$ 
for $u \in \bQ_{>0}$ and $(\hL,\nu = (\nu_E)_E) \in \cL^p(\Sigma,\bQ)$. The Dynkin involution (\cref{def:Dynkin_geometric}) is extended as
\begin{align}\label{eq:Dynkin_pinning}
    \ast: \cL^p(\Sigma,\bQ) \to \cL^p(\Sigma,\bQ), \quad (\hL,(\nu_E)_{E \in \bB}) \mapsto (\hL^\ast,(\nu^\ast_E)_{E \in \bB}),
\end{align}
where $\nu^\ast=(\nu_E^\ast)_{E \in \mathbb{B}}$ is obtained from $\nu$ by the Dynkin involution on the coweight lattice: $\varpi^\ast_s:=\varpi_{3-s}$ for $s=1,2$. 
There is a projection 
\begin{align*}
    \pi_\uf: \cL^p(\Sigma,\bQ) \to \cL^x(\Sigma,\bQ)
\end{align*}
forgetting the second factor, which is equivariant under these structures. 
A rational $\P$-lamination $(\hL,\nu)$ is said to be \emph{integral} if $\hL \in \cL^x(\Sigma,\bZ)$ and $p_E \in \mathsf{P}^\vee$ for all $E \in \mathbb{B}$. 

\begin{rem}
The space $\cL^p(\Sigma,\bQ)$ is introduced as a tropical analogue of the moduli space $\P_{PGL_3,\Sigma}$ of framed $PGL_3$-local systems with pinnings on $\Sigma$ \cite{GS19}. We have a dominant morphism $\P_{PGL_3,\Sigma} \to \X_{PGL_3,\Sigma}$, which is a principal $H_\partial:=H^{\mathbb{B}}$-bundle over its image. Here $H \subset PGL_3$ denote the Cartan subgroup. 
As a tropical analogue, we may naturally consider the bundle 
\begin{align}\label{eq:bundle_structure}
    0 \to H_\partial(\bQ^T) \to \P_{PGL_3,\Sigma}(\bQ^T) \to \X_{PGL_3,\Sigma}(\bQ^T) \to 0.
\end{align} 
The space $\cL^p(\Sigma,\bQ)$ is regarded as the total space $\P_{PGL_3,\Sigma}(\bQ^T)$ with a fixed trivialization. See also \cref{rem:natural_definition} below. 
\end{rem}

\paragraph{\textbf{Shear coordinates on $\cL^p(\Sigma,\bQ)$}}
Given an ideal triangulation $\tri$ of $\Sigma$, we are going to define a shear coordinate system
\begin{align*}
    \sfx_\tri=(\sfx^\tri_i)_{i \in I(\tri)}: \cL^p(\Sigma,\bQ) \to \bQ^{I(\tri)}
\end{align*}
which extends $\sfx_\tri^\uf$ on $\cL^x(\Sigma,\bQ)$. 
For $(\hL,\nu) \in \cL^p(\Sigma,\bQ)$ and an unfrozen index $i \in I_\uf(\tri)$, let $\sfx^\tri_i(\hL,\nu):=\sfx^\tri_i(\hL)$ be the shear coordinate of the underlying rational unbounded lamination. 

We define the frozen coordinate $\sfx^\tri_{E,s}(\hL,\nu)$ for $s=1,2$ associated to a boundary interval $E \in \bB$, as follows. Let $W$ be a non-elliptic signed $\bQ_{>0}$-weighted web without peripheral components representing $\hL$, and $\cW$ its spiralling diagram in a good position with respect to the split triangulation $\widehat{\tri}$. 
By convention, $E$ is endowed with the orientation induced from $\partial\Sigma$. Then $\sfx_{E,1}^\tri$ (resp. $\sfx_{E,2}^\tri$) is assigned to the vertex of the $\fsl_3$-triangulation on $E$ closer to the initial (resp. terminal) endpoint.  
Let $m \in \bM_\partial$ be the initial endpoint of $E$, and $T \in t(\tri)$ the unique triangle having $E$ as an edge.  
Let
$\alpha_E^+(\hL)$ (resp. $\alpha_E^-(\hL)$) be the total weight of the oriented corner arcs in $\cW \cap T$ bounding the special point $m$ in the clockwise (resp. counter-clockwise) direction, hence incoming to (resp. outgoing from) the external biangle $B_E$ if we consider the split triangulation $\widehat{\tri}$. See \cref{fig:boundary_coordinate}. 
Then we define
\begin{align}
\begin{aligned}\label{eq:frozen_coordinates}
    \sfx_{E,1}^\tri(\hL,\nu)&:=\nu_E^+ - \alpha_E^+(\hL), \\
    \sfx_{E,2}^\tri(\hL,\nu)&:=\nu_E^-  - \alpha_E^-(\hL) - [\sfx_T(\hL)]_+.
\end{aligned}
\end{align}

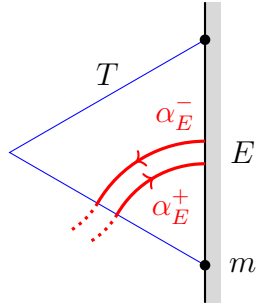
\begin{figure}[h]
\centering
\begin{tikzpicture}
\draw[blue] (0,0) coordinate(A) -- (0,3) coordinate(B) --++(-150:3) coordinate(C) --cycle;
\fill[gray!30] (0,-0.5) -- (0,3.5) -- (0.2,3.5) -- (0.2,-0.5) --cycle;
\draw[thick] (0,-0.5) -- (0,3.5);
\fill(0,0) circle(2pt);
\fill(0,3) circle(2pt);
\draw[red,very thick,-<-] ($(A)!0.55!(C)$) arc(150:90:3*0.55);
\draw[red,very thick,->-] ($(A)!0.45!(C)$) arc(150:90:3*0.45);
\draw[red,very thick,dotted] ($(A)!0.55!(C)$) arc(-30:-60:1);
\draw[red,very thick,dotted] ($(A)!0.45!(C)$) arc(-30:-60:1);
\node at (0.5,1.5) {$E$};
\node at (0.5,0) {$m$};
\node[above] at ($(B)!0.5!(C)$) {$T$};
\node[red,above left] at ($(A)!0.55!(B)$) {$\alpha_E^-$};
\node[red] at (120:3*0.3) {$\alpha_E^+$};
\end{tikzpicture}
    \caption{The corner arcs relevant to the boundary shear coordinate.}
    \label{fig:boundary_coordinate}
\end{figure}

\begin{prop}\label{prop:shear_P}
The shear coordinate system gives a bijection $\sfx_\tri: \cL^p(\Sigma,\bQ) \xrightarrow{\sim} \bQ^{I(\tri)}$.
\end{prop}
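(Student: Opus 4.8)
The plan is to reduce the statement to the already–established bijectivity of the unfrozen shear coordinate system. Write the target as a product $\bQ^{I(\tri)} = \bQ^{I_\uf(\tri)} \times \bQ^{I(\tri)_\f}$, and recall that the frozen index set decomposes as $I(\tri)_\f = \bigsqcup_{E \in \bB} \{i^1(E), i^2(E)\}$, matching the decomposition $H_\partial(\bQ^T) = \bigoplus_{E \in \bB} \mathsf{P}^\vee_\bQ$ of the pinning space and the defining product $\cL^p(\Sigma,\bQ) = \cL^x(\Sigma,\bQ) \times H_\partial(\bQ^T)$. By construction the unfrozen coordinates of $\sfx_\tri(\hL,\delta)$ equal $\sfx^\uf_\tri(\hL)$ and are independent of $\delta$, so $\sfx_\tri$ fits into a commuting square with $\pi_\uf$ on the left, the projection $\bQ^{I(\tri)} \to \bQ^{I_\uf(\tri)}$ on the right, and $\sfx^\uf_\tri$ on the bottom. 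Since the bottom map is a bijection (\cref{prop:injectivity} and \cref{prop:surjectivity}), it suffices to prove that $\sfx_\tri$ restricts to a bijection between the fiber $\{\hL\} \times H_\partial(\bQ^T)$ and the fiber $\{\sfx^\uf_\tri(\hL)\} \times \bQ^{I(\tri)_\f}$ for each fixed $\hL$.

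First I would record that the frozen coordinates are well defined on $\cL^p(\Sigma,\bQ)$: the quantities $\alpha_E^\pm(\hL)$ and $\sfx_T(\hL)$ depend only on the class of $\hL$, since they are read off from a good-position spiralling diagram and are unchanged under the moves listed in \cref{prop:spiralling_good_position} as well as under (E1)--(E4) and the weighted operations of \cref{def:unbounded laminations}; this is the frozen analogue of the verification in \cref{prop:shear_well-defined}.

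With well-definedness in hand, fix $\hL$ and examine the fiber map. For each $E \in \bB$, with $T$ the unique triangle having $E$ as an edge, \eqref{eq:frozen_coordinates} gives
\begin{align*}
    \sfx_{E,1}^\tri(\hL,\delta) &= \delta_E^+ - \alpha_E^+(\hL), \\
    \sfx_{E,2}^\tri(\hL,\delta) &= \delta_E^- - \alpha_E^-(\hL) - [\sfx_T(\hL)]_+.
\end{align*}
Since $\alpha_E^\pm(\hL)$ and $[\sfx_T(\hL)]_+$ are constants once $\hL$ is fixed, the assignment $(\delta_E^+, \delta_E^-) \mapsto (\sfx_{E,1}^\tri, \sfx_{E,2}^\tri)$ is a translation of $\bQ^2$, hence a bijection, with inverse $\delta_E^+ = \sfx_{E,1}^\tri + \alpha_E^+(\hL)$ and $\delta_E^- = \sfx_{E,2}^\tri + \alpha_E^-(\hL) + [\sfx_T(\hL)]_+$. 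Taking the direct sum over $E \in \bB$ shows the fiber map is a bijection, completing the reduction.

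Finally I would package this into an explicit inverse $\xi_\tri^p : \bQ^{I(\tri)} \to \cL^p(\Sigma,\bQ)$: given $(x_i)_{i \in I(\tri)}$, set $\hL := \xi_\tri((x_i)_{i \in I_\uf(\tri)})$ and then define $\delta_E := (x_{i^1(E)} + \alpha_E^+(\hL))\varpi_1^\vee + (x_{i^2(E)} + \alpha_E^-(\hL) + [\sfx_T(\hL)]_+)\varpi_2^\vee$ for each $E \in \bB$. The identities $\sfx_\tri \circ \xi_\tri^p = \mathrm{id}$ and $\xi_\tri^p \circ \sfx_\tri = \mathrm{id}$ follow directly from the displayed formulas together with \cref{prop:injectivity} and \cref{prop:surjectivity}. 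There is no genuine analytic obstacle here; the only point requiring care is the well-definedness of $\alpha_E^\pm$, that is, confirming that the corner-arc weights near the initial special point of $E$ are invariants of the lamination class, which is why I single it out as the step to treat first.
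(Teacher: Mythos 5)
Your proposal is correct and follows essentially the same route as the paper: the paper's (much terser) proof likewise reconstructs $\hL$ from the unfrozen coordinates via $\xi_\tri$ and then observes that $\delta$ is uniquely recovered from \eqref{eq:frozen_coordinates}, which is exactly your fiberwise-translation argument. Your additional remark on the well-definedness of $\alpha_E^\pm$ is a reasonable point of care that the paper leaves implicit.
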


\begin{proof}
Given $(\sfx_i)_{i \in I(\tri)} \in \bQ^{I(\tri)}$,
we can reconstruct the underlying rational unbounded lamination $\hL$ from the unfrozen part $(\sfx_i)_{i \in I(\tri)_\uf}$ as in \cref{subsec:reconstruction}. Then the datum $\nu$ is uniquely determined by the relation \eqref{eq:frozen_coordinates}.
\end{proof}
The following is immediate from the definition:

\begin{lem}\label{lem:cluster_projection}
The map $\pi_\uf:\cL^p(\Sigma,\bQ) \to \cL^x(\Sigma,\bQ)$ is a \emph{cluster projection}. Namely, we have a commutative diagram
\begin{equation*}
    \begin{tikzcd}
    \cL^p(\Sigma,\bQ) \ar[r,"\sfx_\tri"] \ar[d,"\pi_\uf"'] & \bQ^{I(\tri)} \ar[d] \\
    \cL^x(\Sigma,\bQ) \ar[r,"\sfx_\tri^\uf"'] & \bQ^{I_\uf(\tri)}
    \end{tikzcd}
\end{equation*}
for any ideal triangulation $\tri$ of $\Sigma$, where the right vertical map is the projection forgetting the frozen coordinates.
\end{lem}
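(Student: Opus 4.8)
The plan is simply to chase the square, since by construction the extended shear coordinate system was designed so that the pinning datum $\delta$ is invisible to the unfrozen coordinates. Concretely, I would first recall from the definition of $\sfx_\tri$ on $\cL^p(\Sigma,\bQ)$ that for every unfrozen index $i \in I_\uf(\tri)$ one sets $\sfx^\tri_i(\hL,\delta):=\sfx^\tri_i(\hL)$, where $\hL$ is the underlying rational unbounded $\fsl_3$-lamination; the datum $\delta=(\delta_E)_E$ enters only through the frozen coordinates $\sfx^\tri_{E,s}$ via the formula \eqref{eq:frozen_coordinates}.

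Next I would compute the two composites in the diagram on an arbitrary element $(\hL,\delta)$. Going right and then down, $\sfx_\tri(\hL,\delta)=(\sfx^\tri_i(\hL,\delta))_{i\in I(\tri)}$ is sent by the frozen-forgetting projection to $(\sfx^\tri_i(\hL,\delta))_{i\in I_\uf(\tri)}$, which equals $(\sfx^\tri_i(\hL))_{i\in I_\uf(\tri)}$ by the displayed identity. Going down and then right, $\pi_\uf(\hL,\delta)=\hL$ is sent by $\sfx^\uf_\tri$ to $(\sfx^\tri_i(\hL))_{i\in I_\uf(\tri)}$. The two tuples agree termwise, so the square commutes, which is exactly the asserted statement that $\pi_\uf$ is a cluster projection. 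Since the identity $\sfx^\tri_i(\hL,\delta)=\sfx^\tri_i(\hL)$ holds verbatim for every ideal triangulation $\tri$ without self-folded triangles, commutativity holds for all admissible $\tri$ simultaneously.

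There is essentially no obstacle here: the statement is definitional, and the only point that requires a word is that the right-hand vertical arrow retains precisely the unfrozen indices $I_\uf(\tri)\subset I(\tri)$ on which the two coordinate systems were arranged to coincide. I would simply remark that this compatibility was built into the construction of $\sfx_\tri$. If desired, one can also note that the bijectivity of $\sfx_\tri$ and $\sfx^\uf_\tri$ (\cref{prop:shear_P} and \cref{introthm:shear_X}), combined with the commuting square, shows that $\pi_\uf$ is realized on coordinates by the linear projection $\bQ^{I(\tri)}\to\bQ^{I_\uf(\tri)}$, as claimed.
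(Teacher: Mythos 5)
Your proposal is correct and is exactly the argument the paper intends: the lemma is stated as ``immediate from the definition,'' since the unfrozen coordinates of $\sfx_\tri(\hL,\delta)$ are defined to be $\sfx^\tri_i(\hL)$ for $i\in I_\uf(\tri)$, which is precisely the termwise identity you chase through the square. Nothing further is needed.
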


\subsection{Gluing of laminations}
Let $\Sigma$ be a (possibly disconnected) marked surface, and $E_L,E_R \in B(\Sigma)$ distinct boundary intervals. Then we can form a new marked surface $\Sigma'$ from $\Sigma$ by gluing $E_L$ with $E_R$. 
As a tropical analogue of the gluing morphism $\P_{PGL_3,\Sigma} \to \P_{PGL_3,\Sigma'}$ \cite[Lemma 2.14]{GS19}, we are going to introduce a map
\begin{align*}
    q_{E_L,E_R}: \cL^p(\Sigma,\bQ) \to \cL^p(\Sigma',\bQ)
\end{align*}
between the corresponding spaces of rational $\P$-laminations. The map $q_{E_L,E_R}$ will be defined so that equivariant with respect to the $\bQ_{>0}$-action, and invariant under the action $\alpha_{E_L,E_R}: \mathsf{P}^\vee_\bQ \curvearrowright \cL^p(\Sigma,\bQ)$ given by the shift
\begin{align}\label{eq:shift_action}
    \mu.(\nu_{E_L},\nu_{E_R}) := (\nu_{E_L}+\mu, \nu_{E_R}-\mu^\ast)
\end{align}
for $\mu=a\varpi_1^\vee+ b\varpi_2^\vee \in \mathsf{P}^\vee_\bQ$, where $\mu^\ast:=b\varpi_1^\vee+ a\varpi_2^\vee$, and keeping other $\nu_E$, $E \neq E_L,E_R$ intact. 

Let $(\hL,\nu) \in \cL^p(\Sigma,\bZ)$ be an integral $\P$-lamination. Represent the integral unbounded $\fsl_3$-lamination $\hL$ by a non-elliptic signed web $W$ with weight $1$ on every component. 
Around each special point of $E_L$ and $E_R$, draw a semi-infinite collection of disjoint corner arcs with alternating orientations that accumulates only at the special point so that they are disjoint from $W$. Here we choose the orientation of the farthest corner arc from the special point to be clock-wise, as in \cref{subsec:reconstruction}.
Insert a biangle $B$ between $E_L$ and $E_R$, and identify $\Sigma'= \Sigma \cup B$. Notice that the ends of $W$ on $E_L$ and $E_R$, together with those of the additional corner arcs, defines an asymptotically periodic symmetric strand set $S = (S_L,S_R)$ on $B$. 
We equip $S$ with a pinning $\sfp^\pm_Z$ for $Z \in \{L,R\}$ by the following rule:
\begin{itemize}
    \item Choose continuous parametrizations $\psi_{Z}^\pm:\bR \to E_Z$ so that $\psi_{Z}^\pm(\frac{1}{2}+\bZ)=S_{Z}^\pm$, and $\psi_Z^\pm(\bR_{<0}) \cap S_Z^\pm$ consists of all the strands coming from the additional corner arcs around the initial marked point of $E_Z$. 
    \item Then set $p_Z^\pm:=\psi_{Z}^\pm(\nu_{E_Z}^\pm) \in E_Z$.
\end{itemize}
Then we get a pinned symmetric strand set $\widehat{S}:=(S;\sfp_L,\sfp_R)$ on the biangle $B$. Let $W_{\mathrm{br}}(\widehat{S})$ be the associated collection of oriented curves in $B$. Gluing the web $W$ with the collection $W_{\mathrm{br}}(\widehat{S})$, we get an infinite collection $\cW'_{\mathrm{br}}$ of webs on $\Sigma'=\Sigma\cup B$. The initial (resp. terminal) marked point of $E_L$ is identified with the terminal (resp. initial) marked point of $E_R$, and regarded as new marked points in $\Sigma'$. For each of these new marked points, do the followings:
\begin{itemize}
    \item If it is a special point, then remove the peripheral components around this point from $\cW'_{\mathrm{br}}$.
    \item If it is a puncture, then remove the peripheral components and replace each spiralling end around this point with a signed end, while encoding the spiralling directions in signs by reversing the rule in \cref{fig:spiral}. Then there remain at most finitely many intersections in $B$.
    \item Finally, replace each intersection of curves in $B$ with an H-web by the rule \eqref{eq:H-replacement}.
\end{itemize}
Thus we get a non-elliptic signed web $W'$ on $\Sigma'$, which represents an integral $\P$-lamination $\hL'=q_{E_L,E_R}(\hL) \in \cL^p(\Sigma,\bZ)$. The construction is clearly invariant for the action of $H_\partial(\bQ^T)$ by \cref{rem:gluing_symmetry}, and $\bZ_{>0}$-equivariant. Thus it can be extended $\bQ_{>0}$-equivariantly.  

\begin{dfn}
The thus obtained map $q_{E_L,E_R}: \cL^p(\Sigma,\bQ) \to \cL^p(\Sigma',\bQ)$ is called the \emph{gluing map} along $E_L$ and $E_R$.
\end{dfn}
In view of \cref{rem:gluing_symmetry}, we immediately have:

\begin{lem}\label{lem:shift-invariance}
The gluing map $q_{E_L,E_R}$ is invariant under the shift action \eqref{eq:shift_action} of $\mathsf{P}^\vee_\bQ$.
\end{lem}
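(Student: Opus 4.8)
The plan is to trace through the definition of $q_{E_L,E_R}$ and isolate exactly where the pinning datum $\delta$ enters. Fixing a non-elliptic signed web $W$ representing $\hL$ together with the chosen corner arcs around the special points of $E_L$ and $E_R$ (none of which depend on $\delta$), the only place $\delta$ appears is in the pinning points $p_Z^\pm=\psi_Z^\pm(\delta_{E_Z}^\pm)$, and hence in the pinned symmetric strand set $\widehat{S}=(S;\sfp_L,\sfp_R)$ on the inserted biangle $B$. All subsequent steps of the construction---removing peripheral components at the new special points, converting spiralling ends to signed ends, and replacing the remaining crossings by H-webs via \eqref{eq:H-replacement}---are canonical once the braid collection $W_{\mathrm{br}}(\widehat{S})$ is fixed, and make no further reference to $\delta$. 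Therefore it suffices to prove that $W_{\mathrm{br}}(\widehat{S})$ is unchanged when $\delta$ is replaced by $\mu.\delta$.

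By \cref{def:pinning}, $W_{\mathrm{br}}(\widehat{S})$ is determined entirely by the pairing map $f_{\widehat{S}}=f_+\sqcup f_-$, where $f_+\colon S_L^+\to S_R^-$ is cut out from the orientation-reversing homeomorphism $E_L\to E_R$ with $f_+(p_L^+)=p_R^-$, and $f_-\colon S_L^-\to S_R^+$ from the one with $f_-(p_L^-)=p_R^+$. The key elementary observation is that, transported to the parametrizations $\psi_Z^\pm$, each $f_\pm$ becomes a decreasing bijection of $\tfrac12+\bZ$ pinned by a single condition; writing the strand at parameter $\tfrac12+n$ on the left as sitting at offset $(\tfrac12+n)-\delta_{E_L}^+$ from $p_L^+$, its partner under $f_+$ is the strand nearest to parameter $(\delta_{E_L}^++\delta_{E_R}^-)-\tfrac12-n$ on the right. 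Thus the combinatorial matching given by $f_+$ depends on $\delta$ \emph{only} through the sum $\delta_{E_L}^++\delta_{E_R}^-$, and symmetrically $f_-$ only through $\delta_{E_L}^-+\delta_{E_R}^+$; moving both pinnings so as to keep such a sum fixed is a reflection-preserving reparametrization that leaves the pairing intact. For integral $\delta$ this is exactly the invariance recorded in \cref{rem:gluing_symmetry}, and the rational case follows from the same bookkeeping (or, alternatively, from the $\bQ_{>0}$-equivariance of $q_{E_L,E_R}$ reducing to the integral statement).

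It then remains only to compute the effect of the shift \eqref{eq:shift_action} on these two sums. Writing $\mu=a\varpi_1^\vee+b\varpi_2^\vee$ and hence $\mu^\ast=b\varpi_1^\vee+a\varpi_2^\vee$, the shift sends $\delta_{E_L}^+\mapsto\delta_{E_L}^++a$, $\delta_{E_R}^-\mapsto\delta_{E_R}^--a$, $\delta_{E_L}^-\mapsto\delta_{E_L}^-+b$, $\delta_{E_R}^+\mapsto\delta_{E_R}^+-b$, and fixes all other $\delta_E$. Consequently both $\delta_{E_L}^++\delta_{E_R}^-$ and $\delta_{E_L}^-+\delta_{E_R}^+$ are preserved, so $f_+$ and $f_-$---and therefore $f_{\widehat{S}}$, the collection $W_{\mathrm{br}}(\widehat{S})$, the glued web $W'$, and the class $q_{E_L,E_R}(\hL,\delta)$---are all invariant, which is the assertion. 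The only genuinely delicate point is the reflection observation of the second paragraph: one must verify that the order-reversing pairing of two \emph{semi-infinite} asymptotically periodic strand sets is pinned down by the single sum of pinning parameters rather than by the two values separately. I expect this to be a short but careful argument using that, away from a compact strip, the orientation patterns of $S_L$ and $S_R$ are periodic (\cref{def:infinite strand set}), so that the pairing is controlled by the relative offset of $p_L^\pm$ and $p_R^\mp$ alone.
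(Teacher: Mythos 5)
Your proposal is correct and follows essentially the same route as the paper: the paper's proof consists of observing that the statement is immediate from \cref{rem:gluing_symmetry}, i.e.\ that the braid collection on the inserted biangle depends on the pinnings only through the sums $\delta_{E_L}^{+}+\delta_{E_R}^{-}$ and $\delta_{E_L}^{-}+\delta_{E_R}^{+}$, which the shift action \eqref{eq:shift_action} manifestly preserves. You have simply unpacked that remark (and the bookkeeping of the components of $\mu$ and $\mu^{\ast}$) in more detail.
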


Any ideal triangulation $\tri$ of $\Sigma$ naturally induces a triangulation $\tri'$ of $\Sigma'$, where the edges $E_L$ and $E_R$ are identified and give an interior edge $E$ of $\tri$. The 
points in $I(\tri)$ on these edges are identified as $i^s(E_L)=i^{s^\ast}(E_R)$ for $s=1,2$ with $s^\ast:=3-s$. 
The points of $I(\tri)$ away from the edges $E_L$ and $E_R$ are naturally identified with the corresponding points of $I(\tri')$. 

\begin{thm}\label{thm:amalgamation}
The gluing map $q_{E_L,E_R}$ is the \emph{tropicalized amalgamation}. Namely, for any ideal triangulation $\tri$ of $\Sigma$ and the induced triangulation $\tri'$ of $\Sigma'$, it satisfies
\begin{align*}
    q_{E_L,E_R}^\ast\sfx^{\tri'}_{E,s} = \sfx^\tri_{E_L,s} + \sfx^\tri_{E_R,s^\ast}
\end{align*}
for $s=1,2$. 
Here $E$ inherits an orientation from $E_L$ (so that from the bottom to the top, when we draw $E_L$ on the left). The other coordinates are kept intact: $q_{E_L,E_R}^\ast\sfx^{\tri'}_{i} = \sfx^\tri_{i}$ for $i \in I(\tri') \setminus \{i^s(E)\}_{s=1,2}$.
\end{thm}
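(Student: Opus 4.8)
The plan is to check the three families of coordinates separately and to reduce everything to a single local computation on the quadrilateral $Q_E = T_L \cup B \cup T_R$ surrounding the new interior edge $E$, which I then match against the reconstruction procedure of \cref{subsec:reconstruction}. The coordinates away from $E$ are immediate: the gluing map changes the underlying web only inside the inserted biangle $B$, so for every triangle $T \in t(\tri')$ and every boundary interval of $\Sigma'$ not meeting $E$, the relevant restriction of the braid representation and of the corner-arc data is literally unchanged, whence $q_{E_L,E_R}^\ast\sfx^{\tri'}_i = \sfx^\tri_i$ for $i \in I(\tri') \setminus \{i^s(E)\}_{s=1,2}$ follows from the definitions of $\sfx^\uf_\tri$ and of the frozen coordinates \eqref{eq:frozen_coordinates}. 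By $\bQ_{>0}$-equivariance of $q_{E_L,E_R}$ and of all shear coordinates, and since the asserted identity is homogeneous, it then suffices to treat an integral $\P$-lamination $(\hL,\delta)$ represented by a weight-one signed web $W$ in good position with respect to $\widehat{\tri}$. As a consistency check I would first note that the right-hand side $\sfx^\tri_{E_L,s} + \sfx^\tri_{E_R,s^\ast}$ is invariant under the shift action \eqref{eq:shift_action}: for $s=1$ it sends $\delta_{E_L}^+ \mapsto \delta_{E_L}^+ + a$ and $\delta_{E_R}^- \mapsto \delta_{E_R}^- - a$, so the two changes cancel, in agreement with \cref{lem:shift-invariance}.

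The heart of the argument is to identify the glued braid representation $\cW'_{\mathrm{br}}$ restricted to $Q_E$ with a configuration of exactly the type produced in \cref{subsec:reconstruction}. Indeed $\cW'_{\mathrm{br}} \cap T_Z$ is the honeycomb component of $W$ on $T_Z$, of height $|\sfx_{T_Z}(\hL)|$ and of sink/source type according to the sign of $\sfx_{T_Z}(\hL)$, together with corner arcs; and $\cW'_{\mathrm{br}} \cap B$ is the ladder braid $W_{\mathrm{br}}(\widehat{S})$ determined by the pinning $p_Z^\pm = \psi_Z^\pm(\delta_{E_Z}^\pm)$. The orientation reversal inherent in gluing $E_L$ to $E_R$ pairs the incoming strands $S_L^+$ with the outgoing strands $S_R^-$ (the ``$+$'' sheet) and $S_L^-$ with $S_R^+$ (the ``$-$'' sheet); since the initial endpoint of $E_L$ is glued to the terminal endpoint of $E_R$, this is precisely what produces the index swap $s \mapsto s^\ast$ on the $E_R$ side. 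Thus the ``$+$'' sheet, governed by $\delta_{E_L}^+$ and $\delta_{E_R}^-$, contributes to the bottom coordinate $\sfx^{\tri'}_{E,1}$, while the ``$-$'' sheet, governed by $\delta_{E_L}^-$ and $\delta_{E_R}^+$, contributes to the top coordinate $\sfx^{\tri'}_{E,2}$.

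It then remains to convert the pinning positions and read off the coordinate. By \cref{rem:gluing_symmetry} the braid $W_{\mathrm{br}}(\widehat{S})$, hence the edge coordinate, depends on the ``$+$'' sheet only through the combination $n_L^+ + n_R^-$ of reconstruction-style pinning indices, and \cref{lem:braid-shear} computes the resulting bottom coordinate to be $(n_L^+ + n_R^-) - [\sfx_{T_R}(\hL)]_+$; this is where the case analysis of \cref{fig:reconstruction1,fig:reconstruction2,fig:reconstruction3}, and in particular the sink-honeycomb offset $n_R^- = [\sfx_{T_R}]_+$ of \eqref{eq:gluing_rule}, enters. The only remaining point is the relation between the gluing index $\delta_{E_Z}^\pm$ (measured in $\psi_Z^\pm$, whose origin sits beyond the additional corner arcs around the initial point) and the reconstruction index $n_Z^\pm$ (measured in $\phi_Z^\pm$, whose origin sits beyond the initial-point corner arcs of $W$): the two baselines differ exactly by the corner arcs of $W$ hugging the initial marked point, whose count is $\alpha_{E_Z}^\pm(\hL)$, so that $n_Z^\pm = \delta_{E_Z}^\pm - \alpha_{E_Z}^\pm(\hL)$. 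Substituting gives $\sfx^{\tri'}_{E,1} = (\delta_{E_L}^+ - \alpha_{E_L}^+) + (\delta_{E_R}^- - \alpha_{E_R}^-) - [\sfx_{T_R}]_+ = \sfx^\tri_{E_L,1} + \sfx^\tri_{E_R,2}$, and the $s=2$ identity follows by the same computation on the ``$-$'' sheet.

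The main obstacle is the bookkeeping of conventions in the last paragraph: correctly accounting for the orientation reversal that yields the swap $s \mapsto s^\ast$, tracking the sign and magnitude of the baseline shift by $\alpha_{E_Z}^\pm$, and verifying that the Heaviside offset $[\sfx_{T_R}]_+$ appearing in \eqref{eq:frozen_coordinates} is exactly the one emitted by the sink honeycomb in the reconstruction count. A secondary technical point is the case in which a new marked point of $\Sigma'$ is a puncture rather than a special point: there the glued strands spiral, but the spiralling and the subsequent H-web replacements via \eqref{eq:H-replacement} are confined to a disk neighborhood of the puncture and merely reconstitute the asymptotic periodic tail, so the portion of the braid representation inside $Q_E$ that governs $\sfx^{\tri'}_{E,s}$ — and hence the computation above — is unaffected.
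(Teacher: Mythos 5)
Your proposal is correct and follows essentially the same route as the paper's proof: reduce to the integral case, observe that the gluing pinning $\delta_{E_Z}^\pm$ and the reconstruction pinning $n_Z^\pm$ of \eqref{eq:gluing_rule} differ exactly by the corner-arc offset $\alpha_{E_Z}^\pm$ relating the parametrizations $\psi_Z^\pm$ and $\phi_Z^\pm$, invoke the shift-invariance of \cref{lem:shift-invariance} (equivalently \cref{rem:gluing_symmetry}) so that only the sums $n_L^++n_R^-$ and $n_L^-+n_R^+$ matter, and conclude via \cref{lem:braid-shear} and \eqref{eq:frozen_coordinates}. The only presentational difference is that the paper redistributes the terms via the shift action (its displayed modification of the $\delta$'s) while you substitute $n_Z^\pm=\delta_{E_Z}^\pm-\alpha_{E_Z}^\pm$ directly; the arithmetic is identical.
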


\begin{proof}
The last statement is clear from the definition. To see the relation between the coordinates on the edges $E_L$, $E_R$ and $E$, it suffices to consider an integral lamination $\hL \in \cL^p(\Sigma,\bZ)$ by $\bQ_{>0}$-equivariance. Write $L':=q_{E_L,E_R}(\hL)$ and $\sfx_i:=\sfx_i^\tri(\hL)$ for $i \in I(\tri)$. Recall the reconstruction procedure of the integral lamination $\hL'$ from its shear coordinates, and
compare the gluing parameters
\begin{align}\label{eq:amalgamation_rule_1}
    \begin{aligned}
    \nu_{E_L}^+&= \sfx_{E_L,1} + \alpha_{E_L}^+, & \nu_{E_R}^-&= \sfx_{E_R,2} + [\sfx_{T_R}]_+ + \alpha_{E_R}^-, \\
    \nu_{E_L}^-&=\sfx_{E_L,2} + [\sfx_{T_L}]_+ + \alpha_{E_L}^-, & \nu_{E_R}^+&= \sfx_{E_R,1} + \alpha_{E_R}^+
    \end{aligned}
\end{align} 
with the integers appearing in \eqref{eq:gluing_rule}. By \cref{lem:shift-invariance}, the result of gluing is unchanged under the modification 
\begin{align}\label{eq:amalgamation_rule_2}
    \begin{aligned}
    \widetilde{\nu}_{E_L}^+&:= (\sfx_{E_L,1}+\sfx_{E_R,2}) + \alpha_{E_L}^+, & \widetilde{\nu}_{E_R}^-&:= [\sfx_{T_R}]_+ + \alpha_{E_R}^-, \\
    \widetilde{\nu}_{E_L}^-&:= [\sfx_{T_L}]_+ + \alpha_{E_L}^-, & \widetilde{\nu}_{E_R}^+&:= (\sfx_{E_L,2}+\sfx_{E_R,1})+\alpha_{E_R}^+
    \end{aligned}
\end{align}
by the shift action \eqref{eq:shift_action}. 
On the other hand, since there are \lq\lq original'' corner arcs of $\hL$ in $T_L$ and $T_R$ before adding infinite collections of corner arcs in the gluing procedure, the parametrizations of edges are related by 
\begin{align*}
    \phi^\pm_Z(n) = \psi^\pm_Z(n+\alpha_{E_Z}^\pm)
\end{align*}
for $n \in \bZ$ and $Z \in \{L,R\}$. See \cref{fig:difference_gluing}. These comparisons on the two gluing constructions show that $\hL'=q_{E_L,E_R}(\hL)$ if and only if $\sfx_{E,s}(\hL')=\sfx_{E_L,s}(\hL) + \sfx_{E_R,s^\ast}(\hL)$ for $s=1,2$. 
\end{proof}

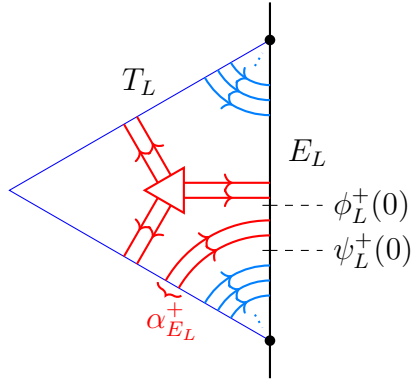
\begin{figure}[h]
\centering
\begin{tikzpicture}
\draw[blue] (0,0) coordinate(A) -- (0,4) coordinate(B) --++(-150:4) coordinate(C) --cycle;
\draw[thick] (0,-0.5) -- (0,4.5);
\fill(0,0) circle(2pt);
\fill(0,4) circle(2pt);
\draw[red,thick] (C)++(1.8,0) coordinate(C1) --++(-30:0.6) coordinate(A1) --($(C1)+(30:0.6)$) coordinate(B1) --cycle;
\begin{scope}
\clip (0,0) coordinate(A) -- (0,4) coordinate(B) --++(-150:4) coordinate(C) --cycle;
\foreach \i in {1,2}
\draw[red,thick,->-] ($(A1)!\i/3!(B1)$) --++(1.3,0);
\foreach \i in {1,2}
\draw[red,thick,->-] ($(C1)!\i/3!(B1)$) --++(120:1);
\foreach \i in {1,2}
\draw[red,thick,->-] ($(A1)!\i/3!(C1)$) --++(-120:1);
\end{scope}
\foreach \i in {0.4,0.35}
\draw[red,thick,->-] ($(A)!\i!(C)$) arc(150:90:4*\i);
\foreach \i in {0.25,0.20,0.15}
\draw[myblue,thick,->-] ($(A)!\i!(C)$) arc(150:90:4*\i);
\draw[myblue,thick,dotted] (120:4*0.10) -- (120:4*0.05); 
\foreach \i in {0.25,0.20,0.15}
\draw[myblue,thick,->-] ($(B)!\i!(C)$) arc(-150:-90:4*\i);
\draw[myblue,thick,dotted] ($(0,4)+(-120:4*0.10)$) -- ($(0,4)+(-120:4*0.05)$);  
\draw (-0.1,4*0.30) -- (0.1,4*0.30);
\draw[dashed] (0.2,4*0.30) -- (0.7,4*0.30)
node[anchor=west]{$\psi_L^+(0)$};
\draw (-0.1,4*0.45) -- (0.1,4*0.45);
\draw[dashed] (0.2,4*0.45) -- (0.7,4*0.45)
node[anchor=west]{$\phi_L^+(0)$};
\draw[red,thick,decorate,decoration={brace,amplitude=3pt,raise=2pt}] (150:4*0.33) --node[midway,below=0.2em]{$\alpha_{E_L}^+$} (150:4*0.42);
\node[above=0.3em] at ($(B)!0.5!(C)$) {$T_L$};
\node at (0.5,2.5) {$E_L$};
\end{tikzpicture}
    \caption{Comparison of two edge parametrizations. A part of the web representing $\hL$ which will be incoming to the bigon $B_E$ is shown in red, and the additional corner arcs are shown in blue.}
    \label{fig:difference_gluing}
\end{figure}

\begin{rem}\label{rem:natural_definition}
In view of the gluing construction presented above, the definition of the integral unbounded $\fsl_3$-laminations with pinnings can be modified slightly more geometrically as integral unbounded $\fsl_3$-laminations equipped with infinitely many corner arcs around special points and choices of points $p_E^\pm \in E$ for each $E \in \bB$, in place of the datum $\nu_E \in \mathsf{P}^\vee$. It gives a right description of the tropical analogue of $\P_{PGL_3,\Sigma}(\bZ^T)$ without fixing a trivialization of the bundle \eqref{eq:bundle_structure}.  
We do not pursue an extension of this description to the rational case. 
\end{rem}

\subsection{Extended ensemble map}
Recall the geometric ensemble map \eqref{eq:ensemble_unfrozen}. 
We extend it by 
\begin{align*}
    \widetilde{p}: \cL^a(\Sigma,\bQ) \to \cL^p(\Sigma,\bQ), \quad L \mapsto (p(L),(\nu_E)_E),
\end{align*}
where $\nu_E^+$ (resp. $\nu_E^-$) is minus the total weight of the peripheral components with the clockwise (resp. counter-clockwise) orientation around the initial marked point of $E$. We have a commutative diagram 
\begin{equation*}
    \begin{tikzcd}
    \cL^a(\Sigma,\bQ) \ar[r,"\widetilde{p}"] \ar[rd,"p"'] & \cL^p(\Sigma,\bQ) \ar[d,"\pi_\uf"] \\
     & \cL^x(\Sigma,\bQ).
    \end{tikzcd}
\end{equation*}

\begin{lem}\label{lem:ensemble_bijection}
If $\Sigma$ has no punctures, then $\widetilde{p}: \cL^a(\Sigma,\bQ) \to \cL^p(\Sigma,\bQ)$ gives a bijection.
\end{lem}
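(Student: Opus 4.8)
The plan is to construct an explicit inverse to $\widetilde{p}$, exploiting the fact that when $\bP=\emptyset$ a bounded $\fsl_3$-web has all of its univalent vertices on the boundary intervals, so that no signs are involved. First I would record the structural observation that every rational bounded $\fsl_3$-lamination $L$ decomposes, canonically up to the equivalence of \cref{def:bounded laminations}, as a disjoint union $L = L^{\mathrm{np}} \sqcup L^{\mathrm{per}}$ of its non-peripheral part $L^{\mathrm{np}}$ and its peripheral part $L^{\mathrm{per}}$, the latter consisting of weighted corner arcs cutting off the special points. Under this decomposition $p(L)$ depends only on the class of $L^{\mathrm{np}}$, while the datum $(\delta_E)_E$ records precisely the total weights of the two orientations of corner arcs around each special point (the initial endpoint of $E$). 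Thus the content of the lemma is that, in the absence of punctures, the non-peripheral part is faithfully captured by $\cL^x(\Sigma,\bQ)$ and the peripheral part by $H_\partial(\bQ^T)$, independently of one another.

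For surjectivity, given $(\hL,\delta)\in \cL^p(\Sigma,\bQ)$ I would represent $\hL$ by a non-elliptic (necessarily bounded, since $\bP=\emptyset$) $\fsl_3$-web $W$ carrying positive rational weights and containing no peripheral components; such a representative exists because the peripheral move (E4) allows one to delete peripheral components inside $\cL^x(\Sigma,\bQ)$. I would then attach, around the initial special point of each boundary interval $E$, a clockwise corner arc of weight $-\delta_E^+$ and a counter-clockwise corner arc of weight $-\delta_E^-$, pushed close enough to $\partial\Sigma$ to be disjoint from $W$ and from each other. Negative weights are permitted on peripheral components by \cref{def:bounded laminations}, and corner arcs of the two orientations are independent, so every pair $(\delta_E^+,\delta_E^-)\in\bQ^2$ is realized. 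The resulting bounded lamination $L$ satisfies $p(L)=\hL$ and has the prescribed peripheral weights, i.e. $\widetilde{p}(L)=(\hL,\delta)$.

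For injectivity, suppose $\widetilde{p}(L)=\widetilde{p}(L')$. Comparing the second factors and using the weight-combining operation together with the arc parallel-moves (\cref{lem:arc-parallel-moves}), the peripheral parts $L^{\mathrm{per}}$ and $L'^{\mathrm{per}}$ agree. Comparing the first factors gives that $L^{\mathrm{np}}$ and $L'^{\mathrm{np}}$ are equivalent in $\cL^x(\Sigma,\bQ)$, so it remains to upgrade this to an equivalence in $\cL^a(\Sigma,\bQ)$; this is the \emph{key point}, and it is also what makes the inverse above well defined. Among the elementary moves, only the peripheral move (E4), together with the puncture moves (E3) which are vacuous since $\bP=\emptyset$, is absent from the equivalence defining $\cL^a(\Sigma,\bQ)$, and (E4) merely creates or deletes peripheral components and hence acts as the identity on the non-peripheral part. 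To make this precise I would place both $L^{\mathrm{np}}$ and $L'^{\mathrm{np}}$ in good position with respect to a split triangulation $\widehat{\tri}$ and invoke the corresponding uniqueness statement (the split-triangulation analogue of \cref{prop:good_position_bounded}): the remaining ambiguity is a sequence of (modified) H-moves, loop parallel-moves and generic isotopies, all of which belong to the equivalence defining $\cL^a(\Sigma,\bQ)$ and none of which introduce peripheral components. Hence $L^{\mathrm{np}}$ and $L'^{\mathrm{np}}$ are already $\cL^a$-equivalent, giving $L=L'$.

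The main obstacle is exactly this last step: controlling the effect of the peripheral move (E4) so as to see that two non-peripheral bounded webs which become equivalent only after passing to $\cL^x(\Sigma,\bQ)$ were in fact already equivalent in $\cL^a(\Sigma,\bQ)$. The hypothesis $\bP=\emptyset$ is essential both here and for surjectivity: with punctures present one would additionally have peripheral components and signed spiralling ends around punctures, whose data are not recorded by $(\delta_E)_{E\in\bB}$, so that $\widetilde{p}$ would cease to be surjective and one could only retain injectivity.
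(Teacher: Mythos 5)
Your overall strategy is the same as the paper's: decompose a bounded lamination into its non-peripheral and peripheral parts and observe that, in the absence of punctures, the peripheral weights are exactly the data recorded by $(\delta_E)_E$. The paper's own proof is just this two-sentence observation, so your surjectivity argument and your identification of the real content — that $\cL^x$-equivalence of the non-peripheral parts must be upgraded to $\cL^a$-equivalence — go beyond what the paper writes down, and that identification is correct.

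However, your resolution of that key step does not close it. The uniqueness statement you invoke (\cref{prop:good_position_bounded} and its split-triangulation analogue) asserts that \emph{representatives of a single parallel-equivalence class} in good position differ only by modified H-moves, loop parallel-moves and generic isotopies. To apply it to the pair $(L^{\mathrm{np}},L'^{\mathrm{np}})$ you must already know that they are parallel-equivalent (up to combining, cabling and boundary H-moves), whereas all you have is that they are equivalent in $\cL^x(\Sigma,\bQ)$ — an equivalence generated by a strictly larger set of moves, including the peripheral move (E4), and whose intermediate webs may carry peripheral components. So the step is circular as written: "the remaining ambiguity is \dots" presupposes the conclusion. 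To actually close the gap you would either (a) track a given sequence of $\cL^x$-moves between two peripheral-free webs and show that each (E4) creates or destroys a component that never interacts with the non-peripheral part (this needs care, since a boundary H-move can a priori merge a corner arc with a non-peripheral strand), or (b) bypass the combinatorics entirely by using the coordinate identifications: $\cL^a(\Sigma,\bQ)\cong\A_{\fsl_3,\Sigma}(\bQ^T)$ via the Douglas--Sun coordinates (\cref{rem:bounded_lamination}), $\cL^x(\Sigma,\bQ)\cong\bQ^{I_\uf(\tri)}$ via \cref{prop:injectivity}, and the linear relation of \cref{prop:ensemble_map}, whose matrix $\varepsilon^\tri+m$ is invertible over $\bQ$ when $\Sigma$ has no punctures. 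Route (b) is essentially how the lemma is used in \cref{thm:cluster_transf}, and is the cleanest way to make your "key point" rigorous.
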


\begin{proof}
In this case, the only datum that the map $p$ loses is the weights of peripheral components around special points. This can be uniquely recovered from the tuple $(\nu_E)_E$. 
\end{proof}
On the integral points, we have $\widetilde{p}(\cL^a(\Sigma,\bZ)) \subset \cL^p(\Sigma,\bZ)$.

\begin{prop}\label{prop:ensemble_map}
The extended geometric ensemble map $\widetilde{p}: \cL^a(\Sigma,\bQ) \to \cL^p(\Sigma,\bQ)$ coincides with the Goncharov--Shen extension of the ensemble map \eqref{eq:GS_extension}. Namely, it satisfies
\begin{align}\label{eq:ensemble_relation}
    \widetilde{p}^* \sfx_i^\tri = \sum_{j \in I(\tri)} (\varepsilon^\tri_{ij}+m_{ij}) \sfa_j^\tri
\end{align}
for any ideal triangulation $\tri$ of $\Sigma$ and $i \in I(\tri)$, where:
\begin{itemize}
    \item $(\sfa_j^\tri)_{j \in I(\tri)}$ denotes the tropical $\A$-coordinates on $\cL^a(\Sigma,\bQ)$ associated with $\tri$, which is the one-third of the Douglas--Sun's coordinates;
    \item  $\varepsilon^\tri=(\varepsilon^\tri_{ij})_{i,j \in I(\tri)}$ denotes the exchange matrix defined in \cref{subsec:cluster_sl3};
    \item $M=(m_{ij})_{i,j \in I_\f(\tri)}$ is the half-integral symmetric matrix given in \eqref{eq:m-matrix}. 
\end{itemize}

\end{prop}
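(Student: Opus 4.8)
The plan is to verify the linear relation \eqref{eq:ensemble_relation} one coordinate at a time, splitting the index set $I(\tri)$ into its unfrozen and frozen parts. Since $\widetilde{p}$ and the coordinate systems $\sfx^\tri$, $\sfa^\tri$ are all $\bQ_{>0}$-equivariant and the asserted relation is homogeneous, it suffices to check it on integral bounded laminations $L \in \cL^a(\Sigma,\bZ)$. The essential reduction is locality: each shear coordinate $\sfx_i^\tri$ depends only on the restriction of the (spiralling diagram of the) underlying web to the quadrilateral $Q_E$ for an edge index, or to the triangle $T$ for a face index, and the Douglas--Sun coordinates $\sfa_j^\tri$ are likewise determined locally by the essential web in good position. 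Hence each $\widetilde{p}^*\sfx_i^\tri$ is a function of the finitely many $\A$-coordinates attached to the relevant $Q_E$ or $T$, and the identity becomes a finite, case-by-case verification divided according to the signs of the honeycomb heights, exactly in the spirit of \cref{lem:braid-shear} and \cite[Section 4]{DS20II}.

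For the unfrozen indices $i \in I_\uf(\tri)$ I would use the commutative triangle $\pi_\uf \circ \widetilde{p} = p$, which reduces the claim to $p^* \sfx_i^\uf = \sum_{j} \varepsilon_{ij}^\tri \sfa_j^\tri$, since $m_{ij}$ vanishes unless both indices are frozen. Because forgetting peripheral components leaves the interior of each $Q_E$ and $T$ unchanged, $\sfx_i^\tri(p(L))$ can be read off directly from the curve- and honeycomb-components of $L$ in good position via \cref{fig:shear_curve,fig:shear_honeycomb}. Matching these signed contributions against the Douglas--Sun intersection and honeycomb numbers and against the entries of $\varepsilon^\tri$ recorded in \cref{subsec:cluster_sl3}, edge by edge and triangle by triangle, yields the identity; this is tedious but routine bookkeeping, entirely analogous to the $\fsl_2$-case.

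The substantive part is the frozen indices $i = (E,s)$, and this is where I expect the main obstacle to lie. Here I would begin from the defining formula \eqref{eq:frozen_coordinates}, express the pinning datum $\delta_E^\pm$ through the peripheral weights (by the definition of $\widetilde{p}$) and the corner-arc totals $\alpha_E^\pm(\hL)$, rewrite $\sfx_T$ as the signed honeycomb height, and finally express all of these quantities in terms of $\sfa^\tri$. The delicate point is that the right-hand side of \eqref{eq:frozen_coordinates}, in particular the coordinate $\sfx_{E,2}^\tri$ with its $[\sfx_T]_+$ term, is a priori only piecewise-linear, whereas the target $\sum_j (\varepsilon_{ij}+m_{ij})\sfa_j^\tri$ is genuinely linear. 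I expect the $\max$-terms arising from $[\sfx_T]_+$ to cancel against the corner-arc contributions of the honeycomb once the orientation conventions — clockwise versus counter-clockwise corner arcs, initial versus terminal endpoint of $E$, and the Dynkin shift $s \mapsto s^\ast$ — are tracked carefully, and the residual constant-coefficient correction to reproduce exactly the half-integral matrix $m$ of \eqref{eq:m-matrix}. Establishing this cancellation and pinning down the coefficients $\varepsilon_{(E,s)j}+m_{(E,s)j}$ is the real work.

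Finally, I would record a consistency check that also serves as a possible shortcut: since both $\widetilde{p}$ and the Goncharov--Shen ensemble map are compatible with amalgamation, \cref{thm:amalgamation} lets one confine the delicate frozen computation to a single triangle (and biangle) and then propagate \eqref{eq:ensemble_relation} to general $\Sigma$ by gluing, which is a clean way to avoid repeating the case analysis globally.
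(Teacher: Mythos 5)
Your proposal follows essentially the same route as the paper: reduce by $\bQ_{>0}$-equivariance and locality to the case of a single triangle or quadrilateral, then verify the linear relation component-by-component (corner arcs, height-one sink/source honeycombs, curve components) against explicit tables of $\sfa_\tri$- and $\sfx_\tri$-values. The "delicate" frozen case you single out resolves by exactly the same mechanism as the unfrozen one — since a reduced essential web on a triangle carries at most one honeycomb, the term $[\sfx_T]_+$ in \eqref{eq:frozen_coordinates} is itself additive over components, so no separate cancellation argument is needed beyond tabulating the per-component contributions as in \cref{fig:triangle_coordinates_comparison}.
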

In particular, by forgetting the pinnings and frozen coordinates, we see that the geometric ensemble map $p: \cL^a(\Sigma,\bQ) \to \cL^x(\Sigma,\bQ)$ coincides with the ensemble map \eqref{eq:ensemble_map}.

\begin{proof}
In view of the local nature of the definitions of coordinate systems and the exchange matrix, it suffices to consider the case where $\Sigma$ is a triangle or a quadrilateral.  
Indeed, for $i=i(T) \in I^\mathrm{tri}(\tri)$, it suffices to focus on the triangle $T$ containing it; for $i=i^s(E) \in I^\mathrm{edge}(\tri) \cap I_\uf(\tri)$ consider the quadrilateral containing the interior edge $E$ as a diagonal; for $i=i^s(E) \in I^\mathrm{edge}(\tri) \cap I_\f(\tri)$ consider the triangle $T$ having the boundary interval $E$ as one of its sides. 

\begin{description}
\item[Triangle case] For the $\fsl_3$-quiver associated with the unique ideal triangulation of a triangle $T$, label its vertices as:
\begin{align*}
\begin{tikzpicture}[scale=0.8]
\draw[blue] (-30:2) -- (90:2) -- (210:2) --cycle;
\draw[mygreen] (0,0) circle(2pt);
\foreach \i in {0,1,2}
{
\begin{scope}[rotate=\i*120,>=latex]
\draw[mygreen] ($(-30:2)!1/3!(90:2)$) circle(2pt) coordinate(X\i);
\draw[mygreen] ($(-30:2)!2/3!(90:2)$) circle(2pt) coordinate(Y\i);
{\color{mygreen}
\qarrow{$(-30:2)!2/3!(90:2)$}{$(90:2)!1/3!(210:2)$}
\qarrow{$(-30:2)!1/3!(90:2)$}{0,0}
\qarrow{0,0}{$(90:2)!2/3!(210:2)$}
\qdlarrow{$(-30:2)!2/3!(90:2)$}{$(-30:2)!1/3!(90:2)$}
}
\end{scope}
}
{\color{mygreen}
\node[above left] at (X1) {\scalebox{0.8}{$1$}};
\node[above left] at (Y1) {\scalebox{0.8}{$2$}};
\node[below] at (X2) {\scalebox{0.8}{$3$}};
\node[below] at (Y2) {\scalebox{0.8}{$4$}};
\node[above right] at (X0) {\scalebox{0.8}{$5$}};
\node[above right] at (Y0) {\scalebox{0.8}{$6$}};
\node[above=0.3em] at (0,0) {\scalebox{0.8}{$0$}};
}
\end{tikzpicture}
\end{align*}
Then the expected relation \eqref{eq:ensemble_relation} reads as
\begin{align*}
    \widetilde{p}^*\sfx_0&=\sfa_{2}+\sfa_{4}+\sfa_{6}-(\sfa_{1}+\sfa_{3}+\sfa_{5}), \\
    \widetilde{p}^*\sfx_{1}&=\sfa_{0} - \sfa_1 - \sfa_6, \\
    \widetilde{p}^*\sfx_{2}&=\sfa_1 + \sfa_3 - \sfa_2 - \sfa_0, \\
    \widetilde{p}^*\sfx_{3}&=\sfa_{0} - \sfa_3 - \sfa_2, \\
    \widetilde{p}^*\sfx_{4}&=\sfa_3 + \sfa_5 - \sfa_4 - \sfa_0, \\
    \widetilde{p}^*\sfx_{5}&=\sfa_{0} - \sfa_5 - \sfa_4, \\
    \widetilde{p}^*\sfx_{6}&=\sfa_5 + \sfa_1 - \sfa_6 - \sfa_0.
\end{align*}
The tropical $\A$-coordinates of essential webs on $T$ are defined as the weighted sum of the coordinates of its components. See \cite[Section 4.3]{DS20I}. Therefore it suffices to check the relations for the corner arcs and the sink-/source-honeycombs of height $1$, whose coordinates are shown in \cref{fig:triangle_coordinates_comparison}. Then the relations between the two coordinates can be easily verified.

\item[Quadrilateral case] For the $\fsl_3$-quiver associated with an ideal triangulation $\tri$ of a quadrilateral $Q$, label its vertices as:
\begin{align*}
\begin{tikzpicture}[scale=0.8,>=latex]
\draw[blue] (0,2.5) coordinate(A) -- (-2.5,0) coordinate(B) -- (0,-2.5) coordinate(C) -- (2.5,0) coordinate(D) --cycle;
\draw[blue] (A) -- (C);
{\color{mygreen}
\quiverplus{0,-2.5}{0,2.5}{-2.5,0};
\qdlarrow{x232}{x231};
\qdlarrow{x312}{x311};
\node[left=0.3em] at (x122) {\scalebox{0.8}{$1$}};
\node[left=0.3em] at (x121) {\scalebox{0.8}{$3$}};
\node[left=0.3em] at (G) {\scalebox{0.8}{$2$}};
\node[above left] at (x231) {\scalebox{0.8}{$5$}};
\node[above left] at (x232) {\scalebox{0.8}{$6$}};
\node[below left] at (x311) {\scalebox{0.8}{$7$}};
\node[below left] at (x312) {\scalebox{0.8}{$8$}};
\quiverplus{0,-2.5}{2.5,0}{0,2.5};
\qdlarrow{x122}{x121};
\qdlarrow{x232}{x231};
\node[left=0.3em] at (G) {\scalebox{0.8}{$4$}};
\node[below right] at (x121) {\scalebox{0.8}{$9$}};
\node[below right] at (x122) {\scalebox{0.8}{$10$}};
\node[above right] at (x231) {\scalebox{0.8}{$11$}};
\node[above right] at (x232) {\scalebox{0.8}{$12$}};
}
\end{tikzpicture}
\end{align*}
The remaining relations to be checked are:
\begin{align}\label{eq:ensemble_edge_coordinate}
    \begin{aligned}
    \widetilde{p}^*\sfx_1 &= \sfa_5 + \sfa_4 - \sfa_2 - \sfa_{12},\\
    \widetilde{p}^*\sfx_3 &= \sfa_2 + \sfa_9 - \sfa_4 - \sfa_8.
    \end{aligned}
\end{align}
The tropical $\A$-coordinate assigned to a vertex $i \in I(\tri)$ only depends on the restriction of a given web to the triangle which contains $i$. In particular, we can choose the braid representative with respect to $\widehat{\tri}$ for the computation, since the biangle part does not matter. Then both $\A$- and $\X$-coordinates are weighted sums of contributions from the components of the braid representative. It is easy to verify that the both sides of the equations in \eqref{eq:ensemble_edge_coordinate} vanish for the corner arcs around the marked points $Q$. For the curve and honeycomb components that contribute to the shear coordinates, the expected relations are easily verified from \cref{fig:quadrilateral_coordinates_comparison,fig:quadrilateral_coordinates_comparison_2}. Here notice that, for instance, the coordinates of the honeycomb component $H_{n_1,n_2,n_3}$ shown in the top of \cref{fig:shear_honeycomb} can be computed as 
$\mathsf{z}_\tri(H_{n_1,n_2,n_3})= n_1\mathsf{z}_\tri(\tau_+^L) + n_2\mathsf{z}_\tri(h) + n_3\mathsf{z}_\tri(\tau_+^R)$ 
for $\mathsf{z} \in \{\sfa,\sfx\}$. Together with this observation, the eight patterns shown in \cref{fig:quadrilateral_coordinates_comparison,fig:quadrilateral_coordinates_comparison_2} exhausts all the patterns up to symmetry. 
\end{description}
Thus the proposition is proved.
\end{proof}

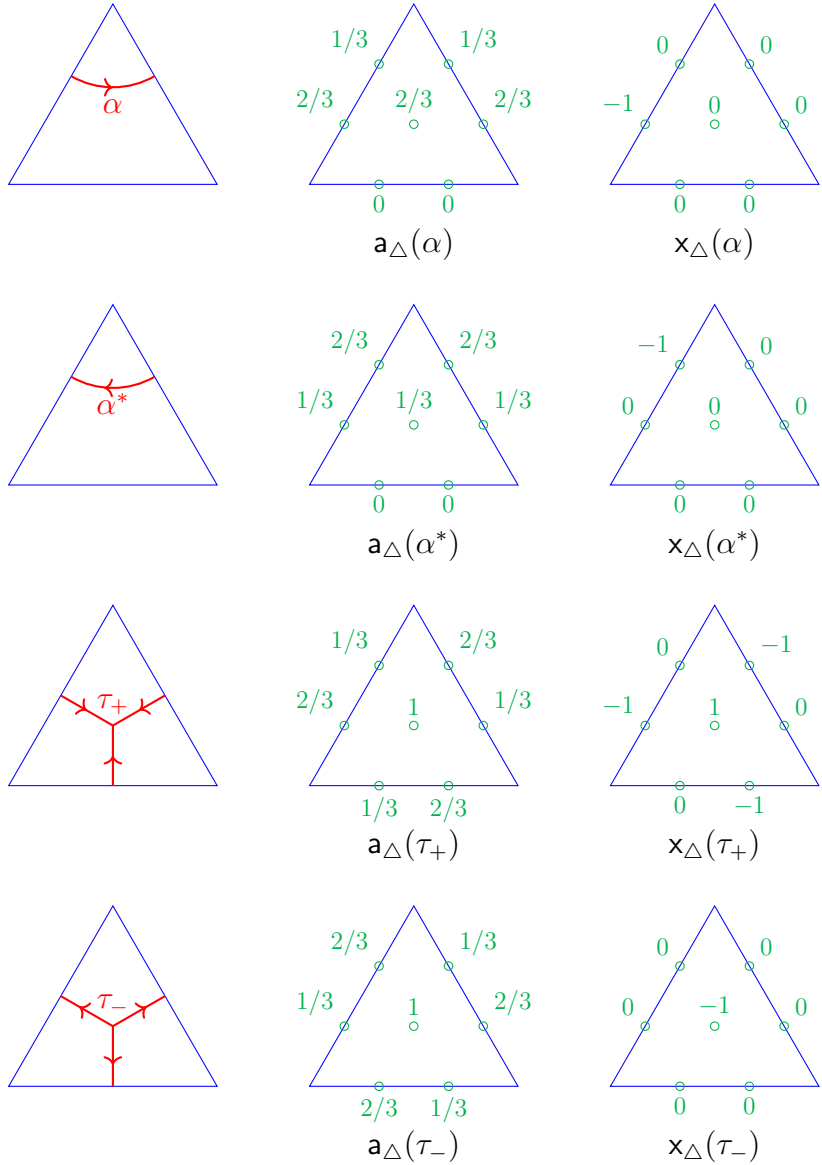
\begin{figure}[htbp]
\centering
\begin{tikzpicture}[scale=0.75]
\draw[blue] (-30:2) -- (90:2) -- (210:2) --cycle;
\draw[red,thick,->-] ($(90:2)!0.4!(210:2)$) arc(-120:-60:0.8*1.732);
\node[red,above] at (0,0) {$\alpha$};

\begin{scope}[xshift=5cm]
\draw[blue] (-30:2) -- (90:2) -- (210:2) --cycle;
\draw[mygreen] (0,0) circle(2pt);
\foreach \i in {0,1,2}
{
\begin{scope}[rotate=\i*120,>=latex]
\draw[mygreen] ($(-30:2)!1/3!(90:2)$) circle(2pt) coordinate(X\i);
\draw[mygreen] ($(-30:2)!2/3!(90:2)$) circle(2pt) coordinate(Y\i);
\end{scope}
}
{\color{mygreen}
\node[above left] at (X1) {\scalebox{0.8}{$1/3$}};
\node[above left] at (Y1) {\scalebox{0.8}{$2/3$}};
\node[below] at (X2) {\scalebox{0.8}{$0$}};
\node[below] at (Y2) {\scalebox{0.8}{$0$}};
\node[above right] at (X0) {\scalebox{0.8}{$2/3$}};
\node[above right] at (Y0) {\scalebox{0.8}{$1/3$}};
\node[above] at (0,0) {\scalebox{0.8}{$2/3$}};
}
\node at (0,-2) {$\sfa_\tri(\alpha)$};
\end{scope}

\begin{scope}[xshift=10cm]
\draw[blue] (-30:2) -- (90:2) -- (210:2) --cycle;
\draw[mygreen] (0,0) circle(2pt);
\foreach \i in {0,1,2}
{
\begin{scope}[rotate=\i*120,>=latex]
\draw[mygreen] ($(-30:2)!1/3!(90:2)$) circle(2pt) coordinate(X\i);
\draw[mygreen] ($(-30:2)!2/3!(90:2)$) circle(2pt) coordinate(Y\i);
\end{scope}
}
{\color{mygreen}
\node[above left] at (X1) {\scalebox{0.8}{$0$}};
\node[above left] at (Y1) {\scalebox{0.8}{$-1$}};
\node[below] at (X2) {\scalebox{0.8}{$0$}};
\node[below] at (Y2) {\scalebox{0.8}{$0$}};
\node[above right] at (X0) {\scalebox{0.8}{$0$}};
\node[above right] at (Y0) {\scalebox{0.8}{$0$}};
\node[above] at (0,0) {\scalebox{0.8}{$0$}};
}
\node at (0,-2) {$\sfx_\tri(\alpha)$};
\end{scope}

\begin{scope}[yshift=-5cm]
\draw[blue] (-30:2) -- (90:2) -- (210:2) --cycle;
\draw[red,thick,-<-] ($(90:2)!0.4!(210:2)$) arc(-120:-60:0.8*1.732);
\node[red,above] at (0,0) {$\alpha^\ast$};
\end{scope}

\begin{scope}[xshift=5cm,yshift=-5cm]
\draw[blue] (-30:2) -- (90:2) -- (210:2) --cycle;
\draw[mygreen] (0,0) circle(2pt);
\foreach \i in {0,1,2}
{
\begin{scope}[rotate=\i*120,>=latex]
\draw[mygreen] ($(-30:2)!1/3!(90:2)$) circle(2pt) coordinate(X\i);
\draw[mygreen] ($(-30:2)!2/3!(90:2)$) circle(2pt) coordinate(Y\i);
\end{scope}
}
{\color{mygreen}
\node[above left] at (X1) {\scalebox{0.8}{$2/3$}};
\node[above left] at (Y1) {\scalebox{0.8}{$1/3$}};
\node[below] at (X2) {\scalebox{0.8}{$0$}};
\node[below] at (Y2) {\scalebox{0.8}{$0$}};
\node[above right] at (X0) {\scalebox{0.8}{$1/3$}};
\node[above right] at (Y0) {\scalebox{0.8}{$2/3$}};
\node[above] at (0,0) {\scalebox{0.8}{$1/3$}};
}
\node at (0,-2) {$\sfa_\tri(\alpha^\ast)$};
\end{scope}

\begin{scope}[xshift=10cm,yshift=-5cm]
\draw[blue] (-30:2) -- (90:2) -- (210:2) --cycle;
\draw[mygreen] (0,0) circle(2pt);
\foreach \i in {0,1,2}
{
\begin{scope}[rotate=\i*120,>=latex]
\draw[mygreen] ($(-30:2)!1/3!(90:2)$) circle(2pt) coordinate(X\i);
\draw[mygreen] ($(-30:2)!2/3!(90:2)$) circle(2pt) coordinate(Y\i);
\end{scope}
}
{\color{mygreen}
\node[above left] at (X1) {\scalebox{0.8}{$-1$}};
\node[above left] at (Y1) {\scalebox{0.8}{$0$}};
\node[below] at (X2) {\scalebox{0.8}{$0$}};
\node[below] at (Y2) {\scalebox{0.8}{$0$}};
\node[above right] at (X0) {\scalebox{0.8}{$0$}};
\node[above right] at (Y0) {\scalebox{0.8}{$0$}};
\node[above] at (0,0) {\scalebox{0.8}{$0$}};
}
\node at (0,-2) {$\sfx_\tri(\alpha^\ast)$};
\end{scope}

\begin{scope}[yshift=-10cm]
\draw[blue] (-30:2) -- (90:2) -- (210:2) --cycle;
\foreach \i in {-30,90,210}
\draw[red,thick,->-] ($(\i:2)!0.5!(\i+120:2)$) -- (0,0);
\node[red,above] at (0,0) {$\tau_+$};
\end{scope}

\begin{scope}[xshift=5cm,yshift=-10cm]
\draw[blue] (-30:2) -- (90:2) -- (210:2) --cycle;
\draw[mygreen] (0,0) circle(2pt);
\foreach \i in {0,1,2}
{
\begin{scope}[rotate=\i*120,>=latex]
\draw[mygreen] ($(-30:2)!1/3!(90:2)$) circle(2pt) coordinate(X\i);
\draw[mygreen] ($(-30:2)!2/3!(90:2)$) circle(2pt) coordinate(Y\i);
\end{scope}
}
{\color{mygreen}
\node[above left] at (X1) {\scalebox{0.8}{$1/3$}};
\node[above left] at (Y1) {\scalebox{0.8}{$2/3$}};
\node[below] at (X2) {\scalebox{0.8}{$1/3$}};
\node[below] at (Y2) {\scalebox{0.8}{$2/3$}};
\node[above right] at (X0) {\scalebox{0.8}{$1/3$}};
\node[above right] at (Y0) {\scalebox{0.8}{$2/3$}};
\node[above] at (0,0) {\scalebox{0.8}{$1$}};
}
\node at (0,-2) {$\sfa_\tri(\tau_+)$};
\end{scope}

\begin{scope}[xshift=10cm,yshift=-10cm]
\draw[blue] (-30:2) -- (90:2) -- (210:2) --cycle;
\draw[mygreen] (0,0) circle(2pt);
\foreach \i in {0,1,2}
{
\begin{scope}[rotate=\i*120,>=latex]
\draw[mygreen] ($(-30:2)!1/3!(90:2)$) circle(2pt) coordinate(X\i);
\draw[mygreen] ($(-30:2)!2/3!(90:2)$) circle(2pt) coordinate(Y\i);
\end{scope}
}
{\color{mygreen}
\node[above left] at (X1) {\scalebox{0.8}{$0$}};
\node[above left] at (Y1) {\scalebox{0.8}{$-1$}};
\node[below] at (X2) {\scalebox{0.8}{$0$}};
\node[below] at (Y2) {\scalebox{0.8}{$-1$}};
\node[above right] at (X0) {\scalebox{0.8}{$0$}};
\node[above right] at (Y0) {\scalebox{0.8}{$-1$}};
\node[above] at (0,0) {\scalebox{0.8}{$1$}};
}
\node at (0,-2) {$\sfx_\tri(\tau_+)$};
\end{scope}

\begin{scope}[yshift=-15cm]
\draw[blue] (-30:2) -- (90:2) -- (210:2) --cycle;
\foreach \i in{-30,90,210}
\draw[red,thick,-<-] ($(\i:2)!0.5!(\i+120:2)$) -- (0,0);
\node[red,above] at (0,0) {$\tau_-$};
\end{scope}

\begin{scope}[xshift=5cm,yshift=-15cm]
\draw[blue] (-30:2) -- (90:2) -- (210:2) --cycle;
\draw[mygreen] (0,0) circle(2pt);
\foreach \i in {0,1,2}
{
\begin{scope}[rotate=\i*120,>=latex]
\draw[mygreen] ($(-30:2)!1/3!(90:2)$) circle(2pt) coordinate(X\i);
\draw[mygreen] ($(-30:2)!2/3!(90:2)$) circle(2pt) coordinate(Y\i);
\end{scope}
}
{\color{mygreen}
\node[above left] at (X1) {\scalebox{0.8}{$2/3$}};
\node[above left] at (Y1) {\scalebox{0.8}{$1/3$}};
\node[below] at (X2) {\scalebox{0.8}{$2/3$}};
\node[below] at (Y2) {\scalebox{0.8}{$1/3$}};
\node[above right] at (X0) {\scalebox{0.8}{$2/3$}};
\node[above right] at (Y0) {\scalebox{0.8}{$1/3$}};
\node[above] at (0,0) {\scalebox{0.8}{$1$}};
}
\node at (0,-2) {$\sfa_\tri(\tau_-)$};
\end{scope}

\begin{scope}[xshift=10cm,yshift=-15cm]
\draw[blue] (-30:2) -- (90:2) -- (210:2) --cycle;
\draw[mygreen] (0,0) circle(2pt);
\foreach \i in {0,1,2}
{
\begin{scope}[rotate=\i*120,>=latex]
\draw[mygreen] ($(-30:2)!1/3!(90:2)$) circle(2pt) coordinate(X\i);
\draw[mygreen] ($(-30:2)!2/3!(90:2)$) circle(2pt) coordinate(Y\i);
\end{scope}
}
{\color{mygreen}
\node[above left] at (X1) {\scalebox{0.8}{$0$}};
\node[above left] at (Y1) {\scalebox{0.8}{$0$}};
\node[below] at (X2) {\scalebox{0.8}{$0$}};
\node[below] at (Y2) {\scalebox{0.8}{$0$}};
\node[above right] at (X0) {\scalebox{0.8}{$0$}};
\node[above right] at (Y0) {\scalebox{0.8}{$0$}};
\node[above] at (0,0) {\scalebox{0.8}{$-1$}};
}
\node at (0,-2) {$\sfx_\tri(\tau_-)$};
\end{scope}

\end{tikzpicture}
    \caption{Two types of coordinates of component webs on a triangle $T$. All the webs shown here have weight $1$.}
    \label{fig:triangle_coordinates_comparison}
\end{figure}

\begin{figure}[htbp]
    \centering
\begin{tikzpicture}[scale=0.58, every node/.style={scale=0.85}]
\draw[blue] (2.5,0) -- (0,2.5) -- (-2.5,0) -- (0,-2.5) --cycle;
\draw[blue] (0,-2.5) to[bend left=30pt] (0,2.5);
\draw[blue] (0,-2.5) to[bend right=30pt] (0,2.5);
\draw[thick,red,->-] (-1.25,1.25) --node[midway,below=0.3em]{$\alpha_+$} (1.25,-1.25);

\begin{scope}[xshift=6cm]
\draw[blue] (2.5,0) -- (0,2.5) -- (-2.5,0) -- (0,-2.5) --cycle;
\draw[blue] (0,-2.5) to (0,2.5);
\quiversquare{0,-2.5}{2.5,0}{0,2.5}{-2.5,0};
{\color{mygreen}
\node[below right,scale=0.8] at (x121) {$2/3$}; 
\node[below right,scale=0.8] at (x122) {$1/3$};
\node[above right,scale=0.8] at (x231) {$0$};
\node[above right,scale=0.8] at (x232) {$0$};
\node[above left,scale=0.8] at (x341) {$1/3$};
\node[above left,scale=0.8] at (x342) {$2/3$};
\node[below left,scale=0.8] at (x411) {$0$};
\node[below left,scale=0.8] at (x412) {$0$};
\node[below right,scale=0.8] at (x131) {$2/3$};
\node[above right,scale=0.8] at (x132) {$1/3$};
\node[right,scale=0.8] at (x241) {$1/3$};
\node[left,scale=0.8] at (x242) {$2/3$};
}
\node at (0,-3) {$\sfa_\tri(\alpha_+)$};
\end{scope}

\begin{scope}[xshift=12cm]
\draw[blue] (2.5,0) -- (0,2.5) -- (-2.5,0) -- (0,-2.5) --cycle;
\draw[blue] (0,-2.5) to (0,2.5);
\quiversquare{0,-2.5}{2.5,0}{0,2.5}{-2.5,0};
\squarefrozen{-1}{0}{0}{0}
{0}{-1}{0}{0};
\squareunfrozen{1}{0}{0}{0};
\node at (0,-3) {$\sfx_\tri(\alpha_+)$};
\end{scope}
\begin{scope}[yshift=-6.5cm]
\draw[blue] (2.5,0) -- (0,2.5) -- (-2.5,0) -- (0,-2.5) --cycle;
\draw[blue] (0,-2.5) to[bend left=30pt] (0,2.5);
\draw[blue] (0,-2.5) to[bend right=30pt] (0,2.5);
\draw[thick,red,->-] (-1.25,-1.25) --node[midway,above=0.3em]{$\alpha_-$} (1.25,1.25);
\end{scope}

\begin{scope}[xshift=6cm,yshift=-6.5cm]
\draw[blue] (2.5,0) -- (0,2.5) -- (-2.5,0) -- (0,-2.5) --cycle;
\draw[blue] (0,-2.5) to (0,2.5);
\quiversquare{0,-2.5}{2.5,0}{0,2.5}{-2.5,0};
\squarefrozen{0}{0}{2/3}{1/3}
{0}{0}{1/3}{2/3};
\squareunfrozen{2/3}{1/3}{2/3}{1/3};
\node at (0,-3) {$\sfa_\tri(\alpha_-)$};
\end{scope}

\begin{scope}[xshift=12cm,yshift=-6.5cm]
\draw[blue] (2.5,0) -- (0,2.5) -- (-2.5,0) -- (0,-2.5) --cycle;
\draw[blue] (0,-2.5) to (0,2.5);
\quiversquare{0,-2.5}{2.5,0}{0,2.5}{-2.5,0};
\squarefrozen{0}{0}{0}{0}
{0}{0}{0}{0};
\squareunfrozen{-1}{0}{0}{0};
\node at (0,-3) {$\sfx_\tri(\alpha_-)$};
\end{scope}
\begin{scope}[yshift=-13cm]
\draw[blue] (2.5,0) -- (0,2.5) -- (-2.5,0) -- (0,-2.5) --cycle;
\draw[blue] (0,-2.5) to[bend left=30pt] (0,2.5);
\draw[blue] (0,-2.5) to[bend right=30pt] (0,2.5);
\draw[thick,red,->-] (-2*5/6,-1*5/6) -- (-1*5/6,0);
\draw[thick,red,->-] (-2*5/6,1*5/6) -- (-1*5/6,0);
\draw[thick,red,-<-] (-1*5/6,0) to[out=0,in=-135] (1.25,1.25);
\node[red] at (0,-0.25) {$\tau_+^L$};
\end{scope}

\begin{scope}[xshift=6cm,yshift=-13cm]
\draw[blue] (2.5,0) -- (0,2.5) -- (-2.5,0) -- (0,-2.5) --cycle;
\draw[blue] (0,-2.5) to (0,2.5);
\quiversquare{0,-2.5}{2.5,0}{0,2.5}{-2.5,0};
\squarefrozen{0}{0}{1/3}{2/3}
{1/3}{2/3}{1/3}{2/3};
\squareunfrozen{1/3}{2/3}{1/3}{1};
\node at (0,-3) {$\sfa_\tri(\tau_+^L)$};
\end{scope}

\begin{scope}[xshift=12cm,yshift=-13cm]
\draw[blue] (2.5,0) -- (0,2.5) -- (-2.5,0) -- (0,-2.5) --cycle;
\draw[blue] (0,-2.5) to (0,2.5);
\quiversquare{0,-2.5}{2.5,0}{0,2.5}{-2.5,0};
\squarefrozen{0}{0}{0}{0}
{0}{-1}{0}{-1};
\squareunfrozen{0}{-1}{0}{1};
\node at (0,-3) {$\sfx_\tri(\tau_+^L)$};
\end{scope}
\begin{scope}[yshift=-19.5cm]
\draw[blue] (2.5,0) -- (0,2.5) -- (-2.5,0) -- (0,-2.5) --cycle;
\draw[blue] (0,-2.5) to[bend left=30pt] (0,2.5);
\draw[blue] (0,-2.5) to[bend right=30pt] (0,2.5);
\draw[thick,red,->-] (-2*5/6,-1*5/6) -- (-1*5/6,0);
\draw[thick,red,->-] (-2*5/6,1*5/6) -- (-1*5/6,0);
\draw[thick,red,-<-] (-1*5/6,0) to[out=0,in=135] (1.25,-1.25);
\node[red] at (0,0.25) {$\tau_+^R$};
\end{scope}

\begin{scope}[xshift=6cm,yshift=-19.5cm]
\draw[blue] (2.5,0) -- (0,2.5) -- (-2.5,0) -- (0,-2.5) --cycle;
\draw[blue] (0,-2.5) to (0,2.5);
\quiversquare{0,-2.5}{2.5,0}{0,2.5}{-2.5,0};
\squarefrozen{1/3}{2/3}{0}{0}
{1/3}{2/3}{1/3}{2/3};
\squareunfrozen{1/3}{2/3}{2/3}{1};
\node at (0,-3) {$\sfa_\tri(\tau_+^R)$};
\end{scope}

\begin{scope}[xshift=12cm,yshift=-19.5cm]
\draw[blue] (2.5,0) -- (0,2.5) -- (-2.5,0) -- (0,-2.5) --cycle;
\draw[blue] (0,-2.5) to (0,2.5);
\quiversquare{0,-2.5}{2.5,0}{0,2.5}{-2.5,0};
\squarefrozen{0}{-1}{0}{0}
{0}{-1}{0}{-1};
\squareunfrozen{0}{0}{0}{1};
\node at (0,-3) {$\sfx_\tri(\tau_+^R)$};
\end{scope}

\begin{scope}[yshift=-26cm]
\draw[blue] (2.5,0) -- (0,2.5) -- (-2.5,0) -- (0,-2.5) --cycle;
\draw[blue] (0,-2.5) to[bend left=30pt] (0,2.5);
\draw[blue] (0,-2.5) to[bend right=30pt] (0,2.5);
\draw[thick,red,-<-] (-2*5/6,-1*5/6) -- (-1*5/6,0);
\draw[thick,red,-<-] (-2*5/6,1*5/6) -- (-1*5/6,0);
\draw[thick,red,->-] (-1*5/6,0) to[out=0,in=-135] (1.25,1.25);
\node[red] at (0,-0.25) {$\tau_-^L$};
\end{scope}

\begin{scope}[xshift=6cm,yshift=-26cm]
\draw[blue] (2.5,0) -- (0,2.5) -- (-2.5,0) -- (0,-2.5) --cycle;
\draw[blue] (0,-2.5) to (0,2.5);
\quiversquare{0,-2.5}{2.5,0}{0,2.5}{-2.5,0};
\squarefrozen{0}{0}{2/3}{1/3}
{2/3}{1/3}{2/3}{1/3};
\squareunfrozen{2/3}{1/3}{2/3}{1};
\node at (0,-3) {$\sfa_\tri(\tau_-^L)$};
\end{scope}

\begin{scope}[xshift=12cm,yshift=-26cm]
\draw[blue] (2.5,0) -- (0,2.5) -- (-2.5,0) -- (0,-2.5) --cycle;
\draw[blue] (0,-2.5) to (0,2.5);
\quiversquare{0,-2.5}{2.5,0}{0,2.5}{-2.5,0};
\squarefrozen{0}{0}{0}{0}
{0}{0}{0}{0};
\squareunfrozen{0}{0}{0}{-1};
\node at (0,-3) {$\sfx_\tri(\tau_-^L)$};
\end{scope}
\end{tikzpicture}
    \caption{Two types of coordinates of component webs on a quadrilateral $Q$. All the webs shown here have weight $1$. (1/2)}
    \label{fig:quadrilateral_coordinates_comparison}
\end{figure}

\begin{figure}[htbp]
    \centering
\begin{tikzpicture}[scale=0.68, every node/.style={scale=0.85}]
\begin{scope}
\draw[blue] (2.5,0) -- (0,2.5) -- (-2.5,0) -- (0,-2.5) --cycle;
\draw[blue] (0,-2.5) to[bend left=30pt] (0,2.5);
\draw[blue] (0,-2.5) to[bend right=30pt] (0,2.5);
\draw[thick,red,-<-] (-2*5/6,-1*5/6) -- (-1*5/6,0);
\draw[thick,red,-<-] (-2*5/6,1*5/6) -- (-1*5/6,0);
\draw[thick,red,->-] (-1*5/6,0) to[out=0,in=-135] (1.25,1.25);
\node[red] at (0,-0.25) {$\tau_-^L$};
\end{scope}

\begin{scope}[xshift=6cm]
\draw[blue] (2.5,0) -- (0,2.5) -- (-2.5,0) -- (0,-2.5) --cycle;
\draw[blue] (0,-2.5) to (0,2.5);
\quiversquare{0,-2.5}{2.5,0}{0,2.5}{-2.5,0};
\squarefrozen{0}{0}{2/3}{1/3}
{2/3}{1/3}{2/3}{1/3};
\squareunfrozen{2/3}{1/3}{2/3}{1};
\node at (0,-3) {$\sfa_\tri(\tau_-^L)$};
\end{scope}

\begin{scope}[xshift=12cm]
\draw[blue] (2.5,0) -- (0,2.5) -- (-2.5,0) -- (0,-2.5) --cycle;
\draw[blue] (0,-2.5) to (0,2.5);
\quiversquare{0,-2.5}{2.5,0}{0,2.5}{-2.5,0};
\squarefrozen{0}{0}{0}{0}
{0}{0}{0}{0};
\squareunfrozen{0}{0}{0}{-1};
\node at (0,-3) {$\sfx_\tri(\tau_-^L)$};
\end{scope}
\begin{scope}[yshift=-6.5cm]
\draw[blue] (2.5,0) -- (0,2.5) -- (-2.5,0) -- (0,-2.5) --cycle;
\draw[blue] (0,-2.5) to[bend left=30pt] (0,2.5);
\draw[blue] (0,-2.5) to[bend right=30pt] (0,2.5);
\draw[thick,red,-<-] (-2*5/6,-1*5/6) -- (-1*5/6,0);
\draw[thick,red,-<-] (-2*5/6,1*5/6) -- (-1*5/6,0);
\draw[thick,red,->-] (-1*5/6,0) to[out=0,in=135] (1.25,-1.25);
\node[red] at (0,0.25) {$\tau_-^R$};
\end{scope}

\begin{scope}[xshift=6cm,yshift=-6.5cm]
\draw[blue] (2.5,0) -- (0,2.5) -- (-2.5,0) -- (0,-2.5) --cycle;
\draw[blue] (0,-2.5) to (0,2.5);
\quiversquare{0,-2.5}{2.5,0}{0,2.5}{-2.5,0};
\squarefrozen{2/3}{1/3}{0}{0}
{2/3}{1/3}{2/3}{1/3};
\squareunfrozen{2/3}{1/3}{1/3}{1};
\node at (0,-3) {$\sfa_\tri(\tau_-^R)$};
\end{scope}

\begin{scope}[xshift=12cm,yshift=-6.5cm]
\draw[blue] (2.5,0) -- (0,2.5) -- (-2.5,0) -- (0,-2.5) --cycle;
\draw[blue] (0,-2.5) to (0,2.5);
\quiversquare{0,-2.5}{2.5,0}{0,2.5}{-2.5,0};
\squarefrozen{-1}{0}{0}{0}
{0}{0}{0}{0};
\squareunfrozen{1}{0}{0}{-1};
\node at (0,-3) {$\sfx_\tri(\tau_-^R)$};
\end{scope}
\begin{scope}[yshift=-13cm]
\draw[blue] (2.5,0) -- (0,2.5) -- (-2.5,0) -- (0,-2.5) --cycle;
\draw[blue] (0,-2.5) to[bend left=30pt] (0,2.5);
\draw[blue] (0,-2.5) to[bend right=30pt] (0,2.5);
\draw[thick,red,->-] (-2*5/6,-1*5/6) -- (-1*5/6,0);
\draw[thick,red,->-] (-2*5/6,1*5/6) -- (-1*5/6,0);
\draw[thick,red,-<-] (-1*5/6,0) -- (1*5/6,0);
\draw[thick,red,->-] (1*5/6,0) -- (2*5/6,1*5/6);
\draw[thick,red,->-] (1*5/6,0) -- (2*5/6,-1*5/6);
\node[red] at (0,0.5) {$h$};
\end{scope}

\begin{scope}[xshift=6cm,yshift=-13cm]
\draw[blue] (2.5,0) -- (0,2.5) -- (-2.5,0) -- (0,-2.5) --cycle;
\draw[blue] (0,-2.5) to (0,2.5);
\quiversquare{0,-2.5}{2.5,0}{0,2.5}{-2.5,0};
\squarefrozen{2/3}{1/3}{2/3}{1/3}
{1/3}{2/3}{1/3}{2/3};
\squareunfrozen{1/3}{2/3}{1}{1};
\node at (0,-3) {$\sfa_\tri(h)$};
\end{scope}

\begin{scope}[xshift=12cm,yshift=-13cm]
\draw[blue] (2.5,0) -- (0,2.5) -- (-2.5,0) -- (0,-2.5) --cycle;
\draw[blue] (0,-2.5) to (0,2.5);
\quiversquare{0,-2.5}{2.5,0}{0,2.5}{-2.5,0};
\squarefrozen{0}{0}{0}{0}
{0}{-1}{0}{-1};
\squareunfrozen{0}{0}{-1}{1};
\node at (0,-3) {$\sfx_\tri(h)$};
\end{scope}
\end{tikzpicture}
    \caption{Two types of coordinates of component webs on a quadrilateral $Q$. All the webs shown here have weight $1$. (2/2)}
    \label{fig:quadrilateral_coordinates_comparison_2}
\end{figure}
The following states an extension of \cref{thm:cluster_transf_unfrozen} with pinnings/frozen variables, as promised before. 

\begin{thm}\label{thm:cluster_transf}
For any two ideal triangulations $\tri$ and $\tri'$ of $\Sigma$, the coordinate transformation $\sfx_{\tri,\tri'}:=\sfx_{\tri'}\circ \sfx_\tri^{-1}: \bQ^{I(\tri)} \to \bQ^{I(\tri)}$ is a composite of tropical cluster Poisson transformations. In particular, we get an $MC(\Sigma)$-equivariant identification $\sfx_\bullet:\cL^p(\Sigma,\bQ) \xrightarrow{\sim} \X_{\fsl_3,\Sigma}(\bQ^T)$. 
\end{thm}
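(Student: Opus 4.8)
The plan is to reduce the statement to a single flip, to localize that flip inside a quadrilateral, and there to import it from the Douglas--Sun computation on the $\A$-side through the ensemble map of \cref{prop:ensemble_map}. Since any two ideal triangulations without self-folded triangles are connected by a finite sequence of flips, and since all the coordinate maps $\sfx_\tri$ are bijections by \cref{prop:shear_P}, it suffices to prove that for a single flip of an interior edge $E$ relating $\tri$ and $\tri'$, the self-map $\sfx_{\tri,\tri'}$ of $\bQ^{I(\tri)}$ coincides with the tropicalization $\mu^\Sigma_E$ of the mutation sequence in the $\fsl_3$-quiver $Q^\tri$ that realizes the flip.

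Next I would cut $\Sigma$ along the sides of the quadrilateral $Q_E$ having $E$ as its diagonal, exhibiting $\Sigma$ as the image, under a sequence of gluing maps, of the disjoint union $\Sigma_0 = Q_E \sqcup R$ with $R$ the complementary surface; here $Q_E$ is a disk with four special points, hence puncture-free, even when some of its sides are identified in $\Sigma$. By \cref{thm:amalgamation} the induced map on coordinates is the tropicalized amalgamation $A\colon \bQ^{I(\tri_0)} \to \bQ^{I(\tri)}$, which adds the coordinates along each glued edge and is therefore surjective. The four sides of $Q_E$ are untouched by the flip of the interior edge $E$, so the amalgamation relations for $\tri$ and $\tri'$ use the same map $A$; since the mutation sequence realizing the flip is supported on the unfrozen interior vertices of $Q_E$ --- which are neither glued nor adjacent to $R$ --- amalgamation commutes with it, giving $A \circ \mu^{\Sigma_0}_E = \mu^\Sigma_E \circ A$ and $\sfx_{\tri,\tri'} \circ A = A \circ \sfx_{\tri_0,\tri_0'}$. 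By surjectivity of $A$ it then suffices to prove $\sfx_{\tri_0,\tri_0'} = \mu^{\Sigma_0}_E$, and as this transformation is the identity on the factor $R$, everything reduces to the puncture-free quadrilateral $Q_E$.

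On $Q_E$ the extended ensemble map $\widetilde{p}\colon \cL^a(Q_E,\bQ)\to\cL^p(Q_E,\bQ)$ is a bijection by \cref{lem:ensemble_bijection}, and by \cref{prop:ensemble_map} it intertwines the tropical $\A$-coordinates $\sfa_\tri$ with the shear coordinates $\sfx_\tri$ through the linear relation \eqref{eq:ensemble_relation}. The Douglas--Sun flip computation \cite{DS20II} shows that $\sfa_\tri$ transforms under the flip by a composite of tropical cluster $\A$-transformations, while the Goncharov--Shen extended ensemble map \eqref{eq:GS_extension} carries such $\A$-transformations to tropical cluster $\X$-(Poisson) transformations. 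Transporting across the bijection $\widetilde{p}$ identifies $\sfx_{\tri_0,\tri_0'}$ on $Q_E$ with a composite of tropical cluster Poisson transformations, which by the previous step proves the same on $\Sigma$. The combined bijection $\sfx_\bullet\colon\cL^p(\Sigma,\bQ)\xrightarrow{\sim}\X_{\fsl_3,\Sigma}(\bQ^T)$ and its $MC(\Sigma)$-equivariance then follow, since $\X_{\fsl_3,\Sigma}(\bQ^T)$ is glued from the charts $\bQ^{I(\tri)}$ exactly along these transformations and every construction is natural under homeomorphisms of $(\Sigma,\bM)$.

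The main obstacle I anticipate is the second step: one must pin down the precise mutation sequence in $Q^\tri$ realizing an $\fsl_3$-flip, verify that it mutates only the interior vertices of $Q_E$, and confirm that the tropicalized amalgamation commutes with exactly this sequence so that no contribution leaks in from the complement $R$. A secondary delicate point is that the compatibility needed is for the $m$-corrected Goncharov--Shen ensemble map rather than the bare ensemble map built from $\varepsilon^\tri$ alone; this is precisely what the extension \eqref{eq:GS_extension} supplies, so it enters as a cited input rather than as something to be reproven here.
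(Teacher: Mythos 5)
Your proposal is correct and follows essentially the same route as the paper: reduce to a single flip, localize to the puncture-free quadrilateral $Q_E$ via the gluing map and \cref{thm:amalgamation} (the paper justifies the commutation of amalgamation with the mutation sequence by citing \cite[Lemma 2.2]{FG06a}), and settle the quadrilateral case by transporting the Douglas--Sun tropical $\A$-flip formula through the bijection $\widetilde{p}$ of \cref{lem:ensemble_bijection} and \cref{prop:ensemble_map}. The "obstacle" you flag about pinning down the mutation sequence is handled in the paper by the explicit sequence recorded in \cref{fig:flip sequence}, which is indeed supported on the four unfrozen vertices interior to $Q_E$.
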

As a corollary, combining with \cref{lem:cluster_projection}, we get a proof of \cref{thm:cluster_transf_unfrozen}.

\begin{proof}
From \cref{lem:ensemble_bijection} and \cref{prop:ensemble_map}, the statement is true when $\Sigma$ has no puncture (in particular, a quadrilateral). Indeed, the corresponding transformation $\sfa_{\tri,\tri'}:=\sfa_{\tri'}\circ \sfa_\tri^{-1}: \bQ^{I(\tri)} \to \bQ^{I(\tri)}$ is shown to be a composite of tropical cluster $\A$-transformations \cite[Proposition 4.2]{DS20II}. Then $\sfx_{\tri,\tri'}=(\widetilde{p}^{-1})^*\circ \sfa_{\tri,\tri'} \circ \widetilde{p}^*$ is the corresponding composite of tropical cluster $\X$-transformations, since the extended ensemble map commutes with the tropical cluster transformations and is a bijection in this case. 

For the general case, it suffices to consider two triangulations $\tri$, $\tri'$ related by a single flip along an edge $E \in e_{\mathrm{int}}(\tri)$. Let $Q$ be the unique quadrilateral in $\tri$ containing $E$ as a diagonal, and $\Sigma':=\Sigma \setminus \interior Q$ the complement marked surface. It is obvious that the shear coordinates assigned to the vertices outside $Q$ are unchanged. On the other hand, the coordinates assigned to the vertices on $Q$ transform correctly from the argument above under the corresponding coordinate transformation on $\cL^p(Q,\bQ)$. Since $\Sigma$ is obtained by gluing $Q$ with $\Sigma'$ and the shear coordinates are obtained by amalgamating those on $\cL^p(Q,\bQ)$ and $\cL^p(\Sigma',\bQ)$ by  \cref{thm:amalgamation}, the statement follows from the fact that the amalgamations commute with cluster $\X$-transformations \cite[Lemma 2.2]{FG06a}.
\end{proof}

\begin{rem}
For an unpunctured surface $\Sigma$, the fastest way to introduce the coordinate system $\sfx_\tri$ on $\cL^p(\Sigma,\bQ)$ which transforms correctly under the flips would be to define it via the relation \eqref{eq:ensemble_relation} in view of \cref{lem:ensemble_bijection}. 
Then, however, it becomes rather difficult to obtain the amalgamation formula in \cref{thm:amalgamation}, since the (tropical) $\A$-coordinates do not behave so simply as the (tropical) $\X$-coordinates under the gluing. Indeed, the following naive diagram does not commute:
\begin{equation*}
    \begin{tikzcd}
    \A_{\fsl_3,\Sigma}(\bQ^T) \ar[r]\ar[d,"\widetilde{p}_\Sigma"'] & \A_{\fsl_3,\Sigma'}(\bQ^T)\ar[d,"\widetilde{p}_{\Sigma'}"] \\
    \X_{\fsl_3,\Sigma}(\bQ^T) \ar[r,"q_{E_L,E_R}"'] & \X_{\fsl_3,\Sigma'}(\bQ^T). 
    \end{tikzcd}
\end{equation*}
Here the top right arrow denotes the quotient map given by the equation $\sfa_i=\sfa_j$ for any pair $\{i,j\}$ of quiver vertices that are identified under the gluing. Actually, we need to ``rescale'' some of the $\A$-coordinates for a correct gluing: see \cite[Section 6.1]{IOS} for a more detail. In particular, the sum $\widetilde{p}_\Sigma^\ast \sfx^\tri_i + \widetilde{p}_\Sigma^\ast \sfx^\tri_j$ does not compute $\widetilde{p}_{\Sigma'}^\ast \sfx^{\tri'}_{\overline{i}}$, where the pair $\{i,j\}$ is amalgamated into $\overline{i}$.
\end{rem}

\subsection{Dynkin involution}\label{subsec:Dynkin}
Let us discuss the equivariance of the shear coordinates under the Dynkin involution \eqref{eq:Dynkin_pinning}. 
The \emph{cluster action} $\ast_\tri$ (see the last paragraph of \cref{sec:appendix}) of the Dynkin involution in the cluster chart associated to $\tri$ is given by the mutation sequence
\begin{align*}
        \mu_\gamma= \sigma_{e(\tri)}\circ \mu_{t(\tri)},
\end{align*}
where $\sigma_{e(\tri)}$ denotes the composite of the transpositions of the labels of the two vertices on each edge of $\tri$, and $\mu_{t(\tri)}$ is the composite of mutations at the vertex on each triangle of $\tri$.  It induces the tropical cluster $\X$-transformation
\begin{align*}
    \ast_\tri^x: 
    \sfx_T &\mapsto -\sfx_T, & \mbox{for $T \in t(\tri)$}, \\
    \sfx_{E,1} &\mapsto \sfx_{E,2}+[\sfx_{T_L}]_+ -[-\sfx_{T_R}]_+, &\\
    \sfx_{E,2} &\mapsto \sfx_{E,1}+[\sfx_{T_R}]_+ -[-\sfx_{T_L}]_+ & \mbox{for $E \in e(\tri)$},
\end{align*}
where we use the local labeling as in \cref{subsec:reconstruction} for each edge $E$.

\begin{prop}\label{prop:Dynkin-cluster}
We have the commutative diagram
\begin{equation*}
    \begin{tikzcd}
    \cL^p(\Sigma,\bQ) \ar[r,"\sfx_\tri"] \ar[d,"\ast"'] & \bQ^{I(\tri)}  \ar[d,"\ast_\tri"] \\
    \cL^p(\Sigma,\bQ) \ar[r,"\sfx_\tri"'] & \bQ^{I(\tri)}. 
    \end{tikzcd}
\end{equation*}
In particular, the orientation-reversing action of the Dynkin involution coincides with the cluster action.
\end{prop}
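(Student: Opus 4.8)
The plan is to verify the commutativity of the square coordinate-by-coordinate, exploiting the locality of the shear coordinates. Recall that each $\sfx^\tri_i(\hL,\delta)$ depends only on the restriction of the braid representation of the spiralling diagram to the triangle (for $i\in I^{\mathrm{tri}}(\tri)$) or to the quadrilateral $Q_E$ (for $i\in I^{\mathrm{edge}}(\tri)$) containing $i$, and is the weighted sum of the contributions of the curve and honeycomb components there (\cref{fig:shear_curve,fig:shear_honeycomb}). Since $\ast$ reverses the orientation of every component and keeps the signs at punctures intact, the first step I would carry out is to check that the good-position braid representation of $\hL^\ast$ is the orientation-reversal of that of $\hL$ (this is forced by the equivariance of the elementary moves and of the spiralling construction under orientation-reversal, as already noted in \cref{def:Dynkin_geometric}). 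In particular a sink honeycomb of height $n$ becomes a source honeycomb of height $n$, and an oriented curve becomes the same curve with reversed orientation.

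The face coordinates are then immediate: for $T\in t(\tri)$, orientation-reversal exchanges sink- and source-honeycombs, so the counting rule of \cref{fig:shear_honeycomb} gives $\sfx^\tri_T(\hL^\ast)=-\sfx^\tri_T(\hL)$, matching the first line of $\ast_\tri^x$. The bulk of the work is the interior edge coordinates. The key observation here is that within each quadrilateral there is at most one honeycomb in each of $T_L,T_R$, so the signs of $\sfx_{T_L}(\hL)$ and $\sfx_{T_R}(\hL)$ are fixed by the configuration, and on the corresponding sign-chamber the piecewise-linear map $\ast_\tri^x$ restricts to a genuine \emph{linear} map. Since the contributions to $\sfx_{E,1},\sfx_{E,2}$ are additive over components, it then suffices to check $\sfx_\tri(c^\ast)=\ast_\tri^x(\sfx_\tri(c))$ on each basic component $c$ (an oriented curve, a corner arc, or one of the basic honeycomb pieces $\tau_\pm^L,\tau_\pm^R,h$ of \cref{fig:shear_honeycomb_decomposition}), each of which lies in a single closed chamber. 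For a curve component, \cref{fig:shear_curve} shows that reversing the orientation exchanges the contributions to $\sfx_{E,1}$ and $\sfx_{E,2}$ with unchanged signs, which is exactly the swap part of $\ast_\tri^x$ on the chamber $\{\sfx_{T_L}=\sfx_{T_R}=0\}$. For the honeycomb pieces, reversing a sink-piece produces a source-piece whose contribution moves to the other edge-vertex; comparing the coordinates tabulated in \cref{fig:quadrilateral_coordinates_comparison,fig:quadrilateral_coordinates_comparison_2} shows that the discrepancy is precisely the correction $[\sfx_{T_L}]_+$ (resp. $-[-\sfx_{T_R}]_+$).

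I expect this last point to be the main obstacle: it is where the piecewise-linearity of $\ast_\tri^x$ is genuinely used, and it requires care in matching, for each sign pattern of $(\sfx_{T_L},\sfx_{T_R})$, which honeycomb edges are counted and to which vertex they are reassigned after reversal — essentially the same case bookkeeping as in the reconstruction Figures \ref{fig:reconstruction1}--\ref{fig:reconstruction3}. The sample check $(\sfx_{E,1},\sfx_{E,2},\sfx_{T_L},\sfx_{T_R})(\tau_+^L)=(0,-1,1,0)$ mapping under $\ast_\tri^x$ to $(0,0,-1,0)=\sfx_\tri(\tau_-^L)$ is representative, and I would organize the argument so that the eight patterns of \cref{fig:quadrilateral_coordinates_comparison,fig:quadrilateral_coordinates_comparison_2} exhaust all cases up to symmetry.

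Finally I would dispose of the frozen coordinates by a direct computation from \eqref{eq:frozen_coordinates}. Under $\ast$ the pinning transforms by $\delta_E\mapsto\delta_E^\ast$, i.e. $\delta_E^+\leftrightarrow\delta_E^-$; orientation-reversal of the corner arcs exchanges $\alpha_E^+(\hL)\leftrightarrow\alpha_E^-(\hL)$; and $\sfx_T(\hL)\mapsto-\sfx_T(\hL)$. For a boundary interval $E$ bounding a single triangle $T$, the formulas for $\ast_\tri^x$ degenerate (the absent second triangle contributes nothing), and a one-line computation gives $\sfx_{E,1}^\tri(\hL^\ast,\delta^\ast)=\delta_E^--\alpha_E^-(\hL)=\sfx_{E,2}^\tri(\hL,\delta)+[\sfx_T(\hL)]_+=\ast_\tri^x(\sfx_{E,1}^\tri)$, and symmetrically for $\sfx_{E,2}$; note that the term $[\sfx_T]_+$ built into \eqref{eq:frozen_coordinates} cancels the one produced by $\ast_\tri^x$. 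Assembling the three cases yields the commutative diagram, and the closing sentence then follows because $\ast_\tri$ is by definition the cluster realization of the Dynkin involution.
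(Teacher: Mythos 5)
Your argument is correct, but it takes a genuinely different route from the paper's. The paper's own proof is a two-line reduction: since $\ast_\tri$ is a composite of cluster transformations it commutes with amalgamation by \cite[Lemma 2.2]{FG06a}, and the geometric involution $\ast$ visibly commutes with the gluing maps $q_{E_L,E_R}$; by \cref{thm:amalgamation} it therefore suffices to check the statement on a single triangle, where it is read off from \cref{fig:triangle_coordinates_comparison} by inspection (on a triangle every edge coordinate is frozen, so only the degenerate form of $\ast_\tri^x$ together with the pinning/corner-arc bookkeeping of your last paragraph is needed). You instead verify the interior-edge coordinates directly on each quadrilateral, which forces you to confront the piecewise-linearity of $\ast_\tri^x$ head-on; your chamber-linearity observation --- in good position each triangle carries at most one honeycomb, of a single sink/source type, so every component contribution and their weighted sum lie in one closed quadrant of the $(\sfx_{T_L},\sfx_{T_R})$-plane on which $\ast_\tri^x$ restricts to a linear map --- is exactly what legitimizes the component-by-component check, and your sample verifications ($\tau_+^L\mapsto\tau_-^L$ via \cref{fig:quadrilateral_coordinates_comparison}, and the cancellation of the $[\sfx_T]_+$ term built into \eqref{eq:frozen_coordinates} against the one produced by $\ast_\tri^x$ on boundary intervals) are correct. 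What the paper's route buys is that the quadrilateral case analysis disappears entirely; what your route buys is independence from \cref{thm:amalgamation} and from the amalgamation-compatibility of cluster transformations, at the cost of the eight-pattern bookkeeping you rightly flag as the main labor.
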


\begin{proof}
Mutations commute with amalgamations \cite[Lemma 2.2]{FG06a}. Moreover, the permutation term $\sigma_{e(\tri)}$ also commutes with the amalgamation of edge vertices corresponding to the gluing. Hence $\ast_\tri$ commutes with the gluing map. 
It is also clear from the definitions that the Dynkin involution \eqref{eq:Dynkin_pinning} commutes with gluing maps. Therefore it suffices to prove the statement for triangles.

It is easy to verify the equation
\begin{align}\label{eq:Dynkin_check}
    \ast_\tri\circ \sfx_\tri(W) = \sfx_\tri (W^\ast)
\end{align}
for each component web $W$ shown in \cref{fig:triangle_coordinates_comparison} by inspection. 
Consider a disjoint union $W=W_1 \sqcup W_2$ of webs on a triangle $T$, and suppose that the equation \eqref{eq:Dynkin_check} is true for $W=W_1,W_2$. Since sink/source honeycombs cannot co-exist, we have $\{\sgn\sfx_T(W_1),\sgn\sfx_T(W_2)\} \neq \{+,-\}$. 
Therefore the coordinate vectors $\sfx_\tri(W_1)$ and $\sfx_\tri(W_2)$ belong to the same cone on which the tropical cluster transformation $\ast_\tri$ is linear. Hence we get
\begin{align*}
    \ast_\tri \circ \sfx_\tri(W)&= \ast_\tri(\sfx_\tri(W_1)+\sfx_\tri(W_1)) \\
    &= \ast_\tri\circ \sfx_\tri(W_1) + \ast_\tri\circ \sfx_\tri(W_2) \\
    &= \sfx_\tri(W_1^\ast) + \sfx_\tri(W_2^\ast) =\sfx_\tri(W^\ast).
\end{align*}
\end{proof}

\section{A relation to the graphical basis and quantum duality map}\label{subsec:FG duality}

Let $\Sigma$ be a marked surface without punctures. Recall from \cite{IY21} the skein algebra $\mathscr{S}_{\fsl_3,\Sigma}^q$, which is a non-commutative algebra over $\bZ_q:=\bZ[q^{\pm 1/2}]$ consisting of tangled trivalent graphs in $\Sigma$ with endpoints in $\bM$, subject to the $\fsl_3$-skein relations
\begin{align}
		\mathord{
			\ \tikz[baseline=-.6ex,scale=.8]{
				\draw[dashed, fill=white] (0,0) circle(1cm);
				\draw[red, very thick, ->-={.8}{red}] (-45:1) -- (135:1);
				\draw[overarc, ->-={.8}{red}] (-135:1) -- (45:1);
				}
		\ }
		&=q^{2}\mathord{
			\ \tikz[baseline=-.6ex,scale=.8]{
				\draw[dashed, fill=white] (0,0) circle(1cm);
				\draw[very thick, red, ->-] (-45:1) to[out=north west, in=south] (0.4,0) to[out=north, in=south west] (45:1);
				\draw[very thick, red, ->-] (-135:1) to[out=north east, in=south] (-0.4,0) to[out=north, in=south east] (135:1);
			}
		\ }
		+q^{-1}\mathord{
			\ \tikz[baseline=-.6ex,scale=.8]{
				\draw[dashed, fill=white] (0,0) circle [radius=1];
				\draw[very thick, red, ->-] (-45:1) -- (0,-0.4);
				\draw[very thick, red, ->-] (-135:1) -- (0,-0.4);
				\draw[very thick, red, -<-] (45:1) -- (0,0.4);
				\draw[very thick, red, -<-] (135:1) -- (0,0.4);
				\draw[very thick, red, -<-] (0,-0.4) -- (0,0.4);
			}
		\ },\\[3pt]
		\mathord{
			\ \tikz[baseline=-.6ex,scale=.8]{
				\draw[dashed, fill=white] (0,0) circle(1cm);
				\draw[very thick, red, ->-={.8}{red}] (-135:1) -- (45:1);
				\draw[overarc, ->-={.8}{red}] (-45:1) -- (135:1);
			}
		\ }
		&=q^{-2}\mathord{
			\ \tikz[baseline=-.6ex,scale=.8]{
				\draw[dashed, fill=white] (0,0) circle(1cm);
				\draw[very thick, red, ->-] (-45:1) to[out=north west, in=south] (0.4,0) to[out=north, in=south west] (45:1);
				\draw[very thick, red, ->-] (-135:1) to[out=north east, in=south] (-0.4,0) to[out=north, in=south east] (135:1);
			}
		\ }
		+q\mathord{
			\ \tikz[baseline=-.6ex,scale=.8]{
				\draw[dashed, fill=white] (0,0) circle [radius=1];
				\draw[very thick, red, ->-] (-45:1) -- (0,-0.4);
				\draw[very thick, red, ->-] (-135:1) -- (0,-0.4);
				\draw[very thick, red, -<-] (45:1) -- (0,0.4);
				\draw[very thick, red, -<-] (135:1) -- (0,0.4);
				\draw[very thick, red, -<-] (0,-0.4) -- (0,0.4);
			}
		\ },\\[3pt]
		\mathord{
			\ \tikz[baseline=-.6ex, scale=.8]{
				\draw[dashed, fill=white] (0,0) circle [radius=1];
				\draw[very thick, red, -<-={.6}{}] (-45:1) -- (-45:0.5);
				\draw[very thick, red, ->-={.6}{}] (-135:1) -- (-135:0.5);
				\draw[very thick, red, ->-={.6}{}] (45:1) -- (45:0.5);
				\draw[very thick, red, -<-={.6}{}] (135:1) -- (135:0.5);
				\draw[very thick, red, -<-] (45:0.5) -- (135:0.5);
				\draw[very thick, red, ->-] (-45:0.5) -- (-135:0.5);
				\draw[very thick, red, -<-] (45:0.5) -- (-45:0.5);
				\draw[very thick, red, ->-] (135:0.5) -- (-135:0.5);
			}
		\ }
		&=\mathord{
			\ \tikz[baseline=-.6ex, scale=.8]{
				\draw[dashed, fill=white] (0,0) circle [radius=1];
				\draw[very thick, red, -<-] (-45:1) to[out=north west, in=south] (0.4,0) to[out=north, in=south west] (45:1);
				\draw[very thick, red, ->-] (-135:1) to[out=north east, in=south] (-0.4,0) to[out=north, in=south east] (135:1);
			}
		\ }
		+\mathord{
			\ \tikz[baseline=-.6ex, scale=.8,rotate=90]{
				\draw[dashed, fill=white] (0,0) circle [radius=1];
				\draw[very thick, red, ->-] (-45:1) to[out=north west, in=south] (0.4,0) to[out=north, in=south west] (45:1);
				\draw[very thick, red, -<-] (-135:1) to[out=north east, in=south] (-0.4,0) to[out=north, in=south east] (135:1);
			}
		\ },\label{eq:skein:4-gon_rel}\\[3pt]
		\mathord{
			\ \tikz[baseline=-.6ex, scale=.8]{
				\draw[dashed, fill=white] (0,0) circle [radius=1];
				\draw[very thick, red, ->-] (0,-1) -- (0,-0.4);
				\draw[very thick, red, ->-] (0,0.4) -- (0,1);
				\draw[very thick, red, -<-] (0,-0.4) to[out=east, in=south] (0.4,0) to[out=north, in=east] (0,0.4);
				\draw[very thick, red, -<-] (0,-0.4) to[out=west, in=south] (-0.4,0) to[out=north, in=west] (0,0.4);
			}
		\ }
		&=-(q^3+q^{-3})\mathord{
			\ \tikz[baseline=-.6ex, scale=.8]{
				\draw[dashed, fill=white] (0,0) circle [radius=1];
				\draw[very thick, red, ->-] (0,-1) -- (0,1);
			}
		\ },\label{eq:skein_bigon_rel}\\[3pt]
		\mathord{
			\ \tikz[baseline=-.6ex, scale=.8]{
				\draw[dashed, fill=white] (0,0) circle [radius=1];
				\draw[very thick, red, ->-] (0,0) circle [radius=0.4];
			}
		\ }
		&=(q^{6}+1+q^{-6})
		\mathord{
			\ \tikz[baseline=-.6ex, scale=.8]{
				\draw[dashed, fill=white] (0,0) circle [radius=1];
			}
		\ }
		=\mathord{
			\ \tikz[baseline=-.6ex, scale=.8]{
				\draw[dashed, fill=white] (0,0) circle [radius=1];
				\draw[very thick, red, -<-] (0,0) circle [radius=0.4];
			}
		\ },
	\end{align}
and the boundary skein relations 
	\begin{align*}
		\mathord{
			\ \tikz[baseline=-.6ex, yshift=-.4cm]{
				\coordinate (P) at (0,0);
				\draw[very thick, red, ->-] (P) -- (135:1);
				\fill[white] (P) circle [radius=0.2cm];
				\draw[very thick, red, ->-] (P) -- (45:1);
				\draw[dashed] (1,0) arc (0:180:1cm);
				\bline{-1,0}{1,0}{.2}
				\draw[fill=black] (P) circle(2pt);
			\ }
		}
		&=q^{2}
		\mathord{
			\ \tikz[baseline=-.6ex, yshift=-.4cm]{
				\coordinate (P) at (0,0);
				\draw[very thick, red, ->-] (P) -- (45:1);
				\fill[white] (P) circle [radius=0.2cm];
				\draw[very thick, red, ->-] (P) -- (135:1);
				\draw[dashed] (1,0) arc (0:180:1cm);
				\bline{-1,0}{1,0}{.2}
				\draw[fill=black] (P) circle(2pt);
			\ }
		}
		&\mathord{
			\ \tikz[baseline=-.6ex, yshift=-.4cm]{
				\coordinate (P) at (0,0);
				\draw[very thick, red, -<-] (P) -- (135:1);
				\fill[white] (P) circle [radius=.2cm];
				\draw[very thick, red, ->-] (P) -- (45:1);
				\draw[dashed] (1,0) arc (0:180:1cm);
				\bline{-1,0}{1,0}{.2}
				\draw[fill=black] (P) circle [radius=2pt];
			\ }
		}
		&=q
		\mathord{
			\ \tikz[baseline=-.6ex, yshift=-.4cm]{
				\coordinate (P) at (0,0);
				\draw[very thick, red, ->-] (P) -- (45:1);
				\fill[white] (P) circle [radius=.2cm];
				\draw[very thick, red, -<-] (P) -- (135:1);
				\draw[dashed] (1,0) arc (0:180:1cm);
				\bline{-1,0}{1,0}{.2}
				\draw[fill=black] (P) circle [radius=2pt];
			\ }
		}\\[3pt]
		\mathord{
			\ \tikz[baseline=-.6ex, yshift=-.4cm]{
				\coordinate (P) at (0,0);
				\node[inner sep=0, outer sep=0, circle, fill=black] (R) at (45:.7) {};
				\node[inner sep=0, outer sep=0, circle, fill=black] (L) at (135:.7) {};
				\draw[very thick, red, ->-] (L) -- (R);
				\draw[very thick, red, -<-={.7}{}] (P) -- (L);
				\draw[very thick, red, -<-={.7}{}] (R) -- (45:1);
				\draw[very thick, red, ->-={.7}{}] (L) -- (135:1);
				\fill[white] (P) circle [radius=.2cm];
				\draw[very thick, red, ->-={.7}{}] (P) -- (R);
				\draw[dashed] (1,0) arc (0:180:1cm);
				\bline{-1,0}{1,0}{.2}
				\draw[fill=black] (P) circle [radius=2pt];
			\ }
		}
		&=
		\mathord{
			\ \tikz[baseline=-.6ex, yshift=-.4cm]{
				\coordinate (P) at (0,0) {};
				\draw[very thick, red, ->-] (P) -- (135:1);
				\fill[white] (P) circle [radius=.2cm];
				\draw[very thick, red, -<-] (P) -- (45:1);
				\draw[dashed] (1,0) arc (0:180:1cm);
				\bline{-1,0}{1,0}{.2}
				\draw[fill=black] (P) circle [radius=2pt];
			\ }
		}
		&\mathord{
			\ \tikz[baseline=-.6ex, yshift=-.4cm]{
				\coordinate (P) at (0,0);
				\node[inner sep=0, outer sep=0, circle, fill=black] (C) at (90:.7) {};
				\draw[very thick, red, ->-] (P) to[out=north west, in=west] (C);
				\fill[white] (P) circle [radius=.2cm];
				\draw[very thick, red, ->-] (P) to[out=north east, in=east] (C);
				\draw[very thick, red, -<-={.7}{}] (C) -- (90:1);
				\draw[dashed] (1,0) arc (0:180:1cm);
				\bline{-1,0}{1,0}{.2}
				\draw[fill=black] (P) circle [radius=2pt];
			\ }
		}
		&=
		\mathord{
			\ \tikz[baseline=-.6ex, yshift=-.4cm]{
				\coordinate (P) at (0,0);
				\node[inner sep=0, outer sep=0, circle, fill=black] (C) at (90:.7) {};
				\draw[very thick, red, -<-] (P) to[out=north west, in=west] (C);
				\fill[white] (P) circle [radius=.2cm];
				\draw[red, very thick] (P) to[out=north east, in=east] (C);
				\draw[dashed] (1,0) arc (0:180:1cm);
				\bline{-1,0}{1,0}{.2}
				\draw[fill=black] (P) circle [radius=2pt];
			\ }
		}
		=0,
\end{align*}
together with their Dynkin involutions. We included the square-root parameter $q^{1/2}$ so that we can consider the \emph{simultaneous crossing} (or the \emph{Weyl normalization}) as
\begin{align*}
    \mathord{
			\ \tikz[baseline=-.6ex, yshift=-.4cm]{
				\coordinate (P) at (0,0);
				\draw[very thick, red, ->-] (P) -- (135:1);
				\fill[white] (P) circle [radius=0.2cm];
				\draw[very thick, red, ->-] (P) -- (45:1);
				\draw[dashed] (1,0) arc (0:180:1cm);
				\bline{-1,0}{1,0}{.2}
				\draw[fill=black] (P) circle(2pt);
			\ }
		}
		&=q
		\mathord{
			\ \tikz[baseline=-.6ex, yshift=-.4cm]{
				\coordinate (P) at (0,0);
				\draw[very thick, red, ->-] (P) -- (45:1);
				\draw[very thick, red, ->-] (P) -- (135:1);
				\draw[dashed] (1,0) arc (0:180:1cm);
				\bline{-1,0}{1,0}{.2}
				\draw[fill=black] (P) circle(2pt);
			\ }
		}
		&\mathord{
			\ \tikz[baseline=-.6ex, yshift=-.4cm]{
				\coordinate (P) at (0,0);
				\draw[very thick, red, -<-] (P) -- (135:1);
				\fill[white] (P) circle [radius=.2cm];
				\draw[very thick, red, ->-] (P) -- (45:1);
				\draw[dashed] (1,0) arc (0:180:1cm);
				\bline{-1,0}{1,0}{.2}
				\draw[fill=black] (P) circle [radius=2pt];
			\ }
		}
		&=q^{1/2}
		\mathord{
			\ \tikz[baseline=-.6ex, yshift=-.4cm]{
				\coordinate (P) at (0,0);
				\draw[very thick, red, ->-] (P) -- (45:1);
				\draw[very thick, red, -<-] (P) -- (135:1);
				\draw[dashed] (1,0) arc (0:180:1cm);
				\bline{-1,0}{1,0}{.2}
				\draw[fill=black] (P) circle [radius=2pt];
			\ }
		}\ .
\end{align*}
It is proved in \cite{IY21} that the localized skein algebra $\mathscr{S}_{\fsl_3,\Sigma}^q[\partial^{-1}]$ along the oriented arcs parallel to boundary intervals 
is contained in the quantum cluster algebra \cite{BZ} $\mathscr{A}^q_{\fsl_3,\Sigma}$ associated with a certain choice of compatibility pairs over the mutation class $\sfs(\fsl_3,\Sigma)$
At least in the classical limit $q=1$, we have the equalities \cite{IOS}
\begin{align}\label{eq:S=A=U_classical}
    \mathscr{S}_{\fsl_3,\Sigma}^1[\partial^{-1}] = \mathscr{A}_{\fsl_3,\Sigma} = \cO(\A_{\fsl_3,\Sigma}).
\end{align}
The skein algebra $\mathscr{S}_{\fsl_3,\Sigma}^q$ has a natural $\bZ_q$-basis $\mathsf{BWeb}_{\fsl_3,\Sigma}$ consisting of \emph{non-elliptic flat trivalent graphs}. Here a flat trivalent graph is an immersed oriented uni-trivalent graph on $\Sigma$ such that each univalent vertex lie in $\bM$, and the other part is embedded into $\interior \Sigma$. In particular, it is required to have simultaneous crossings at each special point. 
It is said to be non-elliptic if it has none of the following \emph{elliptic faces}:
\begin{align}\label{eq:elliptic_face_IY}
    &\begin{tikzpicture}[scale=.8]
		\draw[dashed, fill=white] (0,0) circle [radius=1];
		\draw[very thick, red, fill=pink!60] (0,0) circle [radius=0.4];
	\end{tikzpicture}
	\hspace{2em}
	\begin{tikzpicture}[scale=.8]
		\draw[dashed, fill=white] (0,0) circle [radius=1];
		\draw[very thick, red] (0,-1) -- (0,-0.4);
		\draw[very thick, red] (0,0.4) -- (0,1);
		\draw[very thick, red, fill=pink!60] (0,0) circle [radius=0.4];
	\end{tikzpicture}
	\hspace{2em}
	\begin{tikzpicture}[scale=.8]
		\draw[dashed, fill=white] (0,0) circle [radius=1];
		\draw[very thick, red] (-45:1) -- (-45:0.5);
		\draw[very thick, red] (-135:1) -- (-135:0.5);
		\draw[very thick, red] (45:1) -- (45:0.5);
		\draw[very thick, red] (135:1) -- (135:0.5);
		\draw[very thick, red, fill=pink!60] (45:0.5) -- (135:0.5) -- (225:0.5) -- (315:0.5) -- cycle;
	\end{tikzpicture} 
	\hspace{2em}
	\begin{tikzpicture}[scale=.8,baseline=-0.7cm]
		\coordinate (P) at (0,0) {};
		\coordinate (C) at (90:.6) {};
		\draw[very thick, red, fill=pink!60] (P) to[out=170, in=180] (C) to[out=0, in=10] (P);
		\draw[very thick, red] (C) -- (90:1);
		\draw[dashed] (1,0) arc (0:180:1cm);
		\bline{-1,0}{1,0}{.2}
		\draw[fill=black] (P) circle [radius=2pt];
	\end{tikzpicture}
	\hspace{2em}
	\begin{tikzpicture}[scale=.8,baseline=-0.7cm]
		\coordinate (P) at (0,0) {};
		\coordinate (C) at (90:.6) {};
		\draw[very thick, red, fill=pink!60] (P) to[out=170, in=180] (C) to[out=0, in=10] (P);
		\draw[dashed] (1,0) arc (0:180:1cm);
		\bline{-1,0}{1,0}{.2}
		\draw[fill=black] (P) circle [radius=2pt];
	\end{tikzpicture}
\end{align}
Elements of $\mathsf{BWeb}_{\fsl_3,\Sigma}$ are also called the \emph{basis webs}. 
We are going to relate the integral $\fsl_3$-laminations with pinnings to the basis webs.

\begin{dfn}[negative $\bM$-shifting of webs (cf.~``moving left'' in {\cite[Figure 2]{LY22}})]
Given a web $W$ on $\Sigma$ in the sense of \cref{subsec:webs}, let $W^{\bM} \in \mathscr{S}^q_{\fsl_3,\Sigma}$ be the flat trivalent graph obtained by shifting the endpoints of $W$ to the nearest special point in the negative direction along the boundary (with respect to the orientation induced from $\Sigma$), and taking the simultaneous crossing. See \cref{fig:elementary laminations}. 
\end{dfn}

For an integral $\fsl_3$-lamination with pinnings $(\hL,\nu) \in \cL^p(\Sigma,\bZ)$, represent $\hL$ by a non-elliptic $\fsl_3$-web $W$ only with components with weight one, and define 
\begin{align*}
    \mathbb{I}_\X^q(\hL) := \left[ W^{\bM} \cdot \prod_{E \in \mathbb{B}} (e_E^+)^{\nu_E^+}(e_E^-)^{\nu_E^-} \right] \in \mathscr{S}_{\fsl_3,\Sigma}^q[\partial^{-1}].
\end{align*}
Here $\nu_E=\nu_E^+\varpi_1^\vee + \nu_E^-\varpi_2^\vee \in \mathsf{P}^\vee$ for each $E \in \bB$, and the symbol $[-]$ stands for the Weyl normalization. 
Then $\mathbb{I}_\X^q(\hL)$ does not depend on the choice of the representative $W$, since the loop parallel-move is also realized in the skein algebra (by using the Reidemeister II move twice), and the boundary H-move exactly corresponds to the third boundary skein relation. Moreover, it is a basis web since the two notions of elliptic faces correspond to each other via the shift of endpoints. 

Note that $\mathbb{I}_\X^q(\hL) \in \mathscr{S}_{\fsl_3,\Sigma}^q$ if and only if $\nu_E \in \mathsf{P}_+^\vee:= \bZ_+\varpi_1^\vee + \bZ_+ \varpi_2^\vee$ for all $E \in \bB$. In this case, 
we say that $(\hL,(\nu_E)) \in \cL^p(\Sigma,\bZ)$ is \emph{dominant}. Let $\cL^p(\Sigma,\bZ)_+ \subset \cL^p(\Sigma,\bZ)$ denote the subspace of dominant integral $\fsl_3$-laminations. From the above discussion, we get:

\begin{thm}\label{thm:basis_correspondence}
Assume that $\Sigma$ has no punctures. Then we have an $MC(\Sigma)\times \mathrm{Out}(SL_3)$-equivariant bijection
\begin{align*}
    \mathbb{I}_\X^q: \cL^p(\Sigma,\bZ)_+ \xrightarrow{\sim} \mathsf{BWeb}_{\fsl_3,\Sigma} \subset \mathscr{S}_{\fsl_3,\Sigma}^q.
\end{align*}
Moreover, it is extended to a map $\mathbb{I}_\X^q: \cL^p(\Sigma,\bZ) \hookrightarrow \mathscr{S}_{\fsl_3,\Sigma}^q[\partial^{-1}]$, whose image again gives a $\bZ_q$-basis.
\end{thm}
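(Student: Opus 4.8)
The plan is to establish the bijection onto $\mathsf{BWeb}_{\fsl_3,\Sigma}$ first and then bootstrap to the localization through the monoid of boundary arcs. The preceding discussion already shows that $\mathbb{I}_\X^q$ is well-defined on $\cL^p(\Sigma,\bZ)_+$ and takes values in non-elliptic flat trivalent graphs (the loop parallel-move is realized by two Reidemeister~II moves, the boundary H-move by the third boundary skein relation, and the elliptic faces \eqref{eq:elliptic_face_peripheral} match \eqref{eq:elliptic_face_IY} under the endpoint-sliding). So the first task is to construct an inverse. Given $B \in \mathsf{BWeb}_{\fsl_3,\Sigma}$, I would strip off every component isotopic to a boundary-parallel arc, recording for each $E \in \bB$ the number of such arcs oriented as $e_E^+$ (resp.\ $e_E^-$) as $\delta_E^+$ (resp.\ $\delta_E^-$); sliding the endpoints of what remains back off the terminal special points yields a non-elliptic web $W$ with no boundary-parallel component, hence an $\hL \in \cL^x(\Sigma,\bZ)$. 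One then checks that the pair $(\hL, \sum_E(\delta_E^+\varpi_1^\vee + \delta_E^-\varpi_2^\vee))$ is independent of the choices modulo the moves (E1), (E2), (E4) and the weighting operations (2)--(4) of \cref{def:unbounded laminations}, is dominant by construction, and is inverse to $\mathbb{I}_\X^q$.

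The second step is to understand left-multiplication by the boundary arcs. Weyl-ordering the relevant product shows that $[\mathbb{I}_\X^q(\hL,\delta)\cdot e_E^+]$ equals $\mathbb{I}_\X^q(\hL,\delta')$ up to a half-integral power of $q$, where $\delta'$ is obtained from $\delta$ by adding $\varpi_1^\vee$ on the interval $E$; likewise $e_E^-$ adds $\varpi_2^\vee$. Thus the commutative monoid $M := \bigoplus_{E\in\bB}\mathsf{P}_+^\vee$ acts on $\mathsf{BWeb}_{\fsl_3,\Sigma}$ up to multiplication by units of the form $q^{\bullet/2}$, and under the bijection of the first step this action is exactly the shift of the pinning datum. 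This action is free, and $\pi_\uf$ identifies its orbit set with $\cL^x(\Sigma,\bZ)$.

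Finally I would pass to the localization. By \cite{IY21} the boundary arcs form an Ore set and $\mathscr{S}_{\fsl_3,\Sigma}^q[\partial^{-1}]$ is their localization; extending $\mathbb{I}_\X^q$ to all of $\cL^p(\Sigma,\bZ)$ by permitting negative $\delta_E^+,\delta_E^-$ lands in this localization, and the Weyl-ordering identity above persists for all $\mu \in M^{\mathrm{gp}} = \bigoplus_E \mathsf{P}^\vee$. Injectivity follows by clearing denominators: multiplying two putatively equal images by $\prod_E (e_E^+)^N(e_E^-)^N$ with $N\gg 0$ brings both into $\mathscr{S}_{\fsl_3,\Sigma}^q$ with dominant pinnings, where the first-step bijection applies. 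For the basis property, I would invoke that $\mathsf{BWeb}_{\fsl_3,\Sigma}$ is a free $M$-set (up to units) with orbit set $\cL^x(\Sigma,\bZ)$, so that the standard description of an Ore localization at a monoid acting freely by units shows that extending each orbit over $M^{\mathrm{gp}}$ produces a $\bZ_q$-basis of $\mathscr{S}_{\fsl_3,\Sigma}^q[\partial^{-1}]$; these extended orbits are precisely $\{\mathbb{I}_\X^q(\hL,\delta)\}_{(\hL,\delta)\in\cL^p(\Sigma,\bZ)}$ up to units. Equivariance under $MC(\Sigma)$ is immediate from the geometric definition, and under $\mathrm{Out}(SL_3)$ it follows by matching the Dynkin involution \eqref{eq:Dynkin_pinning} (which swaps $\varpi_1^\vee\leftrightarrow\varpi_2^\vee$, hence $e_E^+\leftrightarrow e_E^-$) with the skein Dynkin involution of \cite[Theorem~2]{IY21}.

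The step I expect to be most delicate is the basis property of the localization: because $\mathscr{S}_{\fsl_3,\Sigma}^q[\partial^{-1}]$ is a noncommutative (Ore) localization, one must verify carefully that the boundary arcs act freely on $\mathsf{BWeb}_{\fsl_3,\Sigma}$ with exactly the orbit set $\cL^x(\Sigma,\bZ)$ and that inverting them creates no coincidences among the extended orbits---in particular that the $q^{1/2}$-power discrepancies coming from Weyl ordering are genuine units and leave the freeness of the $M$-action intact.
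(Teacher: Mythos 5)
Your proposal is correct and follows essentially the same route as the paper: the paper's proof consists of the preceding well-definedness observations (loop parallel-move via Reidemeister II, boundary H-move as the third boundary skein relation, and the matching of the two lists of elliptic faces under endpoint-sliding), with the inverse map and the basis property of the localization left implicit. Your explicit inverse (stripping boundary-parallel arcs into the pinning data) and your Ore-localization argument via the free action of the boundary arcs on $\mathsf{BWeb}_{\fsl_3,\Sigma}$ merely flesh out what the paper asserts without proof, and both are sound.
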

The latter correspondence should be a basic ingredient for a construction of the Fock--Goncharov's \emph{quantum duality map} \cite{FG09} (see \cite[Conjecture 4.14]{Qin21} for a finer formulation as well as \cite{DM}), which requires a basis of the quantum upper cluster algebra parametrized by the tropical set $\X_{\fsl_3,\Sigma}(\bZ^T) = \cL^p(\Sigma,\bZ)$ with certain positivity properties. Let us interpret \cref{thm:basis_correspondence} in this context.

\smallskip
\paragraph{\textbf{Langlands dual coordinates.}}
It turns out that it is more convenient to use a slight modification\footnote{In the language of Goncharov--Shen \cite{GS19}, it amounts to take the decoration at the \emph{terminal} endpoint of a boundary interval rather than its \emph{initial} endpoint along the boundary orientation to make a pinning.} 
of frozen shear coordinates to make the correspondence suited to the Fock--Goncharov conjecture. For an ideal triangulation $\tri$ of $\Sigma$, we define the \emph{Langlands dual coordinates}
\begin{align*}
    \check{\sfx}_\tri=(\check{\sfx}_i^\tri)_{i \in I(\tri)}: \cL^p(\Sigma,\bQ) \xrightarrow{\sim} \bQ^{I(\tri)},
\end{align*}
as follows. For $i \in I_\uf(\tri)$, let $\check{\sfx}_i^\tri:=\sfx_i^\tri$. For $E \in \bB$, we define the frozen coordinates on $E$ by
\begin{align*}
    &\check{\sfx}_{E,1}^\tri(\hL,\nu):=\nu_E^+ + \check{\alpha}_E^+(\hL) + [-\sfx_T(\hL)]_+, \\
    &\check{\sfx}_{E,2}^\tri(\hL,\nu):=\nu_E^- + \check{\alpha}_E^-(\hL).
\end{align*}
Here $T$ is the unique triangle having $E$ as an edge; $\check{\alpha}_E^+(\hL)$ (resp. $\check{\alpha}_E^-(\hL)$) is the total weight of the oriented corner arcs in $\cW \cap T$ bounding the \underline{terminal} endpoint of $E$ in the counter-clockwise (resp. clockwise) direction. Compare with \eqref{eq:frozen_coordinates}. The map $\check{\sfx}_\tri$ gives a bijection, which can be verified similarly to the proof of \cref{prop:shear_P}.

\begin{figure}[h]
\centering
\begin{tikzpicture}
\draw[blue] (0,0) coordinate(A) -- (0,3) coordinate(B) --++(-150:3) coordinate(C) --cycle;
\fill[gray!30] (0,-0.5) -- (0,3.5) -- (0.2,3.5) -- (0.2,-0.5) --cycle;
\draw[thick] (0,-0.5) -- (0,3.5);
\fill(0,0) circle(2pt);
\fill(0,3) circle(2pt);
\draw[red,very thick,-<-] ($(B)!0.55!(C)$) arc(-150:-90:3*0.55);
\draw[red,very thick,->-] ($(B)!0.45!(C)$) arc(-150:-90:3*0.45);
\draw[red,very thick,dotted] ($(B)!0.55!(C)$) arc(30:60:1);
\draw[red,very thick,dotted] ($(B)!0.45!(C)$) arc(30:60:1);
\node at (0.5,1.5) {$E$};
\node at (0.5,3) {$m$};
\node[below] at ($(A)!0.5!(C)$) {$T$};
\node[red] at (-0.5,2.3) {$\alpha_E^+$};
\node[red] at (-0.8,1.1) {$\alpha_E^-$};
\end{tikzpicture}
    \caption{The corner arcs relevant to the Langlands dual coordinate.}
    \label{fig:boundary_coordinate_dual}
\end{figure}
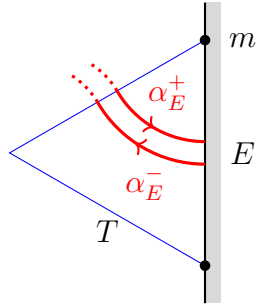
We define the \emph{Langlands dual ensemble map} 
\begin{align}\label{eq:ensemble_Langlands}
    \check{p}: \cL^a(\Sigma,\bQ) \to \cL^p(\Sigma,\bQ)
\end{align}
by forgetting the peripheral components, and defining the pinning $\nu_{E}^+ \in \bQ$ (resp. $\nu_{E}^- \in \bQ$) to be the weight of the peripheral component around the \underline{terminal} endpoint of $E$ in the counter-clockwise (resp. clockwise) direction. 
The name ``Langlands dual'' is inspired by the following property:
\begin{prop}\label{prop:ensemble_Langlands}
The Langlands dual ensemble map \eqref{eq:ensemble_Langlands} satisfies
\begin{align*}
    \check{p}^\ast \sfx_i^\tri = \sum_{j \in I(\tri)} (\ve_{ij}^\tri - m_{ij})\sfa_j^\tri   
\end{align*}
for any ideal triangulation. 
\end{prop}
Compare with \eqref{eq:ensemble_map}, and observe that the presentation matrix is changed to the Langlands dual $-(\ve^\tri+M)^\top = \ve^\tri-M$. The verification of \cref{prop:ensemble_Langlands} is similar to \cref{prop:ensemble_map}, which is left to the reader.


\smallskip 
For each $v \in \bExch_{\fsl_3,\Sigma}$ and $k \in I$, the \emph{elementary lamination} is the tropical point $\ell_k^{(v)} \in \X_{\fsl_3,\Sigma}(\bZ^T)$ characterized by $\check{\sfx}_i^{(v)}(\ell_k^{(v)})=\delta_{i,k}$. We have the cone
\begin{align*}
    \cC^+_{(v)}:= \mathrm{span}_{\bR_+}\{ \ell_k^{(v)}\mid k \in I\} = \{ \ell \in \X_{\fsl_3,\Sigma}(\bR^T) \mid \check{\sfx}_k^{(v)}(\ell) \geq 0 \mbox{ for all $k \in I$}\}
\end{align*}
and its integral points $\cC^+_{(v)}(\bZ):=\cC^+_{(v)} \cap \X_{\fsl_3,\Sigma}(\bZ^T)$. 
The following gives a partial verification of a condition for the quantum duality map:

\begin{lem}\label{lem:quantum_duality}
For any elementary lamination $\ell_k^{(v)}$ associated with a labeled $\fsl_3$-triangulation $v=(\tri,\ell) \in \bExch_{\fsl_3,\Sigma}$, the element $\mathbb{I}_\X^q(\ell_k^{(v)})$ coincides with the quantum cluster variable $A_k^{(v)} \in \mathscr{A}^q_{\fsl_3,\Sigma}$. In particular, any point $\ell=\sum_k x_k\ell_k^{(v)} \in \cC^+_{(v)}(\bZ)$ gives a quantum cluster monomial $\left[ \prod_k (A_k^{(v)})^{x_k} \right]$.
\end{lem}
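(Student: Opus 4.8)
The plan is to make both sides of the asserted identity fully explicit and to reduce it to a local computation on a single triangle and a single quadrilateral. Since $x_i^{(v)} = -\sfx_i$ by the definition of $x_\tri$, the characterization $x_i^{(v)}(\ell_k^{(v)}) = \delta_{ik}$ translates into $\sfx_i(\ell_k^{(v)}) = -\delta_{ik}$: the elementary lamination $\ell_k^{(v)}$ is the unique $\P$-lamination whose shear coordinate equals $-1$ at the vertex $k$ and $0$ at every other vertex of $I(\tri)$. Because $v = (\tri,\ell)$ is a labeled ideal triangulation, I would feed this coordinate vector into the reconstruction procedure of \cref{subsec:reconstruction} together with its pinned refinement from the proof of \cref{prop:shear_P}, which produces an explicit representative web-with-pinning for $\ell_k^{(v)}$. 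I would organize the computation into the three cases $k = i(T)$ (a triangle vertex), $k = i^s(E)$ with $E \in e_{\interior}(\tri)$ (an interior-edge vertex), and $k = i^s(E)$ with $E \in \bB$ (a frozen vertex).

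In each case the reconstruction collapses to a single elementary web. For a triangle vertex with $\sfx_{i(T)} = -1$ and all other coordinates zero, \cref{fig:shear_honeycomb} forces a source-honeycomb of height one on $T$---that is, a single source triad---while the vanishing of the edge coordinates pins the surrounding corner arcs so that no further contribution survives. Applying $\mathbb{I}_\X^q$, i.e. sliding the three tails into the terminal special points and taking the simultaneous crossing, yields precisely the source-triad basis web which, in the modified skein realization of \cite{IY21} recalled in the footnote above, is the initial cluster variable $A_{i(T)}^{(v)}$. For an edge vertex the reconstruction produces a single oriented corner arc around the relevant special point (for the frozen case the datum $\delta_E$ shifts its tail by the amount prescribed by \eqref{eq:frozen_coordinates}), whose $\mathbb{I}_\X^q$-image is the corresponding boundary-parallel arc; this is exactly the edge cluster variable of $v$ sitting on its tail. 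The opposite sign in the labeling $\ell$ is handled by the Dynkin involution, under which the computation is symmetric by \cref{prop:Dynkin-cluster}. Comparing with the initial quantum cluster attached to $v$ gives $\mathbb{I}_\X^q(\ell_k^{(v)}) = A_k^{(v)}$ in $\mathscr{S}^q_{\fsl_3,\Sigma}[\partial^{-1}] \subset \mathscr{A}^q_{\fsl_3,\Sigma}$.

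For the ``in particular'' clause, note that a point $\ell = \sum_k x_k \ell_k^{(v)} \in \cC^+_{(v)}(\bZ)$ has $\sfx_k(\ell) = -x_k \le 0$ for every $k$. Feeding this vector into $\xi_\tri$ reconstructs, triangle by triangle, the honeycomb whose height is the sum of the heights of the elementary pieces, and biangle by biangle, the superposition of the corner arcs; because all coordinates are nonpositive, the resulting web carries no elliptic faces and is exactly the union-with-multiplicity (via the cablings of \cref{def:unbounded laminations}) of the elementary webs underlying the $\ell_k^{(v)}$. Since $\mathbb{I}_\X^q$ is defined by sliding all endpoints to the special points and Weyl-ordering the product, it sends this superposition to $\bigl[\prod_k (A_k^{(v)})^{x_k}\bigr]$, a quantum cluster monomial, as claimed.

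The step I expect to be the main obstacle is controlling the quantum normalization in the last two paragraphs. While the classical web identities (source triads and arcs equal cluster variables, and superposition equals union) are combinatorially transparent, matching them on the nose at the quantum level requires that the $q^{1/2}$-powers generated by the simultaneous crossings at the special points agree with the $q$-commutation relations among the $A_k^{(v)}$ dictated by the compatibility pair over $\sfs(\fsl_3,\Sigma)$ from \cite{IY21}. Concretely, one must check that the Weyl ordering implicit in $\mathbb{I}_\X^q$ reproduces the Weyl ordering defining the quantum cluster monomial $\bigl[\prod_k (A_k^{(v)})^{x_k}\bigr]$; this is where the cluster-algebraic input of \cite{IY21}, rather than mere web combinatorics, is indispensable.
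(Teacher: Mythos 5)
Your proposal follows essentially the same route as the paper: read off each elementary lamination from its shear coordinates $\sfx_i^\tri=-\delta_{ik}$ via the reconstruction procedure (a source triad of height one for $k=i(T)$, an oriented arc for $k=i^s(E)$, the pinning $\delta_E=\varpi_s^\vee$ for frozen $k$), then identify the $\mathbb{I}_\X^q$-images with the elementary webs of \cite{IY21}, which are the quantum cluster variables by \cite[Section 5]{IY21}; the paper's proof is exactly this, compressed to a pointer to \cref{fig:elementary laminations}. One descriptive slip: for an unfrozen interior-edge vertex $i^s(E)$ the elementary lamination is an oriented arc crossing the quadrilateral $Q_E$, so its $\mathbb{I}_\X^q$-image is an oriented copy of the ideal arc $E$ itself, not a corner arc or boundary-parallel arc --- though your conclusion that it is the edge cluster variable sitting on its tail is still correct. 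Your final paragraph on matching the Weyl ordering of $\mathbb{I}_\X^q$ with the quantum cluster monomial normalization is more explicit than the paper, which leaves that point implicit in the definition of $\mathbb{I}_\X^q$ and the compatibility pair from \cite{IY21}.
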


\begin{proof}
Via the isomorphism 
\begin{align*}
    \check{\sfx}_\tri^{-1}:\X_{\fsl_3,\Sigma}(\bZ^T) \cong \cL^p(\Sigma,\bZ),
\end{align*}
the elementary laminations $\ell_k^{(v)}$ for unfrozen $k \in I(\tri)_\uf$ corresponds to the integral $\fsl_3$-laminations as shown in the left of \cref{fig:elementary laminations}. The elementary laminations $\ell_k^{(v)}$ for frozen $k=i^s(E) \in I(\tri)_\f$ with $E \in \mathbb{B}$ and $s \in \{1,2\}$ corresponds to the pinning data $\nu_E=\varpi_s^\vee$. Then via the quantum duality map
\begin{align*}
    \mathbb{I}_\X^q: \cL^p(\Sigma,\bZ)_+ \xrightarrow{\sim} \mathsf{BWeb}_{\fsl_3,\Sigma} \subset \mathscr{S}_{\fsl_3,\Sigma}^q 
\end{align*}
these laminations are sent to the \emph{elementary webs} associated with $\tri$ in the sense of \cite{IY21}. They corresponds to the quantum cluster variables (\cite[Section 5]{IY21}). 
\end{proof}

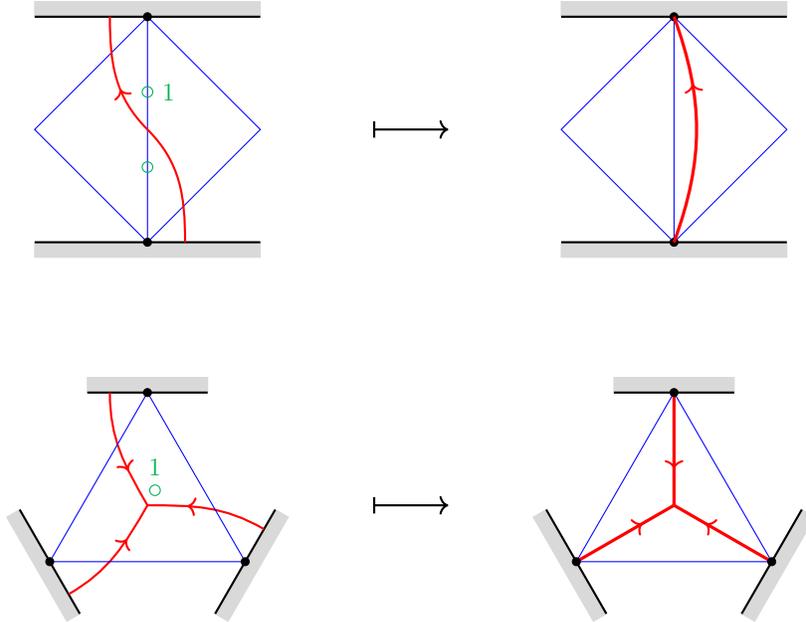
\begin{figure}[htbp]
\centering
\begin{tikzpicture}
\bline{-1.5,-1.5}{1.5,-1.5}{0.2};
\tline{-1.5,1.5}{1.5,1.5}{0.2};
\draw[blue] (-1.5,0) -- (0,-1.5) -- (1.5,0) -- (0,1.5) -- cycle;
\draw[blue] (0,1.5) -- (0,-1.5);
{\color{mygreen}
\draw[mygreen]($(0,1.5)!1/3!(0,-1.5)$) circle(2pt)node[right=0.2em,scale=0.8]{$1$};
\draw[mygreen]($(0,1.5)!2/3!(0,-1.5)$) circle(2pt);
}
\filldraw (0,1.5) circle(1.5pt);
\filldraw (0,-1.5) circle(1.5pt);
\draw[red,thick,->-={0.7}{}] (0.5,-1.5) to[out=90,in=-45] (0,0) to[out=135,in=-90] (-0.5,1.5);

{\begin{scope}[xshift=7cm]
\bline{-1.5,-1.5}{1.5,-1.5}{0.2};
\tline{-1.5,1.5}{1.5,1.5}{0.2};
\draw[blue] (-1.5,0) -- (0,-1.5) -- (1.5,0) -- (0,1.5) -- cycle;
\draw[blue] (0,1.5) -- (0,-1.5);
\filldraw (0,1.5) circle(1.5pt);
\filldraw (0,-1.5) circle(1.5pt);
\draw[red,very thick,->-={0.7}{}] (0,-1.5) to[bend right=20] (0,1.5);
\end{scope}}
\draw[thick,|->] (3,0) -- (4,0);

{\begin{scope}[yshift=-5cm]
\foreach \i in {0,120,240}
{\begin{scope}[rotate=\i]
\tline{-0.8,1.5}{0.8,1.5}{0.2};
\draw[red,thick,-<-={0.4}{}] (0,0) to[out=120,in=-90] (-0.5,1.5);
\end{scope}}
\draw[mygreen] (0.1,0.2) circle(2pt) node[above=0.2em,scale=0.8]{$1$};
\draw[blue] (-30:1.5) -- (90:1.5) -- (210:1.5) --cycle;
\foreach \i in {-30,90,210}
\filldraw (\i:1.5) circle(1.5pt);
\draw[thick,|->] (3,0) -- (4,0);
\end{scope}}

{\begin{scope}[xshift=7cm,yshift=-5cm]
\foreach \i in {0,120,240}
{\begin{scope}[rotate=\i]
\tline{-0.8,1.5}{0.8,1.5}{0.2};
\draw[red,very thick,-<-={0.4}{}] (0,0) -- (0,1.5);
\end{scope}}
\draw[blue] (-30:1.5) -- (90:1.5) -- (210:1.5) --cycle;
\foreach \i in {-30,90,210}
\filldraw (\i:1.5) circle(1.5pt);
\end{scope}}
\end{tikzpicture}
    \caption{Negative $\bM$-shifting of elementary laminations associated with a triangulation. Here exactly one of the Langlands dual coordinates $\check{\sfx}_i^\tri$ is $+1$, while the others are zero (including the frozen ones).}
    \label{fig:elementary laminations}
\end{figure}

\begin{rem}\label{rem:decorated_triangulation}
By the equivariance of the map $\mathbb{I}_\X^q$ under the Dynkin involution, the above lemma can be immediately generalized for \emph{decorated triangulations} (see \cite[Section 1]{IY21}). 
\end{rem}

\begin{rem}\label{rem:GS_GHKK}
When $\Sigma$ is not a $k$-gon with $k=3,4,5$, the mutation class $\sfs(\fsl_3,\Sigma)$ is of infinite-mutation type. In this case, the union $\bigcup_{v\in \bExch_{\fsl_3,\Sigma}} \cC^+_{(v)}$ is not dense in $\X_{\fsl_3,\Sigma}(\bR^\trop)$ \cite[Theorem 2.27]{Yur21}. Therefore \cref{lem:quantum_duality} is far from characterizing the map $\mathbb{I}_\X^q$.
\end{rem}

For the simplest cases that $\Sigma$ is a triangle or a quadrilateral (where the mutation class $\sfs(\fsl_3,\Sigma)$ is finite types $A_1$ and $D_4$, respectively), we actually get a quantum duality map:

\begin{prop}
When $\Sigma$ is a triangle or a quadrilateral, the image  $\mathbb{I}_\X^q(\cL^p(\Sigma,\bZ)) \subset \cO_q(\A_{\fsl_3,\Sigma})$ gives a $\bZ_q$-basis consisting of quantum cluster monomials. In particular, it has positive structure constants. 
\end{prop}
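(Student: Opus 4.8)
The plan is to reduce the statement to the two finite-type mutation classes $A_1$ (triangle) and $D_4$ (quadrilateral) and to exploit the fact that in finite type every seed is reached by a finite mutation sequence, so the positive cones $\cC^+_{(v)}$ associated with the finitely many vertices $v \in \bExch_{\fsl_3,\Sigma}$ exhaust the whole tropical space. Concretely, I would first invoke \cref{lem:quantum_duality}: it already shows that each integral point of a single cone $\cC^+_{(v)}(\bZ)$ is sent by $\mathbb{I}_\X^q$ to a quantum cluster monomial $\left[\prod_k (A_k^{(v)})^{x_k}\right]$. So the entire content of the proposition is the claim that, in these two finite cases, the cones $\cC^+_{(v)}$ for $v$ ranging over all seeds cover $\X_{\fsl_3,\Sigma}(\bR^\trop)$, and dually that $\cL^p(\Sigma,\bZ)=\bigcup_v \cC^+_{(v)}(\bZ)$ under $x_\tri$.

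The key steps, in order, are as follows. First I would recall that for a finite-type cluster algebra the $g$-vector fan (equivalently, the collection of $\mathbf{c}$-vector cones, which is exactly the family $\{\cC^+_{(v)}\}$ after the sign conventions of \cref{rem:GS_GHKK}) is a complete simplicial fan covering the ambient tropical vector space; this is the standard structure theory of \cite{GHKK} specialized to finite type, where it is classical. For $A_1$ the exchange graph is a single mutation (a $5$-cycle is not needed; $A_1$ has two seeds) and for $D_4$ the $g$-vector fan is the well-known complete fan with finitely many maximal cones, so completeness is immediate. Second, using the PL isomorphism $x_\tri:\cL^p(\Sigma,\bZ)\xrightarrow{\sim}\X_{\fsl_3,\Sigma}(\bZ^\trop)$ from \cref{thm:cluster_transf} together with the covering, every integral point $\ell$ lies in some $\cC^+_{(v)}(\bZ)$; third, applying \cref{lem:quantum_duality} to that $v$ identifies $\mathbb{I}_\X^q(\ell)$ with a quantum cluster monomial. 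Fourth, since by \cref{thm:basis_correspondence} the map $\mathbb{I}_\X^q$ is a bijection onto a $\bZ_q$-basis $\mathsf{BWeb}_{\fsl_3,\Sigma}$, the image is a basis, and we have just shown every basis element is a quantum cluster monomial; hence the basis coincides with the set of all quantum cluster monomials. Finally, positivity of structure constants follows because products of cluster monomials expand in the cluster-monomial basis with coefficients that are manifestly positive: in finite type the quantum cluster monomials form a basis with positive structure constants by the (quantum) positivity results for acyclic/finite-type quantum cluster algebras, so I would cite this and conclude.

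I expect the main obstacle to be pinning down the precise completeness statement for the cone family in the quantum/frozen setting and matching the sign conventions: \cref{rem:GS_GHKK} already warns that in infinite-mutation type the union $\bigcup_v \cC^+_{(v)}$ fails to be dense, so the whole argument hinges on the finite-type input, and I must be careful that the cones in question are the $\mathbf{c}$-vector (not $g$-vector) cones and that the identification $\X_{\sfs}(\bZ^\trop)\cong \X_{-\sfs}(\bZ^T)$ from \cref{rem:GS_GHKK} sends our $\cC^+_{(v)}$ to the standard $\cC^+_{(v)}(\bZ)\subset\X_{-\sfs}(\bZ^T)$ of \cite{GHKK}, so that their completeness theorem in finite type applies verbatim. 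A secondary, more technical point is justifying positivity of the structure constants at the quantum level rather than only $q=1$; I would handle this by appealing to the known positivity of the quantum cluster monomial basis in finite type (equivalently, that the quantum cluster algebra equals the quantum upper cluster algebra and the cluster monomials are a positive basis), rather than attempting a direct skein-theoretic computation, which would be lengthy and is not needed for the stated conclusion.
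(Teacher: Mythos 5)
Your overall strategy---cover the tropical space by the finitely many positive cones and feed each cone into \cref{lem:quantum_duality}---is the same one the paper uses, but there is a genuine gap at the step where you ``apply \cref{lem:quantum_duality} to that $v$''. That lemma is stated (and proved) only for seeds $v=(\tri,\ell)$ arising from labeled $\fsl_3$-triangulations, extended to \emph{decorated} triangulations by \cref{rem:decorated_triangulation}. In the quadrilateral case the mutation class is of type $D_4$, which has $50$ clusters, while only a handful of seeds come from decorated triangulations of the quadrilateral; the cones $\cC^+_{(v)}$ attached to those seeds alone do not cover $\X_{\fsl_3,\Sigma}(\bR^\trop)$. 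For a seed $v$ not of triangulation type you do not yet know that the elementary laminations $\ell_k^{(v)}$ are sent by $\mathbb{I}_\X^q$ to the quantum cluster variables $A_k^{(v)}$, so the identification of $\mathbb{I}_\X^q(\ell)$ with a quantum cluster monomial for $\ell\in\cC^+_{(v)}(\bZ)$ does not follow from the lemma. The paper closes exactly this gap by verifying directly that every one of the $16$ unfrozen (and $8$ frozen) cluster variables of the $D_4$ class is the image of an elementary lamination: up to symmetry the only case not already covered by \cref{lem:quantum_duality} and \cref{rem:decorated_triangulation} is the H-shaped elementary web, which is handled by the explicit mutation computation of shear coordinates in \cref{fig:H-web_elementary}. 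Your proof needs this (or an equivalent) verification.

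A secondary omission: the proposition asserts that the image is a $\bZ_q$-basis of $\cO_q(\A_{\fsl_3,\Sigma})$, whereas \cref{thm:basis_correspondence} only produces a basis of $\mathscr{S}_{\fsl_3,\Sigma}^q[\partial^{-1}]$; one needs the identification $\mathscr{S}_{\fsl_3,\Sigma}^q[\partial^{-1}]=\mathscr{A}^q_{\fsl_3,\Sigma}=\cO_q(\A_{\fsl_3,\Sigma})$, valid in these two cases by \cite[Corollary 6.1]{IY21}, which is the first line of the paper's proof. Your appeals to completeness of the fan of cones in finite type and to positivity of structure constants for products of quantum cluster monomials are reasonable once the main gap above is filled.
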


\begin{proof}
For these cases, it is easy to see that $\mathscr{S}_{\fsl_3,\Sigma}^q[\partial^{-1}] = \mathscr{A}^q_{\fsl_3,\Sigma} = \cO_q(\A_{\fsl_3,\Sigma})$ \cite[Corollary 6.1]{IY21}. Moreover, the tropical set $\X_{\fsl_3,\Sigma}(\bZ^T)$ is covered by finitely many cones $\cC^+_{(v)}(\bZ^T)$ for $v \in \bExch_{\fsl_3,\Sigma}$. 

For the triangle case, we have only two clusters (up to permutations), and hence \cref{lem:quantum_duality} with \cref{rem:decorated_triangulation} already gives the desired statement. For the quadrilateral case (type $D_4$), we have $16$ unfrozen variables and $8$ frozen variables. 
For instance, see \cite[Appendix A and Corollary 6.1]{IY21}.
Up to symmetry, we have already seen in the proof of \cref{lem:quantum_duality} (cf.~\cref{fig:elementary laminations}) that all of them are the images of some elementary laminations under the map $\mathbb{I}_\X^q$, except for the one represented by the elementary web $\tikz[scale=0.9,baseline=2mm]{\draw[blue](0,0)--(1,0)--(1,1)--(0,1)--cycle; \Hweb{0,1}{0,0}{1,0}{1,1}}$. This one also comes from an elementary lamination, as seen from \cref{fig:H-web_elementary}. Thus the assertion is proved. 
\end{proof}

\begin{figure}[htbp]
\centering
\begin{tikzpicture}[scale=.85]
\draw[blue] (2,0) -- (0,2) -- (-2,0) -- (0,-2) --cycle;
\draw[blue] (0,-2) -- (0,2);
\CoG{0,2}{0,-2}{-2,0};
\draw(G) coordinate(G1);
\CoG{0,2}{0,-2}{2,0};
\draw(G) coordinate(G2);
\draw[red,thick,-<-={0.4}{}] (G1) -- (G2);
\draw[red,thick,-<-] (G1) --++(135:1.2);
\draw[red,thick,-<-] (G1) --++(-135:1.2);
\draw[red,thick,->-] (G2) --++(45:1.2);
\draw[red,thick,->-] (G2) --++(-45:1.2);
\draw[-implies, double distance=2pt] (-3.5,-5) -- (-2.5,-5);
\draw[-implies, double distance=2pt] (2.5,-5) -- (3.5,-5);

{\begin{scope}[xshift=-6cm,yshift=-5cm,>=latex]
\draw[blue] (2,0) -- (0,2) -- (-2,0) -- (0,-2) --cycle;
\draw[blue] (0,-2) -- (0,2);
{\color{mygreen}
\quiverplus{2,0}{0,2}{0,-2}
\quiverplus{-2,0}{0,-2}{0,2}
\quiversquare{0,-2}{2,0}{0,2}{-2,0}
\qdlarrow{x122}{x121}
\qdlarrow{x232}{x231}
\qdlarrow{x342}{x341}
\qdlarrow{x412}{x411}
\squarefrozen{0}{0}{0}{0}{0}{-1}{0}{-1}
\node[left=0.2em,scale=0.8] at (x131) {$0$};
\node[right=0.2em,scale=0.8] at (x132) {$0$};
\node[right=0.2em,scale=0.8] at (x241) {$-1$};
\node[left=0.2em,scale=0.8] at (x242) {$1$};
\draw[red](x242) circle(5pt);
}
\end{scope}}

{\begin{scope}[yshift=-5cm,>=latex]
\draw[blue] (2,0) -- (0,2) -- (-2,0) -- (0,-2) --cycle;
\draw[blue] (0,-2) -- (0,2);
{\color{mygreen}
\quiverplus{2,0}{0,2}{0,-2}
\quiverminus{0,2}{-2,0}{0,-2}
\quiversquare{0,-2}{2,0}{0,2}{-2,0}
\qarrowbr{x131}{x132}
\qdlarrow{x122}{x121}
\qdlarrow{x232}{x231}
\qdrarrow{x341}{x342}
\qdrarrow{x411}{x412}
\squarefrozen{0}{0}{0}{0}{0}{0}{0}{0}
\node[left=0.2em,scale=0.8] at (x131) {$0$};
\node[right=0.3em,scale=0.8] at (x132) {$1$};
\node[right=0.2em,scale=0.8] at (x241) {$-1$};
\node[left=0.4em,scale=0.8] at (x242) {$-1$};
\draw[red] (x132) circle(5pt);
}
\end{scope}}

{\begin{scope}[xshift=6cm,yshift=-5cm,>=latex]
\draw[blue] (2,0) -- (0,2) -- (-2,0) -- (0,-2) --cycle;
\draw[blue] (0,-2) -- (0,2);
\quiversquare{0,-2}{2,0}{0,2}{-2,0}
{\color{mygreen}
\qarrow{x122}{x231}
\qarrow{x231}{x241}
\qarrow{x241}{x122}
\qarrow{x121}{x241}
\qarrow{x131}{x121}
\qarrow{x132}{x232}
\qarrow{x241}{x132}

\qarrow{x242}{x132}
\qarrow{x342}{x242}
\qarrow{x412}{x242}
\qarrow{x242}{x341}
\qarrowbl{x132}{x131}
\draw[thick,->,shorten >=2pt,shorten <=2pt] (x132) to[bend left=30] (x411);
\draw[thick,->,shorten >=2pt,shorten <=2pt] (x232) to[bend right=30] (x242);
\draw[thick,->,shorten >=2pt,shorten <=2pt] (x411) to[bend right=10] (x241);
\qdlarrow{x122}{x121}
\qdlarrow{x232}{x231}
\qdrarrow{x341}{x342}
\qdrarrow{x411}{x412}
\squarefrozen{0}{0}{0}{0}{0}{0}{0}{0}
\node[left=0.2em,scale=0.8] at (x131) {$0$};
\node[right=0.2em,scale=0.8] at (x132) {$0$};
\node[right=0.2em,scale=0.8] at (x241) {$-1$};
\node[left=0.2em,scale=0.8] at (x242) {$0$};
}
\end{scope}}
\end{tikzpicture}
    \caption{An elementary lamination of H-shape. Its shear coordinates associated with a triangulation is shown in the bottom left, and their transformations under the mutation sequence shown in red circles continue to the right.}
    \label{fig:H-web_elementary}
\end{figure}
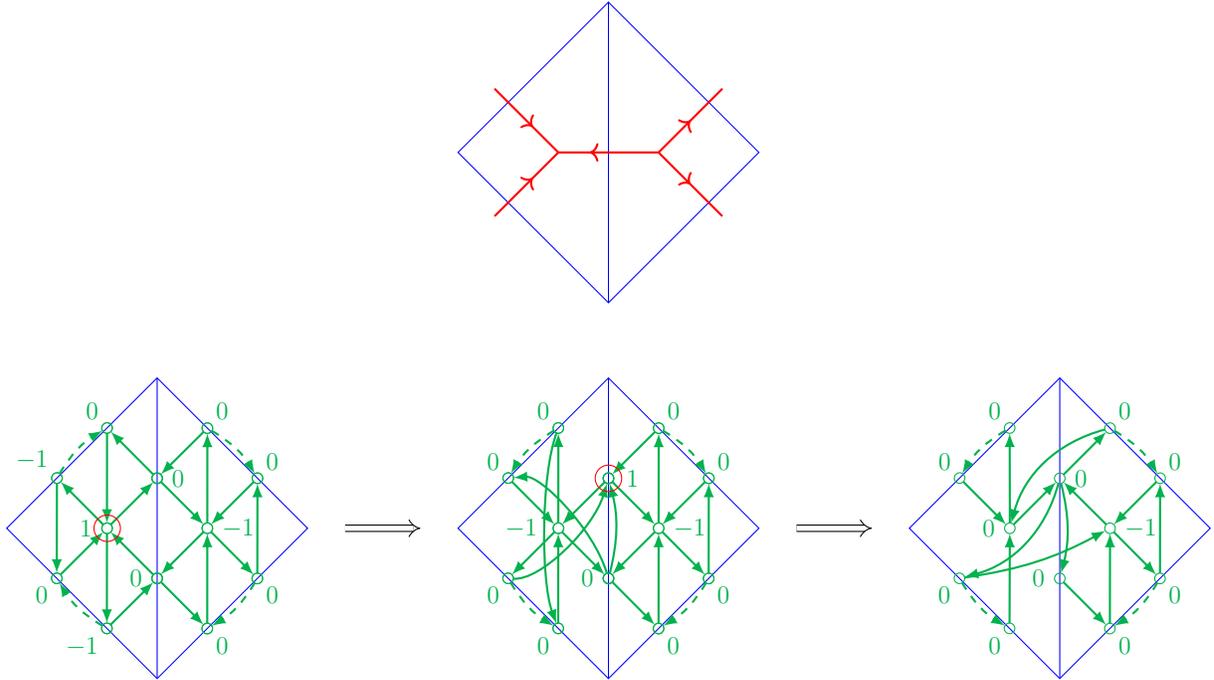

\begin{conj}
The basis $\mathbb{I}_\X^q(\cL^p(\Sigma,\bZ))$ is \emph{parametrized by tropical points} in the sense of \cite[Definition 4.13]{Qin21}. Namely, for any integral $\fsl_3$-lamination $\hL \in \cL^p(\Sigma,\bZ)$, the quantum Laurent expression of $\mathbb{I}_\X^q(\hL) \in \mathscr{A}^q_{\fsl_3,\Sigma}$ in the quantum cluster $\{A_i\}_{i \in I}$ associated with a vertex $\omega \in \bExch_{\fsl_3,\Sigma}$ has the leading term $\big[\prod_{i \in I}A_i^{\check{\sfx}_i(\hL)}\big]$ with respect to the dominance order (\cite[Definition 4.6]{Qin21}), where $\check{\sfx}^{(\omega)}=(\check{\sfx}_i)_{i \in I}$ is the Langlands dual shear coordinate system associated with $\omega$. 
\end{conj}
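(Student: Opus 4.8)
The plan is to fix a vertex $\omega=(\tri,\ell)\in\bExch_{\fsl_3,\Sigma}$ and compute the quantum Laurent expansion of the basis web $W=\mathbb{I}_\X^q(\hL)$ directly in the quantum cluster $\{A_i^{(\omega)}\}_{i\in I}$ by resolving $W$ across the edges of $\tri$. Concretely, I would put $W$ in good position with respect to the split triangulation $\widehat{\tri}$ (\cref{prop:spiralling_good_position}) and apply the state-sum expansion furnished by the embedding $\mathscr{S}_{\fsl_3,\Sigma}^q[\partial^{-1}]\hookrightarrow\mathscr{A}^q_{\fsl_3,\Sigma}$ of \cite{IY21}. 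This writes $\mathbb{I}_\X^q(\hL)$ as a finite $\bZ_q$-linear combination of quantum Laurent monomials in the $A_i^{(\omega)}$, each indexed by a way of resolving the honeycomb, crossing, and curve components of $W\cap Q_E$ (together with the pinning arcs $e_E^\pm$ that carry the frozen data) into cluster variables.

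The next step is to single out the dominant monomial in Qin's dominance order (\cite[Definition 4.6]{Qin21}) and to show its exponent vector equals the negative shear coordinate $x^{(\omega)}(\hL)$. By the locality of the state sum, the exponent of each monomial is a sum of contributions, one per triangle and per quadrilateral $Q_E$; moreover both the shear coordinate and the claimed leading exponent are additive under the amalgamation of \cref{thm:amalgamation} and under the splitting of honeycombs (\cref{notation:division of honeycombs}). This reduces the matching to a finite list of \emph{elementary} webs---corner arcs, the height-one sink/source honeycombs $\tau_\pm$, and the H-web---precisely the pieces whose $\sfa$- and $\sfx$-coordinates are tabulated in \cref{fig:triangle_coordinates_comparison,fig:quadrilateral_coordinates_comparison,fig:quadrilateral_coordinates_comparison_2}. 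For each I would verify that the leading exponent of its skein expansion coincides with $-\sfx_\tri$, using that the tropical limit of the state sum returns exactly the braid/spiralling data defining the shear coordinates.

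I expect the \textbf{existence and uniqueness of a strictly dominant term} in the full $\fsl_3$ resolution to be the main obstacle. Unlike the $\fsl_2$ case, where resolving across an edge only produces a geometric series with a transparent maximal term, the Kuperberg relations force one to resolve trivalent vertices and four-valent crossings, generating many competing monomials; one must prove that all but one are strictly smaller in the dominance order. I would attack this by constructing a $\bZ$-valued valuation on $\mathscr{A}^q_{\fsl_3,\Sigma}$ whose associated degree is the tropical $\X$-coordinate $x^{(\omega)}$, and showing that the state of maximal valuation is unique and coincides with the extremal (maximally spiralling) resolution fixed by the good position of the spiralling diagram. Uniqueness would be argued from the non-elliptic taut normal form of $W$, and compatibility of the valuation with mutations from the cluster-Poisson law of shear coordinates in \cref{thm:cluster_transf}.

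Finally, to pass from one cluster to the full conjecture I would run the same local argument relative to every decorated triangulation $\omega$; the method is intrinsic to $\tri$, so this is uniform, and the resulting degree vectors are automatically compatible because \cref{thm:cluster_transf} identifies $x^{(\omega')}\circ(x^{(\omega)})^{-1}$ with a composite of tropical cluster Poisson transformations, which is the very law by which Qin's pointed degrees transform. The laminations in a single cone $\cC^+_{(v)}$, identified with quantum cluster monomials in \cref{lem:quantum_duality}, supply the base case and a consistency check. The genuine difficulty, as flagged in \cref{rem:GS_GHKK}, is the infinite-mutation-type regime, where the cones $\cC^+_{(v)}$ do not cover $\X_{\fsl_3,\Sigma}(\bR^\trop)$ and no reduction to monomials is available, so the valuation argument must carry the entire load.
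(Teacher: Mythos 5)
The statement you are addressing is left as a \emph{conjecture} in the paper; the authors do not prove it, and the only evidence they supply is \cref{lem:quantum_duality} (laminations lying in a single cone $\cC^+_{(v)}$ map to quantum cluster monomials, for which pointedness is immediate) together with the triangle and quadrilateral cases, where finitely many cones cover the whole tropical space. So there is no ``paper proof'' to compare against, and your text should be judged as a proof on its own terms. As such, it is a strategy outline rather than a proof: the step that would carry the entire argument --- existence and uniqueness of a strictly dominant monomial in the Kuperberg state-sum expansion of $\mathbb{I}_\X^q(\hL)$ in an arbitrary cluster, with exponent $x^{(\omega)}(\hL)$ --- is exactly the content of the conjecture, and you announce a plan to attack it (``I would construct a $\bZ$-valued valuation\dots'') without constructing the valuation, proving it is multiplicative on the relevant products, or showing that the maximal-valuation state is unique. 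Saying the obstacle is the main obstacle does not discharge it.

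Two more concrete points where the reduction as written would fail. First, your locality argument conflates additivity of the \emph{tropical} coordinates (which the paper does establish, via \cref{thm:amalgamation} and the tables in \cref{fig:triangle_coordinates_comparison,fig:quadrilateral_coordinates_comparison,fig:quadrilateral_coordinates_comparison_2}) with additivity of \emph{leading terms of quantum Laurent expansions} under amalgamation. The dominant term of a product or amalgamated expression is the product of dominant terms only if one rules out cancellation and shows the dominance orders are compatible across the gluing; for $\fsl_3$ the resolution of trivalent vertices and crossings produces genuinely competing monomials, and no argument is given that the cross terms between distinct components in a quadrilateral cannot overtake the candidate leading term. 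Second, outside the union $\bigcup_v \cC^+_{(v)}$ --- which by \cref{rem:GS_GHKK} is not dense in the infinite-mutation-type regime --- there is no base case of cluster monomials to anchor an induction, so the mutation-compatibility of Qin's pointed degrees (via \cref{thm:cluster_transf}) gives you consistency of the \emph{claimed} degree vectors but no proof that any given expansion actually attains them. Your proposal correctly identifies where the difficulty lies, but it does not close the gap, and the statement remains open.
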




\paragraph{\textbf{Classical limit}}
Recall that the set $\mathsf{BWeb}_{\fsl_3,\Sigma}$ also gives a $\bZ$-basis of the classical (commutative) skein algebra $\mathscr{S}_{\fsl_3,\Sigma}^1$. Then \cref{thm:basis_correspondence} tells us that the map $\mathbb{I}_\X^q$ induces a bijection 
\begin{align*}
    \mathbb{I}_\X: \cL^p(\Sigma,\bZ)_+ \xrightarrow{\sim} \mathsf{BWeb}_{\fsl_3,\Sigma} \subset \mathscr{S}_{\fsl_3,\Sigma}^1,
\end{align*}
which is also extended to a map $\mathbb{I}_\X: \cL^p(\Sigma,\bZ) \hookrightarrow \mathscr{S}_{\fsl_3,\Sigma}^1[\partial^{-1}]$. Then by \eqref{eq:S=A=U_classical}, we get the following:
\begin{cor}
The image $\mathbb{I}_\X(\cL^p(\Sigma,\bZ))$ gives a $\bZ$-basis of the cluster algebra $\mathscr{A}_{\fsl_3,\Sigma}$.
\end{cor}

\section{Proofs of \texorpdfstring{\cref{prop:spiralling_good_position}}{Theorem 3.10} and \texorpdfstring{\cref{prop:injectivity}}{Theorem 3.19}}\label{sec:technicalities}

\subsection{Proof of \texorpdfstring{\cref{prop:spiralling_good_position}}{Theorem 3.10}}\label{sec:good_position}
\paragraph{\textbf{General position.}} 
Recall that an \emph{ideal arc} in $(\Sigma,\bM)$ is an immersed arc $\gamma$ in $\Sigma$ with endpoints in $\bM$ which has no self-intersection except possibly at its endpoints, and not isotopic to one point. 
In particular $\gamma$ is one-sided differentiable at each endpoint $p$, hence there exists a small coordinate neighborhood $D_p$ of $p$ such that $D_p \cap \gamma$ consists of (at most two) rays incident to $p$. 

We say that two immersed arcs or webs in $\Sigma$ are in \emph{general position} with each other if their intersections are finite, transverse and avoiding the trivalent vertices. 
Moreover, we say that the spiralling diagram $\cW$ (\cref{def:spiralling}) associated with a non-elliptic signed web is in \emph{general position} with an ideal arc if their intersection points do not accumulate in $\interior \Sigma$, transverse and avoiding the trivalent vertices. We may always assume the general position by the concrete construction of a spiralling diagram as logarithmic spirals near punctures.

\paragraph{\textbf{Relative intersection number.}}
Let $\gamma,\gamma'$ be two ideal arcs isotopic to each other with common endpoints $p_1,p_2 \in \bM$, and $\cW$ a spiralling diagram. Assume that these three are in a general position with each other. Then the ideal arcs $\gamma,\gamma'$ bounds a region $B(\gamma,\gamma')$, which is a union of finitely many biangles (or such a region minus small biangles: see $\gamma$ and $\gamma'_2$ in \cref{fig:relative_intersection_diverge}). 

By the construction of the spiralling diagram, there exists a small disk neighborhood $p_i \in D_i$ for $i=1,2$ such that $\rho_i:=\gamma \cap D_i$, $\rho'_i:=\gamma' \cap D_i$ are rays incident to $p_i$, and $\cW \cap D_i$ is a logarithmic spiral. The rays $\rho_i,\rho'_i$ separates $D_i$ into two sectors, and exactly one of them corresponds to the region bounded by $\gamma,\gamma'$. Then we can find a circular segment in this sector which does not intersect with $\cW$, and the restriction of $\cW$ to the circular sector separated by this segment is a periodic ladder-web. We call this circular sector $S(p_i)$ a \emph{cut-off sector} at $p_i$. See \cref{fig:cut-off}. 
Then $\cW_\mathrm{reg}:=\cW\cap (B\setminus S(p_1)\cup S(p_2))$ is a finite web. 

\begin{figure}[htbp]
\centering
\begin{tikzpicture}[scale=.95]
\draw[blue] (0,0) -- (5,0);
\node[blue] at (0.7,-0.3) {$\gamma$};
\draw[blue] (0,0) to[bend left=30] node[midway,above]{$\gamma'$} (1.5,0) to[bend right=30] (3.5,0) to[bend left=30] (5,0);
\draw[dashed] (5,0) circle(0.75cm);
\draw[fill=white] (0,0) circle(2pt) node[below]{$p_1$};
\draw[fill=white] (5,0) circle(2pt) node[below=0.5em]{$p_2$};
\begin{scope}[xshift=5cm]
\draw[red,thick,->-] (135:1.5) node[above]{$\cW$} ..controls ++(-60:0.3) and (-0.4,-0.3).. (0,-0.3)
..controls (0.3,-0.3) and (0.3,0.3).. (0,0.3)
..controls (-0.3,0.3) and (-0.2,-0.1).. (0,-0.1);
\end{scope}
\draw[thick,->] (7.5,0) --(6.5,0);
\begin{scope}[xshift=10.5cm]
\draw[dashed] (0,0) circle(2cm);
\fill[gray!20] (0,0) -- (150:0.5) arc(150:180:0.5) --++(0.5,0);
\draw[blue] (0,0) -- (-2,0);
\draw[blue] (0,0) -- (150:2);
\node[blue] at (180:2.3) {$\rho$};
\node[blue] at (150:2.3) {$\rho'$};
\draw[red,thick,->-] (135:2) ..controls ++(-60:0.3) and (-0.7,-0.5).. (0,-0.5)
..controls (0.5,-0.5) and (0.5,0.5).. (0,0.5)
..controls (-0.5,0.5) and (-0.3,-0.3).. (0,-0.3)
..controls (0.3,-0.3) and (0.3,0.3).. (0,0.3)
..controls (-0.3,0.3) and (-0.2,-0.2).. (0,-0.2);
\draw[thick] (140:0.5) arc(140:190:0.5);
\draw[fill=white] (0,0) circle(2pt);
\end{scope}
\end{tikzpicture}
    \caption{Two isotopic ideal arcs and a spiralling diagram. A cut-off sector is shown in gray in the right.}
    \label{fig:cut-off}
\end{figure}
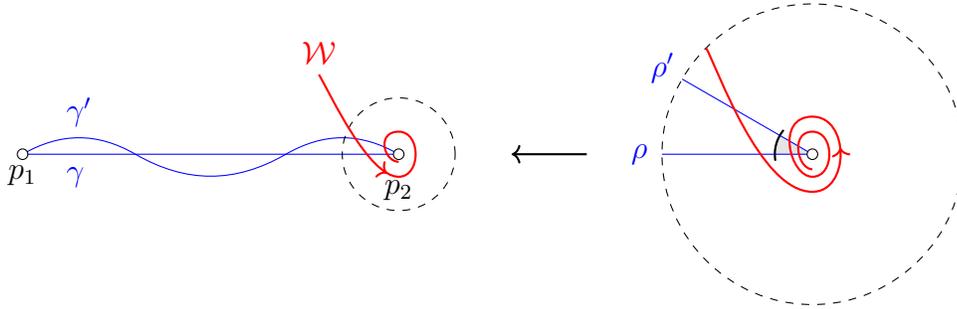

\begin{dfn}[Relative intersection number]
Let $\gamma,\gamma',\cW$ be as above, and choose cut-off sectors $S(p_1),S(p_2)$ at the common endpoints $p_1,p_2 \in \bM$. Then we define the \emph{relative intersection number} of $\cW$ with $(\gamma,\gamma')$ to be
\begin{align*}
    i(\cW;\gamma,\gamma'):=i(\cW_\mathrm{reg},\gamma) - i(\cW_\mathrm{reg},\gamma').
\end{align*}
Here $i(-,-)$ denotes the usual geometric intersection number of two webs. 
\end{dfn}
Notice that it is independent of the choice of the cut-off sectors since a periodic ladder-web has an equal number of intersections with $\gamma$ and $\gamma'$ in each of its period.
Clearly, we have $i(\cW;\gamma',\gamma)= -i(\cW;\gamma,\gamma')$. 

\begin{lem}\label{lem:relative_int_cocycle}
Let $\gamma_1,\gamma_2,\gamma_3$ be three ideal arcs isotopic to each other with common endpoints, and $\cW$ a spiralling diagram. Assume that they are in general position with each other. Then we have
\begin{align*}
    i(\cW;\gamma_1,\gamma_3) = i(\cW;\gamma_1,\gamma_2) + i(\cW;\gamma_2,\gamma_3).
\end{align*}
\end{lem}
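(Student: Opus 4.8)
The plan is to reduce the identity to a trivial telescoping of \emph{finite} intersection numbers, by computing all three relative intersection numbers against a single common truncation of $\cW$. First I would put $\gamma_1,\gamma_2,\gamma_3$ and $\cW$ in general position with one another, and choose, at each common endpoint $p$, a small disk neighborhood $N_p$ inside which every $\gamma_j$ is a ray (or a pair of rays) emanating from $p$ and $\cW$ is a periodic logarithmic spiral. Shrinking $N_p$ if necessary, I may assume that $\cW\cap N_p$ is a periodic ladder-web with respect to each of the finitely many rays $\rho_p^j$, and that $\partial N_p$ meets $\cW$ and the arcs transversally away from the rays. Setting $\bar\cW:=\cW\setminus\bigcup_p N_p$ and $\bar\gamma_j:=\gamma_j\setminus\bigcup_p N_p$, the web $\bar\cW$ is finite and each geometric intersection number $i_j:=i(\bar\cW,\bar\gamma_j)$ is finite.

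The key step is to show that, for any pair $a\neq b$,
\begin{align*}
    i(\cW;\gamma_a,\gamma_b)=i(\bar\cW,\bar\gamma_a)-i(\bar\cW,\bar\gamma_b)=i_a-i_b.
\end{align*}
To see this I would invoke the independence of $i(\cW;\gamma_a,\gamma_b)$ on the cut-off sectors (noted just after the definition) to choose, for \emph{every} pair, the cut-off sector $S_{ab}(p)$ to have outer radius equal to that of $N_p$. Then $B(\gamma_a,\gamma_b)\cap N_p$ is exactly the sector $S_{ab}(p)$ between the rays $\rho_p^a,\rho_p^b$, so $\cW_{\mathrm{reg}}^{ab}:=\cW\cap\bigl(B(\gamma_a,\gamma_b)\setminus\bigcup_p S_{ab}(p)\bigr)$ has no point inside any $N_p$. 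Since the whole arc $\gamma_a$ lies on the boundary $\partial B(\gamma_a,\gamma_b)$, every point of $\cW\cap\gamma_a$ lies in $\overline{B(\gamma_a,\gamma_b)}$; the ones inside the neighborhoods $N_p$ sit on the boundary rays of the removed sectors and are discarded, while those outside survive. Hence $i(\cW_{\mathrm{reg}}^{ab},\gamma_a)$ counts exactly the intersections of $\cW$ with $\gamma_a$ lying outside $\bigcup_p N_p$, that is $i(\bar\cW,\bar\gamma_a)=i_a$, a quantity independent of $b$; the same applies to $\gamma_b$, giving the displayed equality.

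With this in hand the lemma is immediate, since all three relative intersection numbers are now differences of the same three finite numbers $i_1,i_2,i_3$:
\begin{align*}
    i(\cW;\gamma_1,\gamma_3)=i_1-i_3=(i_1-i_2)+(i_2-i_3)=i(\cW;\gamma_1,\gamma_2)+i(\cW;\gamma_2,\gamma_3).
\end{align*}

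I expect the main obstacle to be the bookkeeping in the key step, namely verifying that a common choice of outer radius for all the cut-off sectors makes $i(\cW_{\mathrm{reg}}^{ab},\gamma_a)$ depend only on $a$ (and the fixed neighborhoods), and that no intersection points are gained or lost on the boundaries of the removed sectors. This is precisely where the periodicity of the spiral and the independence-of-cut-off observation are used, and where the transversality of $\partial N_p$ must be arranged with care; the loop case $p_1=p_2$ is handled by the same argument applied to the two rays at the single endpoint.
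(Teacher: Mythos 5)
Your proposal is correct and is essentially the paper's own argument: the paper's entire proof reads ``Immediately verified by choosing a common cut-off sector,'' which is exactly your key step of computing all three relative intersection numbers against a single common truncation so that each becomes a difference $i_a-i_b$ of pair-independent finite counts, after which the identity telescopes. Your extra care about arranging the common radius and the boundary bookkeeping is a reasonable elaboration of what the paper leaves implicit, not a different route.
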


\begin{proof}
Immediately verified by choosing a common cut-off sector. 
\end{proof}


\begin{dfn}
We say that an ideal arc $\gamma$ is in \emph{minimal position} with a spiralling diagram $\cW$ if it satisfies $i(\cW;\gamma',\gamma) \geq 0$ for any ideal arc $\gamma'$ isotopic to $\gamma$ with common endpoints, and in general position with $\cW$. 
\end{dfn}
See \cref{fig:non-minimal position} for an example of an ideal arc not in a minimal position.

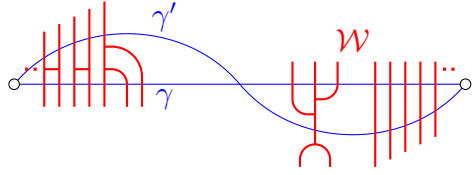
\begin{figure}[ht]
    \centering
\begin{tikzpicture}
\draw[blue](-3,0) -- (3,0);
\draw[blue](-3,0) to[bend left=50pt] (0,0) to[bend right=50pt] (3,0);
\node[blue] at (-1,-0.2) {$\gamma$};
\node[blue] at (-1,0.9) {$\gamma'$};
\draw[red] (1.5,0.3) node[above]{$\cW$};
\foreach \i in {-0.2,0,0.2,0.4,0.6}
\draw[red,thick] (2+\i,-1+\i*0.5) -- (2+\i,0.3);
\draw[red,very thick,dotted] (2.7,0.2) -- (2.9,0.2);
\draw[red,thick] (1,-0.8) -- (1,0.3);
\draw[red,thick,rounded corners=0.2cm] (1,-0.2) --++(0.3,0) -- (1.3,0.3);
\draw[red,thick,rounded corners=0.2cm] (1,-0.4) --++(-0.3,0) -- (0.7,0.3);
\draw[red,thick,rounded corners=0.2cm] (1,-0.8) --++(0.2,0) -- (1.2,-1.1);
\draw[red,thick,rounded corners=0.2cm] (1,-0.8) --++(-0.2,0) -- (0.8,-1.1);
\foreach \i in {-0.2,0,0.2,0.4,0.6}
\draw[red,thick] (-2-\i,-0.3) -- (-2-\i,1-\i*0.5);
\draw[red,thick] (-2,0.2) -- (-2.2,0.2);
\draw[red,thick] (-2.4,0.2) -- (-2.6,0.2);
\draw[red,very thick,dotted] (-2.7,0.2) -- (-2.9,0.2);
\draw[red,thick,rounded corners=0.2cm] (-1.8,0.2) --++(0.3,0) -- (-1.5,-0.3);
\draw[red,thick,rounded corners=0.4cm] (-1.8,0.5) --++(0.5,0) -- (-1.3,-0.3);
\draw[fill=white] (-3,0) circle(2pt);
\draw[fill=white] (3,0) circle(2pt);
\end{tikzpicture}
    \caption{A spiralling diagram $\cW$ that is not in minimal position with an ideal arc $\gamma$. Indeed, $i(\cW;\gamma',\gamma)=-4$.}
    \label{fig:non-minimal position}
\end{figure}

\paragraph{\textbf{Realization of a minimal position}}
We are going to prove:
\begin{prop}[unbounded version of {\cite[Corollary 12]{FS20}}]\label{prop:minimal_position_unbounded}
Let $\cW$ be the spiralling diagram associated with a non-elliptic signed web, and $\gamma$ an ideal arc in a general position with $\cW$. 
Then we can isotope $\cW$ into a spiralling diagram $\cW'$ in minimal position with $\gamma$ via a finite sequence of intersection reduction moves, H-moves, and an isotopy relative to $\gamma$.
\end{prop}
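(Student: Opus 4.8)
The plan is to adapt the bounded-case argument of \cite[Proposition 12]{FS20} to the spiralling setting, the only genuinely new ingredient being the control of the infinitely many intersection points that accumulate toward the punctures. First I would fix, for each endpoint of $\gamma$ at which $\cW$ spirals, a cut-off sector $S(p)$ as in \cref{fig:cut-off}, so that $\cW$ restricts to a periodic ladder-web inside $S(p)$ while the finite regular part $\cW_\mathrm{reg}$ carries all of the combinatorially relevant data. By the definition of the relative intersection number together with the cocycle identity of \cref{lem:relative_int_cocycle}, the map $\gamma' \mapsto i(\cW;\gamma',\gamma)$ is the coboundary of an intersection potential, so that $\cW$ being in minimal position with $\gamma$ is equivalent to $\gamma$ minimising $\gamma' \mapsto i(\cW_\mathrm{reg},\gamma')$ over its isotopy class with common endpoints. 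This reduces the problem to a statement about the finite web $\cW_\mathrm{reg}$.

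Next I would establish a bigon criterion: $\cW$ fails to be in minimal position with $\gamma$ precisely when there is an embedded innermost disk bounded by a subarc of $\gamma$ and a subarc of $\cW$ whose interior is disjoint from $\gamma$. Because $\cW$ is non-elliptic, such a disk can only be of one of three local types, namely a genuine bigon between two edges of $\cW$, a half-disk abutting a single trivalent vertex, or an H-face straddling $\gamma$. The forward implication is the familiar observation that pushing $\gamma$ across the disk yields an isotopic arc $\gamma'$ with $i(\cW;\gamma',\gamma)<0$; the reverse implication follows from the cocycle property, since in the absence of such a disk every isotopic pushoff of $\gamma$ can only increase the finite intersection count $i(\cW_\mathrm{reg},\gamma)$.

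Then I would carry out the reduction. Choosing an innermost removable disk, I eliminate it by the corresponding move: an intersection reduction move of the first type for a bigon, one of the second type for a half-disk at a trivalent vertex (\cref{fig:tightening}), and an H-move for an H-face (\cref{fig:H-move}), each realised by an isotopy relative to $\gamma$ supported away from the disk. Every such move strictly decreases the non-negative integer $i(\cW_\mathrm{reg},\gamma)$; since this quantity is bounded below, the process terminates after finitely many steps in a diagram with no removable disk, which is in minimal position by the criterion.

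The main obstacle is to guarantee that this bookkeeping never escapes into the spiralling tails. Concretely, I must verify that (i) the periodic ladder-web inside each cut-off sector is automatically tight, so that no removable disk is ever contained in, or protrudes into, $S(p)$ — this uses the logarithmic-spiral model of \cref{def:spiralling}, which forces each successive turn of the spiral to cross $\gamma$ essentially, with no period bounding a bigon — and (ii) removing an innermost disk in $\cW_\mathrm{reg}$ creates no new intersections inside the sectors, so that the non-accumulation of intersection points in the interior, and hence the finiteness of $i(\cW_\mathrm{reg},\gamma)$, is preserved at every step. Once these two points are secured, the finite-case termination argument applies essentially verbatim, and \cref{prop:minimal_position_unbounded} follows.
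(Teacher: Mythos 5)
Your overall architecture is different from the paper's, and the difference matters. The paper does not run an innermost-disk induction on $\cW$ at all: it first proves an a priori upper bound on the relative intersection number $i(\cW;\gamma,\gamma')$ as $\gamma'$ ranges over the isotopy class (\cref{lem:relative_intersection_maximal}), then simply takes an arc $\gamma_0$ realizing the maximum and concludes from the cocycle identity (\cref{lem:relative_int_cocycle}) that $\gamma_0$ is in minimal position; the finite sequence of moves is then extracted from the isotopy between $\gamma$ and $\gamma_0$. Your direct reduction could in principle work, but it forces you to prove termination by hand, and that is exactly where your argument has a genuine gap.

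The gap is your claim (i), that the periodic ladder-web inside each cut-off sector is ``automatically tight, so that no removable disk is ever contained in, or protrudes into, $S(p)$.'' This is not a consequence of the logarithmic-spiral model, and it is false without the non-ellipticity hypothesis: the dangerous configurations are not a single spiral strand crossing $\gamma$ essentially, but webs whose $\gamma$-adjacent faces (H-faces, caps, half-disks at trivalent vertices) recur once per period of the spiral, so that each innermost-disk removal peels one more turn out of the tail and the quantity $i(\cW_{\mathrm{reg}},\gamma)$ is restored every time you are forced to shrink the cut-off sector. This is precisely the nested-biangle scenario of \cref{fig:relative_intersection_diverge}, and ruling it out is the entire content of \cref{lem:relative_intersection_maximal}: one must first pass to a puncture-reduced representative, and then observe via the correspondence of \cref{fig:side-puncture_correspondence} that an infinitely iterable reduction at a spiralling end forces an elliptic face of the underlying signed web. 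Your proposal never invokes non-ellipticity at this point, so the termination claim is unsupported; indeed, for the elliptic signed web in the bottom right of \cref{fig:relative_intersection_diverge} your procedure would not terminate. A secondary inaccuracy: the H-move of \cref{fig:H-move} does not change the intersection count with $\gamma$ (only the reduction moves of \cref{fig:tightening} do), so even in the regular part your monotone quantity does not strictly decrease at every step and you would need to bound the number of H-moves between genuine reductions.
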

To prove this, the following lemma is useful:

\begin{lem}[unbounded version of {\cite[Lemma 15]{FS20}}]\label{eq:bigon_lemma_unbounded}
Let $B$ be a biangle in $\Sigma$ bounded by two immersed arcs $\alpha,\alpha'$, and $\cW$ a spiralling diagram in a general position. If some of the endpoints of $\alpha,\alpha'$ are punctures, then choose any cut-off sectors and consider $\cW_\mathrm{reg}$ as above. Otherwise, set $\cW_\mathrm{reg}:=\cW$. 
Then $\cW_{\mathrm{reg}}$ can be isotoped through a finite number of intersection reduction moves and H-moves so that $\cW_{\mathrm{reg}}\cap B$ consists of disjoint parallel arcs connecting $\alpha$ and $\alpha'$. This can be done by preserving the cut-off sectors, and the resulting web does not depend on the choice of cut-offs.
\end{lem}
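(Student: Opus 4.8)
The plan is to reduce the unbounded statement to the compact bigon lemma of Frohman--Sikora (\cite[Lemma 16]{FS20}), isolating all of the infinitely-spiralling behaviour inside the cut-off sectors. First I would fix cut-off sectors $S(p_1),S(p_2)$ at those endpoints $p_1,p_2$ of $\alpha,\alpha'$ that are punctures, exactly as in the definition of $\cW_\mathrm{reg}$. By construction the restriction of $\cW$ to each $S(p_i)$ is a periodic ladder-web, and the circular segment bounding $S(p_i)$ is chosen disjoint from $\cW$; hence $B_\mathrm{reg}:=B\setminus(S(p_1)\cup S(p_2))$ is a compact disk, $\cW\cap S(p_i)$ is an untouched periodic ladder-web, and $\cW_\mathrm{reg}=\cW\cap B_\mathrm{reg}$ is a genuine compact $\fsl_3$-web whose ends lie on $\partial B_\mathrm{reg}$. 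When no endpoint is a puncture this step is vacuous and $\cW_\mathrm{reg}=\cW$.

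Next I would apply the compact bigon lemma to $\cW_\mathrm{reg}$ inside $B_\mathrm{reg}$. The Frohman--Sikora argument proceeds by an innermost-disk induction on the total number of intersection points of the web with a fixed transverse spanning arc of the biangle: one removes innermost web-boundary bigons by intersection reduction moves and eliminates interior trivalent vertices (equivalently, slides or cancels H-faces) by H-moves, each step strictly decreasing a complexity, until no turn-backs or interior vertices remain and the web becomes a collection of disjoint parallel arcs connecting $\alpha$ and $\alpha'$. Since the cut-off segments bound regions in which $\cW$ is already periodic and parallel, I expect every turn-back and every interior trivalent vertex to be confined to the compact core $B_\mathrm{reg}$, so that all the moves produced by the induction are supported there and automatically preserve the sectors $S(p_i)$.

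To finish I would verify independence of the choice of cut-offs. Enlarging a sector $S(p_i)$ replaces $\cW_\mathrm{reg}$ by the old core with one further period of the periodic ladder-web glued on; as that extra piece is itself already a family of parallel arcs, it contributes nothing new to the reduction, and the numbers of strands crossing $\alpha$ and $\alpha'$ agree period-by-period. This is precisely the periodicity observation already used to make $i(\cW;\gamma,\gamma')$ well-defined, so the resulting parallel-arc picture is the same for any admissible choice of cut-offs.

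The main obstacle I anticipate is the bookkeeping at the interface between the compact core and the spiralling tails: I must rule out that straightening $\cW_\mathrm{reg}$ forces a move that propagates into a cut-off sector, or that creates new intersections with the bounding circular segments. The remedy is the rigidity of the periodic ladder-web in each $S(p_i)$: beyond a bounded depth the diagram is already parallel, so one can choose the cut-off deep enough that all the moves demanded by the compact lemma lie strictly inside $B_\mathrm{reg}$. Applying the compact lemma to this sufficiently large core then yields the claim with the sectors untouched, which simultaneously gives both the existence of the desired parallel-arc position and its independence of the cut-offs.
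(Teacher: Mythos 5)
Your proposal is correct and follows essentially the same route as the paper: the paper's (very terse) proof simply removes the cut-off part of $\cW_{\mathrm{reg}}$ to obtain a finite web and invokes \cite[Lemma 16]{FS20}, exactly your reduction to the compact bigon lemma. The extra care you take at the interface with the spiralling tails and the periodicity argument for independence of the cut-offs are points the paper asserts without detail, and your justification of them is sound.
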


\begin{proof}
Since $\cW_\mathrm{reg}$ is finite, the statement follows from \cite[Lemma 15]{FS20}.
\end{proof}
Notice that each of the H-move and the intersection reduction moves is accompanied with a small biangle (shown by dashed lines in \cref{fig:H-move,fig:tightening}) that cuts out a part of the web which we push out. Therefore the finite sequence of these moves in \cref{eq:bigon_lemma_unbounded} is accompanied with a finite collection $\{B^{(j)}\}_{j \in J}$ of biangles that is partially ordered for the inclusion according to the order of moves, which we call the \emph{tightening biangles}. 
Let us denote by $W^{(j)}$ the part of $\cW$ cut out by the tightening biangle $B^{(j)}$, which we call the \emph{loose part} of $\cW$.
See \cref{fig:loose}. 

\begin{figure}[htbp]
\centering
\begin{tikzpicture}[scale=.95]
\draw[blue] (0,0) to[bend left=30] (5,0);
\draw[blue] (0,0) to[bend right=30] (5,0);
\foreach \i in {0.4,0.8}
{
\draw[red,thick] (40:\i) arc(40:-40:\i);
\draw[red,thick] (40:\i+0.2) arc(40:-40:\i+0.2);
\draw[red,thick] (\i,0) -- (\i+0.2,0);
}
\draw[red,thick] (3.8,-0.8) -- (3.8,0.8);
\draw[red,thick] (3.2,-0.9) -- (3.2,0.9);
\draw[orange,thick] (2,-0.2) node{$?$} ellipse(0.5cm and 0.3cm);
\draw[orange,thick] (2,0.1) --++(0,0.6);
\draw[orange,thick] (2,-0.5) --++(0,-0.4);
\draw[orange,thick] (2,-0.2)++(-0.5*0.866,-0.3*0.5) to[out=-120,in=90] (1.4,-0.8);
\draw[orange] (2,-0.2)++(0.5*0.866,-0.3*0.5) to[out=-60,in=90] (2.6,-0.8);

\begin{scope}
\clip (0,0) to[bend right=30] (5,0) to[bend right=30] (0,0);
\draw[blue,dashed] (1,-1.3) ..controls ++(90:1.5) and (1.7,0.3).. (2,0.3)
..controls (2.3,0.3) and ($(3,-1.3)+(90:1.5)$).. (3,-1.3);
\end{scope}
\draw[fill=white] (0,0) circle(2pt);
\draw[fill=white] (5,0) circle(2pt);
\draw [thick,-{Classical TikZ Rightarrow[length=4pt]},decorate,decoration={snake,amplitude=2pt,pre length=2pt,post length=3pt}](5.5,0) --(7.5,0);
\begin{scope}[xshift=8cm]
\draw[blue] (0,0) to[bend left=30] (5,0);
\draw[blue] (0,0) to[bend right=30] (5,0);
\foreach \i in {0.4,0.8}
{
\draw[red,thick] (40:\i) arc(40:-40:\i);
\draw[red,thick] (40:\i+0.2) arc(40:-40:\i+0.2);
\draw[red,thick] (\i,0) -- (\i+0.2,0);
}
\draw[red,thick] (3.8,-0.8) -- (3.8,0.8);
\draw[red,thick] (3.2,-0.9) -- (3.2,0.9);
{\begin{scope}[yshift=-1cm]
\draw[orange,thick] (2,-0.2) node{$?$} ellipse(0.5cm and 0.3cm);
\draw[orange,thick] (2,0.1) --++(0,1.8);
\draw[orange,thick] (2,-0.5) --++(0,-0.4);
\draw[orange,thick] (2,-0.2)++(-0.5*0.866,-0.3*0.5) to[out=-120,in=90] (1.4,-0.8);
\draw[orange] (2,-0.2)++(0.5*0.866,-0.3*0.5) to[out=-60,in=90] (2.6,-0.8);
\end{scope}}
\draw[fill=white] (0,0) circle(2pt);
\draw[fill=white] (5,0) circle(2pt);
\end{scope}
\end{tikzpicture}
    \caption{The restriction of a spiralling diagram $\cW$ to a biangle bounded by two ideal arcs. Its loose part is shown in orange, which can be pushed out through a sequence of intersection reduction moves and H-moves.}
    \label{fig:loose}
\end{figure}
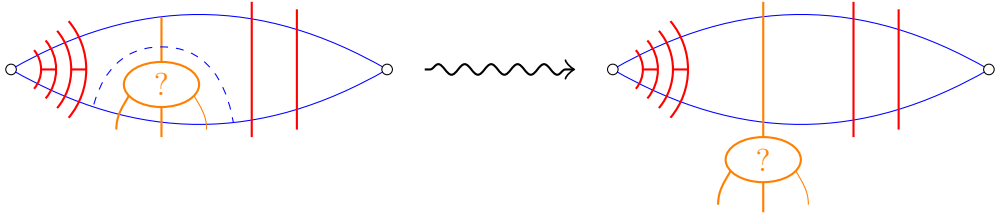

The following lemma ensures that the intersection reduction procedures of a spiralling diagram associated with a non-elliptic signed web always terminate in finite steps. 

\begin{lem}\label{lem:relative_intersection_maximal}
For any spiralling diagram $\cW$ associated with a non-elliptic signed web $W$ and an ideal arc $\gamma$ in general position, the relative intersection number $i(\cW;\gamma,\gamma')$ is bounded from above when $\gamma'$ runs over the ideal arcs homotopic to $\gamma$ and in general position with $\gamma$ and $\cW$. 
\end{lem}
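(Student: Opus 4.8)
The plan is to bound $i(\cW;\gamma,\gamma')$ from above by a quantity depending only on $\cW$ and $\gamma$, by splitting the region $B(\gamma,\gamma')$ into a compact part and small disk neighborhoods of its two corners. The starting point is the elementary inequality
\[
i(\cW;\gamma,\gamma') = i(\cW_\mathrm{reg},\gamma) - i(\cW_\mathrm{reg},\gamma') \le i(\cW_\mathrm{reg},\gamma),
\]
which holds because geometric intersection numbers are nonnegative; the whole difficulty is that the right-hand side is not cut-off independent, so I must instead estimate the imbalance region by region. Since $\gamma$ and $\gamma'$ are isotopic rel endpoints, every biangle constituting $B(\gamma,\gamma')$ is an embedded disk containing no puncture in its interior (a puncture inside such a biangle would obstruct the isotopy). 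Hence the only places where $\cW$ fails to be a finite web inside $B$ are the endpoint corners $p_1,p_2 \in \bP$. Fixing small disk neighborhoods $D_1,D_2$ of these punctures and setting $K := \Sigma^* \setminus (\Int D_1 \cup \Int D_2)$, the web $\cW \cap K$ is finite and $B \subseteq K \cup D_1 \cup D_2$, so the crossings of $\cW_\mathrm{reg}$ with $\gamma$ (and with $\gamma'$) partition into a contribution over $K$ and contributions over $D_1,D_2$.

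Over the compact part the imbalance
\[
i(\cW_\mathrm{reg}\cap K,\gamma) - i(\cW_\mathrm{reg}\cap K,\gamma') \le i(\cW \cap K,\gamma)
\]
is at most the number of crossings of the fixed arc $\gamma$ with the fixed finite web $\cW \cap K$, a constant independent of $\gamma'$. Near a puncture endpoint $p=p_i$ the two arcs meet $D_i$ in rays $\rho,\rho'$ that cut out the sector of $B$ at $p$; as this sector is an embedded corner of the disk $B(\gamma,\gamma')$, its angular width is strictly less than one full turn. Inside $D_i$ the diagram $\cW$ is, for small radius, a periodic ladder-web whose fundamental domain is one turn around $p$. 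The key local estimate is that any strand of this ladder-web that crosses the sector enters through one of $\rho,\rho'$ and exits through the other, contributing one intersection to each of $\gamma,\gamma'$ and hence nothing to the imbalance; by the periodicity recorded just after the definition of the relative intersection number, a window of width less than one period therefore contributes an imbalance bounded by the combinatorial complexity $C_p$ of a single fundamental domain, a number determined by $\cW$ alone.

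Summing the contributions gives $i(\cW;\gamma,\gamma') \le i(\cW\cap K,\gamma) + C_{p_1} + C_{p_2}$, a bound uniform in $\gamma'$, which is the assertion. The main obstacle will be the near-puncture analysis of the previous paragraph: I must argue rigorously that an embedded ideal arc isotopic to $\gamma$ rel endpoints cannot accumulate angular winding around $p$, so that the sector genuinely spans less than one period, and I must carry out the imbalance count at the level of the trivalent ladder-web, with its honeycomb ends and $H$-webs, rather than for naive disjoint logarithmic spirals, checking that the fundamental-domain cancellation survives the trivalent structure. To make this count precise I would use the cut-off independence of $i(\cW;\gamma,\gamma')$ together with the bigon lemma (\cref{eq:bigon_lemma_unbounded}), which normalizes $\cW$ inside each biangle to parallel strands connecting the two sides; the cocycle property (\cref{lem:relative_int_cocycle}) then ensures the resulting bound is insensitive to the particular normal form and cut-off sectors chosen.
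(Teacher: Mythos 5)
There is a genuine gap, and it sits exactly at the point you flagged as your ``main obstacle.'' Your plan rests on the claim that an ideal arc $\gamma'$ isotopic to $\gamma$ rel endpoints ``cannot accumulate angular winding around $p$,'' so that the corner of $B(\gamma,\gamma')$ at a puncture endpoint spans less than one period of the spiralling ladder-web. This is false: an embedded arc in a disk is unknotted rel its endpoints, so $\gamma'$ may wind around its own endpoint puncture $p_i$ any number of times and remain isotopic to $\gamma$. This is precisely the divergence scenario the paper's proof analyzes (the nested arcs $\gamma'_n$ in \cref{fig:relative_intersection_diverge}). Your decomposition then breaks in both possible readings: if the disks $D_i$ are fixed once and for all, then for highly winding $\gamma'$ the intersection $\gamma'\cap D_i$ is not a ray, $B\cap D_i$ is not a sector of angle less than $2\pi$, and the ``one fundamental domain'' count does not apply; if instead $D_i$ is shrunk so that $\gamma'\cap D_i$ is a ray, then $D_i$ depends on $\gamma'$, the complementary region $K$ grows, and the ``constant'' $i(\cW\cap K,\gamma)$ tends to infinity because $\gamma$ meets the spiralling strands more and more often as $K$ eats into the puncture neighborhoods. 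Either way the bound is not uniform in $\gamma'$.

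The tell-tale symptom is that your argument never uses the hypothesis that the signed web is non-elliptic (and admissible/puncture-reduced after applying puncture H-moves), whereas the statement is false without it: for a web with a puncture H-face or a teardrop-type face at $p$, the spiralling diagram makes a U-turn around $p$, and arcs $\gamma'_n$ winding $n$ times around $p$ produce $i(\cW;\gamma,\gamma'_n)\to+\infty$. A proof that would apply to such webs cannot be correct. The paper's argument is structured the other way around: it assumes unboundedness, extracts an infinite nested family of tightening biangles stuck to $\gamma$, observes that the repeated portion of $\cW$ must be spiralling around a puncture, and then uses the dictionary of \cref{fig:side-puncture_correspondence} to show the underlying signed web must contain a puncture H-face (excluded by the puncture-reduced normalization) or an elliptic face --- contradiction. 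If you want to salvage a direct estimate, the non-ellipticity hypothesis has to enter your near-puncture analysis to show that windings of $\gamma'$ around $p_i$ can only decrease (never increase) the relative intersection number; that is where the real content of the lemma lies.
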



\begin{proof}
If $W$ has puncture H-faces, then applying appropriate puncture H-moves, we obtain another signed web $W'$ which is puncture-reduced. The corresponding spiralling diagrams $\cW$ and $\cW'$ differ only by some finitely many H-shaped parts in the spiralling part, and hence $i(\cW;\gamma,\gamma')=i(\cW';\gamma,\gamma')$. 
Therefore it suffices to consider the case where the signed web $W$ giving rise to $\cW$ is puncture-reduced.  

We prove the assertion by contradiction: 
suppose that there exists a sequence $\gamma'_n \simeq \gamma$ of ideal arcs satisfying the condition and $i(\cW;\gamma,\gamma'_n) \geq n$ for all $n \in \bZ_{\geq 0}$. Let $\{B_n^{(j)}\}_{j \in J_n}$ be the collection of tightening biangles for the pair $(\gamma,\gamma'_n)$, and $W_n^{(j)} \subset \cW$ the corresponding loose part. 
\begin{itemize}
    \item[(a)] Since we are interested in a sequence $\gamma'_n$ such that $i(\cW;\gamma,\gamma'_n)$ diverges, we may assume that all of the tightening biangles $B_n^{(j)}$ are sticked to $\gamma$ rather than $\gamma'_n$. Otherwise, a biangle sticked to $\gamma'_n$ contributes negatively to $i(\cW;\gamma,\gamma'_n)$. Then we may isotope $\gamma'_n$ to avoid this biangle without decreasing $i(\cW;\gamma,\gamma'_n)$. 
    \item[(b)] Shrinking each tightening biangle (without changing the intersection number of its boundary with $\cW$) if necessary, we may assume that either $B_n^{(j)} \cap B_m^{(\ell)}=\emptyset$, $B_n^{(j)} \subset B_m^{(\ell)}$ or $B_m^{(\ell)} \subset B_n^{(j)}$ holds for any pair in this collection. Also we can ensure that each tightening biangle do not intersect with the cut-off sectors at punctures. 
\end{itemize}
Let us consider the compact interval $K=\gamma \setminus (\text{cut-off sectors})$. From the assumption of general position, the intersection of $\cW$ with $K$ is finite.
The intersections $I_n^{(j)}:=\gamma \cap \interior B_n^{(j)}$ give open intervals in $K$. Observe that the union $\bigcup_{n \geq 0,~j \in J_n} I_n^{(j)}$ has finitely many path-connected components, since each such component contains a distinct point in $\cW \cap K$, which is finite. Therefore we see that there exist subsequences $n_k$ and $j_k \in J_{n_k}$ such that $B_{n_k}^{(j_k)} \subset B_{n_{k+1}}^{(j_{k+1})}$.

Such a nested situation is illustrated in  \cref{fig:relative_intersection_diverge}. Indeed, the situation says that distinct reduction moves are applied infinitely many times, while the original signed web $W$ is finite. It means that there is a portion $P$ of the signed web that is referred infinitely many times. Therefore the nested biangles $B_{n_k}^{(j_k)}$ (or the arcs $\gamma'_{n_k}$) must be winding around one of the punctures $p_1$ or $p_2$, while the portion $\P$ in $\cW$ corresponding to $P$ is spiralling around the same puncture as in the bottom left of the figure. 
Notice that such a spiralling diagram $\cW$ arises from the signed web $W$ shown in the bottom right. 

Moreover, observe the correspondence shown in \cref{fig:side-puncture_correspondence} between the faces stuck to $\gamma$ and the puncture-faces. 
Therefore, the sequence of loose parts $W_{n_k}^{(j_k)}$ must come from these puncture-faces in the signed web $W$, which contradicts to either the puncture-reduced assumption, non-elliptic condition, or the no bad ends condition.
Thus the assertion is proved.
\end{proof}

\begin{figure}[ht]
\centering
\begin{tikzpicture}[scale=0.8]
\draw[blue] (0,0) ..controls ++(45:2.5) and ($(7,0)+(135:2.5)$).. (7,0);
\draw[blue] (0,0) ..controls ++(67.5:3.5) and ($(7,0)+(112.5:3.5)$).. (7,0);
\draw[blue] (0,0) -- (7,0);
\draw[orange,thick] (4,0.7) node{$?$} ellipse(0.5cm and 0.3cm);
\draw[orange,thick] (4,0.4) --(4,0);
\draw[orange,thick] (4,0.7)++(-0.5*0.866,-0.3*0.5) to[out=-120,in=90] (3.5,0);
\draw[orange,thick] (4,0.7)++(0.5*0.866,-0.3*0.5) to[out=-60,in=90] (4.5,0);
\draw[orange,thick] (4,1) --(4,2.5);
\draw[orange,thick] (2.5,1.8) node{$?$} ellipse(0.5cm and 0.3cm);
\draw[orange,thick] (2.5,1.5) --(2.5,0);
\draw[orange,thick] (2.5,1.8)++(-0.5*0.866,-0.3*0.5) to[out=-120,in=90] (2,1.2) -- (2,0);
\draw[orange,thick] (2.5,1.8)++(0.5*0.866,-0.3*0.5) to[out=-60,in=90] (3,1.2) -- (3,0);
\draw[orange,thick] (2.5,2.1) --(2.5,2.5);
\draw[orange,thick,dotted] (1.8,1.5) --++(-0.5,0);
\node[blue] at (6,-0.3) {$\gamma$};
\node[blue] at (6,0.5) {$\gamma'_{n_1}$};
\node[blue] at (6,1.2) {$\gamma'_{n_2}$};
\draw[blue,thick,dotted] (6,1.6) --++(0,0.5);
\draw[fill=white] (0,0) circle(2pt);
\draw[fill=white] (7,0) circle(2pt);
\node at (-1,1) {$\widetilde{\Sigma}$};
\draw[thick,->] (3.5,-1) -- (3.5,-2);

\begin{scope}[xshift=2cm,yshift=-5cm]
\draw[blue] (0,0) ..controls ++(45:2.5) and ($(5,0)+(135:2.5)$).. (5,0);
\draw[blue] (0,0) ..controls ++(67.5:1) and (-0.5,0.5).. (-0.5,0)
..controls (-0.5,-1) and (1.5,-1).. (1.5,0)
..controls (1.5,1.5) and (3.5,1.5).. (3.5,0)
..controls (3.5,-2) and (-1.5,-2).. (-1.5,0)
..controls (-1.5,3) and ($(5,0)+(112.5:3)$).. (5,0);
\draw[blue] (0,0) -- (5,0);
\draw[orange,thick] (2.5,0.5) node{$?$} ellipse(0.5cm and 0.3cm);
\draw[orange,thick] (2.5,0.2) --(2.5,0)
..controls (2.5,-1.5) and (-1,-1.5).. (-1,0)
..controls (-1,1.2) and (0.7,1.2).. (0.7,0)
..controls (0.7,-0.5) and (-0.3,-0.5).. (-0.3,0);
\draw[orange,thick] (2.5,0.5)++(-0.5*0.866,-0.3*0.5) to[out=-120,in=90] (2,0.2) -- (2,0)
..controls (2,-1.2) and (-0.8,-1.2).. (-0.8,0)
..controls (-0.8,1) and (0.6,1).. (0.6,0)
..controls (0.6,-0.4) and (-0.2,-0.4).. (-0.2,0);
\draw[orange,thick] (2.5,0.5)++(0.5*0.866,-0.3*0.5) to[out=-60,in=90] (3,0.2) -- (3,0)
..controls (3,-1.8) and (-1.2,-1.8).. (-1.2,0)
..controls (-1.2,1.4) and (0.8,1.4).. (0.8,0)
..controls (0.8,-0.6) and (-0.4,-0.6).. (-0.4,0);
\draw[orange,thick] (2.5,0.8) --(2.5,2.5);
\node[blue] at (4,-0.3) {$\gamma$};
\node[blue] at (4,0.5) {$\gamma'_{n_1}$};
\node[blue] at (4,1.6) {$\gamma'_{n_2}$};
\draw[fill=white] (0,0) circle(2pt);
\draw[fill=white] (5,0) circle(2pt);
\node[orange] at (3,-1.5) {$\cW$};
\node at (-3,1) {$\Sigma$};
\end{scope}

\begin{scope}[xshift=10cm,yshift=-5cm]
\draw[blue] (0,0) -- (5,0);
\draw[orange,thick] (0,1) node{$?$} ellipse(0.5cm and 0.3cm);
\draw[orange,thick] (0,0.7) -- (0,0);
\draw[orange,thick] (0,1) ++ (0.5*0.866,-0.3*0.5) to[out=-60,in=45] (0,0);
\draw[orange,thick] (0,1) ++ (-0.5*0.866,-0.3*0.5) to[out=-120,in=135] (0,0);
\node[orange,scale=0.8] at (0.3,0.1) {$+$};
\node[orange,scale=0.8] at (-0.3,0.1) {$+$};
\node[orange,scale=0.8] at (0.15,0.35) {$+$};
\draw[orange,thick] (0,1.3) --++(0,1);
\node[orange] at (1,1) {$W$};
\draw[fill=white] (0,0) circle(2pt);
\draw[fill=white] (5,0) circle(2pt);
\end{scope}
\draw [thick,-{Classical TikZ Rightarrow[length=4pt]},decorate,decoration={snake,amplitude=1.8pt,pre length=2pt,post length=3pt}](9,-4) -- (7.5,-4);
\end{tikzpicture}
    \caption{The situation that the relative intersection numbers $i(\cW;\gamma,\gamma'_n)$ diverge. The top left shows a covering of $\Sigma$ around the puncture. The infinite sequence of portions are projected to the same portion of $\cW$.}
    \label{fig:relative_intersection_diverge}
\end{figure}
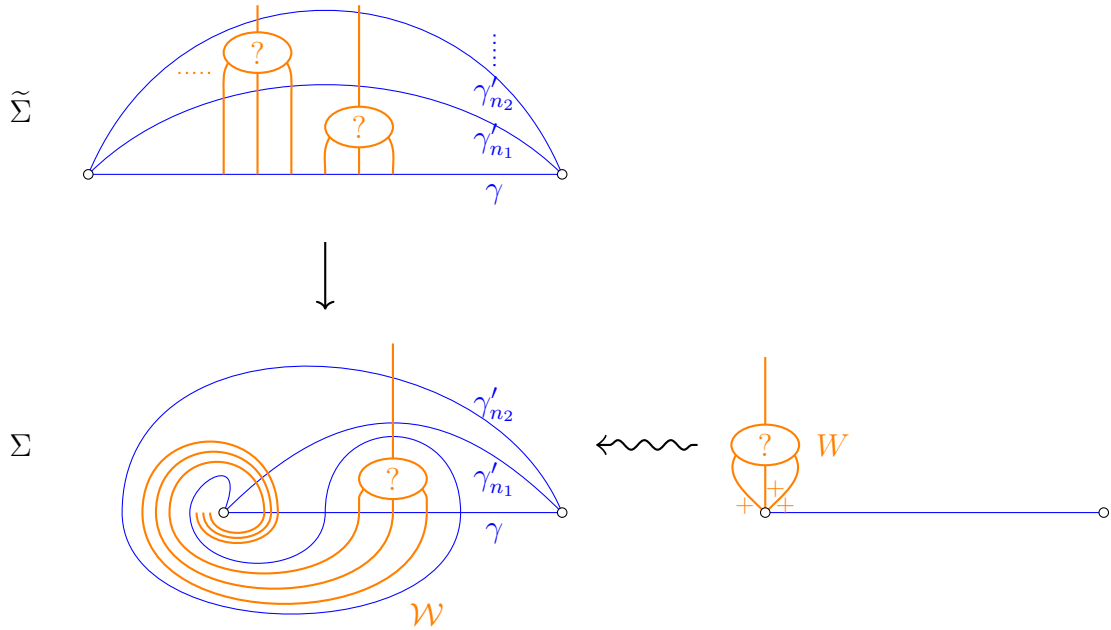

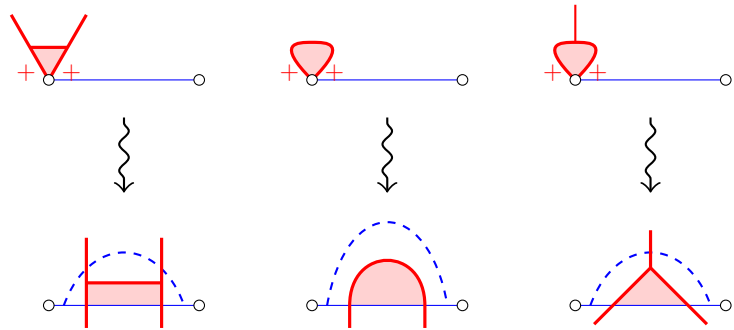
\begin{figure}[ht]
\centering
\begin{tikzpicture}
\begin{scope}[rotate=90]
\fill[pink!70] (0.3,0.5) -- (0.3,-0.5) -- (0,-0.5) -- (0,0.5) --cycle;
\draw[blue] (0,1) -- (0,-1);
\draw[blue,thick,dashed] (0,0.8) ..controls ++(-20:1) and ($(0,-0.8)+(20:1)$).. (0,-0.8);
\draw[red,very thick] (-0.3,0.5) -- (0.9,0.5);
\draw[red,very thick] (-0.3,-0.5) -- (0.9,-0.5);
\draw[red,very thick] (0.3,0.5) -- (0.3,-0.5);
\draw[fill=white] (0,1) circle(2pt);
\draw[fill=white] (0,-1) circle(2pt);
\end{scope}
\begin{scope}[xshift=3.5cm,rotate=90]
\fill[pink!70] (0,0.5) ..controls (0.8,0.5) and (0.8,-0.5).. (0,-0.5) -- (0,0.5);
\draw[blue] (0,1) -- (0,-1);
\draw[red,very thick] (-0.3,0.5) -- (0,0.5) ..controls (0.8,0.5) and (0.8,-0.5).. (0,-0.5) -- (-0.3,-0.5);
\draw[blue,thick,dashed] (0,0.8) ..controls ++(-10:1.5) and ($(0,-0.8)+(10:1.5)$).. (0,-0.8);
\draw[fill=white] (0,1) circle(2pt);
\draw[fill=white] (0,-1) circle(2pt);
\end{scope}
\begin{scope}[xshift=7cm,rotate=90]
\fill[pink!70] (0,0.5) -- (0.5,0) -- (0,-0.5) --cycle;
\draw[blue] (0,1) -- (0,-1);
\draw[blue,thick,dashed] (0,0.8) ..controls ++(-20:1) and ($(0,-0.8)+(20:1)$).. (0,-0.8);
\draw[red,very thick] (-0.25,0.75) -- (0.5,0) -- (-0.25,-0.75);
\draw[red,very thick] (0.5,0) -- (1,0);
\draw[fill=white] (0,1) circle(2pt);
\draw[fill=white] (0,-1) circle(2pt);
\end{scope}
\begin{scope}[yshift=3cm]
\fill[pink!70] (-1,0) --++ (60:0.5) --++(-0.5,0) --cycle;
\draw[blue] (-1,0) -- (1,0);
\draw[red,very thick] (-1,0) --++ (60:1);
\draw[red,very thick] (-1,0) --++ (120:1);
\draw[red,very thick] (-1,0) ++ (60:0.5) --++(-0.5,0);
\node[red,scale=0.8] at (0.3-1,0.1) {$+$};
\node[red,scale=0.8] at (-0.3-1,0.1) {$+$};
\draw[fill=white] (1,0) circle(2pt);
\draw[fill=white] (-1,0) circle(2pt);
\end{scope}
\begin{scope}[xshift=3.5cm,yshift=3cm]
\draw[blue] (-1,0) -- (1,0);
\filldraw[draw=red,fill=pink!70,very thick] (-1,0) ..controls (-1.5,0.5) and (-1.2,0.5).. (-1,0.5) ..controls (-0.8,0.5) and (-0.5,0.5).. (-1,0);
\node[red,scale=0.8] at (0.3-1,0.1) {$+$};
\node[red,scale=0.8] at (-0.3-1,0.1) {$+$};
\draw[fill=white] (1,0) circle(2pt);
\draw[fill=white] (-1,0) circle(2pt);
\end{scope}
\begin{scope}[xshift=7cm,yshift=3cm]
\draw[blue] (-1,0) -- (1,0);
\filldraw[draw=red,fill=pink!70,very thick] (-1,0) ..controls (-1.5,0.5) and (-1.2,0.5).. (-1,0.5) ..controls (-0.8,0.5) and (-0.5,0.5).. (-1,0);
\draw[red,thick] (-1,0.5) --++(0,0.5);
\node[red,scale=0.8] at (0.3-1,0.1) {$+$};
\node[red,scale=0.8] at (-0.3-1,0.1) {$+$};
\draw[fill=white] (1,0) circle(2pt);
\draw[fill=white] (-1,0) circle(2pt);
\end{scope}
\foreach \i in {0,3.5,7}
\draw [thick,-{Classical TikZ Rightarrow[length=4pt]},decorate,decoration={snake,amplitude=1.8pt,pre length=2pt,post length=3pt}](0+\i,2.5) --++ (0,-1);
\end{tikzpicture}
    \caption{The correspondence between the puncture-faces (top) and the faces sticked to $\gamma$ (bottom).}
    \label{fig:side-puncture_correspondence}
\end{figure}

\begin{proof}[Proof of \cref{prop:minimal_position_unbounded}]
Suppose that $\cW$ is not in minimal position with $\gamma$. Then there exists an ideal arc $\gamma_0\simeq \gamma$ such that $i(\cW;\gamma,\gamma_0)>0$ and in general position with $\gamma$ and $\cW$. Choose $\gamma_0$ so that $i(\cW;\gamma,\gamma_0)$ is maximal, whose existence is ensured by \cref{lem:relative_intersection_maximal}. Then for any other ideal arc $\gamma'$ isotopic to $\gamma$, we have
\begin{align*}
    i(\cW;\gamma',\gamma_0) = i(\cW;\gamma',\gamma) + i(\cW;\gamma,\gamma_0) = -i(\cW;\gamma,\gamma') + i(\cW;\gamma,\gamma_0) \geq 0
\end{align*}
by \cref{lem:relative_int_cocycle} and the maximality of $\gamma_0$. It implies that $\gamma_0$ is in minimal position with $\cW$, as desired. 
\end{proof}

\begin{cor}[cf. {\cite[Corollary 12 and Proposition 13]{FS20}}]\label{cor:minimal_position_unbounded}
Any spiralling diagram $\cW$ associated with a signed web on $\Sigma$ can be isotoped through a finite number of intersection reduction moves and H-moves so that it is in minimal position simultaneously with any disjoint finite collection $\{\gamma_i\}_{i=1}^N$ of ideal arcs. Such a minimal position with $\{\gamma_i\}_{i=1}^N$ is unique up to isotopy relative to these arcs, H-moves, periodic H-moves and parallel moves.
\end{cor}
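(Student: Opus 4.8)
The plan is to bootstrap from the single-arc result \cref{prop:minimal_position_unbounded}, proving the two halves of the statement separately: existence by a global maximization argument, and uniqueness by a cut-and-glue analysis based on the bigon lemma \cref{eq:bigon_lemma_unbounded}. Throughout I would keep the spiralling ends tamed by the cut-off sectors, so that every intersection count stays finite and all arguments reduce to essentially bounded webs in the complementary regions, where the finite theory of \cite{FS20} applies.

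For existence, rather than iterating \cref{prop:minimal_position_unbounded} one arc at a time (which risks destroying the minimality already achieved for the earlier arcs), I would maximize simultaneously. Over all disjoint families $\{\gamma_i'\}_{i=1}^N$ with $\gamma_i' \simeq \gamma_i$ sharing endpoints and in general position with $\cW$, consider the total relative intersection number $\sum_{i=1}^N i(\cW;\gamma_i,\gamma_i')$. Each summand is bounded above by \cref{lem:relative_intersection_maximal}, so the sum attains a maximum at some family $\{\gamma_i^0\}$. Since $\gamma_i'$ enters only the $i$-th summand, varying a single arc cannot increase the total, so $i(\cW;\gamma_i,\gamma_i^0)$ is maximal among competitors; the cocycle identity \cref{lem:relative_int_cocycle} then gives $i(\cW;\gamma_i',\gamma_i^0) = i(\cW;\gamma_i,\gamma_i^0) - i(\cW;\gamma_i,\gamma_i') \geq 0$ for every $i$ and every $\gamma_i'$, so each $\gamma_i^0$ is individually in minimal position with $\cW$. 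As the $\gamma_i^0$ form a disjoint family isotopic to $\{\gamma_i\}$, an ambient isotopy carrying $\{\gamma_i^0\}$ back to $\{\gamma_i\}$ transports $\cW$ into simultaneous minimal position; concretely this transport is realized by the tightening and H-moves of \cref{prop:minimal_position_unbounded} localized near each arc, and it terminates in finitely many steps because the finite relative intersection numbers strictly decrease.

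For uniqueness, let $\cW$ and $\cW'$ be two representatives of the same class, both in minimal position with $\{\gamma_i\}$. Cutting $\Sigma$ along the arcs decomposes it into regions; away from the cut-off sectors at punctures, the restrictions of $\cW$ and $\cW'$ to each region are finite webs, and minimality forces their numbers of endpoints on each arc to coincide. Within each region I would invoke the bigon lemma \cref{eq:bigon_lemma_unbounded} together with the classification of essential webs on biangles and triangles to conclude that the two restrictions agree up to isotopy relative to the arcs, H-moves, and parallel moves; the periodic spiralling tails near punctures, which are identical periodic ladder-webs by construction of the cut-off sectors, are matched up to periodic H-moves. Reassembling the regions compatibly with the identified boundary data on the arcs yields the required equivalence between $\cW$ and $\cW'$.

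The main obstacle is precisely the unbounded, spiralling behaviour near punctures. I expect two delicate points. First, ensuring that the simultaneous maximization keeps the families $\{\gamma_i'\}$ disjoint while still reaching a common maximizer: this is where the disjointness of $\{\gamma_i\}$ is essential, since it lets one confine the variation of each $\gamma_i'$ to a neighborhood of $\gamma_i$ disjoint from the others, so that minimality — a condition local to each arc — is attained without interference. Second, verifying that the only ambiguity introduced by the infinite periodic tails is the periodic H-move, so that no moves beyond those listed are needed. Both points are resolved by the cut-off-sector formalism, which confines every genuinely infinite phenomenon to a region where the diagram is a fixed periodic ladder-web, thereby reducing all remaining arguments to the finite statements of \cref{eq:bigon_lemma_unbounded} and the bounded theory of \cite{FS20}.
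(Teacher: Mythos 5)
Your existence argument via simultaneous maximization is a genuinely different route from the paper's, which simply asserts that the single-arc procedure of \cref{prop:minimal_position_unbounded} can be run for each $\gamma_i$ independently because the arcs are disjoint. The maximization idea is attractive (and \cref{lem:relative_intersection_maximal} does guarantee the maximum of $\sum_i i(\cW;\gamma_i,\gamma_i')$ over disjoint families is attained), but as stated it proves less than what is needed: a maximizer over \emph{disjoint} families is only shown to beat competitors $\gamma_i'$ that stay disjoint from the other $\gamma_j^0$, whereas minimal position is defined by $i(\cW;\gamma',\gamma_i^0)\geq 0$ for \emph{every} ideal arc $\gamma'$ isotopic to $\gamma_i$, including arcs that cross the other members of the family. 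Your proposed resolution --- confining the variation of $\gamma_i'$ to a neighborhood of $\gamma_i$ --- misreads the definition: the competitor is an arbitrary isotopic arc, not a nearby one, and \cref{lem:relative_int_cocycle} gives you $i(\cW;\gamma',\gamma_i^0)=i(\cW;\gamma',\gamma'')+i(\cW;\gamma'',\gamma_i^0)$ with no control on the first term when $\gamma''$ is a disjoint replacement of $\gamma'$. Closing this requires showing that the unconstrained maximum of $i(\cW;\gamma_i,\cdot)$ is attained by an arc disjoint from the others, which is essentially the ``independence'' the paper takes for granted.

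The more serious gap is in the uniqueness half. The paper proves it by induction on the number $N$ of arcs, following the proof of \cite[Proposition 14]{FS20}. You instead cut $\Sigma$ along the $\gamma_i$ and claim that the restrictions of the two representatives to each complementary region agree up to the listed moves, invoking \cref{eq:bigon_lemma_unbounded} and the classification of essential webs on biangles and triangles. But $\{\gamma_i\}$ is an arbitrary disjoint finite collection, not a triangulation: the complementary regions are general subsurfaces for which no such classification exists, and minimality with respect to the boundary arcs does not determine the restriction of a web to such a region up to isotopy, H-moves and parallel moves --- that is precisely the global statement to be proved, not a local input. You also assume the two representatives induce matching data on each $\gamma_i$, but equality of the (minimal) intersection numbers does not by itself match the \emph{patterns} of oriented strands and the positions of H-faces along the arcs; establishing that the ambiguity is exactly the listed moves is the content of the inductive comparison in \cite[Proposition 14]{FS20}, which your region-by-region decomposition does not replace.
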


\begin{proof}
As in the discussion above, we isotope the arcs instead of the spiralling diagram.
Let $\{\gamma_i\}_i$ be the original collection of ideal arcs, and $\{\gamma'_i\}_i$ the collection of modified arcs such that $i(\cW;\gamma_i,\gamma'_i)$ is maximal. Let $B_i$ be the biangle bounded by $\gamma_i$ and $\gamma'_i$. We claim that we can slightly modify $B_i$ as in (b) above so that it does not cross $\gamma'_j$ for any $i \neq j$. 
Indeed, suppose $B_i$ crosses $\gamma'_j$. If we can shrink $B_i$ without changing the intersection with $\cW$, do so. Otherwise, it implies that $\gamma_i$ and $\gamma'_j$ bound together at least one biangle $B' \subset B_j$, for which we can apply a reduction move (see \cref{fig:biangle_crossing}). It contradicts to the maximality of $i(\cW;\gamma_j,\gamma'_j)$. 

Hence, the biangle $B_i$ is either disjoint from $B_j$ or intersect with $B_j$ only through $\gamma_j$. In the former case, the reduction moves are independently applied. In the latter case, some of the reduction moves are common for $\gamma_i$ and $\gamma_j$ but still the minimal positions can be simultaneously realized. Thus we get the first statement. 

The second one is proved by induction on the number $N$ of arcs, just in the same way as the proof of \cite[Proposition 13]{FS20}.
\end{proof}

\begin{figure}[ht]
    \centering
    \begin{tikzpicture}[scale=.7]
\draw[blue] (-4,-1) .. controls (-2.5,-1) and (-1.5,1) .. (0,1) .. controls (1.5,1) and (1,-1) .. (2.5,-1);
\draw [cyan](-4,-1) .. controls (-2.85,0.5) and (1,-1) .. (2,0) .. controls (3,1) and (-2.1,1.5) .. (-1,3);
\draw [red, very thick](-3,-1.5) -- (0,0.5) coordinate (v1) {} -- (0.5,3);
\draw [red, very thick](v1) -- (1,-1.5);
\draw (-1,3) node [draw, fill=white, circle, inner sep=1.3pt] {} -- (-4,-1) node [draw, fill=white, circle, inner sep=1.3pt] {} -- (2.5,-1) node [draw, fill=white, circle, inner sep=1.3pt] {};
\node at (-1,-1.3) {$\gamma_i$};
\node at (-2.75,1.25) {$\gamma_j$};
\node[blue] at (-1.55,0.95) {$\gamma_i'$};
\node[cyan] at (2,1) {$\gamma_j'$};
\draw[cyan, dashed] (-1.5,-0.3) .. controls (-0.8,1.1) and (0.7,1.1) .. (1,-0.4);
\node[cyan,scale=0.9] at (0.54,0) {$B'$};
\end{tikzpicture}
\qquad \qquad
\begin{tikzpicture}[scale=.7]
\draw (-1,3) node [draw, fill=white, circle, inner sep=1.3pt] {} -- (-4,-1) node [draw, fill=white, circle, inner sep=1.3pt] {} -- (2.5,-1) node [draw, fill=white, circle, inner sep=1.3pt] {};
\node at (-1.5,-1.35) {$\gamma_i$};
\node at (-1.25,2.05) {$\gamma_j$};
\node[blue] at (0.5,0.5) {$\gamma_i'$};
\node[cyan] at (-3,3) {$\gamma_j'$};
\draw [red, very thick](-3,1.4) -- (-4.5,2.5);
\draw [red, very thick](-3,1.4) .. controls (-3,0.5) and (-1,0) .. (-1,-1.5);
\draw [red, very thick](-3,1.4) .. controls (-2,1.5) and (0.5,0) .. (0.5,-1.5);
\draw [blue](-4,-1) .. controls (-2.5,-0.75) and (-4.35,0.9) .. (-3.35,1.9) .. controls (-2.35,2.9) and (0.5,-0.5) .. (2.5,-1);
\draw [cyan](-4,-1) .. controls (-3.5,0) and (-4.85,1) .. (-4,2) .. controls (-3.15,3) and (-2,2) .. (-1,3);
\draw [cyan, dashed](-3,0.35) .. controls (-3.65,1.35) and (-3.2,2.2) .. (-2.1,1.5);
\end{tikzpicture}
    \caption{Situations where $B_i$ essentially crosses $\gamma_j'$ (left) and $B_i$ crosses $\gamma_j$ essentially (right). Both pictures show the case where $B_i$ intersects $B_j$ only once.}
    \label{fig:biangle_crossing}
\end{figure}

\paragraph{\textbf{Proof of \cref{prop:spiralling_good_position}: realization of a good position.}}

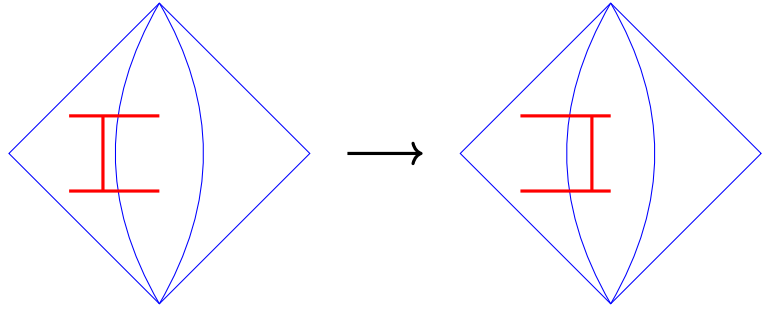
\begin{figure}[ht]
    \centering
\begin{tikzpicture}
\draw[blue] (2,0) -- (0,2) -- (-2,0) -- (0,-2) --cycle;
\draw[blue] (0,-2) to[bend left=30pt] (0,2);
\draw[blue] (0,-2) to[bend right=30pt] (0,2);
\draw[red,very thick] (-1.2,0.5) -- (0,0.5);
\draw[red,very thick] (-1.2,-0.5) -- (0,-0.5);
\draw[red,very thick] (-0.75,0.5) -- (-0.75,-0.5);

\draw[very thick,->] (2.5,0) -- (3.5,0);

\begin{scope}[xshift=6cm]
\draw[blue] (2,0) -- (0,2) -- (-2,0) -- (0,-2) --cycle;
\draw[blue] (0,-2) to[bend left=30pt] (0,2);
\draw[blue] (0,-2) to[bend right=30pt] (0,2);
\draw[red,very thick] (-1.2,0.5) -- (0,0.5);
\draw[red,very thick] (-1.2,-0.5) -- (0,-0.5);
\draw[red,very thick] (-0.25,0.5) -- (-0.25,-0.5);
\end{scope}
\end{tikzpicture}
    \caption{Pushing a ladder into a biangle.}
    \label{fig:tidying_up}
\end{figure}

By \cref{cor:minimal_position_unbounded}, we can place any spiralling diagram $\cW$ in a minimal position with the ideal arcs in the split triangulation $\widehat{\tri}$. 
Then by applying a finite number of H-moves and periodic H-moves, we can push all the \emph{ladders} as in \cref{fig:tidying_up} into biangles (the \lq\lq tidying up'' operation in \cite{FS20}). Assume that these moves can be no longer applied to $\cW$. We are going to prove that this position (the \lq\lq joy-sparking'' position in \cite{FS20}) is a good position with respect to $\widehat{\tri}$. 

For each $E \in e(\tri)$, the intersection $\cW\cap B_E$ is an unbounded essential web by \cref{eq:bigon_lemma_unbounded}, since it is in minimal position with the ideal arcs bounding $B_E$. For each $T \in t(\tri)$, we see that the only components of $\cW \cap T$ which do not touch all sides of $T$ are corner arcs by \cref{eq:bigon_lemma_unbounded}. Indeed, such a component can be viewed as a web in a biangle obtained from $T$ by collapsing one edge that is not touched, and the ladders in the periodic part have been pushed into the biangles neighboring to $T$. Let $W'$ be the web obtained from $\cW \cap T$ by removing these corner arcs, which must be finite. Then we see that $W'$ must be a honeycomb in the same as in the last part in the proof of \cite[Theorem 19]{FS20}. Hence $\cW \cap T$ is an unbounded rung-less essential web. The uniqueness statement follows from that of \cref{cor:minimal_position_unbounded}. Thus \cref{prop:spiralling_good_position} is proved.

\subsection{Proof of \texorpdfstring{\cref{prop:injectivity}}{Theorem 3.19}}\label{sec:traveler}
We are going to prove \cref{prop:injectivity} by following the strategy for the proof of \cite[Theorem 47]{DS20I}. We remark here that another proof of the latter statement is given in \cite[Section 14]{FS20} based on the graded skein algebras.

The main issue here is that we have fixed the periodic pattern of corner arcs in the reconstruction procedure. Hence the resulting spiralling diagram may differ from the original one by a periodic permutation of corner arcs (\lq\lq periodic local parallel-moves'') on each triangle. Our claim is that these local adjustments glue together to give a global parallel-move, thus we get equivalent $\fsl_3$-laminations. See \cref{fig:traveler_labelings} for a typical example.

By the $\bQ_{>0}$-equivariance, it suffices to consider integral unbounded $\fsl_3$-laminations, which are represented by signed non-elliptic webs. Therefore it suffices to prove the following statement:

\begin{prop}\label{prop:same_shear}
If two signed non-elliptic webs $W_1,W_2$ have the same shear coordinates $(\sfx_i)_{i \in I_\uf(\tri)}$ with respect to an ideal triangulation $\tri$, then $W_1$ and $W_2$ are equivalent as unbounded $\fsl_3$-laminations.
\end{prop}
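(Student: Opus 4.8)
The plan is to follow the strategy of \cite[Theorem 47]{DS20I}, with the bounded global picture replaced by the braid representation of a spiralling diagram. By \cref{prop:shear_well-defined} and \cref{prop:spiralling_good_position}, I may represent each $W_j$ ($j=1,2$) by its spiralling diagram $\cW_j$ in good position with respect to $\widehat{\tri}$ and pass to the braid representation $\cW_{j,\mathrm{br}}^\tri$, from which the shear coordinates are read off; moreover two good positions of the same lamination differ only by modified (periodic) H-moves, loop parallel-moves, boundary H-moves, and strict isotopies, all of which preserve the equivalence class. Hence it suffices to prove that $\cW_{1,\mathrm{br}}^\tri$ and $\cW_{2,\mathrm{br}}^\tri$ represent the same unbounded $\fsl_3$-lamination. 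First I would record that on each triangle $T \in t(\tri)$ the honeycomb component is essentially fixed by the local coordinates: its height and type (sink or source) are determined by $\sfx_T^\tri$, and, via the decomposition in \cref{rem:decomposition_honeycomb} together with the edge coordinates of the surrounding quadrilaterals and the honeycombs in the adjacent triangles, the division of its legs is forced as well. Thus the honeycomb parts of $\cW_{1,\mathrm{br}}^\tri$ and $\cW_{2,\mathrm{br}}^\tri$ coincide, and the only remaining freedom lies in the \emph{corner arcs} and in the order in which the \emph{curve components} cross inside each biangle.

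The heart of the argument is an unbounded analogue of the Fellow-Traveler Lemma \cite[Lemma 57]{DS20I}. I would define a \emph{traveler} to be a maximal oriented curve component of the braid representation (an honest arc or loop, or a bi-infinite spiral accumulating at a puncture) together with its itinerary, namely the sequence of edges of $\tri$ it crosses and the left/right turns it makes at the triangles it traverses. Using the edge coordinates $\sfx_{E,s}^\tri$ to count how the curve components pass each edge, and the face coordinates to locate the honeycombs, I would show via the cocycle property of relative intersection numbers (\cref{lem:relative_int_cocycle}) that the multiset of itineraries of the travelers is determined by $(\sfx_i)_{i \in I_\uf(\tri)}$. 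Consequently the travelers of $W_1$ and $W_2$ can be put in bijection so that paired travelers are \emph{fellow travelers}: they cross the same edges in the same order with the same turning pattern, and in particular spiral into the same punctures with matching signs, the signs being recovered from the spiralling direction by reversing the rule of \cref{fig:spiral}. The finitely many crossings inside biangles can then be made to match by the Reidemeister II-type isotopies of \cref{fig:square_resolution}, exactly as in Step 2 of the reconstruction.

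Once the travelers are matched, $\cW_{1,\mathrm{br}}^\tri$ and $\cW_{2,\mathrm{br}}^\tri$ differ on each triangle only by a \emph{periodic local parallel-move} of the corner arcs, that is, a shift of the periodic corner-arc family around each corner. The last step is to verify that these local shifts glue to a global parallel-move of the whole diagram, and I expect this to be the main obstacle: one must check that the shift prescribed on one side of an interior edge is compatible with the one prescribed on the other side, so that no monodromy obstruction arises as one travels around the surface. I would handle this precisely as the reconstruction forces it — the shift-invariance recorded in \cref{rem:gluing_symmetry} shows that the local ambiguity across each edge is exactly a simultaneous shift $(n_L^+, n_R^-) \mapsto (n_L^+ - k,\, n_R^- + k)$ and $(n_L^-, n_R^+) \mapsto (n_L^- - l,\, n_R^+ + l)$, and tracking a traveler that repeatedly follows corner arcs around a puncture or a boundary component shows that these shifts propagate consistently, assembling the local parallel-moves into finitely many applications of the loop parallel-move (E1) and an ambient isotopy. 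This would yield $W_1 \sim W_2$ as unbounded $\fsl_3$-laminations, completing the proof of \cref{prop:injectivity}.
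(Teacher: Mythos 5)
Your overall architecture is the one the paper uses: pass to the braid representation of the spiralling diagram in good position, prove an unbounded Fellow-Traveler Lemma matching the oriented curve components of $\picW{1}$ and $\picW{2}$, and then reduce to the Douglas--Sun adjustment argument. Two of your cited mechanisms, however, are not the ones that actually make these steps work, and one of them as stated would fail. First, the fellow-traveler matching is not obtained from the cocycle property of relative intersection numbers (\cref{lem:relative_int_cocycle} belongs to the good-position argument of \cref{prop:spiralling_good_position}); the paper instead assigns each traveler a \emph{traveler identifier} $(k,\epsilon)$ via the edge parametrizations of \cref{subsec:reconstruction} and proves (\cref{claim:turning_pattern}) that the identifier determines whether the traveler turns left, terminates at the honeycomb, or turns right in the next triangle, because these three regimes are cut out by inequalities in the shear coordinates coming from the gluing rule \eqref{eq:gluing_rule}. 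Your phrase ``count how the curve components pass each edge'' gestures at this, but without the identifier bookkeeping you cannot conclude that the bijection is well defined across consecutive biangles, which is the whole content of \cref{lem:traveler_lemma}.

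Second, and more seriously, the claim that the finitely many crossings inside biangles ``can be made to match by the Reidemeister II-type isotopies of \cref{fig:square_resolution}'' is not correct: a Reidemeister II-type isotopy only cancels a pair of crossings bounding a bigon, whereas the actual difficulty is that the unique crossing of a crossing shared route of two fellow travelers may sit in \emph{different} biangles for $W_1$ and $W_2$, possibly far apart along the shared route. Relocating it requires transporting the corresponding H-web across triangles by modified H-moves, which is the shared-route analysis of Douglas--Sun reproduced in \cref{fig:adjustment_intersection}; this in turn needs the preliminary normalization that both webs are reduced and left-oriented (so each shared route carries at most one crossing). Finally, the paper does not need your monodromy-propagation argument for gluing the local corner-arc shifts: once the crossings are in matching biangles, it simply observes that the finite sequences of oriented strands on every edge of $\widehat{\tri}$ coincide, which forces $\pictW{1}$ and $\pictW{2}$ to restrict to identical local pictures on each triangle and biangle and hence to be isotopic. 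Your appeal to \cref{rem:gluing_symmetry} is in the right spirit but, as written, leaves unverified precisely the consistency you flag as the main obstacle.
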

In what follows, the index $\nu \in \{1,2\}$ will always given to the objects associated to the web $W_\nu$. For a discrete subset $A \subset \bR$ (e.g., $A=\bZ$), we call a subset $I \subset A$ of the form $I=[a,b] \cap A$ for a (possibly unbounded) interval $[a,b] \subset \bR$ an \emph{interval} in $A$.

\bigskip
\paragraph{\textbf{Global pictures.}}
Let $W_1,W_2$ be as in \cref{prop:same_shear}. 
For $\nu=1,2$, we may assume that the associated spiralling diagram $\cW_\nu$ is placed in a good position with respect to the split triangulation $\widehat{\tri}$ by \cref{prop:spiralling_good_position}. Then its braid representative $\cW_{\nu,\mathrm{br}}^\tri$ has at most one honeycomb component on each triangle. 
Let $\Sigma^\circ$ be the holed surface, which is a compact surface obtained by removing a small open disk $D_T$ in each $T \in t(\Delta)$ from $\Sigma$. We may isotope the unique honeycomb component of $\cW_{\nu,\mathrm{br}}^\tri$ into the disk $D_T$, so that $\picW{\nu}:=\cW_{\nu,\mathrm{br}}^\tri \cap \Sigma^\circ$ is a collection of oriented curves, whose ends either lie on $\partial\Sigma^\circ$ or spiral around punctures. Following \cite{DS20I}, we call $\picW{\nu}$ the \emph{global picture} associated with $\cW_{\nu,\mathrm{br}}^\tri$. It is obvious to reconstruct the braid representative from its global picture. 
We call each oriented curve in $\langle \cW_\nu \rangle$ a \emph{traveler}. 

Recall from Step 1 and Step 2 in the reconstruction procedure (\cref{subsec:reconstruction}) that we can construct a braid representative $W_{\nu,\mathrm{br}}^\tri$ of signed web by replacing the spiralling ends with signed ends. We similarly define its global picture by $\pictW{\nu}:=W_{\nu,\mathrm{br}}^\tri \cap \Sigma^\circ$. For the scheme of our proof, see \cref{fig:proof_scheme_1}. Our strategy is as follows:
\begin{enumerate}
    \item Starting from the assumption in \cref{prop:same_shear}, we are going to make a correspondence between the topological data of global pictures $\pictW{1}$ and $\pictW{2}$ (namely, their travelers and intersection points among them) by an unbounded version of the \lq\lq Fellow-Traveler Lemma`` (\cite[Lemma 57]{DS20I}).
    \item From such a correspondence, we can describe a sequence of elementary moves relation $W_1$ and $W_2$ by just following the argument of Douglas--Sun \cite[Section 7.4--]{DS20I} for the bounded case. 
\end{enumerate}

\begin{figure}[ht]
\centering
\begin{tikzpicture}[scale=.8]
\node[draw,rectangle,rounded corners=0.7em,inner sep=0.6em,anchor=west](A) at (0,0) {Signed web $W_\nu$};
\node[draw,rectangle,rounded corners=0.7em,inner sep=0.6em,anchor=west](B) at (7,0) {Spiralling diagram $\cW_\nu$};
\node[draw,rectangle,rounded corners=0.7em,inner sep=0.6em,anchor=west](C) at (7,-3) {Global picture $\picW{\nu}$};
\node[draw,rectangle,rounded corners=0.7em,inner sep=0.6em,anchor=west](D) at (0,-3) {Global picture $\pictW{\nu}$};
\node(E) at (15,-3) {$\bZ^{I_\uf(\tri)}$};
\draw[->,thick,shorten >=0.5cm,shorten <=0.5cm] (A) -- (B);
\draw[<->,thick,shorten >=0.5cm,shorten <=0.5cm] (D) --node[midway,above]{$1:1$} node[midway,below=0.3em]{Steps 1 \& 2} (C);
\draw[<->,thick,shorten >=0.5cm,shorten <=0.5cm] (B) --node[midway,right]{$1:1$}++(0,-2.7);
\draw[<-,thick,shorten >=0.5cm,shorten <=0.5cm] (A) --node[midway,right]{Step 3}++(0,-2.7);
\draw[<->,thick,shorten >=0.5cm,shorten <=0.5cm] (C) --node[midway,above]{$1:1$} node[midway,below=0.3em]{\cref{lem:braid-shear}} (E);
\draw[->,dashed,->-,thick,rounded corners=1em] (A) --++(0,1) --node[midway,above]{$\sfx_\tri$}++(10.5,0) --++(0,-3) --($(E)+(0,1)$) --(E);
\draw[>-,dashed,-<-,thick,rounded corners=1em] (A) --++(-2.2,0) --++(0,-4) --node[midway,below]{$\xi_\tri$} ($(E)+(0,-1)$) --(E);
\end{tikzpicture}
    \caption{The scheme for a proof of \cref{prop:same_shear}.
    It is obvious that the three objects $\cW_\nu$,
    $\picW{\nu}$ and $\pictW{\nu}$
    are in one-to-one correspondences (up to strict isotopies), when one fixes a triangulation $\tri$. It will be proved that we get the identity (up to equivalence of signed webs) after going through the square.}
    \label{fig:proof_scheme_1}
\end{figure}
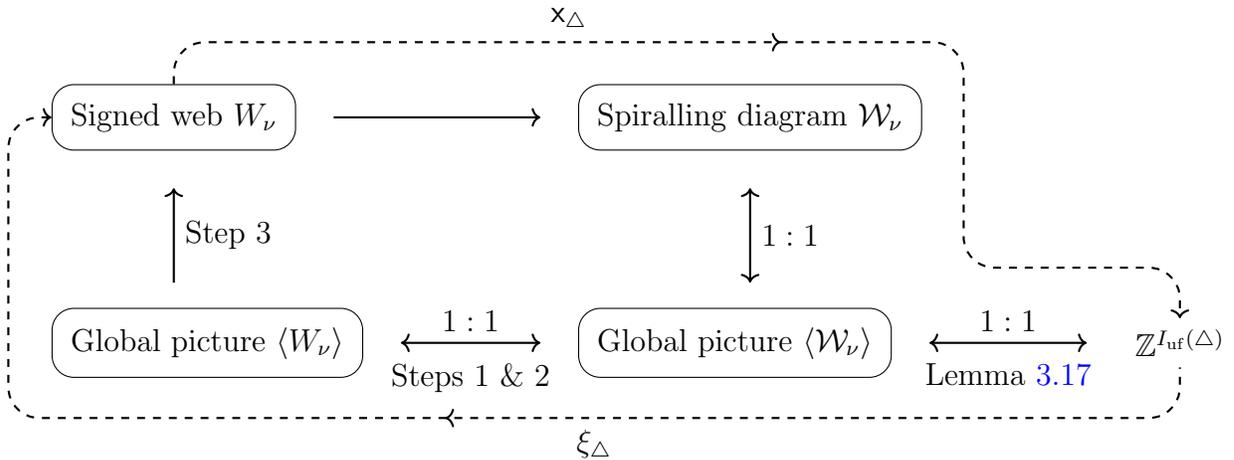


\bigskip
\paragraph{\textbf{Unbounded Fellow-Traveler Lemma.}}
For each traveler $\gamma$ in $\picW{\nu}$, fix a basepoint $x_0 \in \gamma$ so that it does not lie on any edge of $\tri$. Associated to such a based traveler $(\gamma,x_0)$ is the \emph{route} $(E_i)_{i \in I}$, where $I \subset \bZ$ is an interval and $E_i$ is the $i$-th edge of $\tri$ crossed by $\gamma$ listed in order according to the orientation of $\gamma$: the $0$-th edge is the first one encountered by $\gamma$ after passing $x_0$. 
We also define the \emph{turning pattern} $(\tau_i)_{i \in I} \subset \{L,S,R\}^I$ of the based traveler $(\gamma,x_0)$ as follows:
\begin{align*}
    \tau_i:=\begin{cases}
        L & \mbox{if $E_{i+1}$ follows $E_{i}$ in the counter-clockwise direction at their common endpoints},\\
        S & \mbox{if $\gamma$ ends at the boundary of $D_T$ right after passing $E_i$},\\
        R & \mbox{if $E_{i+1}$ follows $E_{i}$ in the clockwise direction at their common endpoints}.
    \end{cases}
\end{align*}
The following is immediately verified:

\begin{lem}\label{lem:traveler_types}
The topological types of the travelers $\gamma$ are distinguished by the periodicity of the data $(E_i,\tau_i)_{i \in I}$, as follows:
\begin{itemize}
    \item $\gamma$ is a bounded arc both of whose ends lie on $\partial\Sigma^\circ$ if $I \subset \bZ$ is bounded;
    \item $\gamma$ is a loop if $I=\bZ$ and the route is totally periodic $($namely, $E_{i+k}=E_i$ for some $k \in \bZ)$. Moreover, it is peripheral if the turning pattern $(\tau_i)_{i \in I}$ is constant;
    \item $\gamma$ has an end spiralling to a puncture $p$, say in the forward direction, if $I \subset \bZ$ is unbounded from above, the route $(E_i)_i$ is not totally periodic but 
    eventually periodic, and the turning pattern $(\tau_i)_i$ is eventually constant in the forward direction. 
\end{itemize}
\end{lem}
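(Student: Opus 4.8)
The plan is to read off each topological type directly from the combinatorics of the route, using that $\gamma$ is in good position with respect to $\widehat{\tri}$. Recall that an end of a traveler in $\picW{\nu}$ either terminates on $\partial\Sigma^\circ$ or spirals into a puncture; no other behavior is possible by the construction of the spiralling diagram (\cref{def:spiralling}) together with its good position (\cref{prop:spiralling_good_position}). Since $\gamma$ meets every edge of $\widehat{\tri}$ transversally and avoids the trivalent vertices, between two consecutive crossings $E_i$ and $E_{i+1}$ the traveler lies in a single region of $\Sigma^\circ \setminus e(\widehat{\tri})$, so once the basepoint is fixed the route $(E_i)_{i \in I}$ is a well-defined combinatorial invariant. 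The whole argument is then a matching of the three geometric behaviors of the ends against the three periodicity patterns of $I$.

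First I would dispose of the bounded case. If $\gamma$ is a compact arc with both ends on $\partial\Sigma^\circ$, then it crosses only finitely many edges and $I \subsetneq \bZ$ is bounded. Conversely, if $I$ is bounded then $\gamma$ has finitely many edge-crossings; after its last crossing it lies in a single region of $\Sigma^\circ \setminus e(\widehat{\tri})$ and, being unable to re-cross any edge, it must terminate on $\partial\Sigma^\circ$, since a spiralling or looping end would force infinitely many crossings. This gives the first bullet.

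For the loop and spiralling cases I would analyze the two-sided behavior of the route. A closed loop returns to its starting region after one full traversal, so its route satisfies $E_{i+k}=E_i$, where $k$ is the number of crossings in a period; hence $I=\bZ$ and the route is totally periodic. Peripherality of a loop is precisely the condition that it is homotopic into a punctured-disk neighborhood $D_p$, which at the level of the route translates into all the $E_i$ sharing the puncture $p$ as a common endpoint. Finally, for a spiralling end I would invoke the explicit logarithmic-spiral local model of \cref{def:spiralling}: inside $D_p$ the edges of $\widehat{\tri}$ met near $p$ cut a neighborhood of $p$ into sectors, and the spiral crosses these edges repeatedly in their cyclic order, so the forward route becomes periodic with period equal to the number of such edges, while every edge in the periodic tail has $p$ as an endpoint. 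A forward-spiralling, non-looping traveler therefore has $I$ unbounded above with a route that is eventually, but not totally, periodic in the forward direction; the converse follows because eventual forward periodicity together with the common puncture endpoint forces the forward end into the spiralling local model.

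The only genuinely delicate point, and the one I expect to be the main obstacle, is the spiralling case: one must verify that the logarithmic-spiral local model really does cross the star of edges at $p$ in a strictly periodic fashion, and that, conversely, eventual forward periodicity of the route cannot be realized by a traveler that instead escapes to $\partial\Sigma^\circ$. Both follow from the good-position hypothesis, which prevents backtracking and confines the periodic tail to the edges incident to $p$, but making this airtight requires attention to the local geometry near the puncture rather than a purely formal argument. This is exactly why the remaining cases are genuinely immediate while this one carries the content.
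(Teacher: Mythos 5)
Your argument is correct and is essentially the verification the paper has in mind: the paper offers no written proof at all (the lemma is introduced with ``The following is immediately verified''), and your case analysis---ends of a traveler either terminate on $\partial\Sigma^\circ$ or spiral into a puncture, and the three periodicity patterns of the route match these behaviors, with the logarithmic-spiral model giving the eventually periodic tail through the star of edges at $p$---is the natural unpacking of that claim. No discrepancy with the paper's approach.
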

We say that two travelers $\gamma^{(1)}$ in $\picW{1}$ and $\gamma^{(2)}$ in $\picW{2}$ are \emph{fellow-travelers} if their data $(E_i^{(1)},\tau_i^{(1)})_{i \in I_1}$ and $(E_i^{(2)},\tau_i^{(2)})_{i \in I_2}$ are the same, in the sense that there exists an order-preserving bijection $f: I_1 \to I_2$ such that $E_{f(i)}^{(2)}=E_i^{(1)}$, $\tau_{f(i)}^{(2)}=\tau_i^{(1)}$ for all $i \in I_1$. Notice that the notion of fellow-traveler does not depend on the choice of basepoints, and that two fellow-travellers have the same topological type by \cref{lem:traveler_types}.

\begin{lem}[Unbounded Fellow-Traveler Lemma, cf.~{\cite[Lemma 57]{DS20I}}]\label{lem:traveler_lemma}
Under the assumption of \cref{prop:same_shear}, there exists a bijection 
\begin{align*}
    \varphi: \{\text{non-peripheral travelers in $\picW{1}$}\} \xrightarrow{\sim} \{\text{non-peripheral travelers in $\picW{2}$}\}
\end{align*}
such that $\gamma$ and $\varphi(\gamma)$ are fellow-travelers. 
\end{lem}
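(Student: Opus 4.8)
The plan is to follow the strategy of Douglas--Sun for the bounded case (\cite[Lemma 57]{DS20I}), upgrading it to the spiralling setting by controlling the periodic parts of the travelers near punctures. Throughout, I place both $\cW_1,\cW_2$ in good position with respect to $\widehat{\tri}$ by \cref{prop:spiralling_good_position}, and I first reduce to the case in which the underlying signed webs $W_1,W_2$ are puncture-reduced: exactly as in the proof of \cref{lem:relative_intersection_maximal}, applying puncture $H$-moves alters the spiralling part by only finitely many $H$-webs and changes neither the shear coordinates nor the routes. Since by \cref{lem:traveler_types} the topological type of a traveler is read off from the periodicity of its route, it suffices to produce a bijection $\varphi$ matching non-peripheral travelers with identical routes; the coincidence of topological types is then automatic.

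The heart of the argument is a local-to-global analysis. First I would establish a purely local determination statement: for each interior edge $E$, the four shear coordinates $\sfx_{E,1},\sfx_{E,2},\sfx_{T_L},\sfx_{T_R}$ attached to the quadrilateral $Q_E$ determine the restriction $\cW_{\nu,\mathrm{br}}^\tri\cap Q_E$ up to periodic permutation of corner arcs, periodic $H$-moves, and strict isotopy. Concretely, the sign of $\sfx_T$ fixes the sink/source type and $|\sfx_T|$ the height of the honeycomb in each triangle $T$ (\cref{fig:shear_honeycomb}), while the edge coordinates fix, in minimal position, the turning pattern of the curve components crossing $E$ (\cref{fig:shear_curve}); minimality forbids the oppositely-turning bigons that would otherwise permit cancellation in the signed counts. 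This is a base-case computation on a disk with few marked points, essentially the content of the reconstruction of \cref{subsec:reconstruction}, and it is not circular because it is local. The decisive observation is that a corner arc always joins the same pair of edges of $\widehat{\tri}$ irrespective of its position within its corner; hence the route of any traveler is insensitive to the local corner-permutation ambiguity, and following each strand through successive triangles shows that the multiset of routes of $\picW{\nu}$ is determined by the shear coordinates.

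To convert this into the bijection $\varphi$ and to accommodate the infinitely many spiralling travelers, I would decompose each global picture into a compact \emph{core}, obtained by excising small disk neighbourhoods $D_p$ of the punctures from $\Sigma^\circ$, and the periodic \emph{tails} inside the $D_p$. On the core only finitely many crossings and traveler-segments occur, so the Douglas--Sun matching applies verbatim to the finite global pictures $\picW{1}\cap(\Sigma^\circ\setminus\bigcup_p D_p)$ and $\picW{2}\cap(\Sigma^\circ\setminus\bigcup_p D_p)$, yielding a route-preserving bijection of core segments. Inside each $D_p$ the good position provides a periodic ladder whose period and orientation pattern are fixed by the edge coordinates of the edges incident to $p$; matching these periodic blocks, and invoking the uniqueness of minimal position up to periodic $H$-moves and parallel moves from \cref{cor:minimal_position_unbounded}, extends the core bijection across the tails. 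Gluing the core and tail matchings along $\partial D_p$ produces $\varphi$, and \cref{lem:traveler_types} guarantees that $\gamma$ and $\varphi(\gamma)$ share a topological type.

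I expect the main obstacle to be the \emph{phase-matching} of the periodic tails with the core: a priori $W_1$ and $W_2$ could realize the same eventually-periodic route with different alignments between the finite transitional part of a spiralling traveler and its periodic tail, so that the naive finite induction of \cite[Section 7.4]{DS20I} fails to terminate. The remedy is to quantify the discrepancy through the relative intersection numbers $i(\cW_\nu;\gamma,\gamma')$ of \cref{sec:good_position}: the cocycle identity of \cref{lem:relative_int_cocycle} together with the boundedness of \cref{lem:relative_intersection_maximal} shows that each spiralling traveler stabilises into its periodic pattern after finitely many edge-crossings, and that this stabilisation point is detected by the shear coordinates at the incident edges. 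Once these transitional lengths are pinned down, the periodic tails of $\picW{1}$ and $\picW{2}$ agree as routes and the finite core induction closes. This is precisely where the unbounded case departs from \cite{DS20I}, and where the relative-intersection-number machinery built for \cref{prop:spiralling_good_position} does the essential work.
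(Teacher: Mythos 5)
Your overall instinct—that the matching must be produced biangle by biangle, with the shear coordinates dictating the turning pattern of each strand—is the right one, and it is indeed how the paper argues. But the way you propose to globalize it has a genuine gap. The paper does not use a core/tail decomposition at all: it assigns to every crossing of a traveler with an edge $E_Z$ a canonical \emph{identifier} $(k,\epsilon)\in(\tfrac12+\bZ)\times\{\pm\}$, using parametrizations of $E_Z$ anchored exactly as in the reconstruction of \cref{subsec:reconstruction} (the negative half-integers are reserved for the strands of corner arcs around the initial marked point of $E_Z$). The bijection is then simply $\gamma^{(1)}_E(k,\epsilon)\mapsto\gamma^{(2)}_E(k,\epsilon)$, and its well-definedness reduces to the single computation that the pairing across a biangle satisfies $k_L+k_R=\sfx_{E,s}+[\sfx_{T}]_+$, an equation involving only the (common) shear coordinates; this simultaneously fixes the ``phase'' of every traveler, finite or spiralling, with no separate treatment of periodic tails. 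Your substitute for this—pinning down ``transitional lengths'' via the relative intersection numbers of \cref{sec:good_position}—does not do the job: $i(\cW;\gamma,\gamma')$ is defined for pairs of isotopic ideal arcs and measures failure of minimal position; \cref{lem:relative_int_cocycle} and \cref{lem:relative_intersection_maximal} give existence of a minimal position, not an alignment between the transitional and periodic parts of a route, and nothing in your argument extracts such an alignment from the shear coordinates. Likewise, the Douglas--Sun induction does not apply ``verbatim'' to the truncated picture on $\Sigma^\circ\setminus\bigcup_p D_p$, because the circles $\partial D_p$ are not edges of $\widehat{\tri}$ and the strand data there is exactly what remains to be matched.

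A second, independent gap: your claim that ``the multiset of routes of $\picW{\nu}$ is determined by the shear coordinates'' is false without excluding peripheral travelers. Peripheral components contribute nothing to the shear coordinates, so $W_1$ and $W_2$ may carry different peripheral travelers; this is why the lemma is stated only for non-peripheral travelers, and why the paper must separately observe that a traveler whose identifier lies outside the common range $I_E^{\epsilon,(1)}\cap I_E^{\epsilon,(2)}$ is forced (by always turning the same way) to be peripheral. Your proposal never restricts the matching nor explains what happens to the excess strands, so the ``bijection'' you construct would not in general be a bijection. To repair the proof you should replace the core/tail gluing by the canonical identifier labeling and the explicit coordinate relation across each biangle, and add the argument disposing of the out-of-range (peripheral) travelers.
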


\paragraph{\textbf{Traveler identifier.}} In order to prove \cref{lem:traveler_lemma}, let us introduce another data that identifies the traveler and can be characterized by the shear coordinates. 
Let us consider two triangles $T_L,T_R \in t(\tri)$ that shares a biangle $B_E$. For $Z \in \{L,R\}$, let $E_Z$ denote the edge of $\widehat{\tri}$ shared by $T_Z$ and $B_E$. Let $S_{E_Z}^{+,(\nu)}$ (resp. $S_{E_Z}^{-,(\nu)}$) denote the set of strands on $E_Z$ incoming to (resp. outgoing from) the biangle $B_E$, which are given by the intersections of travelers in $\picW{\nu}$ and $E_Z$ for $\nu=1,2$. We endow $E_Z$ with the orientation induced from the triangle $T_Z$. 


Choose two orientation-preserving parametrizations of $E_Z$ in the same way as in \cref{subsec:reconstruction}. Namely, choose $\phi_{E_Z}^{\pm,(\nu)}: \bR \to E_Z$ so that the inverse image of $S_{E_Z}^{\pm,(\nu)}$ is an interval $I_{E_Z}^{\pm,(\nu)} \subset \frac{1}{2} + \bZ$, and $\phi_{E_Z}^{\pm,(\nu)}(\bR_{<0}) \cap S_{E_Z}^{\pm,(\nu)}$ consists of all the strands coming from the corner arcs around the initial marked point of $E_Z$. Let $f_{E_Z}^{\pm,(\nu)}: E_Z \to \bR$ be the inverse map of $\phi_{E_Z}^{\pm,(\nu)}$. 
For a traveler $\gamma^{(\nu)}$ in $\picW{\nu}$ that intersects with the edge $E_Z$ at a point $x$, its \emph{traveler identifier} at $E_Z$ is the pair $(k,\epsilon) \in (\frac{1}{2}+\bZ) \times \{\pm 1\}$ given by
\begin{align*}
    (k,\epsilon):=\begin{cases}
    (f_{E_Z}^{+,(\nu)}(x),+) & \mbox{if $\gamma$ enters $B_E$ from $E_Z$}, \\
    (f_{E_Z}^{-,(\nu)}(x),-) & \mbox{if $\gamma$ exits $B_E$ from $E_Z$}.
    \end{cases}
\end{align*}
Then we write $\gamma^{(\nu)}=\gamma^{(\nu)}_{E_Z}(k,\epsilon)$. 

\begin{ex}
In the example shown in \cref{fig:traveler_labelings}, we have
\begin{align*}
    &\gamma_1 = \gamma^{(\nu)}_{E_1}(5/2,-) = \gamma^{(\nu)}_{E_2}(-1/2,+) = \gamma^{(\nu)}_{E_3}(3/2,-),\\
    &\gamma_2 = \gamma^{(\nu)}_{E_1}(3/2,-) = \gamma^{(\nu)}_{E_2}(1/2,+),\\
    &\gamma_3 = \gamma^{(\nu)}_{E_1}(1/2,-) = \gamma^{(\nu)}_{E_2}(3/2,+),\\
    &\gamma_4 = \gamma^{(\nu)}_{E_1}(3/2,+) = \gamma^{(\nu)}_{E_2}(-5/2,-) = \gamma^{(\nu)}_{E_3}(5/2,+),\\
    &\gamma_5 = \gamma^{(\nu)}_{E_1}(1/2,+) = \gamma^{(\nu)}_{E_2}(-3/2,-) = \gamma^{(\nu)}_{E_3}(3/2,+),\\
    &\gamma_6 = \gamma^{(\nu)}_{E_1}(-1/2,+) = \gamma^{(\nu)}_{E_2}(-1/2,-) = \gamma^{(\nu)}_{E_3}(1/2,+),\\
    &\gamma_7 = \gamma^{(\nu)}_{E_1}(-3/2,+) = \gamma^{(\nu)}_{E_2}(1/2,+).
\end{align*}
\end{ex}

\begin{figure}[h]
    \centering
    \begin{tikzpicture}[scale=.57]
    \draw [blue](-1,2.5) .. controls (-2,1) and (-2,-1) .. (-1,-2.5);
    \draw [blue](-1,2.5) coordinate (v1) .. controls (0,1) and (0,-1) .. (-1,-2.5);
    \draw [blue](v1) -- (-5,0) -- (-1,-2.5) -- (3,0) -- (v1);
    \draw [blue] (-3,0) ellipse (0.5 and 0.5);
    \draw [blue] (1,0) ellipse (0.5 and 0.5);
    \draw [blue](-1,2.5) .. controls (0.8,3) and (2.75,1.85) .. (3,0);
    \draw [-<-, thick, red](-2.55,0.2) -- (0.5,0);
    \draw [->-={.43}{}, thick, red](-2.95,1.3) .. controls (-2,0.2) and (-0.1,0.2) .. (0.6,1.5) .. controls (0.8,1.9) and (0.5,2.2) .. (0.65,2.5);
    \draw [-<-={.42}{}, thick, red](-2.45,1.55) .. controls (-1.5,0.7) and (-0.55,0.75) .. (0.2,1.7) .. controls (0.45,2.05) and (0.9,2.05) .. (1.1,2.35);
    \draw [->-={.43}{}, thick, red](-1.95,1.85) .. controls (-1.45,1.2) and (-0.2,1.3) .. (0.15,2.6);
    \draw [-<-={.48}{}, thick, red](-2.55,-0.2) .. controls (-1.05,-0.3) and (-0.35,-0.55) .. (0.55,-1.55);
    \draw [->-={.3}{}, thick, red](1.35,0.35) -- (2.3,1.5);
    \draw [->-, thick, red](1.3,-0.4) -- (1.65,-0.85);
    \draw [-<-, thick, red](-3.15,0.5) -- (-3.45,0.95);
    \draw [-<-, thick, red](-3.45,0.2) -- (-3.85,0.7);
    \draw [-<-, thick, red](-3.45,-0.25) -- (-3.85,-0.7);
    \draw [-<-, thick, red](-3.15,-0.5) -- (-3.5,-0.95);
    \draw [->-={.43}{}, thick, red](-2.85,-1.35) .. controls (-1.5,-0.5) and (0.9,0.75) .. (1.7,2.05);
    \draw [->-, thick, orange](-2,-1.85) .. controls (-1.5,-1.35) and (-0.5,-1.35) .. (0,-1.85);
    
    \draw [blue](14,2.5) .. controls (13,1) and (13,-1) .. (14,-2.5);
    \draw [blue](14,2.5) coordinate (v1) .. controls (15,1) and (15,-1) .. (14,-2.5);
    \draw [blue](v1) -- (10,0) -- (14,-2.5) -- (18,0) -- (v1);
    \draw [blue] (12,0) ellipse (0.5 and 0.5);
    \draw [blue] (16,0) ellipse (0.5 and 0.5);
    \draw [blue](14,2.5) .. controls (15.8,3) and (17.75,1.85) .. (18,0);
    \draw [-<-, thick, red](12.45,0.2) -- (15.5,0);
    \draw [->-, thick, red](13.05,1.9) .. controls (13.55,1.2) and (14.35,1.35) .. (15.15,2.6);
    \draw [-<-={.48}{}, thick, red](12.45,-0.2) .. controls (13.95,-0.3) and (14.65,-0.55) .. (15.55,-1.55);
    \draw [->-={.3}{}, thick, red](16.35,0.35) -- (17.3,1.5);
    \draw [->-, thick, red](16.3,-0.4) -- (16.65,-0.85);
    \draw [-<-, thick, red](11.85,0.5) -- (11.55,0.95);
    \draw [-<-, thick, red](11.55,0.2) -- (11.15,0.7);
    \draw [-<-, thick, red](11.55,-0.25) -- (11.15,-0.7);
    \draw [-<-, thick, red](11.85,-0.5) -- (11.5,-0.95);
    \draw [->-={.43}{}, thick, red](12.15,-1.35) .. controls (13.5,-0.5) and (15.9,0.65) .. (16.7,2.05);
    \draw [->-, thick, red](12,1.25) .. controls (13,0) and (14,0.15) .. (15.7,2.5);
    \draw [-<-={.45}{}, thick, red](12.55,1.6) .. controls (13.75,0) and (14.6,-0.05) .. (16.2,2.3);
    
    \draw [blue](6.5,-1) .. controls (5.5,-2.5) and (5.5,-4.5) .. (6.5,-6);
    \draw [blue](6.5,-1) coordinate (v1) .. controls (7.5,-2.5) and (7.5,-4.5) .. (6.5,-6);
    \draw [blue](v1) -- (2.5,-3.5) -- (6.5,-6) -- (10.5,-3.5) -- (v1);
    \draw [blue] (4.5,-3.5) ellipse (0.5 and 0.5);
    \draw [blue] (8.5,-3.5) ellipse (0.5 and 0.5);
    \draw [blue](6.5,-1) .. controls (8.3,-0.5) and (10.25,-1.65) .. (10.5,-3.5);
    \draw [->-, thick, red](5.55,-1.6) .. controls (6.05,-2.3) and (6.85,-2.15) .. (7.7,-0.9);
    \draw [->-, thick, red](4.65,-4.85) .. controls (6,-4) and (8.4,-2.85) .. (9.2,-1.45);
    \draw [->-, thick, red](4.5,-2.25) .. controls (5.5,-3.5) and (6.5,-3.35) .. (8.5,-1.1);
    \draw [->-, thick, orange](5.5,-5.35) .. controls (6,-4.85) and (7,-4.85) .. (7.5,-5.35);
    
    \draw [blue](6.5,6) .. controls (5.5,4.5) and (5.5,2.5) .. (6.5,1);
    \draw [blue](6.5,6) coordinate (v1) .. controls (7.5,4.5) and (7.5,2.5) .. (6.5,1);
    \draw [blue](v1) -- (2.5,3.5) -- (6.5,1) -- (10.5,3.5) -- (v1);
    \draw [blue] (4.5,3.5) ellipse (0.5 and 0.5);
    \draw [blue] (8.5,3.5) ellipse (0.5 and 0.5);
    \draw [blue](6.5,6) .. controls (8.3,6.5) and (10.25,5.35) .. (10.5,3.5);
    \draw [-<-, thick, red](4.95,3.7) -- (8,3.5);
    \draw [-<-, thick, red](4.95,3.3) .. controls (6.45,3.2) and (7.15,2.95) .. (8.05,1.95);
    \draw [->-={.3}{}, thick, red](8.85,3.85) -- (9.8,5);
    \draw [->-, thick, red](8.8,3.1) -- (9.15,2.65);
    \draw [-<-, thick, red](4.35,4) -- (4.05,4.45);
    \draw [-<-, thick, red](4.05,3.7) -- (3.65,4.2);
    \draw [-<-, thick, red](4.05,3.25) -- (3.65,2.8);
    \draw [-<-, thick, red](4.35,3) -- (4,2.55);
    \draw [-<-={.4}{}, thick, red](5.05,5.1) .. controls (6.25,3.75) and (7.1,3.7) .. (8.7,5.8);
    
    \node[scale=.65] at (5.65,2.75) {0 $-$}[scale=.65];
    \node[scale=.65] at (5.6,3.45) {1 $-$};
    \node[scale=.65] at (5.65,4) {2 $-$};
    \node[scale=.65] at (5.85,4.85) {3 $-$};
    \node[scale=.65] at (7.35,4.8) {$-$ $-1$};
    \node[scale=.65] at (7.4,3.9) {$-$ 0};
    \node[scale=.65] at (7.4,3.1) {$-$ 1};
    \node[scale=.65] at (7.2,2.15) {$-$ 2};
    \node[scale=.65,rotate=60] at (9.6,4.05) {$-$};
    \node[scale=.65,rotate=60] at (8.65,4.65) {$-$};
    \node[scale=.65,rotate=60] at (7.5,5.35) {$-$};
    \node[scale=.65] at (9.75,4.25) {0};
    \node[scale=.65] at (8.8,4.85) {1};
    \node[scale=.65] at (7.65,5.55) {2};
    
    \node[scale=.65] at (5.85,-5.35) {$-2$ $-$};
    \node[scale=.65] at (5.6,-4.65) {$-1$ $-$};
    \node[scale=.65] at (5.6,-3.55) {0 $-$};
    \node[scale=.65] at (5.7,-2.4) {1 $-$};
    \node[scale=.65] at (7.5,-2.85) {$-$ $-1$};
    \node[scale=.65] at (7.35,-4.15) {$-$ 0};
    \node[scale=.65] at (7,-5.35) {$-$ 1};
    
    \node[rotate=60,scale=.65] at (8.95,-2.55) {$-$};
    \node[scale=.65,rotate=60] at (8.2,-2.05) {$-$};
    \node[scale=.65,rotate=60] at (7.5,-1.65) {$-$};
    \node[scale=.65] at (9.1,-2.35) {0};
    \node[scale=.65] at (8.35,-1.85) {1};
    \node[scale=.65] at (7.65,-1.45) {2};
    \node[scale=.65] at (7.3,-2.1) {$-$ $-2$};
    \node[scale=.65] at (6.8,-1.55) {$-$};
    \node[scale=.65] at (6,-1.6) {2 $-$};
    \node[scale=.65,rotate=60] at (7.05,-1.35) {$-$};
    
    \node [scale=.9,red] at (8.85,6.05) {$\gamma_1$};
    \node [scale=.9,red] at (8.3,3.5) {$\gamma_2$};
    \node [scale=.9,red] at (8.25,1.7) {$\gamma_3$};
    \node [scale=.9,red] at (7.75,-0.65) {$\gamma_4$};
    \node [scale=.9,red] at (8.6,-0.85) {$\gamma_5$};
    \node [scale=.9,red] at (9.3,-1.2) {$\gamma_6$};
    \node [scale=.9,orange] at (7.7,-5.6) {$\gamma_7$};
    \node [scale=.8,blue] at (5.75,2) {$E_1$};
    \node [scale=.8,blue] at (6.7,5.15) {$E_2$};
    \node [scale=.8,blue] at (9.6,-2.65) {$E_3$};
    \node [scale=.9] at (-4,-2) {$\picW{1}$};
    \node [scale=.9] at (17,-2) {$\picW{2}$};
    \end{tikzpicture}
    \caption{Example of local picture of a pair $(\picW{1},\picW{2})$ having the same shear coordinates. Here the top (resp. bottom) picture shows the collection of oriented curves going through the central biangle from the right to the left (resp. from the left to the right), which is common for $\picW{1}$ and $\picW{2}$ except for $\gamma_7$.}
    \label{fig:traveler_labelings}
\end{figure}
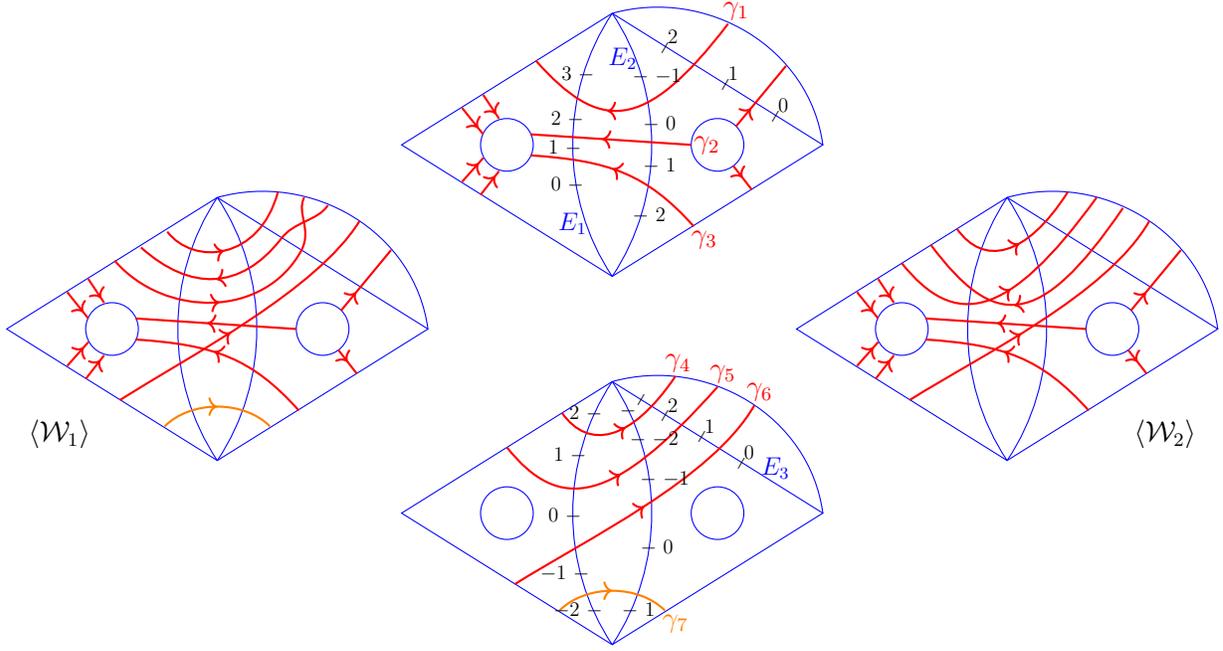

\begin{lem}\label{lem:shear_identifier}
Let $\gamma^{(\nu)}$ be a traveler in $\picW{\nu}$ that passes through $B_E$ from $E_L$ to $E_R$. Let $(k_L,+)$ and $(k_R,-)$ be its traveler identifier at $E_L$ and $E_R$, respectively. Then we have
\begin{align*}
    k_L+k_R=\sfx_{\bott}(W_\nu)+[\sfx_{T_R}(W_\nu)]_+.
\end{align*}
If $\gamma^{(\nu)}$ passes through $B_E$ from $E_R$ to $E_L$, then its traveler identifiers $(k_R,+)$ and $(k_L,+)$ satisfy $k_L+k_R=\sfx_{\topp}(W_\nu)+[\sfx_{T_L}(W_\nu)]_+$.
\end{lem}

\begin{proof}
Just observe that our choice of parametrizations $\phi_{E_Z}^{\pm,(\nu)}$ is the same as in the reconstruction procedure (\cref{subsec:reconstruction}), except for the difference that we do not necessarily have an infinite number of corner arcs here. 
Then the assertion is obtained from the gluing rule \eqref{eq:gluing_rule}. 
\end{proof}

\begin{lem}\label{lem:identifier_traveler}
The traveler identifiers characterizes the traveler and its topological type. Namely,
\begin{enumerate}
    \item The traveler identifier determines the data $(E_i,\tau_i)_{i \in I}$ for each traveler. 
    \item If $\gamma^{(1)}_{E}(k,\epsilon)=\gamma^{(1)}_{E'}(k',\epsilon')$ for two edges $E,E'$ of the split triangulation $\widehat{\tri}$, then $\gamma^{(2)}_{E}(k,\epsilon)=\gamma^{(2)}_{E'}(k',\epsilon')$ holds. 
\end{enumerate}
\end{lem}

\begin{proof}
(1): The initial edge $E_0$ is determined from the basepoint $x_0$. Assume that we have determined the data $E_i$ for $0 \leq i\leq k$ and $\tau_j$ for $0\leq j\leq k-1$. Let $E:=E_k$. Then $E_{k-1}$ and $\tau_{k-1}$ tell us from which direction our traveler passes through the biangle $B_E$. Assume it is from $T_L$ to $T_R$, without loss of generality. Then by \cref{lem:shear_identifier}, we have $k_R=\sfx_{\bott}(W_\nu)+[\sfx_{T_R}(W_\nu)]_+ - k_L$. Then by the choice of the parameterization $\phi_{E_R}^{-,(\nu)}$, we see that 
\begin{align*}
    \tau_k=\begin{cases}
        L & \mbox{if $k_R <0$}, \\
        S & \mbox{if $0 < k_R < [\sfx_{T_R}(W_\nu)]_+$},\\
        R & \mbox{if $k_R > [\sfx_{T_R}(W_\nu)]_+$}.
    \end{cases}
\end{align*}
See \cref{fig:enter-label}. 
Moreover, the pattern $\tau_k$ tells us the next edge $E_{k+1}$ or its absence. 

(2): Recall that the shear coordinates of $W_1$ and $W_2$ are the same. Since the reconstruction given in (1) is characterized by the shear coordinates, the assertion follows.
\end{proof}

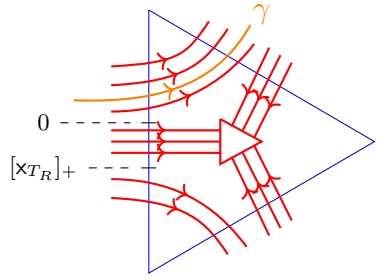
\begin{figure}[ht]
    \centering
    \begin{tikzpicture}
\draw [blue](-1.5,1.75) coordinate (v1) {} -- (-1.5,-1.75) -- (1.5,0) -- (v1);
\draw [red, thick](-0.55,0.3) coordinate (v2) {} -- (-0.55,-0.3) -- (0,0) -- (v2);
\draw [red, thick, ->-](-2,0) -- (-0.55,0);
\draw [red, thick, ->-](-2,-0.15) -- (-0.55,-0.15);
\draw [red, thick, ->-](-2,0.15) -- (-0.55,0.15);
\draw [red, thick, -<-](-0.1,0.05) -- (0.4,1.05);
\draw [red, thick, -<-](-0.25,0.15) -- (0.25,1.15);
\draw [red, thick, -<-](-0.4,0.2) -- (0.1,1.25);
\draw [red, thick, ->-](-2,0.4) .. controls (-1,0.4) and (-0.4,0.9) .. (-0.1,1.3);
\draw [red, thick, ->-](-2,0.75) .. controls (-1,0.8) and (-0.75,1.05) .. (-0.45,1.5);
\begin{scope}[yscale=-1]
\draw [red, thick, -<-](-0.1,0.05) -- (0.4,1.05);
\draw [red, thick, -<-](-0.25,0.15) -- (0.25,1.15);
\draw [red, thick, -<-](-0.4,0.2) -- (0.1,1.25);
\draw [red, thick, ->-](-2,0.5) .. controls (-1,0.5) and (-0.5,1) .. (-0.2,1.4);
\draw [red, thick, ->-](-2,0.75) .. controls (-1,0.8) and (-0.75,1.05) .. (-0.45,1.5);
\end{scope}
\draw [red, thick, ->-](-2,1) .. controls (-1.1,1.05) and (-1,1.15) .. (-0.7,1.6);
\draw [orange, thick, ->-](-2.5,0.55) .. controls (-1.2,0.5) and (-0.55,0.85) .. (-0.15,1.55);
\draw (-1.6,0.25) node (v6) {} -- (-1.4,0.25);
\draw (-1.6,-0.35) node (v4) {} -- (-1.4,-0.35);
\node (v5) at (-2.9,0.25) {\scriptsize $0$};
\node (v3) at (-2.9,-0.35) {\scriptsize $[\sfx_{T_R}]_+$};
\draw [dashed] (v3) edge (v4);
\draw [dashed] (v5) edge (v6);
\node [orange] at (0,1.7) {$\gamma$};
\end{tikzpicture}
    \caption{The turning pattern determined by the value of $k_R$.}
    \label{fig:enter-label}
\end{figure}

\begin{proof}[Proof of \cref{lem:traveler_lemma}]
Define the bijection $\varphi$ by 
\begin{align}\label{eq:traveler_correspondence}
    \varphi: \gamma^{(1)}_{E}(k,\epsilon) \mapsto \gamma^{(2)}_{E}(k,\epsilon).
\end{align}
It is well-defined by \cref{lem:identifier_traveler} (2), and preserves the topological types of travelers by \cref{lem:identifier_traveler} (1). 
\end{proof}

\begin{rem}
From the proof of \cref{lem:shear_identifier}, a traveler $\gamma=\gamma_E^{(1)}(k,\epsilon)$ with 
$k \in I_E^{\epsilon,(1)} \setminus I_E^{\epsilon,(2)}$
must be peripheral. For example, if $\gamma=\gamma_{E_L}^{(1)}(k_L,+)$ and $k_L < \min I_{E_L}^{\epsilon,(2)}$ is a lower excess, then it must have $\tau_k=R$, since otherwise it has a non-trivial contribution to the edge coordinates. It follows that such a traveler also has an identifier of lower excess in the next biangle, concluding $\tau_k=R$ for all $k\in \bZ$ inductively for both directions. 
See $\gamma_7$ in \cref{fig:traveler_labelings} for an example.
\end{rem}

\paragraph{\textbf{Correspondence between the global pictures $\pictW{1}$ and $\pictW{2}$.}}
Let $W_1$ and $W_2$ be as in \cref{prop:same_shear}. 
Then by the unbounded fellow-traveler Lemma, we have a bijective correspondence $\varphi$ between the travelers in $\picW{1}$ and $\picW{2}$. 
Let us consider the global pictures $\pictW{1}$ and $\pictW{2}$, and call each oriented curve in $\pictW{\nu}$ a traveler again. Since the bijection $\varphi$ preserves the spiralling types of travelers in $\picW{\nu}$, it induces a bijection
\begin{align}\label{eq:traveler_correspondence_finite}
    \varphi: \{\text{travelers in $\pictW{1}$}\} \xrightarrow{\sim} \{\text{travelers in $\pictW{2}$}\}.
\end{align}
Here we make the intersection of each traveler in $\pictW{\nu}$ with each edge of $\widehat{\tri}$ minimal, by applying the same isotopy for each pair $(\gamma,\varphi(\gamma))$ of travelers. 
Notice that each traveler in $\pictW{\nu}$ is either a closed loop or a compact arc, and their intersections are finite. 
Therefore we can proceed by applying Douglas--Sun's argument \cite[Section 7.4--]{DS20I} for the rest of discussion. 

Recall the notion of a \emph{shared route} of two ordered travelers $(\gamma,\gamma')$ from \cite[Definition 59]{DS20I}. Roughly speaking, it is a maximal interval shared by the routes of two travelers with opposite orientations. The definition is extended for the travelers in $\pictW{\nu}$ in a straightforward way. 
A shared route is either \emph{crossing} or \emph{non-crossing}. A non-crossing shared route is said to be \emph{left-oriented} if one traveler is always seen on the left from the other traveler. A crossing shared route is said to be \emph{left-oriented} if the same situation occurs near its source-end (\cite[Definition 61]{DS20I}). 

By applying the boundary and puncture H-moves if necessary, we may assume that these webs are reduced
Then we see that each shared route has at most one intersection point (cf.~\cite[Lemma 60]{DS20I}). Indeed, two intersecting travelers cannot have a common endpoint at a puncture, since such a situation would come from a puncture H-face. Hence the situation regarding the crossing shared routes is exactly the same as in the bounded case.  
From these observations, together with the bijection \eqref{eq:traveler_correspondence_finite}, we get:

\begin{lem}[cf.~{\cite[Corollary 64]{DS20I}}]
For $\nu=1,2$, let $P_{\pictW{\nu}}$ denote the set of intersections of travelers in $\pictW{\nu}$. Then we have a bijection
\begin{align*}
    \varphi_{\mathrm{int}}: P_{\pictW{1}} \xrightarrow{\sim} P_{\pictW{2}}
\end{align*}
such that the unique intersection point $p$ of a left-oriented shared route of two travelers $(\gamma,\gamma')$ in $\pictW{1}$ is sent to the unique intersection point $\varphi_{\mathrm{int}}(p)$ of the corresponding shared route of $(\varphi(\gamma),\varphi(\gamma'))$ in $\pictW{2}$.
\end{lem}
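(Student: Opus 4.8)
The plan is to establish the bijection $\varphi_{\mathrm{int}}: P_{\pictW{1}} \xrightarrow{\sim} P_{\pictW{2}}$ by leveraging the traveler correspondence $\varphi$ from \eqref{eq:traveler_correspondence_finite} together with a systematic analysis of shared routes, following the structure of \cite[Corollary 64]{DS20I} but adapted to our unbounded setting. First I would recall that, after applying boundary and puncture H-moves, we may assume both $W_1$ and $W_2$ are reduced, so that each traveler in $\pictW{\nu}$ is either a closed loop or a compact arc with only finitely many mutual intersections. The key preliminary observation, already noted in the excerpt, is that in the reduced setting each shared route carries \emph{at most one} intersection point: two intersecting travelers cannot share an endpoint at a puncture, since this would produce a puncture H-face, which is excluded. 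This reduces the situation for crossing shared routes to exactly the bounded case handled by Douglas--Sun.

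The main step is to define $\varphi_{\mathrm{int}}$ on intersection points. Given an intersection point $p \in P_{\pictW{1}}$, it lies on a unique left-oriented shared route of an ordered pair $(\gamma,\gamma')$ of travelers in $\pictW{1}$ (here the orientation convention fixes the ordering, so that $p$ is unambiguously associated with one shared route). I would send $p$ to the unique intersection point of the corresponding shared route of $(\varphi(\gamma),\varphi(\gamma'))$ in $\pictW{2}$. To see this is well-defined and bijective, I would argue that $\varphi$ induces a bijection between the shared routes in $\pictW{1}$ and those in $\pictW{2}$: a shared route is determined by the pair of travelers together with the maximal common sub-interval of their routes, and since $\varphi$ preserves routes (by the fellow-traveler property of \cref{lem:traveler_lemma}), it matches shared routes with their crossing/non-crossing type and left-/right-orientation intact. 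The fact that a shared route is crossing (hence contributes an intersection point) rather than non-crossing is a topological property determined entirely by the route data near the source-end, which $\varphi$ preserves; therefore $\varphi$ restricts to a bijection on crossing shared routes, and composing with the at-most-one-intersection principle yields the bijection $\varphi_{\mathrm{int}}$ on intersection points.

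The forward and inverse constructions are manifestly symmetric in the roles of $W_1$ and $W_2$ (using $\varphi^{-1}$ in place of $\varphi$), which gives bijectivity immediately once well-definedness is checked in one direction. I would verify that applying $\varphi$ then $\varphi^{-1}$ returns each intersection point to itself, which follows because both the traveler correspondence and the shared-route matching are inverse-compatible.

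The hard part will be ensuring that the notion of shared route, and in particular the crossing-versus-non-crossing and the left-orientation dichotomy, transfers correctly across the unbounded ends. Whereas in the bounded case of \cite{DS20I} every route is a finite sequence and the source-end of a crossing shared route is unambiguous, here a traveler may spiral into a puncture, so a shared route could in principle be a semi-infinite interval. I would handle this by appealing to \cref{lem:traveler_types}: the route of a spiralling traveler is \emph{eventually periodic}, and two distinct travelers sharing an eventually-periodic tail would have to become parallel near the puncture, which (after reduction) cannot involve an honest crossing in the periodic part. Thus any crossing shared route has a well-defined source-end lying in the non-periodic (essentially compact) portion of the diagram, and the orientation test of \cite[Definition 61]{DS20I} applies verbatim. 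Once this localization of crossings to the compact part is justified, the remaining bookkeeping is exactly Douglas--Sun's argument and requires no new ideas.
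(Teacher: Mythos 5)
Your proof is correct and follows essentially the same route as the paper: reduce $W_1,W_2$ by boundary/puncture H-moves, observe that each shared route carries at most one intersection point because two intersecting travelers cannot share an endpoint at a puncture (that would be a puncture H-face), and then transport intersection points along the shared-route correspondence induced by the fellow-traveler bijection $\varphi$, deferring the remaining bookkeeping to Douglas--Sun. The only cosmetic difference is that you phrase the puncture issue in terms of eventually-periodic spiralling tails, whereas in $\pictW{\nu}$ the travelers are already compact arcs or loops and the paper handles the same concern directly via the puncture-H-face exclusion; both arguments localize all crossings to the compact part in the same way.
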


\paragraph{\textbf{Proof of \cref{prop:same_shear}: a sequence of elementary moves relating $W_1$ and $W_2$.}}
As in the previous paragraph, we may assume that $W_1$ and $W_2$ reduced
by applying the boundary/puncture H-moves. 
Moreover by applying the loop parallel-moves and the arc parallel-moves (\cref{lem:arc-parallel-moves}), we may assume that both $W_1$ and $W_2$ are \emph{left-oriented} in the sense that for each pair of parallel loop or arc components with opposite orientations, one is always seen on the left from the other. It includes the \emph{closed-left-oriented} condition (\cite[Definition 62]{DS20I}). 
Now we are going to see that the intersection points $p \in P_{\pictW{1}}$ and $\varphi_{\mathrm{int}}(p) \in P_{\pictW{2}}$ can be adjusted to a common position by a sequence of modified H-moves: see \cref{fig:adjustment_intersection}.
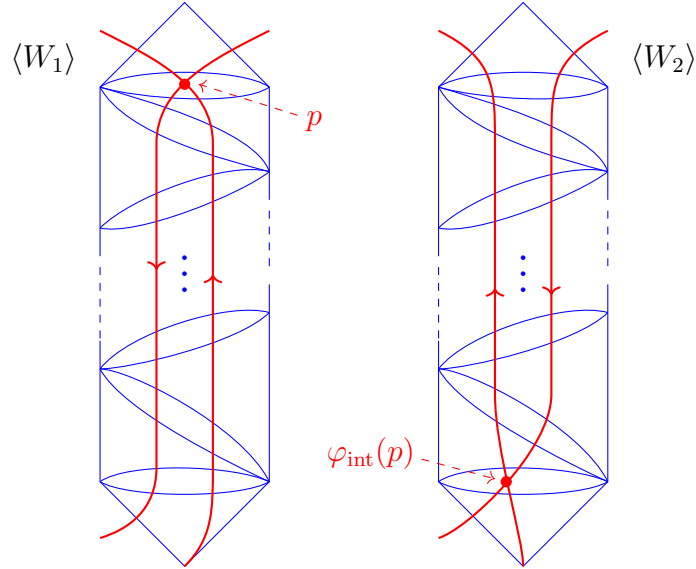
\begin{figure}[h]
    \centering
    \begin{tikzpicture}[scale=.75]
    \draw [blue](-2.5,-1.5) node (v1) {} -- (-2.5,1.5) -- (-1,3) -- (0.5,1.5) -- (0.5,-0.5) node (v2) {};
    \draw [blue](-2.5,-3) -- (-2.5,-5.5) -- (-1,-7) -- (0.5,-5.5) -- (0.5,-2) node (v3) {};
    \draw [blue](-2.5,1.5) .. controls (-2,1.85) and (0,1.85) .. (0.5,1.5);
    \draw [blue](-2.5,1.5) .. controls (-1.85,1.3) and (0,1.15) .. (0.5,1.5);
    \draw [blue](-2.5,1.5) .. controls (-1.85,1.3) and (0.25,0.75) .. (0.5,0);
    \draw [blue](0.5,0) .. controls (-0.25,0.2) and (-2.05,0.65) .. (-2.5,1.5);
    \draw [blue](-2.5,-1) .. controls (-1.95,-0.45) and (-0.25,0.2) .. (0.5,0);
    \draw [blue](-2.5,-1) .. controls (-1.85,-1.15) and (0,-0.4) .. (0.5,0);
    \draw [blue](-2.5,-5.5) .. controls (-2,-5.8) and (0,-5.8) .. (0.5,-5.5);
    \draw [blue](-2.5,-5.5) .. controls (-2,-5.15) and (-0.15,-5.2) .. (0.5,-5.5);
    \draw [blue](0.5,-5.5) .. controls (-0.15,-5.2) and (-2.1,-4.25) .. (-2.5,-3.5);
    \draw [blue](-2.5,-3.5) .. controls (-2,-3.5) and (0,-4.5) .. (0.5,-5.5);
    \draw [blue](-2.5,-3.5) .. controls (-2,-3.5) and (-0.05,-3.05) .. (0.5,-2.5);
    \draw [blue](-2.5,-3.5) .. controls (-2,-2.85) and (-0.2,-2.3) .. (0.5,-2.5);
    \draw [blue, dashed](v1) -- (-2.5,-3);
    \draw [blue, dashed](v2) -- (v3);
    \node[blue, scale=1.5] at (-1,-1.6) {$\vdots$};
    \draw [red, thick, ->-](0.5,2.5) .. controls (-0.5,2) and (-1.5,1.5) .. (-1.5,0.5) .. controls (-1.5,-0.5) and (-1.5,-4.5) .. (-1.5,-5.25) .. controls (-1.5,-6) and (-2,-6.35) .. (-2.5,-6.5);
    \draw [red, thick, ->-](-1,-7) .. controls (-0.5,-6.5) and (-0.5,-6) .. (-0.5,-5.25) .. controls (-0.5,-4.5) and (-0.5,-0.5) .. (-0.5,0.5) .. controls (-0.5,1.5) and (-1.5,2) .. (-2.5,2.5);
    
    \draw [blue](3.5,-1.5) node (v1) {} -- (3.5,1.5) -- (5,3) -- (6.5,1.5) -- (6.5,-0.5) node (v2) {};
    \draw [blue](3.5,-3) -- (3.5,-5.5) -- (5,-7) -- (6.5,-5.5) -- (6.5,-2) node (v3) {};
    \draw [blue](3.5,1.5) .. controls (4,1.85) and (6,1.85) .. (6.5,1.5);
    \draw [blue](3.5,1.5) .. controls (4.15,1.3) and (6,1.15) .. (6.5,1.5);
    \draw [blue](3.5,1.5) .. controls (4.15,1.3) and (6.25,0.75) .. (6.5,0);
    \draw [blue](6.5,0) .. controls (5.75,0.2) and (3.95,0.65) .. (3.5,1.5);
    \draw [blue](3.5,-1) .. controls (4.05,-0.45) and (5.75,0.2) .. (6.5,0);
    \draw [blue](3.5,-1) .. controls (4.15,-1.15) and (6,-0.4) .. (6.5,0);
    \draw [blue](3.5,-5.5) .. controls (4,-5.8) and (6,-5.8) .. (6.5,-5.5);
    \draw [blue](3.5,-5.5) .. controls (4,-5.15) and (5.85,-5.2) .. (6.5,-5.5);
    \draw [blue](6.5,-5.5) .. controls (5.85,-5.2) and (3.9,-4.25) .. (3.5,-3.5);
    \draw [blue](3.5,-3.5) .. controls (4,-3.5) and (6,-4.5) .. (6.5,-5.5);
    \draw [blue](3.5,-3.5) .. controls (4,-3.5) and (5.95,-3.05) .. (6.5,-2.5);
    \draw [blue](3.5,-3.5) .. controls (4,-2.85) and (5.8,-2.3) .. (6.5,-2.5);
    \draw [blue, dashed](v1) -- (3.5,-3);
    \draw [blue, dashed](v2) -- (v3);
    \node[blue, scale=1.5] at (5,-1.6) {$\vdots$};
    \draw [red, thick, ->-](6.5,2.5) .. controls (5.5,2) and (5.5,1.5) .. (5.5,0.5) .. controls (5.5,-0.5) and (5.5,-2.95) .. (5.5,-3.95) .. controls (5.5,-4.95) and (4,-6.35) .. (3.5,-6.5);
    \draw [red, thick, ->-](5,-7) .. controls (5,-6.5) and (4.5,-4.95) .. (4.5,-3.95) .. controls (4.5,-2.95) and (4.5,-0.5) .. (4.5,0.5) .. controls (4.5,1.5) and (4.5,2) .. (3.5,2.5);
    
    \node[red] at (1.3,0.9) {$p$};
    \node[red] at (2.3,-5) {$\varphi_{\mathrm{int}}(p)$};
    \node at (-3.5,2) {$\langle W_1 \rangle$};
    \node at (7.5,2) {$\langle W_2 \rangle$};
    \node [red, fill, circle, inner sep = 1.5] at (-1,1.55) {};
    \node [red, fill, circle, inner sep = 1.5] at (4.7,-5.5) {};
    \draw [red, dashed, ->](3.2,-5.1) -- (4.5,-5.45);
    \draw [red, dashed, ->](1,1) -- (-0.75,1.55);
    \end{tikzpicture}
    \caption{Adjustment of intersection points. Here only the difference from the situation in \cite{DS20I} is that some of the travelers can end at a puncture.}
    \label{fig:adjustment_intersection}
\end{figure}
The techniques developed in \cite[Section 7.8]{DS20I} can be directly applied to our situation without any essential modification, since the situation around a crossing shared route is exactly the same as in the bounded case, and the sets $P_{\pictW{\nu}}$ are finite. Then we get:

\begin{lem}[cf.~{\cite[Lemma 66]{DS20I}}]
There are sequences of modified H-moves applicable to the webs $W_1$ and $W_2$ respectively, after which the bijection $\varphi_{\mathrm{int}}$ satisfies the property that for each intersection point $p$ in $\pictW{1}$, the two points $p$ and $\varphi_{\mathrm{int}}(p)$ lie in the same shared-route-biangle $($\cite[Definition 65]{DS20I}$)$.  
\end{lem}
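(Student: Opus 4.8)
The plan is to carry over the argument of Douglas--Sun \cite[Section~7.8]{DS20I} essentially verbatim, the point being that the unbounded ends play no role in the local analysis of crossings. I would begin by isolating the effect of a single modified H-move on a crossing: such a move transports the unique intersection point of a left-oriented shared route across one corner of the split triangulation $\widehat{\tri}$, carrying it from one biangle of that shared route into the adjacent one while leaving the underlying route---and every crossing not met along the way---unchanged. Thus a finite composite of modified H-moves realizes any prescribed displacement of a given crossing along its shared route, as long as the passage is not obstructed by another crossing.

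Next I would fix the normalization target using the fellow-traveler structure. By \cref{lem:traveler_lemma} the bijection $\varphi$ matches each traveler of $\pictW{1}$ with a fellow-traveler of $\pictW{2}$ having the identical route; hence for every left-oriented shared route of a pair $(\gamma,\gamma')$ in $\pictW{1}$, the corresponding shared route of $(\varphi(\gamma),\varphi(\gamma'))$ in $\pictW{2}$ traverses the same ordered list of edges of $\widehat{\tri}$. This provides a canonical identification of the biangles occurring along the two shared routes, so that the phrase ``the same shared-route-biangle'' is meaningful. Choosing one such biangle per shared route as a target, the goal becomes to slide $p$ into it and $\varphi_{\mathrm{int}}(p)$ into its partner by modified H-moves.

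The heart of the matter, and the place where I expect the real work, is controlling interference and proving termination: moving one crossing can be blocked by, or can disturb, another, so the moves must be scheduled and the process shown to stop. Here I would invoke the preparations already in force---that $W_1$ and $W_2$ are reduced and left-oriented, that each shared route carries at most one intersection point (established just above, since two crossing travelers cannot share a puncture endpoint, as that would force a puncture H-face), and that the crossing sets $P_{\pictW{1}}$ and $P_{\pictW{2}}$ are finite. With these in place, the neighborhood of every crossing is combinatorially identical to the bounded situation of \cite{DS20I}: the sole new phenomenon is that some travelers now terminate by spiralling into a puncture instead of ending on $\partial\Sigma^\circ$, and this never enters the crossing analysis. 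Consequently the inductive ordering argument of \cite[Section~7.8]{DS20I} applies without modification and yields the desired sequences of modified H-moves placing $p$ and $\varphi_{\mathrm{int}}(p)$ in a common shared-route-biangle for every $p \in P_{\pictW{1}}$.
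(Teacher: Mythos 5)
Your proposal is correct and takes essentially the same approach as the paper: the paper likewise reduces the lemma to the argument of \cite[Section 7.8]{DS20I}, justified by the observations that the webs are reduced (so each shared route carries at most one crossing, two intersecting travelers having no common puncture endpoint), that the local picture around a crossing shared route is identical to the bounded case, and that the sets $P_{\pictW{\nu}}$ are finite. Your additional remarks on how a modified H-move transports a crossing between biangles and on the route-matching from the fellow-traveler lemma are consistent with, and slightly more explicit than, the paper's treatment.
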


Apply the sequence of modified H-moves to $W_1$ and $W_2$ prescribed above. We claim that the two signed webs $W_1$ and $W_2$ are now isotopic. 

In the same way as in the proof of \cite[Lemma 67]{DS20I}, we see that the finite sequences of oriented strands on each edge of the split triangulation $\widehat{\tri}$ are the same for $\pictW{1}$ and $\pictW{2}$. We have a correspondence \eqref{eq:traveler_correspondence_finite} that relates the travelers in $\pictW{1}$ and $\pictW{2}$, in particular the ends incident to punctures and their signs. The travelers can intersect with each other inside biangles, whose pattern is uniquely determined by the sequence of oriented strands on the side edges. Thus $\pictW{1}$ and $\pictW{2}$ restricts to the same collection of oriented curves (with signed ends at punctures) in each triangle and biangle in $\widehat{\tri}$. Since we can uniquely recover the honeycombs from these diagrams, we get $W_1=W_2$ up to isotopy. Thus \cref{prop:same_shear} is proved.

\begin{proof}[Proof of \cref{prop:injectivity}]
Let us consider an integral unbounded $\fsl_3$-lamination, which is represented by a signed non-elliptic web $W_1$. Let $W_2:=\xi_\tri \circ \sfx^\uf_\tri(W_1)$ be the signed non-elliptic web obtained from the reconstruction. By \cref{prop:surjectivity}, we have $\sfx^\uf_\tri(W_1)=\sfx^\uf_\tri(W_2)$. Then \cref{prop:same_shear} tells us that $W_1$ and $W_2$ determine an equivalent $\fsl_3$-lamination. Combining with the $\bQ_{>0}$-equivariance, we get the desired assertion. 
\end{proof}

\appendix

\section{Cluster varieties associated with the pair \texorpdfstring{$(\fsl_3,\Sigma)$}{(sl3,Sigma)}}\label{sec:appendix}
Here we briefly recall the general theory of cluster varieties \cite{FG09}, and the construction of the seed pattern $\bs(\fsl_3,\Sigma)$ that encodes the cluster structures of the spaces of $\fsl_3$-laminations in consideration. 

\subsection{Seeds, mutations and the labeled exchange graph}\label{subsec:seeds}

Fix a finite set $I=\{1,\dots,N\}$ of indices, and let $\cF_A$ and $\cF_X$ be fields both isomorphic to the field $\bQ(z_1,\dots,z_N)$ of rational functions on $N$ variables. 
We also fix a subset $I_\uf \subset I$ (\lq\lq unfrozen'') and let $I_\f:=I \setminus I_\uf$ (\lq\lq frozen''). 
A \emph{(labeled, skew-symmetric) seed} in $(\cF_A,\cF_X)$ is a triple $(\ve,\mathbf{A},\mathbf{X})$, where
\begin{itemize}
    \item $\ve=(\ve_{ij})_{i,j \in I}$ is a skew-symmetric matrix (\emph{exchange matrix}) with values in $\frac{1}{2}\bZ$ such that $\ve_{ij} \in \bZ$ unless $(i,j) \in I_\f \times I_\f$. 
    \item $\mathbf{A}=(A_i)_{i \in I}$ and $\mathbf{X}=(X_i)_{i \in I}$ are tuples of algebraically independent elements (\emph{cluster $\A$-} and \emph{$\X$-variables}) in $\cF_A$ and $\cF_X$, respectively.
\end{itemize}
The exchange matrix $\ve$ can be encoded in a quiver with vertices parametrized by the set $I$ and $|\varepsilon_{ij}|$ arrows from $i$ to $j$ (resp. $j$ to $i$) if $\varepsilon_{ij} >0$ (resp. $\varepsilon_{ji} >0$). In figures, we draw $n$ dashed arrows from $i$ to $j$ if $\varepsilon_{ij}=n/2$ for $n \in \bZ$, where a pair of dashed arrows is replaced with a solid arrow. 

For an unfrozen index $k \in I_\uf$, the \emph{mutation} directed to $k$ produces a new seed $(\ve',\mathbf{A}',\mathbf{X}')=\mu_k(\ve,\mathbf{A},\mathbf{X})$ according to an explicit formula \cite{FZ-CA4}. See, for instance, \cite[(2.1),(2.3),(2.4)]{IIO21} for a formula which fits in with our convention. 
A permutation $\sigma \in \mathfrak{S}_{I_\uf} \times \mathfrak{S}_{I_\f}$ induces a transformation $\sigma: (\ve,\mathbf{A},\mathbf{X}) \to (\ve',\mathbf{A}',\mathbf{X}')$ by the rule
\begin{align}\label{eq:seed_permutation}
    \ve'_{ij}:=\ve_{\sigma^{-1}(i),\sigma^{-1}(j)}, \quad 
    A'_i:=A_{\sigma^{-1}(i)},\quad 
    X'_i:=X_{\sigma^{-1}(i)}.
\end{align}
We say that two seeds in $(\cF_A,\cF_X)$ are \emph{mutation-equivalent} if they are transformed to each other by a finite sequence of mutations and permutations. The equivalence class is usually called a \emph{mutation class}. 

The relations among the seeds in a given mutation class $\sfs$ can be encoded in the \emph{(labeled) exchange graph} $\bExch_\sfs$. It is a graph with vertices $v$ corresponding to the seeds $\sfs^{(v)}$ in $\sfs$, together with labeled edges of the following two types:
\begin{itemize}
    \item edges of the form $v \overbar{k} v'$ whenever the seeds $\sfs^{(v)}$ and $\sfs^{(v')}$ are related by the mutation $\mu_k$ for $k \in I_\uf$;
    \item edges of the form $v \overbarnear{\sigma} v'$ whenever the seeds $\sfs^{(v)}$ and $\sfs^{(v')}$ are related by the transposition $\sigma=(j\ k)$ for $(j,k) \in I_\uf \times I_\uf$ or $I_\f \times I_\f$.
\end{itemize}
When no confusion can occur, we simply denote a vertex of the labeled exchange graph by $v \in \bExch_\sfs$ instead of $v \in V(\bExch_\sfs)$. 
When we write $\sfs^{(v)}=(\ve^{(v)},\mathbf{A}^{(v)},\mathbf{X}^{(v)})$, it is known that we have $(\ve^{(v)},\mathbf{A}^{(v)})=(\ve^{(v')},\mathbf{A}^{(v')})$ if and only if $(\ve^{(v)},\mathbf{X}^{(v)})=(\ve^{(v')},\mathbf{X}^{(v')})$ for two vertices $v,v'$ (the \emph{synchronicity phenomenon} \cite{Nak21}). We call $(\ve^{(v)},\mathbf{A}^{(v)})$ and $(\ve^{(v)},\mathbf{X}^{(v)})$ an \emph{$\A$-seed} and an \emph{$\X$-seed}, respectively. We also remark that the labeled exchange graph depends only on the mutation class of the underlying exchange matrices. Indeed, it is unchanged if we transform the cluster variables simultaneously by an automorphism of the ambient field. 

\begin{rem}\label{rem:cluster_atlas}
In geometric applications, $\A$- and $\X$-seeds are constructed in the field of rational function on a space of interest. For $\cZ \in \{\A,\X\}$, a \emph{cluster $\cZ$-atlas} on a variety (scheme, stack) $V$ is a collection of $\cZ$-seeds in the field $\mathcal{K}(V)$ of rational functions which are mutation-equivalent to each other. A cluster atlas can be uniquely extended to a \emph{cluster $\cZ$-structure}, which is a maximal collection of $\cZ$-seeds in $\mathcal{K}(V)$, thus forming a mutation class $\sfs$. See \cref{rem:moduli} below.
\end{rem}

\subsection{Cluster varieties}\label{subsec:cluster_variety}
The cluster varieties associated with a mutation class $\sfs$ are constructed by patching algebraic tori parametrized by the vertices of the labeled exchange graph. 

\begin{conv}
A multiplicative algebraic group is denoted by $\bG_m=\Spec \bZ[u,u^{-1}]$. For a lattice $\Lambda$ (\emph{i.e.}, a free abelian group of finite rank), let $\bT_\Lambda:=\Hom (\Lambda,\bG_m)$ denote the associated algebraic torus. 
For a (split) algebraic torus $T \cong (\bG_m)^N$, let 
\begin{align*}
    X^\ast(T):=\Hom (T,\bG_m)\quad \mbox{and} \quad X_\ast(T):=\Hom (\bG_m, T)
\end{align*}
denote the lattices of characters and cocharacters, respectively. These lattices are dual to each other by via the canonical pairing $X_\ast(T) \otimes X^\ast(T) \to \Hom(\bG_m,\bG_m)\cong\bZ$. The contravariant functors $\bT_\bullet: \Lambda \mapsto \bT_\Lambda$ and $X^\ast: T \mapsto X^*(T)$ are inverses to each other: $\Lambda = X^\ast(\bT_\Lambda)$, $T=\bT_{X^\ast(T)}$. A vector $\lambda \in \Lambda$ gives rise to a character $\chi_\lambda:\bT_\Lambda \to \bG_m$. 
\end{conv}

For $v \in \bExch_\sfs$, consider a lattice $N^{(v)}=\bigoplus_{i \in I}\bZ e_i^{(v)}$ with a fixed basis and its dual $M^{(v)}=\bigoplus_{i \in I}\bZ f_i^{(v)}$. Let $\X_{(v)}:=\bT_{N^{(v)}}$ and $\A_{(v)}:=\bT_{M^{(v)}}$ denote the associated algebraic tori of dimension $|I|$. The characters $X_i^{(v)}:=\chi_{e_i^{(v)}}:\X_{(v)} \to \bG_m$ and $A_i^{(v)}:=\chi_{f_i^{(v)}}:\A_{(v)} \to \bG_m$ are called the \emph{cluster coordinates}. 
The exchange matrix $\ve^{(v)}$ defines a $\frac{1}{2}\bZ$-valued bilinear form on $N^{(v)}$ by $(e_i^{(v)},e_j^{(v)}):=\ve^{(v)}_{ij}$, which induces a Poisson and $K_2$-structures on $\X_{(v)}$ and $\A_{(v)}$, respectively. 
The mutation rule turns into  birational maps $\mu_k^x: \X_{(v)} \to \X_{(v')}$ and $\mu_k^a: \A_{(v)} \to \A_{(v')}$, called the \emph{cluster transformations} \cite[(13),(14)]{FG09}. Then the \emph{cluster $\X$-} and \emph{$\A$-varieties} are the schemes defined as
\begin{align*}
    \X_\sfs:= \bigcup_{v \in \bExch_\sfs} \X_{(v)}, \quad  \A_\sfs:= \bigcup_{v \in \bExch_\sfs} \A_{(v)}.
\end{align*}
Here for $(z,\cZ) \in \{(a,\A),(x,\X)\}$, (open subsets of) tori $\cZ_{(v)},\cZ_{(v')}$ are identified via the cluster transformation $\mu_k^z$ if there is an edge of the form $v \overbar{k} v'$, or via the coordinate permutation \eqref{eq:seed_permutation} if there is an edge of the form $v \overbarnear{\sigma} v'$. As a slight variant, let $\X_{(v)}^\uf:=\bT_{N^{(v)}_\uf}$, and $\X_\sfs^\uf:=\bigcup_{v \in \bExch_\sfs} \X^\uf_{(v)}$
the cluster $\X$-variety without frozen coordinates. Since the cluster transformation of unfrozen $\X$-coordinates does not refer the frozen ones, we have a natural projection $\X_\sfs \to \X_\sfs^\uf$. We remark that the cluster varieties are constructed only from the mutation class of the underlying exchange matrices. 

For $(Z,\cZ) \in \{(A,\A),(X,\X)\}$, each pair $(\ve^{(v)},(Z_i^{(v)})_{i \in I})$ of the exchange matrix and the cluster $\cZ$-coordinates defines a $\cZ$-seed in the field $\cF_Z:=\mathcal{K}(\cZ_\sfs)$ of rational functions in the sense of the previous section. The rings $\cO(\A_\sfs) \subset \cF_A$ and $\cO(\X_\sfs) \subset \cF_X$ of regular functions are called the \emph{upper cluster algebra} and the \emph{cluster Poisson algebra}, respectively. The \emph{cluster algebra} \cite{FZ-CA1} is the subring $\mathscr{A}_\sfs \subset \cO(\A_\sfs)$ generated by all the cluster coordinates $A_i^{(v)}$, $i\in I$, $v \in \bExch_\sfs$.

\bigskip
\paragraph{\textbf{Ensemble maps and their extensions.}}\label{subsec:cluster_ensemble}
The cluster varieties $\X_\sfs$ and $\A_\sfs$ are coupled as a \emph{cluster ensemble}. For $v \in \bExch_\sfs$, let $N^{(v)}_\uf \subset N^{(v)}$ denote the sub-lattice spanned by $e_i^{(v)}$ for $i \in I_\uf$. Then by the assumption on the exchange matrix, we have the linear map 
\begin{align}\label{eq:ensemble_map}
    p_{(v)}^{*}: N^{(v)}_\uf \to M^{(v)}, \quad e_i^{(v)} \mapsto \sum_{j \in I}\ve_{ij}^{(v)}f_j^{(v)}.
\end{align}
Moreover, it can be verified that the maps between tori induced by \eqref{eq:ensemble_map} commute with cluster transformations, and combine to give a morphism $p: \A_\sfs \to \X_\sfs^\uf$. We call this map the \emph{ensemble map}, and the triple $(\A_\sfs,\X_\sfs,p)$ the \emph{cluster ensemble} associated with $\sfs$. If we pick up a suitable extension $\widetilde{p}_{(v)}^*:N^{(v)} \to M^{(v)}$ of the map \eqref{eq:ensemble_map} (see \cite[(A.2)]{GHKK} for the required condition), then it still commutes with cluster transformations and hence we get an \emph{extended ensemble map} $\widetilde{p}: \A_\sfs \to \X_\sfs$. It is shown in \cite[Section 13.3]{GS19} that such a choice exactly corresponds to a choice of compatibility pairs \cite{BZ} defining a quantum cluster algebra. 

\bigskip
\paragraph{\textbf{Tropicalizations.}}
The positive structures on the cluster varieties allow us to consider their semifield-valued points. For $\bA=\bZ,\bQ$ or $\bR$, let $\bA^T:=(\bA,\max,+)$ denote the corresponding \emph{tropical semifield} (or the \emph{max-plus semifield}). For an algebraic torus $H$, let $H(\bA^T):=X_\ast(H)\otimes_\bZ (\bA,+)$. A positive rational map $f:H \to H'$ between algebraic tori naturally induces a piecewise-linear (PL for short) map $f^T: H(\bA^T) \to H'(\bA^T)$. We call $f^T$ the \emph{tropicalized map}. 
In particular we have the tropicalized cluster transformations $\mu_k^T:\cZ_{(v)}(\bA^T) \to \cZ_{(v')}(\bA^T)$ for $(\mathsf{z},\cZ) \in \{(\sfa,\A),(\sfx,\X)\}$, explicitly given as:
\begin{align}
    (\mu_k^T)^* \sfx_i^{(v')}&= \begin{cases}
    -\sfx_k^{(v)} & \mbox{if $i=k$},\\
    \sfx_i^{(v)} - \ve_{ik}^{(v)}[ -\sgn(\ve_{ik}^{(v)})\sfx_k^{(v)}]_+ & \mbox{if $i\neq k$}, 
    \end{cases} \label{eq:tropical x-transf}\\
    (\mu_k^T)^* \sfa_i^{(v')}&= \begin{cases}
    -\sfa_k^{(v)} + \max\left\{\sum_{j \in I} [\ve_{kj}^{(v)}]_+\sfa_j^{(v)},\sum_{j \in I} [-\ve_{kj}^{(v)}]_+\sfa_j^{(v)}\right\}  & \mbox{if $i=k$},\\
    \sfa_i^{(v)} & \mbox{if $i\neq k$}.
    \end{cases} \label{eq:tropical a-transf}
\end{align}
Here $\sfx_i^{(v)}$ and $\sfa_i^{(v)}$ are the coordinate functions induced by the basis vectors $e_i^{(v)}$ and $f_i^{(v)}$ respectively, and $[u]_+:=\max\{0,u\}$ for $u \in \bA$. 
We can use them to define the \emph{tropical cluster varieties}
\begin{align*}
    \X_\sfs(\bA^T) := \bigcup_{v \in \bExch_\sfs} \X_{(v)}(\bA^T), \quad \A_\sfs(\bA^T) := \bigcup_{v \in \bExch_\sfs} \A_{(v)}(\bA^T),
\end{align*}
which are naturally equipped with canonical PL structures. Since the PL maps are equivariant for the scaling action of $\bA_{>0}$, the tropical cluster varieties inherit this $\bA_{>0}$-action. We also consider the tropical $\X$-varieties $\X^\uf_\sfs(\bA^T)$ without frozen coordinates. In the body of this paper, the main objects of study are the spaces $\X^\uf_\sfs(\bQ^T)$ and $\X_\sfs(\bQ^T)$ associated with a particular mutation class $\sfs$.

\bigskip
\paragraph{\textbf{Cluster modular group.}}
The cluster ensemble is naturally equipped with a discrete symmetry group. Let $\mathrm{Mat}_\sfs$ denote the mutation class of exchange matrices underlying the mutation class $\sfs$. Then we have a map 
\begin{align*}
    \ve^\bullet: V(\bExch_\sfs) \to \mathrm{Mat}_\sfs, \quad v \mapsto \ve^{(v)}.
\end{align*}
Then the \emph{cluster modular group} $\Gamma_\sfs \subset \mathrm{Aut}(\bExch_\sfs)$ consists of graph automorphism $\phi$ which preserves the fibers of the map $\ve^\bullet$ and the labels on the edges. An element of the cluster modular group is called a \emph{mutation loop}. 
The cluster modular group acts on the cluster varieties $\A_\sfs$ and $\X_\sfs$ so that $\phi^*Z_i^{(v)}=Z_i^{(\phi^{-1}(v))}$ for all $\phi \in \Gamma_\bs$, $v \in \bExch_\bs$ and $i \in I$, where $(Z,\cZ) \in \{(A,\A),(X,\X)\}$. These actions commute with the ensemble map. 


Since the actions are by positive rational maps, they induce actions of $\Gamma_\sfs$ on $\A_\sfs(\bA^T)$ and $\X_\sfs(\bA^T)$ by PL automorphisms, which commute with the (extended) ensemble map. Moreover, these actions commute with the rescaling action of $\bA_{>0}$.


\subsection{The cluster ensemble associated with the pair \texorpdfstring{$(\fsl_3,\Sigma)$}{(sl(3),S)}}\label{subsec:cluster_sl3}
Here we quickly recall the cluster structures on the moduli spaces $\A_{SL_3,\Sigma}$, $\X_{PGL_3,\Sigma}$ and $P_{PGL_3,\Sigma}$ constructed in \cite{FG03,GS19}. We are going to recall the \emph{Fock--Goncharov atlas} associated with ideal triangulations of $\Sigma$ and their mutation-equivalences, since it is typical difficult to describe the entire cluster structure.

Let $\tri$ be an ideal triangulation of $\Sigma$. Then we construct a quiver $Q_\tri$ with the vertex set $I(\tri)$ by drawing the quiver
\begin{align*}
\begin{tikzpicture}[scale=0.7]
\draw[blue] (210:3) -- (-30:3) -- (90:3) --cycle;
\foreach \x in {-30,90,210}
\path(\x:3) node [fill, circle, inner sep=1.2pt]{};
\begin{scope}[color=mygreen,>=latex]
\quiverplus{210:3}{-30:3}{90:3};
\qdlarrow{x122}{x121}
\qdlarrow{x232}{x231}
\qdlarrow{x312}{x311}
\end{scope}
\end{tikzpicture}
\end{align*}
on each triangle, and glue them by the \emph{amalgamation} construction \cite{FG06a}. In our case, this just means that we glue the quivers on adjacent triangles by identifying the two vertices on the shared edge and eliminate the pair of opposite dashed arrows. The vertices on the boundary intervals of $\Sigma$ are declared to be frozen, forming the subset $I_\f(\tri) \subset I(\tri)$ as in \cref{subsec:notation_marked_surface}. Let $\ve^\tri=(\ve_{ij}^\tri)_{i,j \in I(\tri)}$ be the corresponding exchange matrix. 

These quivers $Q^\tri$ (or the exchange matrices $\ve^\tri$) associated with ideal triangulations of $\Sigma$ are mutation-equivalent to each other. Indeed, the quivers $Q^\tri$, $Q^{\tri'}$ associated with two triangulations $\tri$, $\tri'$ connected by a single flip $f_E:\tri \to \tri'$ are transformed to each other via one of the mutation sequences shown in \cref{fig:flip sequence}. Then the assertion follows from the classical fact that any two ideal triangulations of the same marked surface can be transformed to each other by a finite sequence of flips. 

\begin{figure}
\centering
\begin{tikzpicture}[scale=0.53]
{\color{blue}
\draw (3,0) -- (0,3) -- (-3,0) -- (0,-3) --cycle;
\draw[blue] (0,-3) --node[midway,left=-0.2em]{\scalebox{0.9}{$E$}} (0,3);
}
\draw (-1.5,-2.5) node[anchor=east]{$(\tri,\ell)$};
\foreach \x in {0,90,180,270}
\path(\x:3) node [fill, circle, inner sep=1.2pt]{};
\quiverplus{3,0}{0,3}{0,-3}
\draw[mygreen](x231) node[right=0.2em]{\scalebox{0.9}{$1$}};
\draw[mygreen](G) node[right=0.2em]{\scalebox{0.9}{$4$}};
\draw[mygreen](x232) node[left=0.2em]{\scalebox{0.9}{$3$}};
\draw[mygreen](x311) node[below]{\scalebox{0.9}{$9$}};
\draw[mygreen](x312) node[below]{\scalebox{0.9}{$10$}};
\draw[mygreen](x121) node[above]{\scalebox{0.9}{$11$}};
\draw[mygreen](x122) node[above]{\scalebox{0.9}{$12$}};
\quiverplus{-3,0}{0,-3}{0,3}
\draw[mygreen](G) node[left=0.2em]{\scalebox{0.9}{$2$}};
\draw[mygreen](x311) node[above]{\scalebox{0.9}{$5$}};
\draw[mygreen](x312) node[above]{\scalebox{0.9}{$6$}};
\draw[mygreen](x121) node[below]{\scalebox{0.9}{$7$}};
\draw[mygreen](x122) node[below]{\scalebox{0.9}{$8$}};

\begin{scope}[xshift=4.5cm,yshift=-5cm,scale=0.8,>=latex,yscale=-1]
\draw[blue] (3,0) -- (0,3) -- (-3,0) -- (0,-3) --cycle;
\foreach \x in {0,90,180,270}
\path(\x:3) node [fill, circle, inner sep=1.2pt]{};
{\color{mygreen}
\quiversquare{-3,0}{0,-3}{3,0}{0,3}
    \qarrow{x241}{x122}
				\qarrow{x122}{x131}
				\qarrow{x131}{x121}
				\qarrow{x121}{x412}
				\qarrow{x412}{x131}
				\qarrow{x131}{x241}
				\qarrow{x241}{x132}
				\qarrow{x132}{x341}
				\qarrow{x341}{x232}
				\qarrow{x232}{x132}
				\qarrow{x132}{x231}
				\qarrow{x231}{x241}
				
				\qarrow{x342}{x242}
				\qarrow{x242}{x411}
				\qarrow{x411}{x342}
				\qarrow{x131}{x242}
				\qarrow{x242}{x132}
				\qarrow{x132}{x131}
}

\end{scope}

\begin{scope}[xshift=4.5cm,yshift=5cm,scale=0.8,>=latex,yscale=-1]
\draw[blue] (3,0) -- (0,3) -- (-3,0) -- (0,-3) --cycle;
\foreach \x in {0,90,180,270}
\path(\x:3) node [fill, circle, inner sep=1.2pt]{};
{\color{mygreen}
\quiversquare{-3,0}{0,-3}{3,0}{0,3}
    \qarrow{x242}{x342}
				\qarrow{x342}{x132}
				\qarrow{x132}{x341}
				\qarrow{x341}{x232}
				\qarrow{x232}{x132}
				\qarrow{x132}{x242}
				\qarrow{x242}{x131}
				\qarrow{x131}{x121}
				\qarrow{x121}{x412}
				\qarrow{x412}{x131}
				\qarrow{x131}{x411}
				\qarrow{x411}{x242}
				
				\qarrow{x132}{x241}
				\qarrow{x241}{x131}
				\qarrow{x131}{x132}
				\qarrow{x122}{x241}
				\qarrow{x241}{x231}
				\qarrow{x231}{x122}
}
\end{scope}

\begin{scope}[xshift=9cm,>=latex,xscale=-1]
\draw[blue] (3,0) -- (0,3) -- (-3,0) -- (0,-3) --cycle;
\foreach \x in {0,90,180,270}
\path(\x:3) node [fill, circle, inner sep=1.2pt]{};
{\color{mygreen}
\quiversquare{-3,0}{0,-3}{3,0}{0,3}
    \qarrow{x122}{x241}
				\qarrow{x131}{x121}
				\qarrow{x121}{x412}
				\qarrow{x412}{x131}
				\qarrow{x241}{x131}
				\qarrow{x132}{x241}
				\qarrow{x132}{x341}
				\qarrow{x341}{x232}
				\qarrow{x232}{x132}
				\qarrow{x241}{x231}
				\qarrow{x231}{x122}
				
				\qarrow{x342}{x242}
				\qarrow{x242}{x411}
				\qarrow{x411}{x342}
				\qarrow{x131}{x242}
				\qarrow{x242}{x132}
}
\draw[mygreen](x242) node[right=0.2em]{\scalebox{0.9}{$1$
}};
\draw[mygreen](x241) node[left=0.2em]{\scalebox{0.9}{$3$}};
\draw[mygreen](x131) node[below=0.2em]{\scalebox{0.9}{$4$
}};
\draw[mygreen](x132) node[above=0.2em]{\scalebox{0.9}{$2$}};
\draw[mygreen](x342) node[above]{\scalebox{0.9}{$5$}};
\draw[mygreen](x341) node[above]{\scalebox{0.9}{$6$}};
\draw[mygreen](x232) node[below]{\scalebox{0.9}{$7$}};
\draw[mygreen](x231) node[below]{\scalebox{0.9}{$8$}};
\draw[mygreen](x122) node[below]{\scalebox{0.9}{$9$}};
\draw[mygreen](x121) node[below]{\scalebox{0.9}{$10$}};
\draw[mygreen](x412) node[above]{\scalebox{0.9}{$11$}};
\draw[mygreen](x411) node[above]{\scalebox{0.9}{$12$}};

\end{scope}

\begin{scope}[xshift=13.5cm,yshift=5cm,scale=0.8,xscale=-1,rotate=90,>=latex]
\draw[blue] (3,0) -- (0,3) -- (-3,0) -- (0,-3) --cycle;
\foreach \x in {0,90,180,270}
\path(\x:3) node [fill, circle, inner sep=1.2pt]{};
{\color{mygreen}
\quiversquare{-3,0}{0,-3}{3,0}{0,3}
    \qarrow{x242}{x342}
				\qarrow{x342}{x132}
				\qarrow{x132}{x341}
				\qarrow{x341}{x232}
				\qarrow{x232}{x132}
				\qarrow{x132}{x242}
				\qarrow{x242}{x131}
				\qarrow{x131}{x121}
				\qarrow{x121}{x412}
				\qarrow{x412}{x131}
				\qarrow{x131}{x411}
				\qarrow{x411}{x242}
				
				\qarrow{x132}{x241}
				\qarrow{x241}{x131}
				\qarrow{x131}{x132}
				\qarrow{x122}{x241}
				\qarrow{x241}{x231}
				\qarrow{x231}{x122}
}
\end{scope}

\begin{scope}[xshift=13.5cm,yshift=-5cm,scale=0.8,yscale=-1,rotate=-90,>=latex]
\draw[blue] (3,0) -- (0,3) -- (-3,0) -- (0,-3) --cycle;
\foreach \x in {0,90,180,270}
\path(\x:3) node [fill, circle, inner sep=1.2pt]{};
{\color{mygreen}
\quiversquare{-3,0}{0,-3}{3,0}{0,3}
    \qarrow{x241}{x122}
				\qarrow{x122}{x131}
				\qarrow{x131}{x121}
				\qarrow{x121}{x412}
				\qarrow{x412}{x131}
				\qarrow{x131}{x241}
				\qarrow{x241}{x132}
				\qarrow{x132}{x341}
				\qarrow{x341}{x232}
				\qarrow{x232}{x132}
				\qarrow{x132}{x231}
				\qarrow{x231}{x241}
				
				\qarrow{x342}{x242}
				\qarrow{x242}{x411}
				\qarrow{x411}{x342}
				\qarrow{x131}{x242}
				\qarrow{x242}{x132}
				\qarrow{x132}{x131}
}
\end{scope}

\begin{scope}[xshift=18cm]
{\color{blue}
\draw (3,0) -- (0,3) -- (-3,0) -- (0,-3) --cycle;
\draw (3,0) --node[midway,above=-0.2em]{\scalebox{0.9}{$E'$}} (-3,0);
}
\foreach \x in {0,90,180,270}
\path(\x:3) node [fill, circle, inner sep=1.2pt]{};
\quiverplus{-3,0}{3,0}{0,3}
\draw[mygreen](G) node[above=0.2em]{\scalebox{0.9}{$1$}};
\draw[mygreen](x121) node[above=0.2em]{\scalebox{0.9}{$2$}};
\draw[mygreen](x122) node[below=0.2em]{\scalebox{0.9}{$4$}};
\draw[mygreen](x311) node[above]{\scalebox{0.9}{$5$}};
\draw[mygreen](x312) node[above]{\scalebox{0.9}{$6$}};
\draw[mygreen](x231) node[above]{\scalebox{0.9}{$11$}};
\draw[mygreen](x232) node[above]{\scalebox{0.9}{$12$}};
\quiverplus{-3,0}{0,-3}{3,0}
\draw[mygreen](G) node[below=0.2em]{\scalebox{0.9}{$3$}};
\draw[mygreen](x231) node[below]{\scalebox{0.9}{$9$}};
\draw[mygreen](x232) node[below]{\scalebox{0.9}{$10$}};
\draw[mygreen](x121) node[below]{\scalebox{0.9}{$7$}};
\draw[mygreen](x122) node[below]{\scalebox{0.9}{$8$}};
\draw (1.5,-2.5) node[anchor=west]{$(\tri',\ell')$};
\end{scope}

\draw[thick,<->] (2,2) --node[midway,above left]{$\mu_1$} (3,3);
\draw[thick,<->] (2,-2) --node[midway,below left]{$\mu_3$} (3,-3);
\draw[thick,<->] (6,3) --node[midway,above right]{$\mu_3$} (7,2);
\draw[thick,<->] (6,-3) --node[midway,below right]{$\mu_1$} (7,-2);
\draw[thick,<->] (16,2) --node[midway,above right]{$\mu_2$} (15,3);
\draw[thick,<->] (16,-2) --node[midway,below right]{$\mu_4$} (15,-3);
\draw[thick,<->] (12,3) --node[midway,above left]{$\mu_4$} (11,2);
\draw[thick,<->] (12,-3) --node[midway,below left]{$\mu_2$} (11,-2);
\end{tikzpicture}
    \caption{Some of the sequences of mutations that realize the flip $f_E: \tri \to \tri'$. Here we partially fix labelings $\ell,\ell'$ of vertices in $I(\tri)$, $I(\tri')$, respectively.}
    \label{fig:flip sequence}
\end{figure}
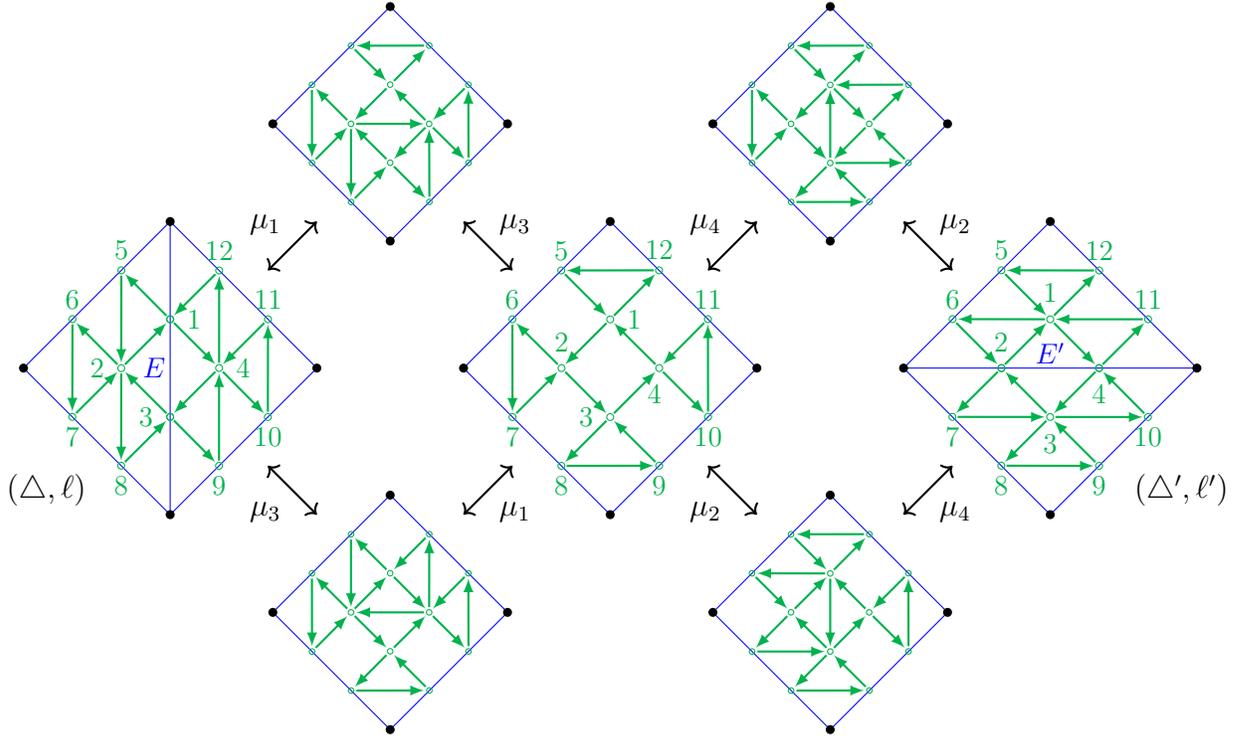

\begin{rem}\label{rem:moduli}
For each ideal triangulation $\tri$, we can associate an $\A$-seed $(\ve^\tri,\mathbf{A}^\tri)$ (resp. $\X$-seed $(\ve^\tri,\mathbf{X}^\tri)$) in the field of rational functions on the moduli space $\A_{SL_3,\Sigma}$ (resp. $\P_{PGL_3,\Sigma}$). Forgetting the frozen part in the latter, we get an $\X$-seed for the moduli space $\X_{PGL_3,\Sigma}$. See \cite[Section 9]{FG03} or \cite[Section 3]{GS19} for construction. These birational coordinate systems define cluster atlases on these moduli spaces in the sense of \cref{rem:cluster_atlas}. 
\end{rem}
Then there exists a unique mutation class $\sfs(\fsl_3,\Sigma)$ containing the seeds associated with any ideal triangulations $\tri$. 
More precisely, a \emph{labeled $\fsl_3$-triangulation} $(\tri,\ell)$, namely an ideal triangulation $\tri$ together with a bijection $\ell:I(\tri) \to \{1,\dots,N\}$, give rise to vertices of the labeled exchange graph $\bExch_{\sfs(\fsl_3,\Sigma)}$. \cref{fig:flip sequence} describes a subgraph containing $(\tri,\ell)$ and $(\tri',\ell')$, where the labels $\ell$, $\ell'$ are consistently chosen. Let us simply denote the objects related to $\sfs(\fsl_3,\Sigma)$ by
\begin{align*}
    \A_{\fsl_3,\Sigma}:=\A_{\sfs(\fsl_3,\Sigma)},\quad \X_{\fsl_3,\Sigma}:=\X_{\sfs(\fsl_3,\Sigma)}, \quad \bExch_{\fsl_3,\Sigma}:=\bExch_{\sfs(\fsl_3,\Sigma)}, \quad 
    \Gamma_{\fsl_3,\Sigma}:=\Gamma_{\sfs(\fsl_3,\Sigma)},
\end{align*}
and so on. 

The following can be verified from \eqref{eq:tropical x-transf} by a direct computation:
\begin{lem}
For two labeled $\fsl_3$-triangulations $v=(\tri,\ell), v'=(\tri',\ell') \in \bExch_{\fsl_3,\Sigma}$ as in \cref{fig:flip sequence}, the (max-plus) tropical coordinates $\sfx_i:=\sfx_i^{(v)}$ and $\sfx'_i:=\sfx_i^{(v')}$ for $i \in \{1,\dots,12\}$ are related as follows:
\begin{align*}
    \sfx'_1&=\sfx_2 + [\sfx_3,\sfx_4,\sfx_1]_+ - [\sfx_1,\sfx_2,\sfx_3]_+, &
    \sfx'_2&=-\sfx_1-\sfx_2+[\sfx_1]_+ - [\sfx_3]_+, \\
    \sfx'_3&=\sfx_4+[\sfx_1,\sfx_2,\sfx_3]_+-[\sfx_3,\sfx_4,\sfx_1]_+, &
    \sfx'_4&=-\sfx_3-\sfx_4+ [\sfx_3]_+ -[\sfx_1]_+, \\
    \sfx'_5&=\sfx_5+[\sfx_1]_+, & \sfx'_6&=\sfx_6+[\sfx_1,\sfx_2,\sfx_3]_+-[\sfx_1]_+,\\
    \sfx'_7&=\sfx_7+\sfx_1+\sfx_2+[\sfx_3]_+-[\sfx_1,\sfx_2,\sfx_3]_+, & 
    \sfx'_8&=\sfx_8-[-\sfx_3]_+,\\
    \sfx'_9&=\sfx_9+[\sfx_3]_+, & \sfx'_{10}&=\sfx_{10}+[\sfx_3,\sfx_4,\sfx_1]_+-[\sfx_3]_+,\\
    \sfx'_{11}&=\sfx_{11}+\sfx_3+\sfx_4+[\sfx_1]_+-[\sfx_3,\sfx_4,\sfx_1]_+, & \sfx'_{12}&=\sfx_{12}-[-\sfx_1]_+.
\end{align*}
Here $[x]_+:=\max\{0,x\}$ and  $[x,y,z]_+:=\max\{0,x,x+y,x+y+z\}$.
\end{lem}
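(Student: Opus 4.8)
The plan is to carry out the composite tropical cluster $\mathcal{X}$-transformation directly, since the statement is local to the quadrilateral $Q$ containing the flipped edge $E$ and concerns only the $12$ coordinates indexed by $I(\tri) \cap Q$. As recorded in \cref{fig:flip sequence}, the flip $f_E\colon \tri \to \tri'$ is realized by a length-four mutation sequence at the four ``internal'' vertices $1,2,3,4$ of $Q$ (the two vertices on the diagonal $E$ and the two triangle centers), followed by the relabeling permutation $\sigma$ matching $\ell$ with $\ell'$. Thus I would write $\sfx^{(v')} = \sigma^\ast \circ (\mu_{\mathrm{seq}})^\ast \sfx^{(v)}$ and evaluate $(\mu_{\mathrm{seq}})^\ast$ step by step via \eqref{eq:tropical x-transf}. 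The two mutation orders displayed in \cref{fig:flip sequence} commute, which both fixes the sequence up to harmless reordering and furnishes a built-in consistency check.

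Concretely, first I would read off the entries of the exchange matrix $\ve^\tri$ restricted to $Q$ from the quiver drawn in \cref{fig:flip sequence}, paying attention to the half-integer (dashed-arrow) entries. Then I would apply \eqref{eq:tropical x-transf} for the first mutation, recording not only the transformed coordinates but also the \emph{new} exchange matrix, since subsequent mutations depend on it. Iterating through the four mutations and finally applying $\sigma$, the eight boundary coordinates $\sfx_5,\dots,\sfx_{12}$ pick up only the additive correction terms $-\ve_{ik}[-\sgn(\ve_{ik})\sfx_k]_+$ coming from each internal vertex $k$ to which they are adjacent; collecting these reproduces the stated formulas for $\sfx'_5,\dots,\sfx'_{12}$. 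The four internal coordinates $\sfx_1,\dots,\sfx_4$ require the full composite, including the sign-reversals $\sfx_k \mapsto -\sfx_k$ at each mutated vertex.

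The remaining task is to recognize that the nested maxima produced by composing these piecewise-linear maps collapse into the compact normal forms $[x]_+ = \max\{0,x\}$ and $[x,y,z]_+ = \max\{0,x,x+y,x+y+z\}$ appearing in the statement. I would verify the relevant tropical identities by a short case analysis over the linearity chambers cut out by the signs of $\sfx_1,\sfx_2,\sfx_3$ (and, by the evident left--right symmetry of $Q$, of $\sfx_3,\sfx_4,\sfx_1$): since all maps in sight are piecewise-linear, agreement on each chamber suffices. As a cross-check, and an alternative self-contained derivation, one may instead tropicalize the Douglas--Sun $\mathcal{A}$-transformation formula \cite[Proposition 4.2]{DS20II} and transport it to the $\mathcal{X}$-side through the ensemble relation \eqref{eq:ensemble_relation} of \cref{prop:ensemble_map}, which is legitimate on an unpunctured patch by \cref{lem:ensemble_bijection} and consistent with \cref{thm:cluster_transf}.

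The main obstacle will be the bookkeeping of the exchange matrix through the four-step sequence: each mutation creates and annihilates arrows, in particular between the frozen boundary vertices and the internal vertices, so one must track the $\tfrac{1}{2}\bZ$-valued entries and the dashed-arrow conventions with care, lest a later application of \eqref{eq:tropical x-transf} use a stale value of $\ve_{ik}$. Keeping the signs consistent through the composition, and then matching the resulting expressions to the $[\cdot]_+$ and $[\cdot,\cdot,\cdot]_+$ forms, is where errors are most likely to creep in; beyond that the computation is entirely routine.
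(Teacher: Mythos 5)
Your plan — composing the four tropical $\X$-mutations of \eqref{eq:tropical x-transf} along the sequence in \cref{fig:flip sequence}, tracking the mutated exchange matrix at each step, and then applying the relabeling permutation — is exactly how the paper justifies this lemma (it is stated as "verified from \eqref{eq:tropical x-transf} by a direct computation"). The proposal is correct and takes essentially the same approach.
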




\bigskip
\paragraph{\textbf{Goncharov--Shen extension of the ensemble map.}}
Following \cite{GS19}, we choose the following extension of the ensemble map. Let 
\begin{align*}
    C(\fsl_3)=(C_{st})_{s,t \in \{1,2\}}=\begin{pmatrix}
    2 & -1 \\
    -1 & 2
    \end{pmatrix}
\end{align*}
denote the Cartan matrix of the Lie algebra $\fsl_3$. 
For an ideal triangulation $\tri$, let $\widetilde{\ve}^\tri=(\widetilde{\ve}_{ij}^\tri)_{i,j \in I(\tri)}$ be the matrix given by $\widetilde{\ve}_{ij}^\tri:=\ve_{ij}^\tri+m_{ij}$, where 
\begin{align}\label{eq:m-matrix}
    m_{ij}:=\begin{cases}
    -\frac{1}{2}C_{st} & \mbox{if $i=i^s(E),j=i^t(E)$ lie on a common boundary interval $E \in \mathbb{B}$}, \\
    0 & \mbox{otherwise}.
    \end{cases}
\end{align}
Then we define $\widetilde{p}^\ast_\tri: N^\tri \to M^\tri$ by $e_i^\tri \mapsto \sum_{i,j \in I(\tri)} \widetilde{\ve}_{ij}^\tri f_j^\tri$ inducing a morphism 
\begin{align}\label{eq:GS_extension}
    \widetilde{p}_{\mathrm{GS}}: \A_{\fsl_3,\Sigma} \to \X_{\fsl_3,\Sigma},
\end{align}
which we call the \emph{Goncharov--Shen extension of the ensemble map}. This choice naturally comes from the geometry of the moduli spaces of local systems on $\Sigma$, so that $\widetilde{p}_{\mathrm{GS}}$ agrees with the map $p:\A_{SL_3,\Sigma}^\times \to \mathcal{P}_{PGL_3,\Sigma}$ (\cite[Proposition 9.4]{GS19}).

\bigskip
\paragraph{\textbf{Cluster modular group.}}
Although the entire structure of the cluster modular group $\Gamma_{\fsl_3,\Sigma}$ is yet unknown, it is known to include the subgroup $(MC(\Sigma) \times \mathrm{Out}(SL_3)) \ltimes W(\fsl_3)^{\bP} \subset \Gamma_{\fsl_3,\Sigma}$ \cite{GS18}. Here $MC(\Sigma)$ denotes the mapping class group of the marked surface $\Sigma$, $\mathrm{Out}(SL_3))=\mathrm{Aut}(SL_3)/\mathrm{Inn}(SL_3)$ is the outer automorphism group of $SL_3$, and $W(\fsl_3)$ is the Weyl group of the Lie algebra $\fsl_3$. The group $\mathrm{Out}(SL_3)$ has order $2$, and generated by the \emph{Dynkin involution} $\ast: G \to G$, $g \mapsto (g^{-1})^\mathsf{T}$. 
For each element $\phi$ in this subgroup, let us call the induced PL action $\phi:\cZ_{\fsl_3,\Sigma}(\bQ^T) \to \cZ_{\fsl_3,\Sigma}(\bQ^T)$ the \emph{cluster action}, in comparison to the geometric action we introduce in the body of this paper in terms of signed $\fsl_3$-webs.

%
%
%
%

\end{document}